\documentclass[12pt,reqno,a4paper]{amsart}
\usepackage{blindtext}
\usepackage{fullpage}
\usepackage{mathtools}
\usepackage{longtable}
\usepackage{amsmath,amssymb,amsthm}
\usepackage{amscd}
\usepackage{bm}
\usepackage{hyperref}   
\usepackage{dsfont}
\usepackage{enumerate}
\usepackage{epsfig}
\usepackage{float, graphicx}
\usepackage{latexsym, amsxtra}
\usepackage{mathrsfs}
\usepackage{multicol}
\usepackage[normalem]{ulem}
\usepackage{psfrag}
\usepackage[parfill]{parskip}
\usepackage{stmaryrd}
\usepackage{tikz}
\usepackage[T1]{fontenc}
\usepackage{url}
\usepackage{verbatim}
\usepackage{indentfirst}
\usepackage{tikz-cd}
\usepackage{booktabs}
\usepackage{svg}

\usepackage{mathtools}

\usepackage{caption} \captionsetup[table]{skip=5pt}

\flushbottom

\makeatletter
\def\thm@space@setup{%
	\thm@preskip=2ex \thm@postskip=2ex
}
\makeatother

\oddsidemargin=0in
\evensidemargin=0in
\textwidth=6.5in
\setlength{\unitlength}{1cm}
\setlength{\parindent}{0.6cm}

\hypersetup{hidelinks}

\newtheorem{thm}{Theorem~}[section]
\newtheorem{lem}[thm]{Lemma~}

\newtheorem{prop}[thm]{Proposition~}

\newtheorem{cor}[thm]{Corollary~}

\theoremstyle{remark}
\newtheorem{rmk}[thm]{Remark~}
\newtheorem{ex}[thm]{Example~}

\theoremstyle{definition}  
\newtheorem{defn}[thm]{Definition~}

\newcommand{\CC}{\mathbb{C}}
\newcommand{\ZZ}{\mathbb{Z}}
\newcommand{\RR}{\mathbb{R}}
\newcommand{\LL}{\mathbb{L}}
\newcommand{\PP}{\mathbb{P}}

\newcommand{\QQ}{\mathbb{Q}}

\newcommand{\BB}{\mathbb{B}}
\newcommand{\KK}{\mathbb{K}}

\newcommand{\calA}{\mathcal{A}}

\newcommand{\calC}{\mathcal{C}}

\newcommand{\calM}{\mathcal{M}}

\newcommand{\calO}{\mathcal{O}}
\newcommand{\calP}{\mathcal{P}}

\newcommand{\calL}{\mathcal{L}}
\newcommand{\calU}{\mathcal{U}}

\newcommand\Aut{\mathrm{Aut}}

\newcommand{\GL}{\mathrm{GL}}
\newcommand\SL{\mathrm{SL}}
\newcommand\SU{\mathrm{SU}}

\newcommand\PU{\mathrm{PU}}
\newcommand{\GU}{\mathrm{GU}}
\newcommand{\U}{\mathrm{U}}

\newcommand\Res{\mathrm{Res}}

\newcommand\PSL{\mathrm{PSL}}

\newcommand{\Pic}{\mathrm{Pic}}
\newcommand{\diag}{\mathrm{diag}}

\newcommand{\Hom}{\mathrm{Hom}}

\newcommand{\Tr}{\mathrm{Tr}}
\newcommand{\Ad}{\mathrm{Ad}}
\newcommand{\Gal}{\mathrm{Gal}}
\newcommand{\sgn}{\mathrm{sgn}}

\newcommand{\fp}{\mathfrak{p}}

\newcommand{\bs}{\backslash}
\newcommand{\dbs}{\bs\hspace{-1mm} \bs}

\title{Commensurability among Deligne--Mostow Monodromy Groups}
\vspace{1.2cm}
\author{Chenglong Yu, Zhiwei Zheng}
\date{}

\newcommand{\Addresses}{{
		\bigskip
		\footnotesize
		
		C.~Yu, \textsc{Tsinghua University, Beijing, China}\par\nopagebreak
		\textit{Email address}: \texttt{yuchenglong@tsinghua.edu.cn}
		
		\medskip
		
		Z.~Zheng, \textsc{Tsinghua University, Beijing, China}\par\nopagebreak
		\textit{Email address}: \texttt{zhengzhiwei@mail.tsinghua.edu.cn}
}}

\begin{document}
\bibliographystyle{amsalpha}

\begin{abstract}
This paper gives the commensurability classification of Deligne--Mostow ball quotients and shows that the $104$ Deligne--Mostow lattices form $38$ commensurability classes. First, we find commensurability relations among Deligne--Mostow monodromy groups, which are not necessarily discrete. This generalizes previous work by Sauter and Deligne--Mostow in dimension two. In this part, we consider certain projective surfaces with two fibrations over the projective line, which induce two sets of Deligne--Mostow data. Correspondences between moduli spaces provide a geometric realization of commensurability relations. Secondly, we obtain commensurability invariants from conformal classes of Hermitian forms and toroidal boundary divisors. This completes the commensurability classification of Deligne--Mostow lattices and provides an alternative approach to the results of Kappes--M{\"o}ller and McMullen on non-arithmetic Deligne--Mostow lattices.
\end{abstract}
	
	\maketitle
 \setcounter{tocdepth}{1}
	\tableofcontents
	
\section{Introduction}
This paper investigates the commensurability relations among Deligne--Mostow monodromy groups in $\PU(1,n)$. Applying higher-dimensional cyclic covers inspired by \cite{yu2024calabi}, we provide a geometric approach to the results of Sauter \cite{sauter1990isomorphisms} and Deligne--Mostow \cite{deligne1993commensurabilities} on commensurability relations among Deligne--Mostow monodromy groups in $\PU(1,2)$. We also derive new relations for $n > 2$, see Theorem \ref{theorem: main by mentioning table 2}. Furthermore, using invariants related to Hermitian forms and toroidal compactifications, we can distinguish different commensurability classes and provide a complete classification of commensurability relations among all $104$ Deligne--Mostow lattices, which is presented in Table \ref{table: main table}. In total, there are $38$ commensurability classes of Deligne--Mostow lattices. Previously, it was known there are $10$ commensurability classes of non-arithmetic Deligne--Mostow lattices by the work of Sauter \cite{ sauter1990isomorphisms}, Deligne--Mostow \cite{deligne1993commensurabilities}, Kappes--M\"oller \cite{kappes2016lyapunov} and McMullen \cite{McMullen2017Gauss-Bonnet}.

In simple Lie groups, every lattice must be arithmetic, either by Margulis's superrigidity theorem \cite{margulis1991discrete} or by results of Corlette \cite{corlette1992archimedean} and Gromov--Schoen \cite{gromov1992harmonic}, except for the series of $\PU(1,n)$ and $\mathrm{SO}(1,n)$. For each positive integer $n$, there are infinitely many non-arithmetic lattices in the group $\mathrm{SO}(1,n)$ by Gromov--Piatetski-Shapiro \cite{gromov1987non}. In the case of $\PU(1,n)$ with $n\geq 2$, there are only $22$ commensurability classes of non-arithmetic lattices found so far for $n = 2$ by \cite{deligne1986monodromy}, \cite{mostow1986generalized, mostow1988discontinuous}, \cite{thurston1998shapes}, \cite{deraux2016new, deraux2021new}, and only two for $n = 3$ by \cite{deligne1986monodromy}, \cite{couwenberg2005geometric}, \cite{deraux2020new}. A primary source of non-arithmetic lattices in $\PU(1,n)$ is the Deligne--Mostow theory, which we recall in the following.

Let $\mu = (\mu_1, \cdots, \mu_{n+3})$ be an $(n+3)$-tuple of rational numbers where $0 < \mu_i < 1$ and $\sum\limits_{i=1}^{n+3} \mu_i = 2$. Deligne and Mostow studied the monodromy groups $\Gamma_\mu \subset \PU(1, n)$ of certain hypergeometric functions associated with such data. These groups naturally act on the complex hyperbolic ball $\BB^n$ of dimension $n$. There are three natural aspects on the classification of those groups: discreteness, arithmeticity, and commensurability. Deligne--Mostow \cite{deligne1986monodromy}, Mostow \cite{mostow1986generalized, mostow1988discontinuous} and Thurston \cite{thurston1998shapes} established criteria for the discreteness and arithmeticity of $\Gamma_\mu$. For $n \geq 2$, there are 104 tuples $\mu$ such that $\Gamma_\mu$ are discrete lattices in $\PU(1,n)$. This includes $94$ examples satisfying the so-called half-integer condition and $10$ exceptional cases, as listed in \cite{mostow1988discontinuous}. Among these $104$ examples, there are $19$ non-arithmetic lattices in $\PU(1,2)$ and one non-arithmetic lattice in $\PU(1,3)$.

The classification of discreteness and arithmeticity for $\Gamma_\mu\subset \PU(1, 1)$ is also known and related to the hyperbolic triangle groups. The complete list of infinitely many discrete lattices $\Gamma_\mu\subset \PU(1, 1)$ is given in \cite[Theorem 3.8]{mostow1988discontinuous}. These discrete lattices are related to hyperbolic triangle groups up to commensurability. The classification of arithmetic triangle groups and their commensurability classes is given by Takeuchi \cite{takeuchi1977commensurability}. The commensurability classes of all triangle groups are given by the work of Petersson \cite{Petersson1937berDE}, Greenberg \cite{Greenberg1963MaximalFG}, and Singerman \cite{Singerman1972FinitelyMF}. In particular, all commensurability classes of discrete $\Gamma_\mu\subset \PU(1, 1)$ are known.

For Deligne--Mostow tuples $\mu$ and $\nu$, we write $\mu\sim \nu$ if and only if $\Gamma_\mu$ and $\Gamma_\nu$ are commensurable in $\PU(1,n)$, namely, up to conjugation in $\PU(1,n)$, they share a common finite-index subgroup. Sauter \cite{sauter1990isomorphisms} and Deligne--Mostow \cite{deligne1993commensurabilities} found the following commensurability relations among Deligne--Mostow monodromy groups in $\PU(1,2)$: 

\begin{thm}[Sauter, Deligne--Mostow]
\label{theorem: deligne mostow sauter}
There are the following commensurability pairs of Deligne--Mostow data when $n=2$.
\begin{enumerate}[(i)]
\item $(a, a, b, b, 2-2a-2b) \sim (1-a, 1-b, 1-a-b, a + b-{1\over 2}, a+b-{1\over 2})$ for any positive real numbers $a, b$ with $a+b \in ({1\over 2}, 1)$; 
\item $(a, a, a, a, {2-4a})\sim (a, a, a, {1\over 2}-a, {3\over 2}-2a)$ for any real number $a \in ({1\over 4}, {1\over 2})$; 
\item $({1\over 6}, {1\over 6}, {1\over 6}, {7\over 6}-a, {1\over 3}+a)\sim (a, a, a, {2\over 3}-a, {4\over 3}-2 a)$ for any real number $a \in({1\over 6}, {2\over 3})$.
\end{enumerate}
\end{thm}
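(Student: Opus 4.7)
The plan is to establish each of the three commensurability pairs by constructing, for each one, an auxiliary family of projective surfaces carrying two distinct fibrations over $\PP^1$ whose discriminant loci realize the two Deligne--Mostow data simultaneously. Concretely, for a pair $(\mu, \nu)$, I would build a family $\calS \to B$ over a suitable base $B$, together with fibrations $\pi_1, \pi_2: \calS \to B \times \PP^1$ and dominant, generically finite morphisms $\phi_1: B \to \calM_{0,5}^{\mu}/\Sym(\mu)$ and $\phi_2: B \to \calM_{0,5}^{\nu}/\Sym(\nu)$ recording the locations of the five singular fibers of each $\pi_i$. The primitive transcendental cohomology of the fiber surfaces should then recover, via each fibration, the hypergeometric variation of Hodge structure attached to the respective Deligne--Mostow datum. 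The resulting span $\calM_{0,5}^{\mu}/\Sym(\mu) \xleftarrow{\phi_1} B \xrightarrow{\phi_2} \calM_{0,5}^{\nu}/\Sym(\nu)$ would produce a common finite cover of the ball quotients $\Gamma_\mu \bs \BB^2$ and $\Gamma_\nu \bs \BB^2$, yielding commensurability of $\Gamma_\mu$ and $\Gamma_\nu$ in $\PU(1,2)$.

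The shape of $\calS$ in each case is dictated by the symmetry of the $\mu$-tuple. For case (i), the $\ZZ/2 \times \ZZ/2$ symmetry of $(a,a,b,b,c)$ suggests taking a $(\ZZ/2)^2$-cover of a Hirzebruch surface branched along two pairs of fibers from the two rulings; $\pi_1$ and $\pi_2$ are inherited from these rulings, and a direct branch-divisor computation should yield the image tuple $(1-a, 1-b, 1-a-b, a+b-1/2, a+b-1/2)$ on the second $\PP^1$. For case (ii), the $S_4$ symmetry on the four equal weights invites a Kummer-type construction: a double cover of $\PP^1$ branched at the four $a$-points is an elliptic curve, and the auxiliary elliptic surface admits an isogeny (translation by a $2$-torsion section) whose quotient fibration should realize $(a,a,a,1/2-a,3/2-2a)$. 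Case (iii) proceeds analogously, now exploiting the order-$3$ cyclic symmetry of the three weights $1/6$ by constructing a $\ZZ/3$-cover of $\PP^1$ branched at those three points; the resulting genus-one cover together with the last two marked points assembles into a surface whose second fibration produces $(a,a,a,2/3-a,4/3-2a)$.

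The main obstacle is the Hodge-theoretic identification on the fibers. One must verify that the relevant isotypic piece of $H^{2}_{\mathrm{tr}}(\calS_b)$ under the cyclic Galois action has Hodge numbers $(1,2,1)$ with period domain $\BB^2$, and that the two realizations (via $\pi_1$ and $\pi_2$) yield polarized variations of Hodge structure compatible up to a scalar. This reduces to a careful weight count of the Galois action on holomorphic differentials, using Esnault--Viehweg-type formulas for cyclic covers as developed in \cite{yu2024calabi}, together with a matching of the local monodromy exponents at the five singular fibers of each $\pi_i$. Once this compatibility is in place, the two period maps coincide on a finite étale cover of $B$, which in turn identifies $\Gamma_\mu$ and $\Gamma_\nu$ up to a common finite-index subgroup, giving the three stated commensurabilities.
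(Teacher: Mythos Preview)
Your overarching strategy---build a surface with two fibrations over $\PP^1$ whose discriminant loci realize the two Deligne--Mostow tuples, then compare period maps---is exactly the paper's. But the specific constructions you propose for each case are ad hoc and diverge from what the paper actually does, and at least one of them looks unworkable as stated.

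The paper uses a single unified construction for all three parts: a $d$-fold cyclic cover $S\to\PP^1\times\PP^1$ branched along a divisor of total bidegree $(3,3)$, decomposed into irreducible pieces according to the list in Proposition~\ref{proposition: types}. The two projections $\PP^1\times\PP^1\to\PP^1$ induce the two fibrations, and the monodromy computations of \S\ref{section: monodromy of fibrations} (degree-two ramification, nodal, vertical line, tacnode) read off the Deligne--Mostow weights on each side. Concretely, case~(i) is type~8.1 (divisor $(2,1)+(1,0)+(0,1)+(0,1)$ with weights $1-a-b,\,2a+2b-1,\,a,\,b$), case~(ii) is type~5.3 or~7.2 (involving a tacnode), and case~(iii) is type~3.2 (divisor $(3,1)+(0,1)+(0,1)$ with a tacnode). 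The Hodge-theoretic identification you flag as the main obstacle is handled once, uniformly, by Theorem~\ref{theorem: surface S to DM cohomology} and Proposition~\ref{proposition: Monodromy from surfaces to DM}.

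Your case-by-case constructions, by contrast, are guided by the symmetry of $\mu$ rather than by the target $\nu$, and at least the first one has a gap. A $(\ZZ/2)^2$-cover of a Hirzebruch surface branched along two pairs of fibers is (birationally) a product of two hyperelliptic curves; its second fibration is again by rulings and there is no mechanism for the fifth weight $2-2a-2b$ to appear, nor for the asymmetric tuple $(1-a,1-b,1-a-b,a+b-\tfrac12,a+b-\tfrac12)$ to emerge. The Kummer/isogeny and $\ZZ/3$-cover ideas for (ii) and (iii) are plausible heuristics but are not pinned down enough to check: you have not specified the auxiliary surface, its second fibration, or why the discriminant weights come out as claimed. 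The paper's divisor-type bookkeeping does exactly this and is what makes the argument go through.
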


Notice that in Theorem \ref{theorem: deligne mostow sauter}, by iterating relation $(i)$ twice, one can obtain relation $(ii)$ as follows:
\[
(a,a,a,a,2-4a)\sim (1-a, 1-a, 2a-{1\over 2}, 2a-{1\over 2}, 1-2a)\sim (a,a,a, {1\over 2}-a, {3\over 2}-2a).
\]

The invariants that complete the commensurability classification of non-arithmetic lattices were obtained by Kappes--M\"oller \cite{kappes2016lyapunov} and McMullen \cite{McMullen2017Gauss-Bonnet} independently. They are called Lyapunov spectrum and volume ratios of cone manifolds, and are related to ratios of Chern numbers of Hodge bundles under Galois conjugation. These invariants are similar to Hirzebruch's proportionality for lattices \cite{Hirzebruch1958AutomorpheFU}, \cite{Mumford1977HirzebruchsPT}. The $20$ non-arithmetic Deligne--Mostow lattices fall into $9$ commensurability classes. However, this approach does not apply to arithmetic lattices, since the Lyapunov spectrum for arithmetic ball quotients are always zero, or Galois conjugations do not provide other conic complex hyperbolic structures on the moduli spaces.
	
Two new series of infinitely many commensurability relations of Deligne--Mostow monodromy groups in $\PU(1,3)$ are found in this paper. 
\begin{thm}
 \label{theorem: dimension 3}
There are the following commensurability pairs of Deligne--Mostow data when $n=3$.
		\begin{enumerate}
	\item $({1\over 6}, {1\over 6}, {1\over 6}, {1\over 6}, 1-a, {1\over 3}+a)\sim (a, a, a, {2\over 3}-a, {2\over 3}-a, {2\over 3}-a)$ for any real number $a \in (0, {2\over 3})$; 
\item $(a, a, a, a, 1-2a, 1-2a)\sim (a, a, a, {1\over 2}-a, {1\over 2}-a, 1-a)$ for any real number $a\in (0, {1\over 2})$.
		\end{enumerate}
\end{thm}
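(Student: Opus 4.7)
The plan is to follow the general geometric mechanism used throughout the paper: for each stated pair $(\mu,\nu)$ of Deligne--Mostow tuples, construct a family of projective surfaces $\mathcal{S}\to T$ where each fiber $S_t$ carries two rational fibrations $\pi_1,\pi_2:S_t\to \PP^1$, arranged so that the positions and local weights of the singular fibers of $\pi_i$ recover the $i$-th tuple. Applying the higher-dimensional cyclic cover construction of \cite{yu2024calabi} to this setup, the periods of the family in the relevant eigenspace of its cyclic automorphism group produce two period maps with the same image in the ball $\BB^3$. The resulting finite correspondence between the Deligne--Mostow moduli spaces for $\mu$ and $\nu$ then yields commensurability of $\Gamma_\mu,\Gamma_\nu$ inside $\PU(1,3)$.

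For relation (1), the weight combinatorics is especially suggestive: the left tuple $(\tfrac16,\tfrac16,\tfrac16,\tfrac16,1-a,\tfrac13+a)$ contains four equal weights $\tfrac16$, which naturally arise as the branch locus of a double cover, while the right tuple $(a,a,a,\tfrac23-a,\tfrac23-a,\tfrac23-a)$ exhibits a $\ZZ/3$-symmetry on a $3+3$ partition. I would therefore look for a surface $S_t$ carrying a $\ZZ/2\times \ZZ/3\simeq \ZZ/6$-action, most naturally realized as a cyclic degree-$6$ cover of $\PP^1\times \PP^1$ branched along a product divisor of the appropriate type; the two projections $\PP^1\times \PP^1\to \PP^1$ then descend to fibrations on $S_t$ whose branch configurations match the two tuples.

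For relation (2), the structure is genuinely three-dimensional and does not appear to follow from Sauter's dimension-two relations by a simple operation (unlike the derivation of (ii) from (i) noted in the paper in the dimension two case). I would nevertheless imitate the Sauter construction one dimension higher: pass to a double cover of $\PP^1$ branched at the four weight-$a$ points on the left side, producing an elliptic curve fibration on $S_t$, then exploit the remaining pair of weight-$(1-2a)$ points to determine a translation on that elliptic curve, whose quotient gives an isogenous elliptic fibration whose projection to the base $\PP^1$ realizes the right-hand tuple $(a,a,a,\tfrac12-a,\tfrac12-a,1-a)$. Compare this to Sauter's isogeny-based identification of $\PP^1$-configurations.

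The principal obstacle in both cases is verifying that the two induced period maps genuinely factor through the \emph{same} signature-$(1,3)$ eigenspace of $H^1(S_t)$ under the cyclic automorphism group, rather than through a priori distinct or character-twisted eigenspaces; only then do the two monodromy representations land in $\PU(1,3)$ as commensurable subgroups. This reduces to a Hodge-number computation for each eigenspace---rigid but combinatorially delicate---combined with a dimension count showing that the correspondence between the two moduli spaces is finite of the expected degree. Once these are in place, the Deligne--Mostow identification of monodromy with the image of $\pi_1$ of the complement in $\BB^3$ delivers the commensurability.
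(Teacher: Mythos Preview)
Your overall architecture---a family of surfaces $S$ carrying two fibrations $S\to\PP^1$ whose respective discriminant/monodromy data recover the two Deligne--Mostow tuples, so that the common period image in $\BB^3$ forces commensurability---is exactly the mechanism the paper uses. However, the specific constructions you propose do not match, and in part will not work as written.

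For relation (1), the paper does \emph{not} use a degree-$6$ cover along a ``product divisor.'' The cover degree $d$ is the common denominator of the weights and varies with $a$; the branch divisor is a smooth bidegree-$(3,1)$ curve together with two horizontal $(0,1)$-lines (case 3.1 of Proposition~\ref{proposition: types}), with weight $\tfrac13$ on the $(3,1)$-curve and weights summing to $\tfrac23$ on the two lines. Projecting to one factor, the four degree-two ramification points of the $(3,1)$-curve each contribute $\tfrac12-\tfrac13=\tfrac16$ and the two $(0,1)$-lines contribute the vertical-line monodromy, yielding $(\tfrac16,\tfrac16,\tfrac16,\tfrac16,1-a,\tfrac13+a)$; projecting to the other factor, the three nodal collisions between the $(3,1)$-curve and each $(0,1)$-line give $(a,a,a,\tfrac23-a,\tfrac23-a,\tfrac23-a)$. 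A product divisor (case~12) produces a different pair entirely.

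For relation (2), your elliptic-isogeny sketch is a genuinely different route and is not carried out in the paper. The paper instead uses case~7.1 (a normal-crossing $(2,1)+(1,1)+(0,1)$ configuration) or equivalently case~5.1 (a $(2,2)+(1,0)+(0,1)$ configuration with a tacnode between the first two); the same two-projection argument then reads off both tuples directly. Your isogeny idea may be viable, but as stated it is only heuristic and you would still owe the verification that the two period maps land in conformally the same Hermitian eigenspace.

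Two further points. First, the relevant eigenspace is in $H^2_\chi(S)$, not $H^1(S)$; the ball-type Hodge structure sits in weight two. Second, the ``principal obstacle'' you flag---that both projections feed into the same Hermitian eigenspace---is handled uniformly in the paper by the numerical conditions of Proposition~\ref{proposition: Hodge numbers for surface S} ($\sum L_j=\calO(3,3)$ and $\sum \tfrac{a_j}{d}L_j=\calO(1,1)$) together with Theorem~\ref{theorem: surface S to DM cohomology}, which identifies $H^2_\chi(S)$ with the Deligne--Mostow Hermitian space of \emph{either} fibration; no separate case-by-case eigenspace check is needed.
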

	
We have a unified algebro-geometric proof for Theorem \ref{theorem: deligne mostow sauter}, Theorem \ref{theorem: dimension 3}, and other pairs with explicit indices of common subgroups. The key geometric object of our approach is the cyclic cover of $(\PP^1)^2$ branched along a divisor of bidegree $(3,3)$. Under suitable numerical conditions (see Proposition \ref{proposition: types}), such a cover is a surface with a sub-Hodge structure of ball type. Meanwhile, this surface admits two natural fibrations over the projective line. To each fibration, we attach a Deligne--Mostow tuple. The Deligne--Mostow monodromy groups associated with the two tuples are shown to be commensurable. To state Theorem \ref{theorem: main by mentioning table 2}, we need to use some larger monodromy groups, denoted by $\Gamma_{\mu, H}$ with $H$ any group of permutations preserving points with the same weight. This notion will be recalled in \S\ref{subsection: review deligne mostow theory}. The common finite-index subgroup comes from the moduli spaces of such surfaces, and the indices divide the degrees of GIT moduli spaces.

\begin{thm} 
\label{theorem: main by mentioning table 2} 
The Deligne--Mostow monodromy groups $\Gamma_\mu$ and $\Gamma_\nu$ in each row of Table \ref{table: relation from geo} are commensurable. For each row, there are naturally groups $H_1$ and $H_2$ of permutations associated with $\mu$ and $\nu$, respectively, such that, up to conjugation in $\PU(1,n)$, the monodromy groups $\Gamma_{\mu, H_1}$ and $\Gamma_{\nu, H_2}$ share a common subgroup whose index in $\Gamma_{\mu, H_1}$ divides $\deg \pi_1$, and whose index in $\Gamma_{\nu, H_2}$ divides $\deg \pi_2$. 
\end{thm}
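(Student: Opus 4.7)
The plan is to realize both Deligne--Mostow data $\mu$ and $\nu$ appearing in a given row of Table \ref{table: relation from geo} simultaneously on a single algebraic family of surfaces, then use the resulting monodromy group as the common finite-index subgroup. Concretely, I start from a cyclic cover $S \to \PP^1 \times \PP^1$ branched along a smooth divisor of bidegree $(3,3)$, with the order of the cover determined by the common denominator of the weights in $\mu$ and $\nu$, following the construction hinted at in the paragraph just before the theorem (in the spirit of \cite{yu2024calabi}). Under the numerical conditions of Proposition \ref{proposition: types}, a distinguished character eigenspace of $H^2(S,\CC)$ for the cyclic Galois action carries a sub-Hodge structure with Hodge numbers $(1,n,0)$, producing a period map to the ball $\BB^n$.

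Next I exploit the two projections $\pi_1,\pi_2 \colon \PP^1\times\PP^1 \to \PP^1$. Restricting the branch divisor to a generic fiber of $\pi_i$ gives a configuration of marked points on $\PP^1$ whose weights, read off from the ramification indices of the cyclic cover and the bidegree, reproduce the tuple $\mu$ (for $i=1$) and $\nu$ (for $i=2$). A Leray-type computation for $S \to \PP^1$, applied on the relevant eigenspace, identifies the ball-type sub-Hodge structure as the hypergeometric variation of Deligne--Mostow associated to that tuple. This yields, for each $i=1,2$, a factorization of the period map through the Deligne--Mostow period map of the corresponding configuration space.

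From this input I build one moduli stack $\calM$ parametrizing the bidegree $(3,3)$ branch data (together with the cyclic covering datum), equipped with two forgetful maps to the respective configuration moduli of $\mu$- and $\nu$-type point sets on $\PP^1$. The groups $H_1$ and $H_2$ are introduced precisely to absorb the symmetries that permute equally-weighted marked points within a fiber, so that both forgetful maps descend to well-defined morphisms on the level of $H_i$-equivariant configuration spaces. A generic fiber of $\calM \to \calM_{\mu,H_1}$ corresponds to the choice of a fiber of $\pi_1$ lying over the fixed marked configuration, hence has cardinality dividing $\deg \pi_1$, and similarly for $\pi_2$ and $\nu$. Translating through the period maps, the image of $\pi_1(\calM)$ in $\PU(1,n)$ is a common finite-index subgroup of both $\Gamma_{\mu,H_1}$ and $\Gamma_{\nu,H_2}$ with the stated index bounds, so in particular $\Gamma_\mu$ and $\Gamma_\nu$ are commensurable.

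The hard part will be the Hodge-theoretic matching: one must check row by row in Table \ref{table: relation from geo} that the ball-type eigenspace of $H^2(S)$, when decomposed via the Leray filtration for $\pi_i$, really recovers the hypergeometric VHS for the claimed tuple, with compatible polarizations and Galois structures, and that the numerical conditions of Proposition \ref{proposition: types} translate cleanly into the Deligne--Mostow conditions $0 < \mu_i < 1$ and $\sum \mu_i = 2$ on both tuples simultaneously. A secondary technical issue is controlling the degree of the forgetful map $\calM \to \calM_{\mu,H_1}$: one must rule out that a generic $S$ acquires extra automorphisms beyond those coming from the product structure of $\PP^1\times\PP^1$, for otherwise the index might drop below $\deg \pi_1$ rather than divide it.
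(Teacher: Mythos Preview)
Your overall architecture matches the paper's: build the cyclic cover $S\to\PP^1\times\PP^1$, extract a ball-type eigenspace in $H^2_\chi(S)$, use the two projections to attach two Deligne--Mostow tuples, and take the monodromy group of the moduli space $\calM$ of such surfaces as the common finite-index subgroup (this is exactly Proposition \ref{proposition: Monodromy from surfaces to DM} and Theorem \ref{theorem: main}). So the strategy is right.

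However, there is a genuine gap in how you extract the Deligne--Mostow tuple. You write that ``restricting the branch divisor to a generic fiber of $\pi_i$ gives a configuration of marked points on $\PP^1$ whose weights \ldots\ reproduce the tuple $\mu$''. This is not correct: a generic fiber meets the $(3,3)$ branch divisor in only three points, with weights summing to $1$, so the fiber curve $C$ has $\dim H^1_\chi(C)=1$ (Proposition \ref{Proposition: Hodge numbers for curve C}). That one-dimensionality is exactly what makes $(R^1p_*\QQ[\zeta_d])_\chi$ a rank-one local system $\LL$ on $\PP^1-A$. The Deligne--Mostow tuple $\mu$ lives on the \emph{base}: its entries are the monodromy exponents of $\LL$ around the discriminant points $A$, computed case by case in \S\ref{section: monodromy of fibrations} (degree-two ramification, nodal, vertical-line, tacnode). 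Without that computation you cannot write down $\mu$ and $\nu$, and the row-by-row check you flag as ``the hard part'' is precisely this monodromy calculation, done uniformly by local models rather than by matching Hodge structures directly.

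Two smaller points. First, the branch divisor is rarely a single smooth $(3,3)$ curve; Proposition \ref{proposition: types} lists twelve configurations of components $L_j$ with weights $a_j/d$, and the type determines which local models from \S\ref{section: monodromy of fibrations} appear. Second, your reading of $\deg\pi_i$ is tangled: in Table \ref{table: relation from geo}, $\deg\pi_i$ is \emph{by definition} the degree of the forgetful map $\calM\to\calM_{\mu,H_i}$ (see the worked example for case 8.1 in \S\ref{section: tables}), so the cardinality of its generic fiber equals $\deg\pi_i$ rather than divides it; the ``divides'' in the theorem refers to the index of $\Gamma$ in $\Gamma_{\mu,H_i}$, which divides that degree because the monodromy representation may identify sheets (Proposition \ref{proposition: Monodromy from surfaces to DM}).
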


The pairs in Theorem \ref{theorem: deligne mostow sauter} are cases 8.1, 5.3 (or 7.2), and 3.2 in Table \ref{table: relation from geo}. The pairs in Theorem \ref{theorem: dimension 3} are cases 3.1 and 5.1 (or 7.1) in Table \ref{table: relation from geo}.

To obtain a complete classification of commensurability relations among Deligne--Mostow lattices, we also need commensurability invariants. There are two types of invariants used in our paper. Firstly we prove that if $\Gamma_\mu$ and $\Gamma_\nu$ are commensurable in $\PU(1, n)$ for $n\geq 2$, then the corresponding Hermitian spaces share the same cyclotomic field $K$; see Proposition \ref{prop: the same CM field}. This generalizes the previous known commensurability invariant called adjoint trace fields, which are the real cyclotomic fields in this case. The condition $n\geq 2$ is necessary here because there are exceptions for $n=1$ given by commensurable arithmetic triangle groups. This is essentially related to different automorphism groups of type $A_n$ Dynkin diagram with $n=1$ and $n\geq 2$. Since the two Hermitian spaces are defined over the same cyclotomic fields, it makes sense to discuss the conformality between the two Hermitian spaces. It turns out that the conformal classes of Hermitian spaces form commensurability invariants for Deligne--Mostow monodromy groups $\Gamma_\mu$. Furthermore, we give an explicit way to calculate the Hermitian forms based on colliding weights in Deligne--Mostow data and comparing the conformal classes by determinants and signatures. When $\Gamma_\mu$ is arithmetic, we prove that the conformal classes provide the complete commensurability invariants. So this completes the commensurability classification of arithmetic Deligne--Mostow lattices. Part of the conformal invariant is the signature under Galois conjugation, and this invariant already appeared before in \cite[\S6.2]{deraux2021new} and is called signature spectrum and non-arithmeticity index. 

The second commensurability invariant discussed in \S\ref{section: toroidal boundary divisors} is the boundary divisors of toroidal compactifications. This is essentially the same invariant called Heisenberg groups at cusps which appeared in Deraux's work \cite[\S6]{deraux2020new} distinguishing Deligne--Mostow and Couwenberg--Heckman--Looijenga non-arithmetic lattices in $\PU(1,3)$. The above two invariants can be used to distinguish non-arithmetic Deligne--Mostow lattices; see Corollary \ref{cor: distinguishing nonarithemetic DM by signature} and Corollary \ref{Corollary: nonarithemtic with denominator 12}.

The main theorem about the commensurability classification of Deligne--Mostow lattices is as follows.
\begin{thm}
\label{theorem: full classification}
    The $104$ Deligne--Mostow lattices in $\PU(1,n)$ with $n\geq 2$ form $38$ commensurability classes listed in Table \ref{table: main table}.
\end{thm}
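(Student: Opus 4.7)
The plan is to prove the classification by combining the explicit commensurabilities of Theorem~\ref{theorem: main by mentioning table 2} (giving an upper bound on the number of classes) with the commensurability invariants developed later in the paper (giving a matching lower bound). First I would tabulate all $104$ Deligne--Mostow tuples together with the invariants at our disposal: the cyclotomic field $K_\mu$ attached to the tuple $\mu$, the signature of the associated Hermitian form and of all its Galois conjugates, the conformal class of that form (encoded by determinant data modulo norms from $K_\mu$ and rational squares), and the combinatorial type of the Heisenberg cusp stratification of the toroidal compactification.

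Second, I would close the equivalence relation $\sim$ under the commensurabilities produced by Theorem~\ref{theorem: main by mentioning table 2}. Iterating the geometric relations in Table~\ref{table: relation from geo}, together with the obvious symmetries (permutations of equal weights), collapses the $104$ tuples into at most $38$ equivalence classes, which I would read off as the rows of Table~\ref{table: main table}.

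Third, to establish the lower bound, I would verify that any two tuples in different rows of Table~\ref{table: main table} disagree on some invariant. By Proposition~\ref{prop: the same CM field}, distinct cyclotomic fields separate two classes immediately. Among tuples sharing a field, the conformal class of the Hermitian form (signature spectrum plus determinant class) is a commensurability invariant, and in the arithmetic case it is complete, so this settles the arithmetic lattices at once. When conformal data still coincide, the toroidal boundary invariant handles the remainder: see Corollary~\ref{cor: distinguishing nonarithemetic DM by signature} and Corollary~\ref{Corollary: nonarithemtic with denominator 12}, which between them separate the non-arithmetic examples and recover the Kappes--M\"oller/McMullen picture.

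The main obstacle I anticipate is the third step in the arithmetic range. For non-arithmetic tuples, Galois conjugation already produces different signatures and furnishes a rich invariant; for arithmetic tuples all Galois conjugates have the same fixed signature, and one must rely on the finer determinant class of the Hermitian form. Computing this class explicitly from the colliding-weights recipe, controlling it modulo norms from $K_\mu$ and rational squares, and verifying that it genuinely separates the arithmetic candidates that share field and signature spectrum is the delicate portion of the argument. A secondary difficulty is keeping careful track of the larger groups $\Gamma_{\mu, H}$ of Theorem~\ref{theorem: main by mentioning table 2} versus the $\Gamma_\mu$ themselves, so that the geometric upper bound on the number of classes really reflects an upper bound for the original lattices rather than merely for their permutation extensions.
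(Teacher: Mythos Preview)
Your overall strategy matches the paper's, but step~2 contains a factual error that breaks the argument as stated. The geometric relations of Table~\ref{table: relation from geo} do \emph{not} collapse the $104$ tuples into $38$ classes. For instance, among the arithmetic cases with $(n,d)=(5,6)$, the tuple ${1\over 6}(1,1,1,1,1,1,3,3)$ (case~10) is not linked by any row of Table~\ref{table: relation from geo} to cases~8, 9, or~11; likewise for $(n,d)=(3,6)$, the tuples ${1\over 6}(1,1,2,2,3,3)$, ${1\over 6}(1,1,1,2,3,4)$, ${1\over 6}(1,2,2,2,2,3)$ (cases~23, 25, 27) are isolated from their blocks. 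So iterating the geometric relations yields strictly more than $38$ classes, and your upper bound fails.

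The paper handles this differently. For \emph{arithmetic} lattices the conformal class of $(V_\mu, h_\mu)$ is not merely an invariant but a \emph{complete} invariant (Proposition~\ref{prop: arithmetic commen by F-forms} and Theorem~\ref{theorem: hermitian form}). Hence the upper bound in the arithmetic range comes not from Table~\ref{table: relation from geo} but from directly computing $\det h_\mu / \det h_\nu$ via Proposition~\ref{proposition: determinant} and checking that it lies in $N_{K/F}(K^\times)$ for tuples in the same block; this is exactly what the proof does for cases 8--11, 19--23, 24--27, 39--40, and 54--56. The geometric input (Theorem~\ref{theorem: dimension 3}) is invoked only once in the arithmetic part, to link cases~55 and~56. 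You already possess this tool---you write ``in the arithmetic case it is complete''---but you have placed it in step~3 as a separating device, when in fact it is the engine that \emph{produces} the missing commensurabilities. Reorganize so that Proposition~\ref{prop: arithmetic commen by F-forms} carries the arithmetic upper bound, and reserve Table~\ref{table: relation from geo} for the non-arithmetic blocks (where the relations \emph{do} suffice), and you recover the paper's argument. A minor further difference: for the non-arithmetic lower bound the paper simply cites Kappes--M\"oller and McMullen rather than relying on its own Corollaries~\ref{cor: distinguishing nonarithemetic DM by signature} and~\ref{Corollary: nonarithemtic with denominator 12} alone.
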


\noindent {\it Organization:}
The content of this paper is roughly divided into two parts. The first part is about commensurability relations via moduli of certain projective surfaces. In \S\ref{section: background} we review the backgrounds, including the Esnault--Viehweg formula for cyclic covers and the Deligne--Mostow theory. In \S\ref{section: cyclic cover of P1^2} we will use the Esnault--Viehweg formula to derive the numerical conditions and classifications for cyclic covers $S$ of $\PP^1\times \PP^1$ with Hodge structures of ball type. In \S\ref{section: monodromy of fibrations}, we calculate the monodromy for the two projections $S\to \PP^1$ around the discriminant points. This allows us to explicitly write down the two Deligne--Mostow tuples associated with $S$. In \S\ref{section: moduli space}, we show that the two Deligne--Mostow monodromy groups arising from the two fibrations are commensurable via the moduli spaces of surfaces $S$. 

The second part deals with commensurability invariants. We discuss the conformal class invariant and prove that it classifies the commensurability relations among arithmetic Deligne--Mostow lattices in \S\ref{section: arithmetic lattices}. In \S\ref{section: degeneration method} we establish relations between Hermitian forms of adjacent ranks via a degeneration method. This provides the main tables of classifications in \S\ref{section: tables}. The commensurability invariant, boundary divisors of toroidal compactifications, is discussed in \S\ref{section: toroidal boundary divisors}.

\noindent {\it Acknowledgments:} We thank Prof. Mikhail Borovoi for answering our questions on MathOverflow, which is crucial for the proof of Proposition \ref{prop: the same CM field}. We thank YMSC and BIMSA for providing excellent environment for our collaboration. It is our pleasure to thank Eduard Looijenga, Ben Moonen, Michael Rapoport, Gerard van der Geer, Jinxing Xu and Kang Zuo for their comments and suggestions.

\section{Preliminary Result}
\label{section: background}
In this section we review the Esnault--Viehweg Formula (Proposition \ref{Proposition: Esnault--Viehweg}) which will be used in \S\ref{section: cyclic cover of P1^2} to classify certain cyclic covers with Hodge structures of ball types. We also recall necessary knowledge about Deligne--Mostow theory.

\subsection{Cyclic Covers and Esnault--Viehweg Formula}
\label{subsection: cyclic cover and EV formula}
We first recall the construction of cyclic covers. Let $X$ be an $n$-dimensional smooth projective variety with line bundles $L, L_1, \cdots, L_m\in \Pic(X)$. Suppose for each $j\in\{1,\cdots, m\}$, the divisor $D_j$ is irreducible and defined by a section $f_j\in H^0(X, L_j)$. Let $d, a_1,\cdots, a_m$ be positive integers such that $\gcd(d,a_1,\cdots, a_m)=1$ and suppose $D=a_1D_1+\cdots +a_m D_m\in |dL|$. Then the normalization $Z$ of the algebraic variety defined by the equation
\begin{equation}
\label{equation: define cyclic cover}
z^d=(f_1)^{a_1}\cdots (f_m)^{a_m}
	\end{equation}
in $\PP(\calO\oplus L)$ is called the $d$-fold cyclic cover of $X$ branching along $D$. 

If $D_1+\cdots+D_m$ is simple normal crossing, then the variety $Z$ has quotient singularities. In this case the cohomology groups $H^k(Z, \QQ)$ has pure $\QQ$-Hodge structure of weight $k$ by \cite{steenbrink1977mixed}. The cyclic group $C_d=\langle\zeta_d=\exp({2\pi\sqrt{-1}\over d})\rangle$ operates on $Z$ by $\zeta_d\cdot(z,x)=(z, \zeta_d\cdot x)$. This induces an operation of $C_d$ on the cohomology group of $Z$ by $g\cdot \gamma=(g^{-1})^*(\gamma)$ for any $g\in C_d$ and $\gamma\in H^*(Z)$. Let $\chi\colon C_d\to \CC^*$ be the tautological character. The operation of $C_d$ induces the character decomposition 
\begin{equation*}
H^n(Z, \QQ[\zeta_d])=\bigoplus_{i=0}^{d-1}H^n_{\chi^i}(Z, \QQ[\zeta_d])
	\end{equation*}
and
\begin{equation*}
H^{p,q}(Z)=\bigoplus_{i=0}^{d-1}H^{p,q}_{\chi^i}(Z).
	\end{equation*}

Next we recall the Esnault--Viehweg Formula. We denote by $\{r\}=r-[r]$ the fractional part of a real number $r$. Recall $D=\sum\limits_{j=1}^m a_j D_j$. 
\begin{prop}[Esnault--Viehweg \cite{esnault1992lectures}]
\label{Proposition: Esnault--Viehweg}
Let be given $i\in\{1,\cdots,d-1\}$ with $\gcd(i,d)=1$. Suppose $1\leq a_j\leq d-1$ for $1\le j\le m$. Then the components of the Hodge decomposition of $H^{n}(Z)$ can be described as follows:
\begin{equation*}
H^{n-q,q}_{\chi^i}(Z)\cong H^q(X, \Omega_X^{n-q}(\log D)\otimes \calO (-\sum_{j=1}^m\{{ia_j\over d}\}D_j)).
\end{equation*}
Especially, for $q=0$, we have 
\begin{equation*}
H^{n,0}_{\chi^i}(Z)\cong H^0(X, K_X\otimes  \calO(\sum_{j=1}^m (1-\{{ia_j\over d}\}) D_j)).
\end{equation*}
\end{prop}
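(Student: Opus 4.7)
The plan is to compute both sides by pushing forward along the finite morphism $\pi\colon Z \to X$ and exploiting the $C_d$-action. Since $\pi$ is finite, one has $H^q(Z,\mathcal{F}) = H^q(X, \pi_*\mathcal{F})$ for any coherent sheaf $\mathcal{F}$ on $Z$, and the character decomposition of $\pi_*\mathcal{F}$ under $C_d$ commutes with cohomology. Thus the proposition reduces to identifying the $\chi^i$-eigenspace of $\pi_*\Omega_Z^{n-q}$ (in a suitable orbifold/log sense) with the claimed twist of $\Omega_X^{n-q}(\log D)$ on the base.

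First I would handle the case $p=0$, that is, compute $\pi_*\mathcal{O}_Z$ with its $C_d$-decomposition. Locally the equation $z^d = \prod f_j^{a_j}$ shows that $\pi_*\mathcal{O}_Z$ is freely generated over $\mathcal{O}_X$ by $1, z, z^2, \ldots, z^{d-1}$, with $z^i$ spanning the $\chi^i$-eigenspace. Tracking how $z^i$ transforms under change of trivialization of $L$ and of the $L_j$, and using the linear equivalence $dL \equiv \sum_j a_j D_j$, gives
\[
(\pi_*\mathcal{O}_Z)_{\chi^i} \;\cong\; L^{-i}\otimes \mathcal{O}\Bigl(\textstyle\sum_j \lfloor ia_j/d\rfloor D_j\Bigr),
\]
which is exactly $\mathcal{O}(-\sum_j \{ia_j/d\}D_j)$ in the fractional-divisor shorthand of the proposition.

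Next I would upgrade this to sheaves of differentials. The key input is that the natural pullback $\pi^*\Omega_X^1(\log D) \to \Omega_Z^1(\log \tilde D)$, with $\tilde D = \pi^{-1}(D)_{\mathrm{red}}$, is an isomorphism: locally $\pi^*(df_j/f_j) = a_j\, dz/z$ differs from the generator $dz/z$ by a unit. Taking exterior powers and applying the projection formula together with the computation of $\pi_*\mathcal{O}_Z$ yields
\[
(\pi_*\Omega_Z^{n-q}(\log \tilde D))_{\chi^i} \;\cong\; \Omega_X^{n-q}(\log D)\otimes L^{-i}\otimes \mathcal{O}\Bigl(\textstyle\sum_j \lfloor ia_j/d\rfloor D_j\Bigr).
\]
The final step is to strip the log poles along $\tilde D$ in the $\chi^i$-eigenspace, passing from $\Omega_Z^{n-q}(\log\tilde D)$ to $\Omega_Z^{n-q}$; this is controlled by the Poincar\'e residue exact sequence, and for $\gcd(i,d)=1$ the $z^i$-eigenvector forces precisely the fractional-part cancellation, producing the stated formula. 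Taking cohomology gives the proposition, and the case $q=0$ follows by writing $\Omega_X^n(\log D) = K_X \otimes \mathcal{O}(\sum_j D_j)$ and rewriting $1-\{ia_j/d\}$.

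The main obstacle is handling the singularities of $Z$ rigorously. Since $Z$ is only a $V$-manifold, the sheaves $\Omega_Z^p$ must be interpreted in Steenbrink's orbifold sense (equivalently, as the $C_d$-invariants on an equivariant resolution), and one must check both that these compute the pure Hodge structure on $H^n(Z,\mathbb{Q})$ and that the pushforward decomposition above is independent of the choice of resolution. The simple normal crossing hypothesis on $\sum D_j$ and the tameness condition $\gcd(i,d)=1$ are exactly what keep the singularities cyclic-quotient and ensure that no exceptional contribution enters the $\chi^i$-eigenspace, so that the log-smooth computation above survives passing to $Z$.
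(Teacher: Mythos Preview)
The paper does not give its own proof of this proposition: it is quoted directly from Esnault--Viehweg \cite{esnault1992lectures} as a black box and used as input for the Hodge-number computations in \S\ref{section: cyclic cover of P1^2}. So there is nothing in the paper to compare your argument against.

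That said, your sketch is essentially the standard argument from the cited reference: decompose $\pi_*\calO_Z$ into eigensheaves $L^{(i)}=L^{-i}\otimes\calO(\sum_j\lfloor ia_j/d\rfloor D_j)$, use the isomorphism $\pi^*\Omega_X^{\bullet}(\log D)\cong \Omega_Z^{\bullet}(\log \tilde D)$ together with the projection formula, and invoke Steenbrink's theory to handle the quotient singularities of $Z$. One small inaccuracy: your local formula ``$\pi^*(df_j/f_j)=a_j\,dz/z$'' is not literally correct. From $z^d=\prod_k f_k^{a_k}$ one gets $d\,dz/z=\sum_k a_k\,\pi^*(df_k/f_k)$, and near a smooth point of $D_j$ one must pass to the normalization and a genuine local coordinate $w$ on $Z$ (depending on $\gcd(a_j,d)$) before concluding that $\pi^*(df_j/f_j)$ and $dw/w$ differ by a unit. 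The conclusion that $\pi^*\Omega_X^1(\log D)\to\Omega_Z^1(\log\tilde D)$ is an isomorphism is correct, but the bookkeeping is slightly more delicate than you indicate.
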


\subsection{Hodge Structures of Ball Type}
The following definition is used throughout this paper.
\begin{defn}[{$\QQ[\zeta_d]$}-Hodge structure of ball type]
A $\QQ[\zeta_d]$-Hodge structure of ball type is a $\QQ[\zeta_d]$-vector space $V$ (of dimension $n$) together with a Hermitian pairing
\begin{equation*}
h\colon  V\times V\to \QQ[\zeta_d]
		\end{equation*}
of signature $(1, n-1)$ and a filtration
		\begin{equation*}
F^1\subset F^0=V_\CC\coloneqq V\otimes_{\QQ[\zeta_d]}\CC
		\end{equation*}
such that $\dim F^1=1$ and $h(x,x)>0$ for any $x\in F^1-\{0\}$.
	\end{defn}

We can construct $\QQ[\zeta_d]$-Hodge structure of ball type from certain polarized $\QQ$-Hodge structures of weight two or one together with cyclic group operation. We describe such constructions in the following two examples.
\begin{ex}
Let $d\geq 3$ be an integer and $(W, b)$ a polarized $\QQ$-Hodge structure of weight two and signature $(2, m)$. Here $b\colon W\times W\to \QQ$ is a symmetric bilinear form of signature $(2, m)$ and naturally extends to $W_\CC$. Suppose the Hodge decomposition is $W_\CC=W^{2,0}\oplus W^{1,1}\oplus W^{0,2}$. Then $b(x,\overline{x})>0$ for $x\in W^{2,0}-\{0\}$. Assume there is a cyclic group $C_d$ action on $W$, then we have the characteristic decomposition
		\begin{equation*}
			W_{\QQ[\zeta_d]}=\bigoplus_{i=0}^{d-1}(W_{\QQ[\zeta_d]})_{\chi^i}.
		\end{equation*}
Here $\chi$ is the tautological character of $C_d$. Assume moreover $W^{2,0}\subset (W_\CC)_\chi$. Then we can take $V=(W_{\QQ[\zeta_d]})_\chi$ and define a Hermitian form $h$ on $V$ via $h(x,y)=b(x,\overline{y})\in \QQ[\zeta_d]$ for any $x,y\in V$. Then $(V, h)$ together with the Hodge filtration $F^1=W^{2,0}\subset V_\CC=F^0$ is a $\QQ[\zeta_d]$-Hodge structure of ball type. See \cite[Domain of type IV, Section 3]{looijenga2016moduli} for a discussion on examples of this type.
	\end{ex}
	
	\begin{ex}
Let $(W, b)$ be a polarized $\QQ$-Hodge structure of weight one. Here $b\colon W\times W\to \QQ$ is a symplectic bilinear form and naturally extends to $W_\CC$. Suppose the Hodge decomposition is $W_\CC=W^{1,0}\oplus W^{0,1}$. By Hodge-Riemann bilinear relation, $\sqrt{-1} b(x, \overline{x})>0$ for any $x\in W^{1,0}$. Assume there is a $C_d$ action on $W$, then we have
		\begin{equation*}			W_{\QQ[\zeta_d]}=\bigoplus_{i=0}^{d-1}(W_{\QQ[\zeta_d]})_{\chi^i}.
		\end{equation*}
Here $\chi$ is the tautological character of $C_d$. Assume moreover $\dim(W^{1,0}_\chi)=1$. Let $a\in \QQ[\zeta_d], a\notin \RR$ and assume $\sqrt{-1}(a-\overline{a})<0$. Then we can take $V=(W_{\QQ[\zeta_d]})_\chi$ and define a Hermitian form $h$ on $V$ via $h(x,y)=(a-\overline{a})\cdot b(x, \overline{y})$ for any $x, y\in V$. Then $(V, h)$ together with the Hodge filtration $F^1=W^{1,0}_\chi \subset V_\CC=F^0$ is a $\QQ[\zeta_d]$-Hodge structure of ball type. See \cite[Domain of type III, \S3]{looijenga2016moduli} for a discussion on examples of this type.
	\end{ex}

\subsection{Deligne--Mostow Theory}
\label{subsection: review deligne mostow theory}
In this section we review the Deligne--Mostow theory. Let $n$ be a nonnegative integer. Take a tuple $\mu=(\mu_1, \cdots, \mu_{n+3})$ of rational numbers in $(0,1)$ such that $\sum\limits_{i=1}^{n+3}\mu_i\in \ZZ$. Let $d$ be the least common denominator of $\mu_i$ and $a_i=d\mu_i$. Consider $n+3$ points $A=\{x_1, \cdots, x_{n+3}\}$ on $\PP^1$. Let $C_\mu$ be the $d$-fold cyclic cover of $\PP^1$ with branching locus $\sum\limits_{i=1}^{n+3} a_i x_i$. Explicitly, the curve $C_\mu$ is the normalization of the curve 
\begin{equation}
\label{equation: cyclic cover of P^1}
z^d=(x-x_1)^{a_1}\cdots (x-x_{n+3})^{a_{n+3}}.
\end{equation} 
The Hodge numbers of character eigenspaces directly follow from Esnault--Viehweg formula (Proposition \ref{Proposition: Esnault--Viehweg}).
\begin{prop}
\label{prop: signature of DM hermitian space}
There are equalities
\begin{align*}
\dim H^{0,1}_{\chi}(C_\mu)=\dim H^{1,0}_{\overline{\chi}}(C_\mu)=\sum\limits_{i=1}^{n+3}\mu_i-1, \\
\dim H^{0,1}_{\overline{\chi}}(C_\mu)
= \dim H^{1,0}_{\chi}(C_\mu)
= n+2-\sum\limits_{i=1}^{n+3}\mu_i.
\end{align*}
\end{prop}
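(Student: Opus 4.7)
The plan is to derive all four dimensions as a direct unpacking of the Esnault--Viehweg formula from Proposition \ref{Proposition: Esnault--Viehweg}, applied to the degree-$d$ cyclic cover $\pi\colon C_\mu\to \PP^1$ branched along $D = \sum_{j=1}^{n+3} a_j x_j$ with $a_j=d\mu_j$. The geometric inputs on the base $\PP^1$ are elementary: $K_{\PP^1}=\calO(-2)$, the reduced branch locus has degree $n+3$ so that $\Omega^1_{\PP^1}(\log D)\cong \calO(n+1)$, and the line bundle $L$ satisfying $L^d\cong \calO(D)$ exists and equals $\calO(\sum_j \mu_j)$ because $\sum_j \mu_j\in\ZZ$ by hypothesis. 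To evaluate the $\QQ$-twist in Proposition \ref{Proposition: Esnault--Viehweg} explicitly, I would use the identity $\calO(-\sum_j\{ia_j/d\}x_j) \cong L^{-i}\otimes\calO(\sum_j\lfloor ia_j/d\rfloor x_j)$, which converts the fractional-part shorthand into a product of integer line bundles.

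For the character $\chi$ (i.e.\ $i=1$), the condition $0<\mu_j<1$ forces $\lfloor \mu_j\rfloor = 0$, and the formula at $q=0$ collapses to
\[
H^{1,0}_\chi(C_\mu)\cong H^0\bigl(\PP^1,\, \calO(n+1)\otimes L^{-1}\bigr) = H^0\bigl(\PP^1,\, \calO(n+1-\textstyle\sum_j\mu_j)\bigr),
\]
of dimension $n+2-\sum_j\mu_j$. For $\overline{\chi}=\chi^{d-1}$, the identity $\lfloor (d-1)\mu_j\rfloor = a_j-1$ (immediate from $(d-1)\mu_j = a_j-\mu_j$ with $0<\mu_j<1$) turns the correction divisor into one of degree $d\sum_j\mu_j-(n+3)$, and the formula simplifies to
\[
H^{1,0}_{\overline{\chi}}(C_\mu)\cong H^0\bigl(\PP^1,\, \calO(\textstyle\sum_j\mu_j-2)\bigr),
\]
of dimension $\sum_j\mu_j-1$. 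These two computations already give both $H^{1,0}$ equalities claimed in the proposition.

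The two $H^{0,1}$ equalities then follow by complex conjugation of the eigenspace decomposition: $H^{p,q}_{\chi^i}(C_\mu) = \overline{H^{q,p}_{\chi^{-i}}(C_\mu)}$, so that $H^{0,1}_\chi(C_\mu) = \overline{H^{1,0}_{\overline{\chi}}(C_\mu)}$ has dimension $\sum_j\mu_j-1$, and symmetrically $H^{0,1}_{\overline{\chi}}(C_\mu) = \overline{H^{1,0}_\chi(C_\mu)}$ has dimension $n+2-\sum_j\mu_j$. I do not foresee any real obstacle; the argument is elementary bookkeeping on $\PP^1$, and the only mild subtlety is the interplay between the fractional-part notation in Proposition \ref{Proposition: Esnault--Viehweg} and the integer-divisor manipulations, which is fully handled by the two explicit floor identities above.
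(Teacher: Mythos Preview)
Your proposal is correct and matches the paper's approach exactly: the paper simply asserts that these dimensions ``directly follow from Esnault--Viehweg formula (Proposition \ref{Proposition: Esnault--Viehweg})'' without writing out the computation, and you have supplied precisely that computation. One small simplification: rather than passing through the identity $\calO(-\sum_j\{ia_j/d\}x_j)\cong L^{-i}\otimes\calO(\sum_j\lfloor ia_j/d\rfloor x_j)$, you can use the $q=0$ version of the formula already isolated in Proposition~\ref{Proposition: Esnault--Viehweg}, namely $H^{1,0}_{\chi^i}(C_\mu)\cong H^0(\PP^1,K_{\PP^1}\otimes\calO(\sum_j(1-\{ia_j/d\})x_j))$, which for $i=1$ and $i=d-1$ immediately gives degrees $n+1-\sum_j\mu_j$ and $\sum_j\mu_j-2$ respectively; but this is only a cosmetic shortcut over what you wrote.
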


\begin{defn}[Deligne--Mostow Hermitian space]
\label{definition: Deligne--Mostow Hermitian space}
The natural Poincar\'e pairing 
\[
H^{1}_{\overline{\chi}}(C,\QQ[\zeta_d])\times H^{1}_{\chi}(C,\QQ[\zeta_d])\to \QQ[\zeta_d]
\] 
together with natural isomorphism $H^{1}_{\overline{\chi}}(C,\QQ[\zeta_d])\cong \overline{H^{1}_{\chi}(C,\QQ[\zeta_d])}$ induces a skew-Hermitian form on $H^{1}_{\overline{\chi}}(C,\QQ[\zeta_d])$. After multiplying the pairing by $\zeta_d-\overline{\zeta}_d$, it become a Hermitian space. We denote this Hermitian space by $(V_\mu, h_\mu)$.
\end{defn}

Under the tautological embedding of $\QQ[\zeta_d]\subset \CC$, the Hermitian pairing $h_\mu$ is positive definite on $H^{1,0}_{\overline{\chi}}(C)$ and negative definite on $H^{0,1}_{\overline{\chi}}(C)$. In this case, the Esnault-Viehweg formula (Proposition \ref{prop: signature of DM hermitian space}) gives its signature.

\begin{cor}
The Deligne--Mostow Hermitian space $(V_\mu, h_\mu)$ has signature 
\[(\sum\limits_{i=1}^{n+3}\mu_i-1, n+2-\sum\limits_{i=1}^{n+3}\mu_i)\]
under the tautological embedding of $\QQ[\zeta_d]\subset \CC$. 
\end{cor}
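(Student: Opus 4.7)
The plan is to combine the dimension count already furnished by Proposition~\ref{prop: signature of DM hermitian space} with the sign of the pairing on each Hodge component of $V_\mu$, which is what the paragraph immediately preceding the corollary asserts without proof. Concretely, after the tautological embedding we have the Hodge decomposition
\begin{equation*}
V_\mu\otimes_{\QQ[\zeta_d]}\CC \;=\; H^{1,0}_{\overline{\chi}}(C_\mu)\oplus H^{0,1}_{\overline{\chi}}(C_\mu),
\end{equation*}
so it suffices to check that $h_\mu$ is positive definite on the first summand and negative definite on the second, and then read off the dimensions from Proposition~\ref{prop: signature of DM hermitian space}.

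For the sign, I would unpack Definition~\ref{definition: Deligne--Mostow Hermitian space}. The Poincar\'e pairing on $H^1(C_\mu,\CC)$ is $\langle\alpha,\beta\rangle=\int_{C_\mu}\alpha\wedge\beta$; it pairs $H^1_{\overline{\chi}}$ nontrivially only with $H^1_{\chi}$, and complex conjugation sends $H^1_{\overline{\chi}}$ isomorphically onto $H^1_{\chi}$. Thus the construction gives a skew-Hermitian form on $V_\mu$ whose scalar extension to $\CC$ is $(\alpha,\beta)\mapsto \int_{C_\mu}\alpha\wedge\overline{\beta}$. The Hodge--Riemann bilinear relations on the weight-one Hodge structure $H^1(C_\mu)$ state that for nonzero $\alpha\in H^{1,0}(C_\mu)$ one has $i\int_{C_\mu}\alpha\wedge\overline{\alpha}>0$, and the opposite sign on $H^{0,1}$; restricting to the eigenspaces labeled by $\overline{\chi}$ preserves these inequalities.

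Finally, the normalizing factor $\zeta_d-\overline{\zeta_d}=2i\sin(2\pi/d)$ is, under the tautological embedding, a strictly positive multiple of $i$. Hence multiplication by $\zeta_d-\overline{\zeta_d}$ turns the skew-Hermitian form into a genuine Hermitian form while preserving the sign of $i\cdot(\text{skew-Hermitian value})$ on each Hodge piece. This yields positive definiteness on $H^{1,0}_{\overline{\chi}}(C_\mu)$ and negative definiteness on $H^{0,1}_{\overline{\chi}}(C_\mu)$, so by Proposition~\ref{prop: signature of DM hermitian space} the signature of $h_\mu$ is
\begin{equation*}
\bigl(\dim H^{1,0}_{\overline{\chi}}(C_\mu),\; \dim H^{0,1}_{\overline{\chi}}(C_\mu)\bigr)\;=\;\Bigl(\sum_{i=1}^{n+3}\mu_i-1,\;n+2-\sum_{i=1}^{n+3}\mu_i\Bigr),
\end{equation*}
as claimed. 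There is no real obstacle here; the only point one must watch is the compatibility of signs between the Hodge--Riemann convention and the choice of normalization $\zeta_d-\overline{\zeta_d}$, and both are fixed by the tautological embedding.
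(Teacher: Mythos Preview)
Your proposal is correct and follows exactly the approach the paper takes: the paper simply asserts (in the sentence preceding the corollary) that $h_\mu$ is positive definite on $H^{1,0}_{\overline{\chi}}(C)$ and negative definite on $H^{0,1}_{\overline{\chi}}(C)$, then invokes Proposition~\ref{prop: signature of DM hermitian space} for the dimensions. You have supplied the Hodge--Riemann justification and the sign check for $\zeta_d-\overline{\zeta_d}$ that the paper leaves implicit, which is entirely appropriate.
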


In this paper, we mainly use the following two cases.
\begin{prop}
\label{proposition: sum 2 implies ball}
    When $\sum\limits_{k=1}^{n+3} \mu_k=2$, then $(V_\mu, h_\mu)$ has signature $(1, n)$ and 
$H^{1,0}_{\overline{\chi}}(C)\subset H^{1}_{\overline{\chi}}(C)$ form a $\QQ[\zeta_d]$-Hodge structure of ball type.
\end{prop}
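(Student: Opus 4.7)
The statement has two assertions that can be handled separately. The plan is to first observe that the signature $(1,n)$ is immediate from the corollary stated just before: substituting $\sum_{k=1}^{n+3}\mu_k = 2$ into the formula $(\sum\mu_i - 1,\, n+2-\sum\mu_i)$ gives $(1, n)$. So the Hermitian pairing has the required signature.

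Next, I would verify that $F^1 := H^{1,0}_{\overline{\chi}}(C_\mu)$ satisfies the conditions of the definition of Hodge structure of ball type. The condition $\dim_\CC F^1 = 1$ follows directly from Proposition \ref{prop: signature of DM hermitian space}, which gives $\dim H^{1,0}_{\overline{\chi}}(C_\mu) = \sum \mu_i - 1 = 1$ under our hypothesis. Meanwhile $F^0 = V_\mu \otimes_{\QQ[\zeta_d]} \CC = H^1_{\overline{\chi}}(C_\mu)$ by construction, so the filtration $F^1 \subset F^0$ is the Hodge filtration restricted to the $\overline{\chi}$-eigenspace.

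The remaining condition — positivity of $h_\mu$ on $F^1 - \{0\}$ — is actually already noted in the paragraph preceding the corollary: under the tautological embedding $\QQ[\zeta_d] \hookrightarrow \CC$, the Hermitian form $h_\mu$ is positive definite on $H^{1,0}_{\overline{\chi}}(C_\mu)$ and negative definite on $H^{0,1}_{\overline{\chi}}(C_\mu)$. This is exactly the positivity required on $F^1$. Thus all three defining conditions of a $\QQ[\zeta_d]$-Hodge structure of ball type are satisfied, and the proposition follows.

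The main (already resolved) subtlety here lies upstream, in the construction of $h_\mu$ in Definition \ref{definition: Deligne--Mostow Hermitian space}: the Poincar\'e pairing on $H^1(C_\mu)$ is skew-symmetric, and one must multiply the induced skew-Hermitian form on $H^1_{\overline{\chi}}$ by the purely imaginary quantity $\zeta_d - \overline{\zeta_d}$ (whose sign is pinned down by the choice $\zeta_d = e^{2\pi\sqrt{-1}/d}$) so that the Hodge--Riemann bilinear relation for curves translates into positivity on the $(1,0)$-part rather than the $(0,1)$-part. This sign bookkeeping is what guarantees the positivity claimed in the corollary, and hence is the only nontrivial input in the present proof; once taken for granted, everything reduces to specializing the Hodge number formula to $\sum\mu_i = 2$.
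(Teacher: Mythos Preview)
Your proposal is correct and matches the paper's approach: the paper states this proposition without proof, treating it as an immediate consequence of the preceding corollary and Proposition~\ref{prop: signature of DM hermitian space}, which is exactly the argument you spell out. Your additional remark about the sign convention for $\zeta_d - \overline{\zeta_d}$ is accurate and useful, though the paper does not make it explicit at this point.
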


\begin{prop}
\label{Proposition: Hodge numbers for curve C}
Suppose $n=0$ and $\mu_1+\mu_2+\mu_3=1$, then we have
\begin{equation*}
\dim H^1_\chi(C,\CC)=\dim H^{1,0}_\chi(C)=1   
\end{equation*}
\end{prop}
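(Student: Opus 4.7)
The plan is to apply the Esnault--Viehweg formula (Proposition \ref{Proposition: Esnault--Viehweg}) directly with $X=\PP^1$, $\dim X = 1$, $m=3$, $i=1$, so that $\chi^i = \chi$ is the tautological character. Equivalently, this is a direct specialization of Proposition \ref{prop: signature of DM hermitian space} to the case $n=0$, $\sum \mu_i = 1$, giving $\dim H^{0,1}_\chi(C) = \sum\mu_i - 1 = 0$ and $\dim H^{1,0}_\chi(C) = n+2-\sum\mu_i = 1$. I would spell out the Esnault--Viehweg computation for clarity.

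First, observe that the normalized cyclic cover $C$ of $\PP^1$ branched at $3$ points is smooth, so $H^1(C,\CC)$ carries a pure Hodge structure of weight $1$; hence $H^1_\chi(C,\CC) = H^{1,0}_\chi(C) \oplus H^{0,1}_\chi(C)$. The assumption $0<\mu_j<1$ gives $1 \leq a_j \leq d-1$, and $d$ being the least common denominator forces $\gcd(a_1, a_2, a_3, d) = 1$, so Proposition \ref{Proposition: Esnault--Viehweg} applies.

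For $(p,q)=(1,0)$, the formula yields
\[
H^{1,0}_\chi(C) \cong H^0\bigl(\PP^1,\, K_{\PP^1} \otimes \calO\bigl(\textstyle\sum_{j=1}^{3}(1-\{a_j/d\})\,x_j\bigr)\bigr).
\]
Since $\{a_j/d\}=\mu_j$ and $\sum_{j=1}^{3}\mu_j=1$, the twist equals $\calO(-2 + 3 - 1) = \calO$, so this space is $H^0(\PP^1, \calO) \cong \CC$, of dimension $1$. For $(p,q)=(0,1)$, noting that $\Omega^0_{\PP^1}(\log D) = \calO_{\PP^1}$, we get
\[
H^{0,1}_\chi(C) \cong H^1\bigl(\PP^1,\, \calO\bigl(-\textstyle\sum_{j=1}^{3}\mu_j\, x_j\bigr)\bigr) = H^1(\PP^1, \calO(-1)) = 0.
\]
Combining the two yields $\dim H^1_\chi(C,\CC) = \dim H^{1,0}_\chi(C) = 1$, as claimed.

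There is no serious obstacle: the statement is a numerical verification via Esnault--Viehweg. The only point requiring mild care is checking the hypotheses ($1\leq a_j\leq d-1$ and the gcd condition) so that Proposition \ref{Proposition: Esnault--Viehweg} is legitimately invoked; both follow from the standing assumptions on $\mu$.
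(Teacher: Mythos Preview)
Your proof is correct and matches the paper's approach: the proposition is stated without a separate proof because it is an immediate specialization of Proposition~\ref{prop: signature of DM hermitian space} (itself a consequence of Esnault--Viehweg), exactly as you note. Your explicit verification of the line bundle degrees is a fine elaboration of what the paper leaves implicit.
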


For a Deligne--Mostow tuple $\mu=(\mu_1, \cdots, \mu_{n+3})$, we have a moduli space $\calM_\mu$ of $n+3$ points on $\PP^1$ with weight $\mu$, given by GIT quotient
\begin{equation*}
\calM_\mu=\SL(2,\CC)\dbs [(\PP^1)^{n+3}-\text{diagonals},  \Large\boxtimes_{i=1}^{n+3}\calO(d\mu_i)]
\end{equation*}
Assume $\sum\limits_{i=1}^{n+3}\mu_i=2$. Let $\BB_\mu$ be the $n$-dimensional complex hyperbolic ball associated with $(V_\mu, h_\mu)$ in Definition \ref{definition: Deligne--Mostow Hermitian space} and Proposition \ref{proposition: sum 2 implies ball}, and denote by $\Gamma_\mu$ the monodromy group for the variation of Hodge structures of ball type on $(\PP^1)^{n+3}-\text{diagonals}$. 

When some of the weights $\mu_j$ are the same, there are other monodromy groups containing $\Gamma_\mu$ with finite index; see \cite{mostow1986generalized} and \cite{deligne1993commensurabilities}. Denote by $\Sigma$ the full permutation group of indices $\{1, \cdots, n+3\}$ with the same weights. Let $H$ be a subgroup of $\Sigma$ and act on $\calM_\mu$. We denote by $\calM_{\mu,H}\coloneqq H\bs \calM_\mu$. Then the monodromy representation extends to $\pi_1(\calM_{\mu,H})\to \PU(1,n)$. The image is denoted by $\Gamma_{\mu, H}$. It contains $\Gamma_\mu$ as a subgroup whose index divides $|H|$.

Deligne--Mostow (\cite{deligne1986monodromy}, \cite{mostow1988discontinuous}) proved that there are exactly 104 tuples $\mu$ with $n\ge 2$ such that $\Gamma_\mu$ are discrete in $\PU(1,n)$. For these cases, it turns out that in this case $n$ is at most $9$, and $\Gamma_\mu$ is always a lattice (but not necessarily arithmetic). For $94$ of them satisfying the so-call half-integer condition, the period map
\begin{equation*}
\calP\colon \calM_{\mu,\Sigma}\to \Gamma_{\mu, \Sigma}\bs \BB_\mu    
\end{equation*}
is an open embedding. For a later statement of our main classification, we list the 104 discrete cases in Table \ref{table: main table}.

\subsection{Homology with Coefficients in a Local System}
\label{subsection: locally finite homology}
The Deligne--Mostow Hermitian space has another interpretation in terms of cohomology of local systems on punctured sphere. We first recall the definitions of singular homology and locally finite (Borel-Moore) singular homology with coefficients in a local system. Let $\LL$ be a $\QQ[\zeta_d]$-local system on a manifold $X$. Let $C_k(X,\LL)$ be the set of finite formal sums of pairs $(\gamma, e)$, where $\gamma$ is a continuous map $\gamma\colon \Delta^k\to X$, and $e\in\Gamma(\Delta^k, \gamma^*\LL)$. We denote $\gamma\cdot e=(\gamma, e)$ for short. The differential 
\begin{equation*}
\partial\colon C_k(X,\LL)\to C_{k-1}(X,\LL) 
\end{equation*}
can be defined via $\partial (\gamma\cdot e)=\partial\gamma\cdot e$. The homology $H_k(X,\LL)$ is defined as the $k$-th homology of the complex $(C_*(X,\LL),\partial)$. The dual of the complex defines the cohomology $H^k(X, \LL^\vee)$, with $\LL^\vee$ the dual of $\LL$, and has a natural pairing with $H_k(X, \LL)$. 

Let $C_k^{lf}(X,\LL)$ be the set of formal sums of $\gamma\cdot e\in C_k(X,\LL)$ such that for any $x\in X$, there exists an open neighborhood of $x$ in $X$ intersecting with finitely many $\gamma(\Delta^k)$ for members $\gamma$ of the sum. There are also differentials $\partial\colon C_k^{lf}(X,\LL)\to C_{k-1}^{lf}(X,\LL)$. Let $H_k^{lf}(X,\LL)$ be the $k$-th homology of the complex $(C_*^{lf}(X,\LL),\partial)$. This is called locally finite singular homology with coefficients in the local system $\LL$. The dual gives cohomology with compact support $H^k_c(X, \LL^\vee)$.

In this paper we consider the case $X=\PP^1-A$, where $A=\{x_1,\cdots, x_{n+3}\}$ is set of $n+3$ distinct points on $\PP^1$ in the defining equation \eqref{equation: cyclic cover of P^1} for cyclic cover $\pi\colon C\to \PP^1$. The sheaf $\pi_*(\QQ[\zeta_d])$ has the cyclic group action defined by $g\cdot \gamma=(g^{-1})^*(\gamma)$ for any $g\in C_d$ and section $\gamma$. Then it decomposes as direct sum of character eigenspaces. 
\begin{prop}
  The $\overline{\chi}$-eigensheaf $\pi_*(\QQ[\zeta_d])_{\overline{\chi}}$ is a rank-one local system when restricted to $\PP^1-A$. Denote this local system by $\LL_\mu$. The monodromy of $\LL_\mu$ along a simple counterclockwise circle around $x_i$ in $\PP^1$ is multiplication with $e^{2\pi \sqrt{-1}\mu_i}$.
\end{prop}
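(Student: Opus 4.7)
The plan is to reduce the claim to an explicit description of $\pi$ as a finite \'etale Galois cover of degree $d$ over $\PP^1-A$, decompose its direct image by the deck group action, and then compute the local monodromy at each puncture by tracking the phase acquired by a branch of the $d$-th root along a small loop. On $\PP^1-A$ the polynomial $\prod_{j}(x-x_j)^{a_j}$ is nowhere vanishing, so the equation $z^d=\prod_{j}(x-x_j)^{a_j}$ has $d$ distinct smooth solutions fibrewise; hence $\pi$ restricts there to a finite \'etale Galois cover of degree $d$ with Galois group $C_d$, and the direct image along this cover of the constant sheaf $\QQ[\zeta_d]$ is a $\QQ[\zeta_d]$-local system of rank $d$ on $\PP^1-A$, carrying a fibrewise $C_d$-action which realises the regular representation.

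Since $\QQ[\zeta_d]$ contains all $d$-th roots of unity, the regular representation of $C_d$ splits completely into one-dimensional characters, each appearing with multiplicity one. Applied stalkwise, and using that the $C_d$-action commutes with parallel transport, this forces a global decomposition of the local system into a direct sum of $d$ rank-one $\QQ[\zeta_d]$-local subsystems indexed by the characters of $C_d$; the $\overline{\chi}$-summand in this decomposition is the asserted $\LL_\mu$.

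For the monodromy around $x_i$, I would work in a small disc $U$ about $x_i$ disjoint from the remaining punctures, factor $\prod_{j}(x-x_j)^{a_j}=(x-x_i)^{a_i}g(x)$ with $g$ nowhere vanishing on $U$, and use a holomorphic $d$-th root of $g$ to reduce the cover over $U$ to the model $\tilde z^d=(x-x_i)^{a_i}$. Parametrising a counterclockwise loop by $x=x_i+\varepsilon e^{2\pi\sqrt{-1}t}$ for $t\in[0,1]$, the right-hand side acquires the phase $e^{2\pi\sqrt{-1}a_i}$, so any chosen branch of $\tilde z$ (hence of $z$) is transported continuously to $\zeta_d^{a_i}$ times its original value; equivalently, the monodromy of the full rank-$d$ local system on the fibre is realised by the deck transformation $\zeta_d^{a_i}\in C_d$. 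Restricted to the $\overline{\chi}$-eigensummand $\LL_\mu$, this deck transformation acts by the scalar $e^{2\pi\sqrt{-1}\mu_i}=e^{2\pi\sqrt{-1}a_i/d}$, as claimed.

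The main subtlety, and the step I would double-check most carefully, is the bookkeeping of sign conventions---which direction $C_d$ acts on $z$, whether the induced action on sections is covariant or contravariant, and the prescription $g\cdot\gamma=(g^{-1})^*\gamma$ from \S\ref{subsection: cyclic cover and EV formula}. These must be chosen compatibly so that the $\overline{\chi}$-summand (rather than the $\chi$-summand) carries the positive-phase monodromy $e^{2\pi\sqrt{-1}\mu_i}$ rather than its complex conjugate; with the conventions fixed, the rest is a routine calculation.
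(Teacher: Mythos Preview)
Your argument is correct in substance and more self-contained than the paper's, which simply cites \cite[\S12.9]{deligne1986monodromy} and remarks on a sign convention. You are spelling out precisely the standard computation that reference contains, so the approaches agree.

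One point you should make explicit: the statement as printed refers to $R^1\pi_*(\QQ[\zeta_d])_{\overline{\chi}}$, but since $\pi\colon C\to\PP^1$ is finite \'etale over $\PP^1-A$, the higher direct images $R^i\pi_*$ vanish there for $i\geq 1$. The object you are actually analysing---and the one the paper uses everywhere afterwards---is the $\overline{\chi}$-summand of $\pi_*\QQ[\zeta_d]=R^0\pi_*\QQ[\zeta_d]$. You silently make this correction; it would be better to flag it.

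On the sign bookkeeping you rightly worry about: with the paper's convention $\zeta_d\cdot z=\zeta_d^{-1}z$ (see \S\ref{section: monodromy of fibrations}) and $g\cdot\gamma=(g^{-1})^*\gamma$, the monodromy sending a branch $z$ to $\zeta_d^{a_i}z$ corresponds to the group element $\zeta_d^{-a_i}\in C_d$, which then acts on the $\overline{\chi}$-eigenspace by $\overline{\chi}(\zeta_d^{-a_i})=\zeta_d^{a_i}=e^{2\pi\sqrt{-1}\mu_i}$, as required. So your caveat is well placed, but the computation does close up with the stated answer once the conventions are pinned down.
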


\begin{proof}
    See \cite[\S12.9]{deligne1986monodromy} and notice that our notation of cyclic group operation on curve $C$ differs from that used in \cite{deligne1986monodromy} by an inverse.
\end{proof}
In the following discussion, we omit the subindex $\mu$ and denote the local system by $\LL$. The cohomology of local system $\LL$ is related to the character decompositions of $H^1(C)$ in \cite[Proposition 2.6.1, Corollary 2.21, \S 2.23]{deligne1986monodromy}.

\begin{prop}[Deligne--Mostow]
\label{proposition: deligne mostow}
If $ \mu_k\notin \ZZ$ for all $1\leq k\leq n+3$, then the natural map 
\[
H^1_c(\PP^1-A, \LL) \to H^1(\PP^1-A, \LL)
\]
is an isomorphism and both are isomorphic to $H^1_{\overline{\chi}}(C,\QQ[\zeta_d])$ by Leray-Hirsch theorem. The natural map
\begin{equation}
\label{equation: iso between homology and locally finite homology}
    H_1(\PP-A, \LL)\to H_1^{lf}(\PP-A, \LL)
\end{equation}
is also an isomorphism.
\end{prop}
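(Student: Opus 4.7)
The engine is a local vanishing at each puncture. Because $\mu_k \notin \ZZ$, the monodromy $T_k$ of $\LL$ around $x_k$ acts on the rank-one fibre by the scalar $e^{2\pi\sqrt{-1}\mu_k} \neq 1$, so $T_k - \id$ is invertible. On a small punctured disk $D_k^\ast$ around $x_k$ this yields
\[
H^0(D_k^\ast, \LL) = \ker(T_k - \id) = 0, \qquad H^1(D_k^\ast, \LL) = \mathrm{coker}(T_k - \id) = 0.
\]
All three asserted isomorphisms will be traced back to this vanishing.

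Writing $U = \PP^1 - A$, $j\colon U \hookrightarrow \PP^1$, and $i\colon A \hookrightarrow \PP^1$, the distinguished triangle $j_!\LL \to Rj_*\LL \to i_* i^* Rj_*\LL \xrightarrow{+1}$ together with the identification $(i^* Rj_*\LL)_{x_k} \cong H^*(D_k^\ast, \LL)$ upgrades the local vanishing to a quasi-isomorphism $j_!\LL \simeq Rj_*\LL$. Taking cohomology on $\PP^1$ gives the first isomorphism $H^1_c(U, \LL) \xrightarrow{\cong} H^1(U, \LL)$. For the comparison with $H^1_{\overline{\chi}}(C, \QQ[\zeta_d])$, finiteness of $\pi$ yields $H^*_{\overline{\chi}}(C, \QQ[\zeta_d]) \cong H^*(\PP^1, (\pi_*\QQ[\zeta_d])_{\overline{\chi}})$, and over $U$ the $\overline{\chi}$-isotypic summand restricts to $\LL$. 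A local computation at each $x_k$ shows that the inertia subgroup at any preimage is the unique cyclic subgroup of $C_d$ of order $d/\gcd(d\mu_k, d)$, and $\overline{\chi}$ restricted to this inertia is nontrivial exactly when $\mu_k \notin \ZZ$; by Frobenius reciprocity this nontriviality forces the stalk $((\pi_*\QQ[\zeta_d])_{\overline{\chi}})_{x_k}$ to vanish. Hence $(\pi_*\QQ[\zeta_d])_{\overline{\chi}} \cong j_!\LL$ on $\PP^1$, and the identification $H^1_{\overline{\chi}}(C, \QQ[\zeta_d]) \cong H^1_c(U, \LL) \cong H^1(U, \LL)$ follows.

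For the second claim, I would invoke Poincaré--Lefschetz duality on the oriented real $2$-manifold $U$ with coefficients in $\LL$, which produces compatible isomorphisms
\[
H_k(U, \LL) \cong H^{2-k}_c(U, \LL), \qquad H_k^{lf}(U, \LL) \cong H^{2-k}(U, \LL),
\]
intertwining the natural map $H_k \to H_k^{lf}$ with the forgetful map $H^{2-k}_c \to H^{2-k}$. The case $k = 1$ then follows directly from the first claim. The hardest step I anticipate is the local analysis of the $C_d$-action on the stalks of $\pi_*\QQ[\zeta_d]$ at ramification points — pinning down the inertia subgroup and its pairing with $\overline{\chi}$ — but once that bookkeeping is in place, all three isomorphisms drop out uniformly from the local vanishing principle set up at the outset.
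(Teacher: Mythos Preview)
Your argument is correct. The paper does not supply its own proof of this proposition; it is quoted from Deligne--Mostow \cite[Proposition~2.6.1, Corollary~2.21, \S2.23]{deligne1986monodromy}, so there is nothing line-by-line to match against. What the paper \emph{does} record, in Proposition~\ref{proposition: geometric description of isomorphism of homologies}, is the key step of Deligne--Mostow's original argument: an explicit inverse to the map $H_1^{lf}(\PP^1-A,\LL)\to H_1(\PP^1-A,\LL)$, obtained by replacing an open arc $\gamma\cdot e$ from $x_i$ to $x_j$ with a compactly supported cycle $\frac{\alpha_i}{\alpha_i-1}\theta_1\cdot e_1+\widetilde{\gamma}\cdot e+\frac{1}{1-\alpha_j}\theta_2\cdot e_2$. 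The hypothesis $\mu_k\notin\ZZ$ enters precisely as the non-vanishing of the denominators $\alpha_k-1$.

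Your route is genuinely different and more conceptual: you package the same local input (invertibility of $T_k-\id$) as a quasi-isomorphism $j_!\LL\simeq Rj_*\LL$, deduce $H^1_c\cong H^1$ directly, and then transport to homology via Poincar\'e--Lefschetz duality. This is cleaner and generalizes immediately to higher-rank local systems with no invariants or coinvariants at the punctures. The trade-off is that the paper actually \emph{needs} the explicit chain-level inverse later, in the computation of the intersection form in Proposition~\ref{prop: intersection form}; your abstract isomorphism does not by itself hand you the coefficients $\frac{\alpha_i}{\alpha_i-1}$ that feed into those calculations. So both approaches are valid, but for the downstream applications in \S\ref{section: degeneration method} one still has to unwind to the explicit formula.

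One minor remark: the statement says the identification with $H^1_{\overline{\chi}}(C,\QQ[\zeta_d])$ is ``by Leray--Hirsch''. Your argument instead identifies $(\pi_*\QQ[\zeta_d])_{\overline{\chi}}$ with $j_!\LL$ via a stalk computation and then takes cohomology. That is fine---for a finite map the Leray spectral sequence degenerates to exactly this---so there is no discrepancy, just different packaging.
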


Furthermore, this is also another description of Deligne--Mostow Hermitian spaces $V_\mu$. There is a natural bilinear form 
\begin{equation*}
H_1(\PP^1-A, \LL)\times H_1^{lf}(\PP^1-A, \LL^{\vee})\to \QQ[\zeta_d]
\end{equation*}
given by Poincar\'e duality or more explicitly, intersection of cycles. The isomorphism $H_1(\PP^1-A, \LL)\to H_1^{lf}(\PP^1-A, \LL)$ in Proposition \ref{proposition: deligne mostow} together with $\LL^\vee\cong \overline{\LL}$ induces a skew-Hermitian form on $H_1^{lf}(\PP^1-A, \LL)$:
\begin{equation}
\label{equation: paring for locally finite cohomology}
(\cdot, \cdot)\colon H_1^{lf}(\PP^1-A, \LL)\times H_1^{lf}(\PP^1-A, \LL) \to \QQ[\zeta_d].
\end{equation} 

Thus, the homology-cohomology pairing and the Poincar\'e pairing yield the isomorphisms
\[
H^1(\PP^1-A, \LL)\cong (H_1(\PP^1-A, \LL^\vee))^\vee \cong H_1^{lf}(\PP^1-A, \LL).
\]

So there is the following proposition.
\begin{prop}
\label{prop: iso from C to P^1}
After multiplying by $\zeta_{d}-\overline{\zeta}_{d}$, the pairing \eqref{equation: paring for locally finite cohomology} gives a Hermitian isometry
\[
H_1^{lf}(\PP^1-A, \LL)\cong H^1_{\overline{\chi}}(C)\cong (V_\mu, h_\mu)
\]
\end{prop}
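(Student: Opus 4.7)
The plan is to combine the vector-space isomorphisms already supplied by Proposition \ref{proposition: deligne mostow} with a compatibility check for the two pairings. First, Poincar\'e duality with local coefficients on the oriented open surface $\PP^1-A$ yields a canonical isomorphism
\[
H_1^{lf}(\PP^1-A,\LL)\cong H^1(\PP^1-A,\LL),
\]
which, composed with the Leray--Hirsch identification in Proposition \ref{proposition: deligne mostow}, gives the underlying vector-space isomorphism $H_1^{lf}(\PP^1-A,\LL)\cong H^1_{\overline{\chi}}(C,\QQ[\zeta_d])$. Here the latter identification comes from restricting $\pi_*\QQ[\zeta_d]$ on $\PP^1-A$ to its $\overline{\chi}$-eigensheaf, which is exactly $\LL$, and noting that the complement $C\setminus\pi^{-1}(\PP^1-A)$ is a finite set of orbifold points so that cohomology in the non-trivial characters is unaffected.

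Next I would verify that the two skew-Hermitian pairings match. On the downstairs side, the pairing \eqref{equation: paring for locally finite cohomology} is built from the intersection pairing $H_1(\PP^1-A,\LL)\times H_1^{lf}(\PP^1-A,\LL^\vee)\to\QQ[\zeta_d]$ together with the canonical isomorphism $\LL^\vee\cong \overline{\LL}$; geometrically, one intersects two transverse real $1$-cycles and contracts their local-system labels via $\LL\otimes\overline{\LL}\to \underline{\QQ[\zeta_d]}$. On the upstairs side, the skew-Hermitian form used in Definition \ref{definition: Deligne--Mostow Hermitian space} is built from the Poincar\'e pairing on $H^1(C)$, restricted to $H^1_{\overline{\chi}}(C)\times H^1_\chi(C)$, together with the conjugation isomorphism $H^1_\chi(C)\cong \overline{H^1_{\overline{\chi}}(C)}$. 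Lifting cycles from $\PP^1-A$ to $\pi^{-1}(\PP^1-A)\subset C$ and projecting onto character eigenspaces converts the downstairs construction into the upstairs one, because the Poincar\'e pairing on $C$ is $C_d$-equivariant and distinct eigenspaces are orthogonal. Multiplying both pairings by $\zeta_d-\overline{\zeta_d}$ then simultaneously converts them into Hermitian forms, so the vector-space isomorphism automatically becomes an isometry onto $(V_\mu,h_\mu)$.

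The main obstacle is the bookkeeping of scalars under these identifications: one must confirm that pushforward along the degree-$d$ cyclic cover, followed by projection onto a single character eigenspace, does not introduce an unwanted factor of $d$ or a stray root of unity. This amounts to the standard compatibility of Poincar\'e duality with finite \'etale pushforward combined with the orthogonality of distinct character eigenspaces under the equivariant pairing on $C$. Once normalizations are fixed, the two skew-Hermitian forms agree on the nose, and multiplication by $\zeta_d-\overline{\zeta_d}$ produces the asserted Hermitian isometry.
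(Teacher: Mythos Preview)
Your proposal is correct and follows essentially the same route as the paper, which in fact offers no proof beyond the single sentence preceding the proposition: the isomorphism $H^1(\PP^1-A,\LL)\cong \big(H_1(\PP^1-A,\LL^\vee)\big)^\vee\cong H_1^{lf}(\PP^1-A,\LL)$ via homology--cohomology and Poincar\'e duality, combined with the Leray--Hirsch identification of Proposition~\ref{proposition: deligne mostow}. Your additional remarks on pairing compatibility and normalization are more than the paper provides and correctly flag the only subtle point.
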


In \S \ref{section: degeneration method}, we need a basis for $H_1^{lf}(\PP^1-A, \LL)$ for explicit calculation of $(V_\mu, h_\mu)$. Recall the construction in {\cite[Proposition 2.5.1]{deligne1986monodromy}}. Take two points $x_i,x_j\in A$ and consider a path $\gamma\colon [0,1]\to \PP^1$ such that $\gamma(0)=x_i, \gamma(1)=x_j$ and $\gamma(0,1)\subset \PP^1-A$. Take a section $e\in \Gamma((0,1), \gamma^* \LL)$. The open interval $(0,1)$ can be written as a countable sum of closed intervals $(0,1)=\sum\limits_{i=1}^{\infty} I_i$.  Then 
\begin{equation*}
\gamma\cdot e=\sum\limits_{i=1}^{\infty} \gamma|_{I_i}\cdot e|_{I_i} 
\end{equation*}
is a locally finite formal sum of elements in $C_1(\PP^1-A, \LL)$, and this defines an element in $H_1^{lf}(\PP^1-A, \LL)$. 

\begin{prop}[{\cite[Proposition 2.5.1]{deligne1986monodromy}}]
\label{prop: basis for lf homology}
Consider $n+1$ paths $\gamma_1, \cdots, \gamma_{n+1}$ such that $\gamma_i$ connects $x_i$ to $x_{i+1}$. Take a nonzero section $e_i\in \Gamma((0,1), \gamma_i^* \LL)$. Then the elements $\gamma_i\cdot e_i$ with $1\le i\le n+1$ form a basis of $H_1^{lf}(\PP^1-A, \LL)$.
\end{prop}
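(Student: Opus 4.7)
The plan is to verify that the proposed $n+1$ elements are linearly independent in a space whose dimension equals $n+1$. For the dimension count, combine Proposition \ref{prop: iso from C to P^1} with Proposition \ref{prop: signature of DM hermitian space}: since each $\mu_k \in (0,1)$ is not an integer, the hypotheses of Proposition \ref{proposition: deligne mostow} hold, so
\[
H_1^{lf}(\PP^1 - A, \LL) \cong H^1_{\overline{\chi}}(C, \QQ[\zeta_d]),
\]
whose $\QQ[\zeta_d]$-dimension is $\dim H^{1,0}_{\overline{\chi}}(C) + \dim H^{0,1}_{\overline{\chi}}(C) = (n+2 - \sum \mu_i) + (\sum \mu_i - 1) = n+1$.

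It therefore suffices to show that $\gamma_1 \cdot e_1, \ldots, \gamma_{n+1} \cdot e_{n+1}$ are linearly independent, and the natural strategy is to exploit the nondegenerate Hermitian pairing on $H_1^{lf}(\PP^1 - A, \LL)$ from \eqref{equation: paring for locally finite cohomology} (Proposition \ref{prop: iso from C to P^1}). I would choose the representatives so that $\gamma_1, \ldots, \gamma_{n+1}$ are pairwise disjoint in $\PP^1 - A$, sharing only the consecutive endpoints $x_2, \ldots, x_{n+1}$ in $A$. The Gram matrix $G_{ij} := (\gamma_i \cdot e_i,\, \gamma_j \cdot e_j)$ then has a tridiagonal structure: for $|i - j| \ge 2$ the paths do not meet in $\PP^1-A$, so $G_{ij} = 0$; for $j = i+1$ the two chains share only the puncture $x_{i+1}$, so the pairing localizes there; and $G_{ii}$ is a sum of local contributions at $x_i$ and $x_{i+1}$. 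For $j = i+1$, perturbing $\gamma_{i+1}$ slightly off $x_{i+1}$ along a small arc and tracking the monodromy of $\LL$ (multiplication by $\zeta_d^{a_{i+1}}$) produces a contribution proportional to $\zeta_d^{a_{i+1}} - 1$, which is nonzero since $1 \le a_{i+1} \le d-1$. Since a tridiagonal matrix with nonzero super- and sub-diagonal entries is nondegenerate, this yields linear independence.

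The main technical obstacle is the careful computation of the pairing at a shared endpoint $x_{i+1}$. Because $\gamma_i \cdot e_i$ and $\gamma_{i+1} \cdot e_{i+1}$ are locally finite chains with endpoints lying in the punctures $A$, neither is a compactly supported cycle and the intersection pairing is not defined naively; one must perturb one chain through a small arc avoiding $x_{i+1}$, then use the local monodromy of $\LL$ and the compatibility of the flat sections $e_i, e_{i+1}$ with that monodromy to identify the resulting contribution with the explicit scalar $\zeta_d^{a_{i+1}} - 1$. Once this local model is set up, the tridiagonal structure and the final nondegeneracy conclusion are routine, and the result follows by matching $n+1$ linearly independent vectors against the computed dimension.
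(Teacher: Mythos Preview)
The paper does not give its own proof of this statement; it simply cites \cite[Proposition 2.5.1]{deligne1986monodromy}. Your overall strategy---match the dimension count against linear independence established via the Gram matrix of the intersection pairing---is sound in outline, and the dimension computation is correct.

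The final step, however, has a genuine gap: the assertion that a tridiagonal matrix with nonzero super- and sub-diagonal entries is nondegenerate is \emph{false}. For example,
\[
\begin{pmatrix} 1 & 1 & 0 \\ 1 & 2 & 1 \\ 0 & 1 & 1 \end{pmatrix}
\]
is real symmetric (hence Hermitian) with nonzero off-diagonals but determinant zero. Nonvanishing off-diagonal entries force only rank $\geq n$ (columns $2,\dots,n+1$ exhibit a staircase of leading nonzero entries in rows $1,\dots,n$), not rank $n+1$. To repair the argument you must actually compute $\det G$: with the explicit local values $G_{ii}=\dfrac{1-\alpha_i\alpha_{i+1}}{(1-\alpha_i)(1-\alpha_{i+1})}$ and $G_{i,i+1}=\dfrac{1}{\alpha_{i+1}-1}$ (where $\alpha_k=e^{2\pi\sqrt{-1}\mu_k}$), the three-term recursion for tridiagonal determinants yields $\det G$ as a nonzero multiple of $\prod_k(1-\alpha_k)^{-1}$, which is indeed nonzero since each $\mu_k\notin\ZZ$. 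This computation is essentially what appears later as Proposition~\ref{proposition: determinant}; carrying it out directly here (rather than invoking it, which would be circular) would complete your proof. Alternatively, one can bypass the pairing entirely and compute $H_1^{lf}$ cellularly on the tree $\gamma_1\cup\cdots\cup\gamma_{n+2}$ with coefficients in $\LL$, which is closer in spirit to Deligne--Mostow's original argument.
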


\section{Cyclic Covers of $\PP^1\times \PP^1$}
\label{section: cyclic cover of P1^2}
This section discusses the key geometric construction of this paper, the $d$-fold cyclic covers of $\PP^1\times \PP^1$. Many of these surfaces have previously been studied by Moonen \cite{moonen2018deligne} from a different perspective. One of our focus is on the two fibrations of such surfaces.

\subsection{Numerical Conditions}
Using the notation in \S\ref{subsection: cyclic cover and EV formula} for $X=\PP^1\times \PP^1$, surface $S=Z$ and applying the Esnault--Viehweg formula in this case, we obtain the following result for Hodge numbers.
\begin{prop}
\label{proposition: Hodge numbers for surface S}
Suppose the line bundles $L_j$ and $a_j$ satisfies 
\begin{equation}
\label{equation: sum of Lj}
\sum_{j=1}^m L_j=\calO(3,3)
\end{equation}
and
\begin{equation}
\label{equation: sum of ajdLj}
\sum_{j=1}^m {a_j\over d}L_j=\calO(1,1)
\end{equation}
in $\Pic(\PP^1\times \PP^1)$, then we have 
\begin{equation}
\dim H^{2,0}_\chi(S)=1,  \dim H^{0,2}_\chi(S)=0.  
\end{equation}
	\end{prop}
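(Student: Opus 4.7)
The plan is to apply the Esnault--Viehweg formula (Proposition~\ref{Proposition: Esnault--Viehweg}) to $X = \PP^1\times\PP^1$ with $n=2$ and character $i=1$, reducing both Hodge numbers to cohomology of explicit line bundles on $\PP^1\times\PP^1$, which can be read off from their bi-degrees. The two numerical hypotheses \eqref{equation: sum of Lj} and \eqref{equation: sum of ajdLj} are calibrated exactly so that the resulting line bundles are $\calO$ and $\calO(-1,-1)$, respectively.

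For $H^{2,0}_\chi(S)$, I would take $q=0$, which gives
\[
H^{2,0}_\chi(S) \cong H^0\Bigl(X,\, K_X \otimes \calO\bigl(\sum_{j=1}^m (1-\{a_j/d\}) D_j\bigr)\Bigr).
\]
Under the standing assumption $1\le a_j \le d-1$ one has $\{a_j/d\} = a_j/d$, and the $\QQ$-divisor $\sum(1-a_j/d) D_j$ has integral Picard class
\[
\sum_{j=1}^m L_j - \sum_{j=1}^m \tfrac{a_j}{d}\, L_j = \calO(3,3) - \calO(1,1) = \calO(2,2),
\]
using hypotheses \eqref{equation: sum of Lj} and \eqref{equation: sum of ajdLj}. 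Tensoring with $K_X = \calO(-2,-2)$ yields the trivial line bundle, so $\dim H^{2,0}_\chi(S) = \dim H^0(\PP^1\times\PP^1, \calO) = 1$. For $H^{0,2}_\chi(S)$, I would take $q=2$; since $\Omega_X^0(\log D) = \calO_X$, the formula produces
\[
H^{0,2}_\chi(S) \cong H^2\Bigl(X,\, \calO\bigl(-\sum_{j=1}^m \{a_j/d\} D_j\bigr)\Bigr),
\]
whose Picard class is $-\sum (a_j/d) L_j = \calO(-1,-1)$ by \eqref{equation: sum of ajdLj}. Since neither index is $\le -2$, $H^2(\PP^1\times\PP^1, \calO(-1,-1)) = 0$, so $\dim H^{0,2}_\chi(S) = 0$.

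The argument is essentially bookkeeping once Esnault--Viehweg is invoked, and I do not anticipate any real obstacle. The only mildly delicate point is reading the $\QQ$-divisors $\sum \{a_j/d\}D_j$ and $\sum (1-\{a_j/d\})D_j$ as honest integral Picard classes; this is permitted because hypothesis \eqref{equation: sum of ajdLj} forces the $\QQ$-combination of the fractional parts to collapse to a well-defined element of $\Pic(X)$, at which point the bi-degree computation is immediate.
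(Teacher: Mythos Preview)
Your proof is correct and the first half (for $H^{2,0}_\chi$) is identical to the paper's. For $H^{0,2}_\chi$ you take a slightly different route: you apply Esnault--Viehweg directly with $i=1$, $q=2$ and land on $H^2(\PP^1\times\PP^1,\calO(-1,-1))$, whereas the paper instead applies the $q=0$ case with the conjugate character $i=d-1$ to compute $H^{2,0}_{\overline{\chi}}(S)\cong H^0(\PP^1\times\PP^1,\calO(-1,-1))=0$ and then invokes $H^{0,2}_\chi(S)\cong\overline{H^{2,0}_{\overline{\chi}}(S)}$. The two computations are Serre-dual to each other, so neither is deeper; the paper's version has the small advantage of only needing the simpler $q=0$ instance of Esnault--Viehweg, while yours avoids the extra step of passing through the conjugate character and Hodge symmetry.
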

	\begin{proof}
Similarly as the case of curves, we apply Esnault--Viehweg formula to $X=\PP^1\times \PP^1$. When $i=1$ and $q=0$, we have 
\begin{equation*}
H^{2,0}_\chi (S)\cong H^0(\PP^1\times \PP^1, K_{\PP^1\times \PP^1}\otimes \calO(\sum_{j=1}^m D_j-\sum_{j=1}^{m} {a_j\over d}D_j)) \cong H^0(\PP^1\times \PP^1, \calO). 
\end{equation*}
So $\dim H^{2,0}_\chi (S)=1$. 
		
Choose $i=d-1$ and $q=0$ in Esnault--Viehweg formula, we have 
\begin{equation*}
H^{2,0}_{\overline{\chi}} (S)\cong H^{0}(\PP^1\times \PP^1, K_{\PP^1\times \PP^1}\otimes \calO(\sum_{j=1}^m D_j-\sum_{j=1}^m(1-{a_j\over d})D_j))\cong H^0(\PP^1\times\PP^1, \calO(-1,-1))=0.
\end{equation*}
So $H^{0,2}_\chi(S)\cong  \overline{H^{2,0}_{\overline{\chi}} (S)}=0$. 
\end{proof}

\begin{rmk}
Conditions \eqref{equation: sum of Lj} and \eqref{equation: sum of ajdLj} are sufficient but not necessary condition for such Hodge numbers.
\end{rmk}

More explicit classification of such numerical conditions is summarized in Proposition \ref{proposition: types}.
\begin{prop}
\label{proposition: types}
Up to permutations, conditions \eqref{equation: sum of Lj} and \eqref{equation: sum of ajdLj} are equivalent to the following cases:
	\begin{enumerate}
		\item $L_1=(3,3), a_1=1$;
		\item $L_1=(3,2), L_2=(0,1), d=3, a_1=a_2=1$;
		\item $L_1=(3,1), L_2=L_3=(0,1), {a_1\over d}={1\over 3}, {a_2\over d}+{a_3\over d}={2\over 3}$; 
		\item $L_1=(2,2), L_2=(1,1), 2a_1+a_2=d$;
		\item $L_1=(2,2), L_2=(1,0), L_3=(0,1), 2a_1+a_2=d, a_2=a_3$;
		\item $L_1=(2,1), L_2=(1,2), d=3, a_1=a_2=1$;
		\item $L_1=(2,1), L_2=(1,1), L_3=(0,1), 2a_1+a_2=d, a_1=a_3$;
		\item $L_1=(2,1), L_2=(1,0), L_3=(0,1), L_4=(0,1), 2a_1+a_2=a_1+a_3+a_4=d$;
		\item $L_1=L_2=L_3=(1,1), a_1+a_2+a_3=d$;
		\item $L_1=L_2=(1,1), L_3=(1,0), L_4=(0,1), a_1+a_2+a_3=d, a_3=a_4$;
		\item $L_1=(1,1), L_2=L_3=(1,0), L_4=L_5=(0,1), a_1+a_2+a_3=a_1+a_4+a_5=d$;
		\item $L_1=L_2=L_3=(1,0), L_4=L_5=L_6=(0,1), a_1+a_2+a_3=a_4+a_5+a_6=d$.
	\end{enumerate}
 \end{prop}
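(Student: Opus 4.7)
The plan is to reduce this to a finite combinatorial enumeration. Writing each $L_j$ as $\calO(b_j, c_j)$ for non-negative integers with $(b_j, c_j) \neq (0,0)$ (so that $L_j$ supports an effective divisor), and setting $\alpha_j = a_j/d \in (0,1) \cap \QQ$, the two bundle-valued conditions \eqref{equation: sum of Lj} and \eqref{equation: sum of ajdLj} become the four scalar equations
\begin{equation*}
\sum_{j=1}^{m} b_j = 3,\quad \sum_{j=1}^{m} c_j = 3,\quad \sum_{j=1}^{m} \alpha_j b_j = 1,\quad \sum_{j=1}^{m} \alpha_j c_j = 1.
\end{equation*}

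First I would enumerate the multisets $\{(b_j, c_j)\}$ summing to $(3,3)$. Restricted to indices with $b_j \geq 1$, the sequence $\{b_j\}$ is a partition of $3$, hence one of $(3)$, $(2,1)$, $(1,1,1)$, and similarly for $\{c_j\}$. This yields a $3 \times 3$ grid of horizontal/vertical partition types, and for each cell I would enumerate the ways that the non-zero horizontal and vertical parts can be carried by common indices (producing mixed bidegrees with both $b_j, c_j \geq 1$) versus distinct indices (producing pure horizontal $(b,0)$ or pure vertical $(0,c)$ bidegrees). This produces a finite list of candidate multisets of $L_j$'s.

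Next, for each candidate multiset I would check solvability of the weight system with $\alpha_j \in (0,1) \cap \QQ$ and record the resulting constraints. Pure bidegrees contribute to only one of the two weight equations and so decouple; mixed bidegrees couple the two equations. In several patterns the coupling forces rigid identifications, for example equal weights $a_1 = a_3$ in case (7), or the total specialization $d = 3$, $a_1 = a_2 = 1$ in cases (2) and (6), where a bidegree $(3,c)$ or $(2,1)+(1,2)$ saturates one coordinate and leaves only one admissible value of $\alpha_j$. After normalizing by $\gcd(d, a_1, \ldots, a_m) = 1$ and the bounds $1 \leq a_j \leq d-1$, each surviving candidate can be matched against exactly one of the twelve items listed.

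The main difficulty is not conceptual but book-keeping: one must confirm that no admissible multiset is overlooked and that no two listed cases coincide after re-indexing. This reduces to working through the nine cells of the partition grid and, within each cell, enumerating the ways to amalgamate horizontal and vertical parts into single indices $j$. The count of surviving configurations then matches the twelve cases in the statement.
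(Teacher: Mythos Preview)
The paper states this proposition without proof, treating it as a routine enumeration, so your outline is essentially what anyone would do and is the right approach. There is, however, one genuine omission in your argument.

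You allow pure bidegrees $(b,0)$ and $(0,c)$ with arbitrary $b,c\ge 1$, but the setup in \S\ref{subsection: cyclic cover and EV formula} requires each $D_j$ to be \emph{irreducible}. On $\PP^1\times\PP^1$ every divisor in $|\calO(b,0)|$ with $b\ge 2$ (or $|\calO(0,c)|$ with $c\ge 2$) is a union of fibers and hence reducible, so the only admissible pure bidegrees are $(1,0)$ and $(0,1)$. Without invoking this, your enumeration does not terminate at twelve cases: for instance $L_1=(3,0)$, $L_2=(0,3)$ with $\alpha_1=\alpha_2=\tfrac13$, or $L_1=(3,1)$, $L_2=(0,2)$ with $\alpha_1=\alpha_2=\tfrac13$, both satisfy your four scalar equations but are absent from the list precisely because they force reducible components. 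Once you impose this constraint, your $3\times 3$ partition grid together with the amalgamation step does yield exactly the twelve configurations, and the weight analysis you sketch is correct (e.g.\ the rigid $d=3$ in cases (2) and (6), and $a_1=a_3$ in case (7)).
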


A direct corollary of Proposition \ref{proposition: Hodge numbers for surface S} is that 
	\begin{cor}
		When conditions \eqref{equation: sum of Lj} and \eqref{equation: sum of ajdLj} are satisfied, the eigenspace $H^2_\chi(S,\QQ[\zeta_d])$ has a $\QQ[\zeta_d]$-Hodge structure of ball type.
	\end{cor}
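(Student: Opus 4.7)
The plan is to equip $V := H^{2}_{\chi}(S,\QQ[\zeta_d])$ with the Hermitian form induced by the intersection pairing $b$ on $H^{2}(S,\QQ)$, take $F^{1} := H^{2,0}_{\chi}(S)$ inside $V_{\CC}$, and then verify the ball-type axioms using Proposition \ref{proposition: Hodge numbers for surface S} together with the classical Hodge--Riemann bilinear relations on a K\"ahler surface. First I would extend $b$ to $H^{2}(S,\QQ[\zeta_d])$ $\QQ[\zeta_d]$-bilinearly; since $C_{d}$ acts on $S$ by automorphisms it preserves $b$, so a character argument forces $b(H^{2}_{\chi^{i}},H^{2}_{\chi^{j}})=0$ unless $\chi^{i}\chi^{j}=1$. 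In particular $b$ gives a perfect pairing $V\times H^{2}_{\bar\chi} \to \QQ[\zeta_d]$; composing with the Galois involution $\sigma\colon V\to H^{2}_{\bar\chi}$ coming from $\zeta_d\mapsto \zeta_d^{-1}$ on coefficients yields a sesquilinear form $h(x,y) := b(x,\sigma y)$ on $V$, which is Hermitian by the symmetry of $b$.

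Under the tautological embedding $\QQ[\zeta_{d}] \hookrightarrow \CC$, the involution $\sigma$ becomes ordinary complex conjugation on $V_{\CC} = H^{2}_{\chi}(S,\CC)$, so $h(x,x) = b(x,\bar x)$. Proposition \ref{proposition: Hodge numbers for surface S} gives $\dim F^{1}=1$ and $H^{0,2}_{\chi}=0$; the positivity $h(x,x)>0$ on $F^{1}-\{0\}$ is then the statement $\int_{S}\alpha\wedge\bar\alpha>0$ for a nonzero holomorphic two-form $\alpha$. To conclude that the signature of $h$ is $(1,n-1)$ with $n=\dim_{\QQ[\zeta_d]}V$, it remains to check that $h$ is negative definite on $H^{1,1}_{\chi}$. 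Averaging any K\"ahler class on $S$ over $C_{d}$ yields a $C_{d}$-invariant K\"ahler class $\omega$, which lies in the trivial character eigenspace; since $\chi$ is non-trivial, the same character argument gives $b(y,\omega)=0$ for every $y\in H^{1,1}_{\chi}$, which on a surface is exactly the primitivity of $y$. Writing $y = u + iv$ with $u,v\in H^{1,1}(S,\RR)$ (necessarily also primitive), one computes $h(y,y) = b(u,u) + b(v,v)$, and the Hodge index theorem yields $b(u,u),b(v,v)\le 0$ with strict inequality whenever the respective class is nonzero. Since $y\ne 0$ forces $u$ or $v$ to be nonzero, $h(y,y)<0$, as required.

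The main obstacle I anticipate is running the K\"ahler and Hodge--Riemann apparatus rigorously on the possibly singular surface $S$. The paper already cites Steenbrink's theorem for the existence of a pure Hodge structure on $H^{2}(S,\QQ)$; the remaining positivity statements $\int\alpha\wedge\bar\alpha>0$ and the Hodge index theorem on primitive $(1,1)$-classes can be obtained either by working with an orbifold K\"ahler form on $S$ directly, or by passing to a $C_{d}$-equivariant resolution and identifying the $\chi$-eigenspaces downstairs with their image upstairs. Once this is in place the three ingredients above assemble formally to give the desired $\QQ[\zeta_d]$-Hodge structure of ball type.
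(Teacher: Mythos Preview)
Your proposal is correct and follows exactly the approach the paper intends: the paper states this as a ``direct corollary'' of Proposition~\ref{proposition: Hodge numbers for surface S} with no further proof, tacitly invoking the weight-two construction spelled out in the first Example after the ball-type definition. Your write-up supplies precisely the details that construction leaves implicit---most notably the observation that a $C_d$-averaged K\"ahler class lies in the trivial eigenspace, forcing $H^{1,1}_\chi$ to be primitive so that the Hodge index theorem gives the required negative definiteness---and you correctly flag the quotient-singularity issue, which the paper handles by citing Steenbrink for the pure Hodge structure and implicitly working in the orbifold/$V$-manifold category for the Hodge--Riemann relations.
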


\subsection{Relation to Deligne--Mostow Theory}
Let $X=\PP^1\times \PP^1$ and let $S$ be a $d$-fold cover of $X$ satisfying conditions \eqref{equation: sum of Lj} and \eqref{equation: sum of ajdLj}. Let $p\colon S\to \PP^1$ be the projection of $S$ to one of the factors. The operation of $C_d$ on $S$ preserves the fibers of $p$. We denote by $A\subset \PP^1$ the discriminant set of $p$. The higher pushforward sheaf $R^k p_* \QQ[\zeta_d]$ has an induced operation of $C_d$ and decomposes as direct sum of character eigensheaves. We focus on $\calL=(R^1 p_{*}\QQ[\zeta_d])_\chi$. A generic fiber of $p$ is a $d$-fold cover of $\PP^1$ with total degree of branching points equals to $d$. From Proposition \ref{Proposition: Hodge numbers for curve C}, we have $\dim H^1_\chi(C,\QQ[\zeta_d])=1$ for a generic fiber $C$ of $p$. Thus the eigensheaf $\calL$ is a rank-one $\QQ[\zeta_d]$-local system on $\PP^1-A$ and denoted by $\LL$. In \S\ref{section: monodromy of fibrations} we will calculate the monodromy of $\calL$ around each point in $A$. Before that we identify the balls from the surface $S$ and the Deligne--Mostow data associated with a fibration $p\colon S\to \PP^1$. The case $d=3$ and $a_1=\cdots=a_m=1$ has been proved in \cite{yu2024calabi}. In this section we generalize the result (\cite[\S7.3]{yu2024calabi}) to the general case.

Let $U=p^{-1}(\PP^1-A)\subset S$ be the open subset consisting of smooth fibers. 

\begin{lem}
	\label{lemma: S U}
Assume $|A|=\dim H^{1,1}_\chi(S)+3$ and the monodromy of $\LL$ around each point of $A$ is not identity. Then the natural map $H^2_\chi(S, \QQ[\zeta_d])\to H^2_\chi(U, \QQ[\zeta_d])$ is an isomorphism of $\QQ[\zeta_d]$-vector spaces.
\end{lem}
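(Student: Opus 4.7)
The plan is to compare the Leray spectral sequences of $p\colon S\to \PP^1$ and $p|_U\colon U\to \PP^1-A$, restricted to the $\chi$-eigenpart of the $C_d$-action, and conclude by a dimension count. On any smooth fiber the $C_d$-action on $H^0$ and $H^2$ is trivial, so on $\PP^1-A$ the only nonzero $\chi$-eigenpart of $R^q(p|_U)_*\QQ[\zeta_d]$ is the rank-one local system $\LL$ in degree one. Since $\LL$ has nontrivial monodromy by hypothesis, $H^0(\PP^1-A,\LL)=0$, and the Leray spectral sequence collapses to $H^2_\chi(U)\cong H^1(\PP^1-A,\LL)$, which has dimension $|A|-2$ by Euler characteristic.

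Turning to $S$, set $\mathcal{F}=(R^1 p_*\QQ[\zeta_d])_\chi$ and $\mathcal{G}=(R^2 p_*\QQ[\zeta_d])_\chi$ as constructible sheaves on $\PP^1$. Then $\mathcal{F}|_{\PP^1-A}=\LL$, $\mathcal{G}$ is supported on $A$, and $(R^0 p_*\QQ[\zeta_d])_\chi$ vanishes entirely (it is supported on $A$ and has $H^0$ equal to $H^0_\chi(S)=0$, using that $S$ is connected). For $\mathcal{F}$ I would analyze the adjunction map $\mathcal{F}\to j_*\LL$ with $j\colon \PP^1-A\hookrightarrow \PP^1$: at each $a\in A$ the target stalk $(j_*\LL)_a=\LL^{T_a}$ vanishes because the monodromy $T_a$ is nontrivial, so the adjunction is surjective with kernel a skyscraper on $A$. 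The resulting long exact sequence, combined with $H^0(\PP^1-A,\LL)=0$ and the vanishing of higher cohomology of skyscraper sheaves, gives
\[
H^1(\PP^1,\mathcal{F})\xrightarrow{\sim} H^1(\PP^1,j_*\LL)=H^1(\PP^1-A,\LL),
\]
and by naturality of Leray this coincides with the restriction map induced by $U\hookrightarrow S$ on the corresponding graded piece.

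The Leray spectral sequence for $p$ on the $\chi$-part then has only $E_2^{p,1}$ for $p=0,1,2$ and $E_2^{0,2}$ possibly nonzero, producing a filtration of $H^2_\chi(S)$ with graded pieces $H^1(\PP^1,\mathcal{F})$ and $E_\infty^{0,2}\subseteq H^0(\PP^1,\mathcal{G})$. Proposition \ref{proposition: Hodge numbers for surface S} gives $\dim H^{2,0}_\chi(S)=1$ and $\dim H^{0,2}_\chi(S)=0$, so together with the hypothesis $|A|=\dim H^{1,1}_\chi(S)+3$ one obtains
\[
\dim H^2_\chi(S)=1+\dim H^{1,1}_\chi(S)=|A|-2=\dim H^1(\PP^1,\mathcal{F}).
\]
This forces $E_\infty^{0,2}=0$, so $H^2_\chi(S)=H^1(\PP^1,\mathcal{F})$, and the restriction to $U$ becomes the isomorphism of the previous paragraph.

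The main delicate point is the compatibility of the two Leray spectral sequences under the open restriction $U\hookrightarrow S$: one must verify that the induced map on the $E_\infty^{1,1}$ graded piece is exactly the adjunction map $H^1(\PP^1,\mathcal{F})\to H^1(\PP^1,j_*\LL)$ analyzed above. Everything else reduces to the local structure of $\mathcal{F}$ near $A$, controlled entirely by the nontrivial-monodromy hypothesis, together with the Hodge-theoretic dimension count.
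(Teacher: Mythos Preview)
Your proposal is correct and follows essentially the same route as the paper: compare the $\chi$-eigenparts of the Leray spectral sequences for $p\colon S\to\PP^1$ and $p|_U\colon U\to\PP^1-A$, use that $(R^0p_*)_\chi$ and $(R^2p_*)_\chi$ are supported on $A$, and finish with the dimension count $\dim H^2_\chi(S)=|A|-2=\dim H^1(\PP^1-A,\LL)$. Your adjunction analysis of $\calF\to j_*\LL$ is a slightly more detailed version of the paper's one-line assertion that $H^1(\PP^1,\calL)\to H^1(\PP^1-A,\LL)$ is surjective by nontrivial monodromy, and the compatibility of Leray spectral sequences under $U\hookrightarrow S$ that you flag is indeed standard functoriality.
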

		
\begin{proof}
	Consider the Leray spectral sequence
			\begin{equation*}
	E_2^{k-q,q}=H^{k-q}(\PP^1, R^q p_*\QQ[\zeta_d]) \implies H^{k}(S, \QQ[\zeta_d]),
			\end{equation*}
which converges at the second page. Since $C_d$ operates on the fibration, the sheaves $R^{q} p_*\QQ[\zeta_d]$ decomposes as direct sum of eigensubsheaves, and we have
			\begin{equation*}
(E_2^{k-q,q})_\chi=H^{k-q}(\PP^1, (R^q p_*\QQ[\zeta_d])_\chi)\implies H^{k}_\chi(S, \QQ[\zeta_d]),
			\end{equation*}
also converging at the second page. Note that the cyclic group $C_d$ operates on $H^0(C)$ and $H^2(C)$ as identity. So the stalks of $\chi$-eigensubsheaves $(R^0p_*\QQ[\zeta_d])_\chi$ and $(R^2p_*\QQ[\zeta_d])_\chi$ are zero away from $A$. In other words, these two sheaves are supported on $A$. This implies the vanishing of $H^2(\PP^1, (R^0p_*\QQ[\zeta_d])_\chi)$. Taking $k=2$ in the spectral sequence, the filtration on $H^{2}_\chi(S, \QQ[\zeta_d])$ becomes an exact sequence \begin{equation*}
0\to H^1(\PP^1, \calL)\to H^2_\chi(S, \QQ[\zeta_d])\to H^0(\PP^1,(R^2p_*\QQ[\zeta_d])_\chi)\to 0
\end{equation*} 
Similarly, the same spectral sequence for smooth fibration $U\to \PP^1-A$ gives isomorphism $H^2(U, \QQ[\zeta_d])_\chi\cong H^1(\PP^1-A, \LL)$ since $(R^2p_*\QQ[\zeta_d])_\chi$ is supported on $A$. Comparing the two spectral sequences, the morphism $H^1(\PP^1, \calL)\to H^1(\PP^1-A, \LL)$ is surjective since the monodromy of $\LL$ is nontrivial. We conclude that $H^2_\chi(S,\QQ[\zeta_d])\to H^2_\chi(U, \QQ[\zeta_d])$ is surjective. Since 
\begin{equation*}
\dim H^2_\chi(S,\CC)=1+\dim H^{1,1}_\chi(S)=|A|-2=\dim H^1(\PP^1-A,\LL),
\end{equation*}
we know that $H^2_\chi(S,\QQ[\zeta_d])\to H^2_\chi(U, \QQ[\zeta_d])$ is an isomorphism.
\end{proof}
	
\begin{thm}
\label{theorem: surface S to DM cohomology}
Assume $|A|=\dim H^{1,1}_\chi(S)+3$ and the monodromy of $\LL$ around each point of $A$ is not identity. Then after a rescaling of the Hermitian form, there is a natural isomorphism of $\QQ[\zeta_d]$-Hodge structures of ball type between 
		\begin{equation*}
H^2_\chi(S,\QQ[\zeta_d])\cong H^1(\PP^1-A, \LL)
	\end{equation*}
	\end{thm}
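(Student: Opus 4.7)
The plan is to upgrade the $\QQ[\zeta_d]$-linear isomorphism from Lemma~\ref{lemma: S U} to an isomorphism of $\QQ[\zeta_d]$-Hodge structures of ball type. Since both sides are of ball type, it suffices to verify two compatibilities: the Hermitian pairings must agree up to a scalar, and the one-dimensional ball-type Hodge line $H^{2,0}_\chi(S)$ must map to the ball-type Hodge line on the right. Both will follow from a systematic comparison of the cup product on $S$ with the Poincar\'e pairing on $\PP^1 - A$ through the Leray spectral sequence for the smooth proper fibration $p|_U \colon U \to \PP^1 - A$.

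I would first handle the Hermitian form. The cup product on $S$ yields a pairing $H^2_\chi(S) \times H^2_{\overline{\chi}}(S) \to H^4(S) \cong \QQ[\zeta_d]$, from which the Hermitian form is obtained by the rescaling $\zeta_d - \overline{\zeta_d}$ of Definition~\ref{definition: Deligne--Mostow Hermitian space}. Poincar\'e duality on $S$ and on $U$ converts the isomorphism $H^2(S)_\chi \cong H^2(U)_\chi$ of Lemma~\ref{lemma: S U} into a dual isomorphism $H^2_c(U)_\chi \cong H^2(S)_\chi$, so the cup product on $S$ factors through $H^2_c(U)_\chi \times H^2(U)_\chi \to H^4_c(U) \cong \QQ[\zeta_d]$. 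Running the Leray spectral sequences for $p|_U$ both with and without compact supports, and using once more that the $\chi$-isotypes of $R^0 p_* \QQ[\zeta_d]$ and $R^2 p_* \QQ[\zeta_d]$ are supported on $A$, one obtains $H^2(U)_\chi \cong H^1(\PP^1 - A, \LL)$ and $H^2_c(U)_\chi \cong H^1_c(\PP^1 - A, \LL)$. Cup product is compatible with these spectral sequences, and integration along the fibers identifies the resulting pairing with the Poincar\'e pairing between $H^1_c(\PP^1 - A, \LL)$ and $H^1(\PP^1 - A, \LL)$. By Proposition~\ref{proposition: deligne mostow}, this is, up to the same rescaling $\zeta_d - \overline{\zeta_d}$, the Hermitian pairing \eqref{equation: paring for locally finite cohomology} on $V_\mu$, so the two Hermitian forms agree up to an overall scalar.

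For the Hodge filtration, the restriction $H^2(S) \to H^2(U)$ is a morphism of mixed Hodge structures, and the Leray filtration on $H^2(U)_\chi$ is by sub-mixed-Hodge-structures. The one-dimensional top Hodge piece $H^{2,0}_\chi(S)$ therefore maps into the top Hodge piece of $H^2(U)_\chi$, which under the Leray identification corresponds to the ball-type Hodge line $H^{1,0}_{\overline{\chi}}(C) \subset H^1(\PP^1 - A, \LL)$ supplied by Proposition~\ref{proposition: deligne mostow}; the weight discrepancy between $H^2_\chi(S)$ (weight two) and $H^1_{\overline{\chi}}(C)$ (weight one) is an innocuous Tate twist that is invisible to the ball-type filtration. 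The main obstacle I anticipate is the careful bookkeeping of the functoriality of cup product through the compactly supported Leray spectral sequence, and in particular pinning down the exact scalar produced by integration along the fibers so that the final normalization matches Definition~\ref{definition: Deligne--Mostow Hermitian space} on the nose, which is precisely what is needed to preserve the conformal class of the Hermitian form in the statement.
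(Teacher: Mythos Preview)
Your proposal is correct and follows essentially the same route as the paper: Lemma~\ref{lemma: S U} for the vector-space isomorphism, compactly supported Leray on $U\to\PP^1-A$ to match the Poincar\'e pairings (the paper phrases this via $\LL^\vee$ and Leray--Hirsch rather than duality on $U$, but the content is identical), and then identification of the Hodge line. The paper differs from you only in that last step: instead of invoking abstract mixed Hodge theory and compatibility of the Leray filtration, it writes down the explicit sheaf isomorphism $H^0_\chi(S,K_S)\cong H^0(\PP^1,(p_*K_S)_\chi)\cong H^0(\PP^1,\Omega^1(\sum_i\mu_i a_i)(\LL))$, which pins down the Hodge line concretely and also makes the rescaling factor explicit as $\frac{1-\alpha_1\alpha_2}{(1-\alpha_1)(1-\alpha_2)}$ via Proposition~\ref{prop: intersection form}. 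Your MHS argument is a legitimate shortcut, but be careful that the symbol $C$ in ``$H^{1,0}_{\overline\chi}(C)$'' must refer to the $(n+3)$-point Deligne--Mostow curve $C_\mu$ of \S\ref{subsection: review deligne mostow theory}, not the $3$-point fiber curve of $p$; the notation collides between \S2 and \S3.
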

\begin{proof}	
By Lemma \ref{lemma: S U} and isomorphism $H^1(\PP^1-A, \LL)\cong H^2_\chi(U, \QQ[\zeta_d])$ we have an isomorphism $H^2_\chi(S, \QQ[\zeta_d])\cong H^1(\PP^1-A, \LL)$ as $\QQ[\zeta_d]$-vector spaces. Next we prove this isomorphism also preserves $\QQ[\zeta_d]$-Hodge structure on both sides. 

Analogously, considering $\LL^\vee$, we have the isomorphism: 
\begin{equation*}
H^2_{c, \overline{\chi}} (U, \QQ[\zeta_d])\cong H_c^1(\PP^1-A, \LL^\vee)\cong H^1(\PP^1-A, \LL^\vee)\cong H^2_{\overline{\chi}}(S,\QQ[\zeta_d]).
\end{equation*}
There is a nondegenerate bilinear form 
\begin{equation*}
H^1(\PP^1-A, \LL)\times H^1_c(\PP^1-A, \LL^\vee)\to H^2_c(\PP^1-A,\QQ[\zeta_d])\cong \QQ[\zeta_d]. 
\end{equation*}
This is compatible with the natural Poincar\'e pairing 
\begin{equation*}
H^2_\chi(U, \QQ[\zeta_d]) \times H^2_{c,\overline{\chi}}(U, \QQ[\zeta_d])\to H^4_c(U,\QQ[\zeta_d])\cong \QQ[\zeta_d].
\end{equation*}
So the Poincar\'e pairing gives rise to a Hermitian form on $H^1(\PP^1-A, \LL)$. This is the same Hermitian form on $H^2_\chi(S, \QQ[\zeta_d])$ induced by Poincar\'e pairing and $H^2_{\overline{\chi}}(S, \QQ[\zeta_d])\cong \overline{H^2(S, \QQ[\zeta_d])_\chi}$. The isomorphism between $H^4_c(U,\QQ[\zeta_d])$ and $H^2_c(\PP^1-A,\QQ[\zeta_d])$ is given by Leray-Hirsch theorem. So $$H^2_\chi(S,\QQ[\zeta_d])\cong H^1(\PP^1-A, \LL)$$ is an isomorphism as Hermitian spaces after a rescaling factor. The rescaling factor is given by the Hermitian form on one-dimensional $\QQ[\zeta_d]$ vector space $H^1(\PP^1-\{x_1, x_2, x_3\},\KK)$. The factor is 
\begin{equation*}
{1-\alpha_1\alpha_2\over (1-\alpha_1)(1-\alpha_2)}, \text{  where } \alpha_i=e^{-2\pi{\sqrt{-1}}{a_i\over d}}
\end{equation*}
by Proposition \ref{prop: intersection form}.
		
The Hodge filtrations on $H^2_\chi(S,\QQ[\zeta_d])$ and $H^1(\PP^1-A, \QQ[\zeta_d])$ are identified by
\begin{equation*}
H^0_\chi(S, K_S)\cong H^0(\PP^1, (p_*K_S)_\chi)\cong H^0(\PP^1, \Omega^1(\sum_{i} \mu_i a_i)(\LL)).
\end{equation*}
Here, the notion of a line bundle defined by rational divisors twisted by local system $\LL$ is the same as \cite[\S2.11]{deligne1986monodromy}. And this isomorphism is compatible with the isomorphism 
		\begin{equation*}
H^2(S, \QQ[\zeta_d])_\chi\cong H^1(\PP^1-A, \LL).
		\end{equation*}
So this is an isomorphism between $\QQ[\zeta_d]$-Hodge structures of ball-type.
\end{proof}
	
\begin{rmk}
When surface $S$ is a triple cover of the quadratic surface $\PP^1\times \PP^1$ branched along a generic curve of genus $4$, Theorem \ref{theorem: surface S to DM cohomology} specializes to \cite[Theorem 3]{kondo2000moduli}. 
\end{rmk}

\section{Monodromy of Fibrations}
\label{section: monodromy of fibrations}
Recall that $A$ is the branch locus of fibration $p\colon S\to \PP^1$, the $\chi$-eigensubsheaf $R^1p_*(\QQ[\zeta_d])_\chi$ is a rank-one local system on $\PP^1-A$. Thus, the monodromy around each point $a_k\in A$ is given by a complex number $\alpha_k$, which, as we will show, has the form $\alpha_k=\exp({2\pi \sqrt{-1}\mu_k})$ for some rational number $\mu_k$. In this section, we describe a way to calculate the numbers $\mu_k$ via drawing paths on $\PP^1$. This method was used in \cite[\S2.2]{deligne1986monodromy} to understand the generators for monodromy groups $\Gamma_\mu$. For elliptic fibrations $S\to \PP^1$, we can also obtain the same result from resolution of singularities of $S$ and Kodaira's classification of singular fibers; see \cite[\S7]{yu2024calabi} for some typical examples.

A general fiber of $p$ is a cyclic cover $\pi\colon C\to \PP^1$ with the form
\begin{equation*}
z^d=(x-x_1)^{a_1}(x-x_2)^{a_2}(x-x_3)^{a_3}.
\end{equation*}
Let $\KK$ be a local system on $\PP^1-\{x_1, x_2, x_3\}$ with monodromy $\exp{(2\pi\sqrt{-1}(-{a_i\over d}))}$ around each point $x_i$. Then Proposition \ref{prop: iso from C to P^1} gives an isomorphism
\[
H^1_\chi(C)\cong H_1^{lf}(\PP^1-\{x_1, x_2, x_3\},\KK).
\]
Let $\beta\colon [0,1]\to \PP^1$ be a path such that $\beta(t)\in \PP^1-\{x_1, x_2, x_3\}$ for $0<t<1$ and $\beta(0)=x_i, \beta(1)=x_j$ where $i\neq j\in \{1, 2, 3\}$, and choose a nonzero section $e\in H^0((0,1), \beta^*\KK)$. Then the pair $(\beta, e)$ defines a basis in $H_1^{lf}(\PP^1-\{x_1, x_2, x_3\},\KK)$ by Proposition \ref{prop: basis for lf homology}.
	
Next we calculate the monodromy for isotrivial degeneration for different singular fibers in the following local models and use them to carry out the monodromy data associated with surfaces $S$ and branch divisors $D$.
	
\subsection{Degree-two Ramification Case}
 Let $\Delta\coloneqq \{t\in \CC\colon |t|<2\}$ be the disc of radius $2$. Let $x_3\in \PP^1$ be a constant that $|x_3|>\sqrt{2}$. Let $a_1, a_3, d$ be positive integers such that $\gcd(a_1, a_3, d)=1$ and $2a_1+a_3=d$. Consider the one-parameter family of curves $p\colon \calC\to \Delta$ defined by 
	\begin{equation*}
		z^d=(x^2-t)^{a_1}(x-x_3)^{a_3}. 
	\end{equation*}
This family admits a fiberwise operation of $C_d$ via $\zeta_d(z,x)=(\zeta_d^{-1} z, x)$. We denote by $C_t=p^{-1}(t)$ the fiber. We have isomorphisms $C_t\cong C$ compatible with the operations of $C_d$ when $t\neq 0$. The eigensubsheaf $(R^1p_* \QQ[\zeta_d])_\chi$ forms a rank-one local system $\LL$ on $\Delta-\{0\}$. We call this type of degeneration a degree-two ramification. The picture of the branch locus for $\calC\to \Delta\times \PP^1$ is illustrated in Figure \ref{figure: degree-two ramification} and we call this local model the degree-two ramification case (or the half nodal case comparing to the nodal case in \S\ref{subsection: nodal case}).

 \begin{figure}[htp]
		\centering
		\includegraphics[width=9cm]{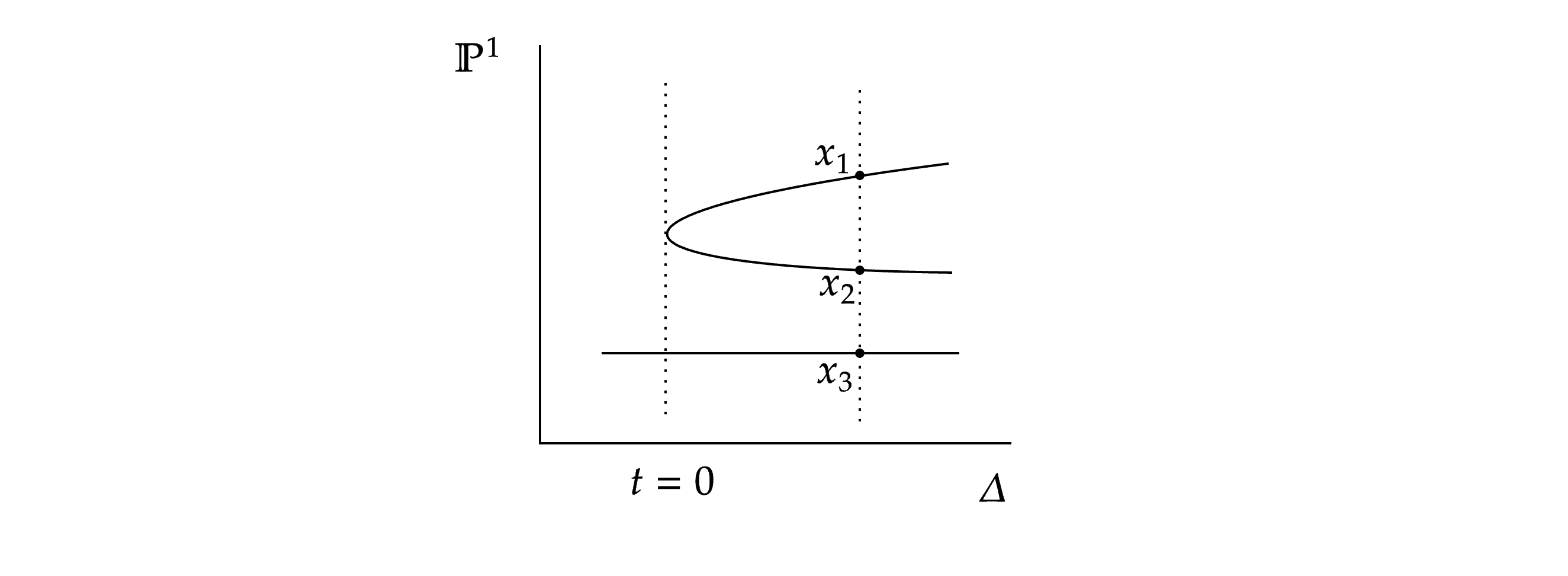}
		\caption{Degree-two ramification}
		\label{figure: degree-two ramification}
	\end{figure}
	
	\begin{prop}
\label{proposition: degree-two ramification}
In the degree-two ramification case, the monodromy of $\LL$ around $t=0$ is multiplication by $\exp({2\pi \sqrt{-1}({1\over 2}- {a_1\over d})})$.
\end{prop}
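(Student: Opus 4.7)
The plan is to compute $g$—the scalar by which $\LL = (R^1 p_* \QQ[\zeta_d])_\chi$ is multiplied after going once counterclockwise around $t = 0$—by tracking an explicit basis of the one-dimensional $H_1^{lf}(\PP^1 - A_t, \KK_t)$ under the isomorphism of Proposition \ref{prop: iso from C to P^1}. Using Proposition \ref{prop: basis for lf homology}, I take as generator the class $[\beta_t \cdot e_t]$, where $\beta_t(s) = \sqrt{t}(1-2s)$ is the straight segment from $\sqrt{t}$ to $-\sqrt{t}$ through the origin and $e_t$ is a nonzero parallel section of $\KK_t$ along $\beta_t$. Choosing $\sqrt{t(\theta)} = \sqrt{t_0}\, e^{\pi i \theta}$ continuously for $t(\theta) = t_0 e^{2\pi i \theta}$, one computes $\beta_{t_0 e^{2\pi i}} = \beta_{t_0}^{-1}$ after one loop, so the transported path is the reversal of the initial one and contributes a factor $-1$ in locally finite homology.

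The remaining phase comes from the transported section. I would compare the transported $e_{t_0 e^{2\pi i}}$ with the naive reversal $s \mapsto e_{t_0}(1-s)$—both parallel along $\beta_{t_0}^{-1}$—by evaluating at the midpoint $s = 1/2$, where $\beta_t(1/2) = 0$ for every $t$. The comparison reduces to the monodromy of $\KK$ along the loop $\theta \mapsto (0, t_0 e^{2\pi i \theta})$ inside $(\PP^1 \times \Delta) \setminus D$, with discriminant divisor $D = \{x^2 = t\} \cup \{x = x_3\}$. The essential observation—and what separates this case from the nodal one of \S\ref{subsection: nodal case}—is that near the origin the component $\{x^2 = t\}$ is a single smooth irreducible curve tangent to the $x$-axis, not two crossing branches. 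Consequently the disc $\{(0, t): |t| \leq |t_0|\}$ bounding the midpoint loop meets $D$ only at $(0,0)$, transversally with intersection number $+1$. The monodromy of $\KK$ around this smooth component is $\alpha = e^{-2\pi i a_1/d}$, since both colliding branches carried weight $a_1$. Hence $e_{t_0 e^{2\pi i}}(1/2) = \alpha \cdot e_{t_0}(1/2)$, and parallel continuation along $\beta_{t_0}^{-1}$ forces $e_{t_0 e^{2\pi i}}(s) = \alpha \cdot e_{t_0}(1-s)$ on the whole path.

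Combining, $g = (-1) \cdot \alpha = e^{2\pi i (1/2 - a_1/d)}$, which is the claim. The main subtlety lies in pinning down the linking number and section sign correctly; as a consistency check, the base change $\tilde t^2 = t$ pulls back the family to one that is smooth in the coordinates $u = x/\tilde t$, $w = z/\tilde t^{2a_1/d}$, so it has trivial fibration monodromy there, and the branch change of $\tilde t^{2a_1/d}$ after one $\tilde t$-loop identifies $g^2$ on the $\chi$-eigenspace with the pullback by the fiber automorphism $z \mapsto e^{4\pi i a_1/d} z$. Under the convention $g \cdot \gamma = (g^{-1})^* \gamma$ this pullback acts as $e^{-4\pi i a_1/d}$, matching the square of the claimed $g$ and so leaving only the sign to be fixed by the half-twist computation above.
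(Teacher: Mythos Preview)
Your argument is correct and follows the same strategy as the paper's proof: track the generator $\beta\cdot e$ of $H_1^{lf}$ and decompose the monodromy into the path-reversal factor $-1$ and the section-phase factor $e^{-2\pi i a_1/d}$. The only difference is bookkeeping for the phase---the paper anchors $e$ at an external base point $B$ and reads off the winding of the connecting path from Figure~\ref{figure: degree-two ramification s}, whereas you anchor at the fixed midpoint $x=0$ and compute the linking number of the loop $\{0\}\times\partial\Delta$ with the smooth branch component $\{x^2=t\}$; these are equivalent local computations.
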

	
\begin{proof}
Let $t(s)=\exp({2\pi \sqrt{-1} s})$ with $s\in [0,1]$ be a counterclockwise loop around origin in $\Delta$. Along the loop, we have a continuous choice of $x_1=\sqrt{t}=\exp({\pi \sqrt{-1} s})$ and $x_2=-\sqrt{t}=\exp(\pi \sqrt{-1} (s+1))$ with respect to the parameter $s$. Let $B$ be a base point on $\PP^1-\{\Delta\cup \{x_3\}\}$. For each $s$, there is local system $\KK$ on $\PP^1-\{x_1, x_2, x_3\}$ defined as above by the cyclic cover $C_t\to \PP^1$. We fix an element $e' \in \KK|_B$ from the stalk of $\KK$ at $B$ and keep it unchanged along the loop. Let $\beta$ be a path on $\PP^1$ connecting $x_1$ to $x_2$. If we connect $B$ to the path $\beta$ and parallel transport $e'$ to $\beta$. Then we have a basis vector $(\beta, e)$ for $H^1(\PP^1-\{x_1, x_2, x_3\},\KK)$. Figure \ref{figure: degree-two ramification s} shows the picture for parallel transport $\beta(s)$ and $e(s)$ along the loop $t(s)$.

	\begin{figure}
			\centering
	\includegraphics[width=9cm]{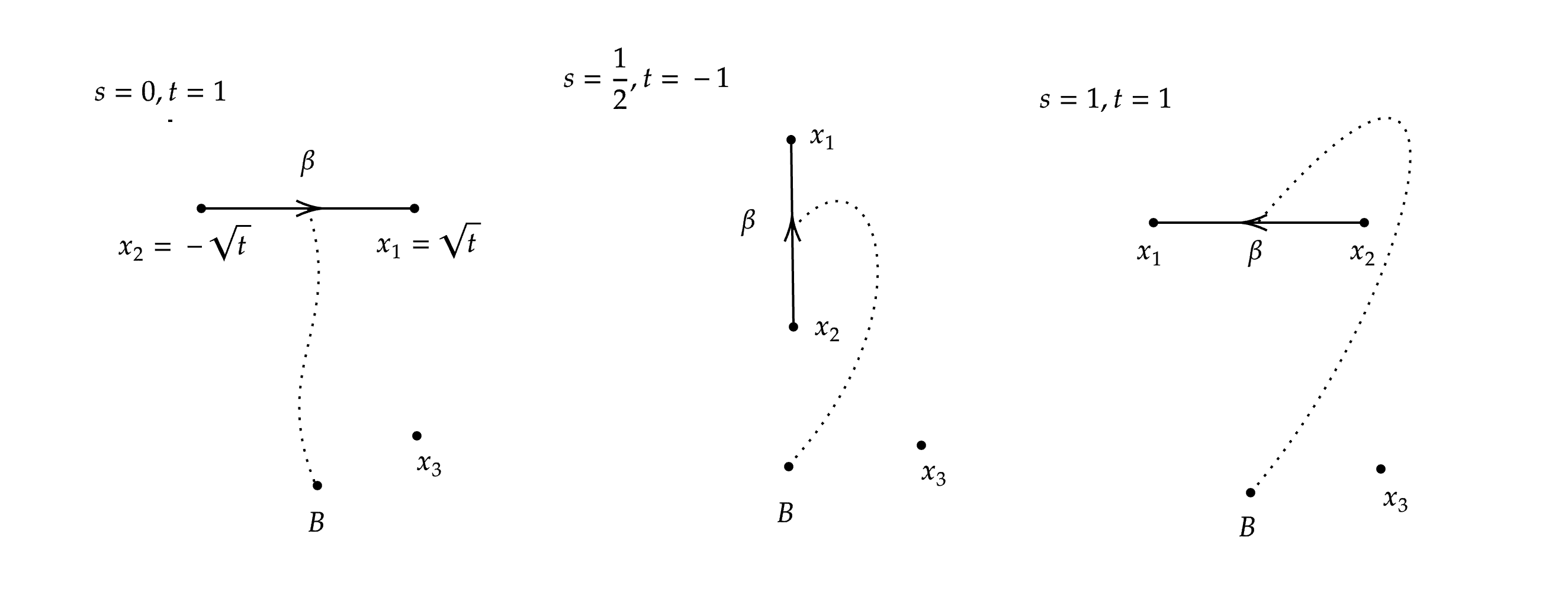}
		\caption{Parallel transport of $(\beta, e)$ for degree-two ramification}
			\label{figure: degree-two ramification s}
		\end{figure}
  
Comparing the picture for $s=0$ and $s=1$, we find that $\beta(1)=-\beta(0)$. We have 
\begin{equation*}
e(1)=\exp({2\pi \sqrt{-1}(1-{a_1\over d})})e(0)
\end{equation*}
since the dot line connecting $e(0)$ and $e(1)$ wind around $x=1$ counterclockwise once. So we have 
\begin{equation*}
(\beta(1), e(1))=-\exp({2\pi \sqrt{-1}(1-{a_1\over d})}) (\beta(0), e(0)).
\end{equation*}
So the monodromy of $\LL$ around $t=0$ is multiplication by $\exp({2\pi \sqrt{-1}({1\over 2}-{a_1\over d})})$.
\end{proof}
	
\subsection{Nodal Case}
\label{subsection: nodal case}
	We keep the same notation of $\Delta$ for the disc of radius $2$. Let $x_1(t)=t$, $x_2(t)=-t$, and $x_3\in \PP^1$ be a constant outside $\Delta$. Let $a_1, a_2, a_3, d$ be positive integers such that $\gcd(a_1, a_2, a_3, d)=1$ and $a_1+a_2+a_3=d$. Consider the one-parameter family of curves $p\colon \calC\to \Delta$ defined by 
	\begin{equation*}
		z^d=(x-x_1(t))^{a_1}(x-x_2(t))^{a_2}(x-x_3)^{a_3}. 
	\end{equation*}
	Similarly as before, we obtain a rank-one local system $\LL$ on $\Delta-\{0\}$. We call this local model the nodal case and the picture of the branch locus is in Figure \ref{figure: nodal case}. 

  \begin{figure}[htp]
		\centering
		\includegraphics[width=9cm]{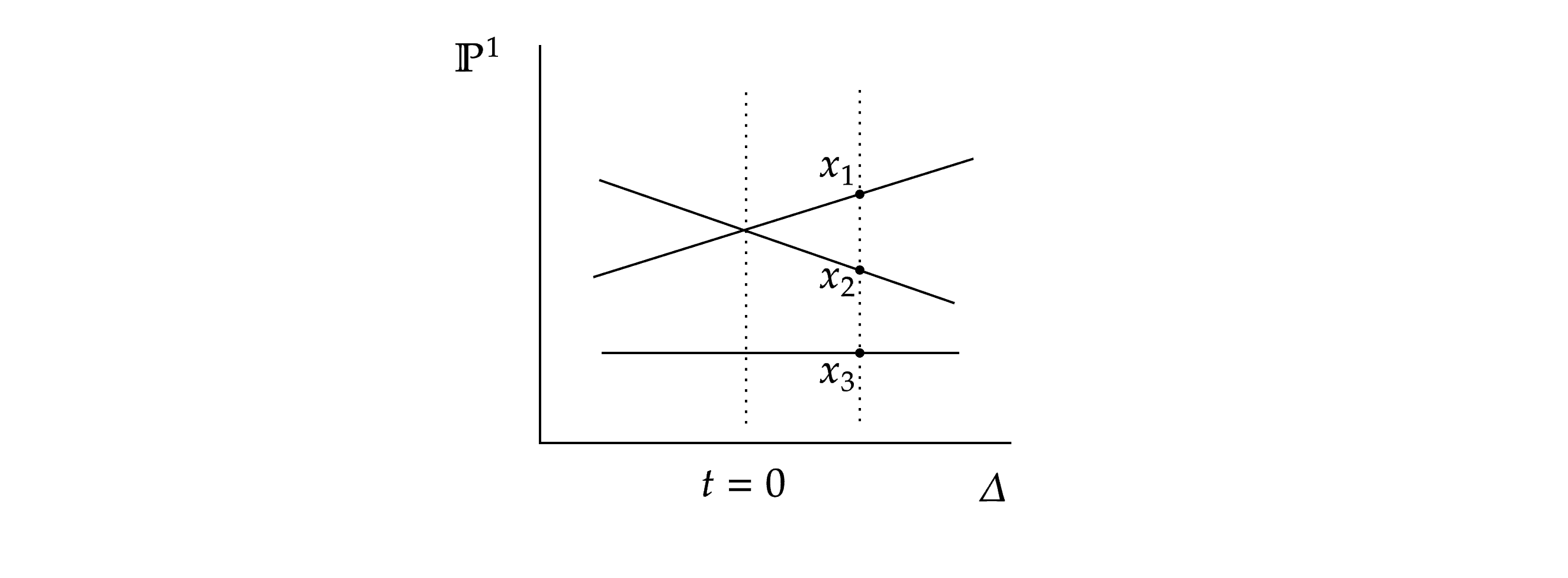}
		\caption{Nodal ramification}
		\label{figure: nodal case}
	\end{figure}
	
	\begin{prop}
		The monodromy of $\LL$ in the nodal case around $t=0$ is multiplication by $\exp({2\pi \sqrt{-1}(-{a_1\over d}-{a_2\over d})})$
	\end{prop}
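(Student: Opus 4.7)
The plan is to adapt the picture-based monodromy argument from the degree-two ramification case to the nodal loop, in which both $x_1(t)=t$ and $x_2(t)=-t$ undergo one full counterclockwise revolution around the origin as $t$ traces a single counterclockwise loop in $\Delta\setminus\{0\}$. Parametrize this loop by $t(s)=\exp(2\pi\sqrt{-1}\,s)$, $s\in[0,1]$, so that $x_1(s)=\exp(2\pi\sqrt{-1}\,s)$ and $x_2(s)=-\exp(2\pi\sqrt{-1}\,s)$ remain antipodal with modulus $1$. Fix a basepoint $B\in\PP^1$ with $|B|>|x_3|\ge 2$ (or, if $x_3=\infty$, just $|B|$ large) and choose a nonzero $e'\in\KK|_B$ held constant in $s$. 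As basis of $H_1^{lf}(\PP^1-\{x_1(s),x_2(s),x_3\},\KK_s)$ I would take the pair $(\beta(s),e(s))$ afforded by Proposition \ref{prop: basis for lf homology}, where $\beta(s)(u)=(1-2u)\exp(2\pi\sqrt{-1}\,s)$ is the straight segment from $x_1(s)$ through $0$ to $x_2(s)$, and $e(s)$ is obtained by parallel-transporting $e'$ from $B$ to the midpoint $0$ of $\beta(s)$ along a connecting path $\gamma(s)$ and then extending along $\beta(s)$.

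To make $(\beta(s),e(s))$ continuous in $s$, I would set $\gamma(s)=\phi_s\circ\gamma(0)$, where $\phi_s\colon\PP^1\to\PP^1$ is the smooth family of diffeomorphisms equal to $z\mapsto\exp(2\pi\sqrt{-1}\,s)z$ on some disc $|z|\le\rho_1$ (with $1<\rho_1<|x_3|$), equal to the identity on $|z|\ge\rho_2$ (with $\rho_1<\rho_2<\min(|x_3|,|B|)$), and interpolating smoothly on the annulus $\rho_1\le|z|\le\rho_2$. Then $\phi_s$ fixes $x_3$ and $B$ while carrying $(x_1(0),x_2(0))$ to $(x_1(s),x_2(s))$, so $\beta(s)=\phi_s\circ\beta(0)$ is a valid continuous deformation; at $s=1$ the map $\phi_1$ is the identity both on $|z|\le\rho_1$ and on $|z|\ge\rho_2$, performing a full $2\pi$ counterclockwise twist only in the annulus. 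Consequently $\beta(1)=\beta(0)$ as parametrized paths.

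The discrepancy $(\beta(1),e(1))=c\cdot(\beta(0),e(0))$ reduces to the parallel transport of $e'$ along the residual loop $L=\gamma(1)\gamma(0)^{-1}$ at $B$. Since $\phi_1$ is the identity outside the annulus, $L$ is supported in $\rho_1\le|z|\le\rho_2$; and since $\phi_1$ performs a full counterclockwise twist there, $L$ is homotopic inside the annulus to a simple counterclockwise circle around $0$. That circle encloses $x_1=1$ and $x_2=-1$ (which lie in $|z|\le\rho_1$) but not $x_3$ (which lies in $|z|\ge\rho_2$), so its winding numbers around $x_1,x_2,x_3$ are $+1,+1,0$ respectively. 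Multiplying the local monodromies $\exp(-2\pi\sqrt{-1}\,a_j/d)$ of $\KK$ around $x_j$ yields
\[
c=\exp(-2\pi\sqrt{-1}\,a_1/d)\cdot\exp(-2\pi\sqrt{-1}\,a_2/d)=\exp(2\pi\sqrt{-1}(-a_1/d-a_2/d)),
\]
which is the claimed monodromy of $\LL$.

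The main technical care is in the choice of the cutoff annulus so that it separates $\{x_1(s),x_2(s)\}$ from $\{x_3,B\}$; without this separation, the residual loop would also enclose $x_3$, contributing an extra factor $\exp(-2\pi\sqrt{-1}\,a_3/d)$ and collapsing the monodromy to $\exp(-2\pi\sqrt{-1}(a_1+a_2+a_3)/d)=1$ (using $a_1+a_2+a_3=d$). As an internal consistency check, when $a_1=a_2$ the nodal setup is precisely the base change of the degree-two ramification case under $t=r^2$ (with $x_1=\sqrt{t},x_2=-\sqrt{t}$), and the square of the degree-two monodromy $\bigl(\exp(2\pi\sqrt{-1}(\frac{1}{2}-a_1/d))\bigr)^2=\exp(2\pi\sqrt{-1}(-2a_1/d))$ indeed agrees with $\exp(2\pi\sqrt{-1}(-a_1/d-a_2/d))$ at $a_1=a_2$.
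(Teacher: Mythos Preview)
Your argument is correct and follows essentially the same approach as the paper: both track the parallel transport of the basis element $(\beta,e)$ of $H_1^{lf}(\PP^1-\{x_1,x_2,x_3\},\KK)$ as $t$ traverses a counterclockwise loop, and both identify the monodromy factor with the local monodromies of $\KK$ picked up by the residual loop of the connecting path around the colliding points $x_1,x_2$. The paper's proof is terse, simply pointing to a figure showing the parallel transport; your version makes the same picture precise via the twist diffeomorphisms $\phi_s$, and your consistency check against the degree-two ramification case under the base change $t=r^2$ is a nice sanity check not in the paper.
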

	\begin{proof}
		We use the same notation in the proof of Proposition \ref{proposition: degree-two ramification}. Figure \ref{figure: nodal transport} shows the parallel transport of $(\beta, e)$.
	\end{proof}

\begin{figure}[htp]
		\centering
		\includegraphics[width=9cm]{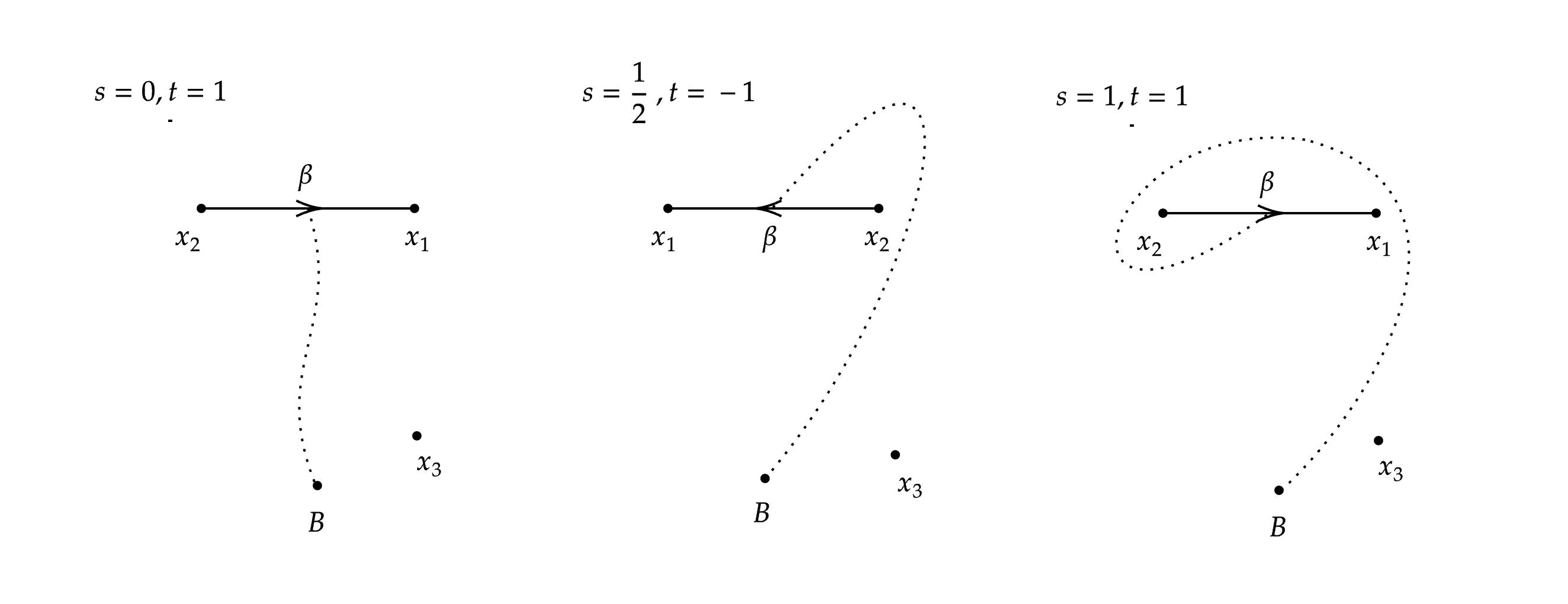}
		\caption{Parallel Transportation in Nodal Case}
		\label{figure: nodal transport}
	\end{figure}

	\subsection{Vertical Line Case}
	In this local model, we consider $x_1, x_2, x_3$ distinct constants on $\PP^1$ and the one-parameter family defined by equation
	\begin{equation*}
		z^d=t^{a_0}(x-x_1)^{a_1}(x-x_2)^{a_2}(x-x_3)^{a_3}.  
	\end{equation*}

\begin{figure}[htp]
		\centering
		\includegraphics[width=9cm]{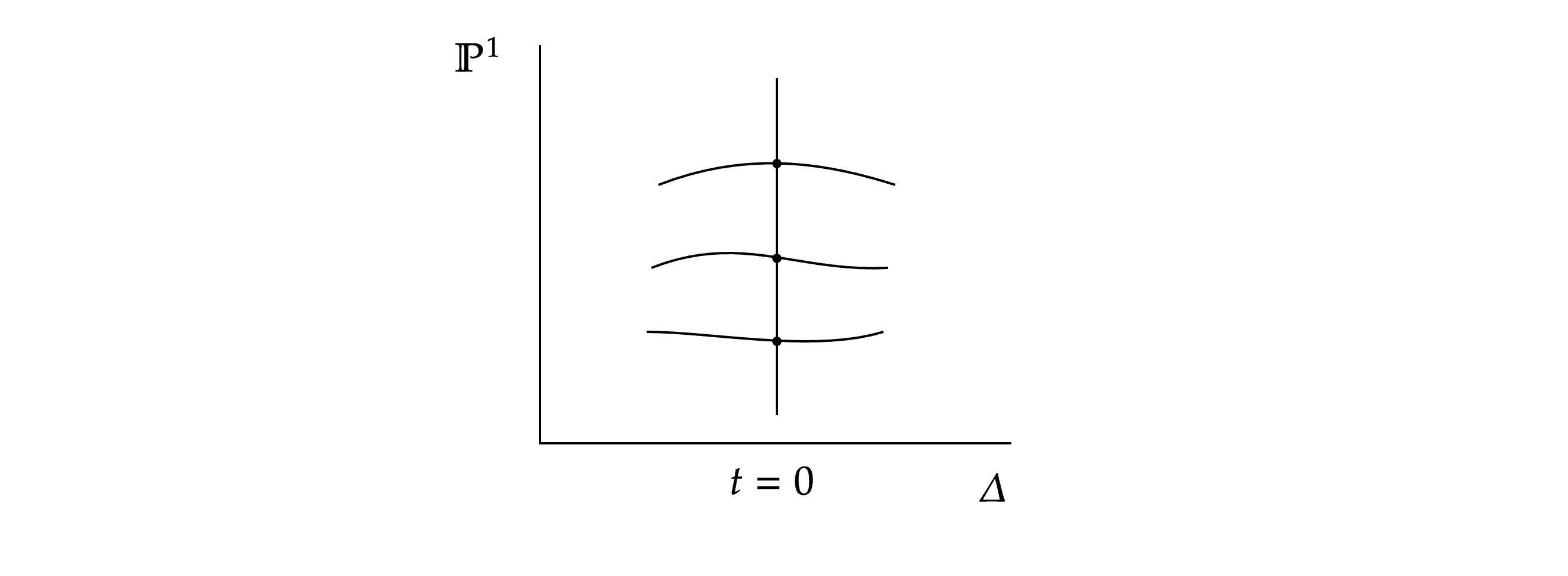}
		\caption{Vertical Line Case}
		\label{figure: vertical line}
	\end{figure}
 
	Similarly, this family induces a rank-one local system $\LL$ on $\Delta-\{0\}$. 
	Figure \ref{figure: vertical line} shows the branching locus, and we call this local model the vertical line case. 
	\begin{prop}
		The monodromy of $\LL$ in the vertical line case around $t=0$ is multiplication by $\exp({2\pi \sqrt{-1}(-{a_0\over d})})$.
	\end{prop}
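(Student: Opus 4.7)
The plan is to mimic the parallel-transport arguments used for the degree-two ramification and nodal cases, but to take advantage of the fact that the branch points $x_1,x_2,x_3$ are now constant in $t$. Fix the counterclockwise loop $t(s)=\exp(2\pi\sqrt{-1}s)$, $s\in[0,1]$, in $\Delta^*$, and pick a basis cycle $(\beta,e)$ of $H_1^{lf}(\PP^1-\{x_1,x_2,x_3\},\KK_{t(0)})$ with $\beta$ a path on $\PP^1$ joining two of the $x_i$'s and $e$ a nonzero flat section of $\KK_{t(0)}$ along $\beta$. The monodromy of $\LL$ around $0$ is the scalar $\lambda$ with $(\beta(1),e(1))=\lambda\,(\beta(0),e(0))$ after parallel transport around the loop.

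The crucial simplification is that no branch point moves along the loop, so $\beta(s)\equiv\beta(0)$ on $\PP^1$ for all $s$; the entire monodromy must come from the vertical fiber direction. To isolate that contribution I would perform the fiberwise change of coordinate $w=z\cdot t^{-a_0/d}$ with the continuous branch $t(s)^{a_0/d}=\exp(2\pi\sqrt{-1}\,s\,a_0/d)$. In the variable $w$ the family becomes the constant cover $w^d=\prod_{i=1}^{3}(x-x_i)^{a_i}$, over which parallel transport of $(\beta,e)$ is literally the identity. Consequently the whole monodromy is packaged in the fact that completing the loop picks up $t(1)^{a_0/d}=\zeta_d^{a_0}$, so the map returning from the $w$-coordinate to the $z$-coordinate at $s=1$ is the fiberwise automorphism $(z,x)\mapsto(\zeta_d^{a_0}z,x)$ of $C_1$.

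Under the convention of this section, $\zeta_d\cdot(z,x)=(\zeta_d^{-1}z,x)$, so that fiberwise automorphism is the action of the group element $\zeta_d^{-a_0}\in C_d$. Pulling back cohomology by this action and using the convention $g\cdot\gamma=(g^{-1})^*\gamma$ from \S\ref{subsection: cyclic cover and EV formula}, one finds that on the $\chi$-eigenspace — which is the stalk of $\LL$ — the resulting scalar is $\chi(\zeta_d^{a_0})^{-1}=\exp(-2\pi\sqrt{-1}\,a_0/d)$, as claimed.

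The only delicate point is keeping the two conventions straight: the $(g^{-1})^*$ in the definition of the $C_d$-action on cohomology, and the $(\zeta_d^{-1}z,x)$ in the $C_d$-action on the total space. As an independent sanity check one can redraw the analogue of the earlier parallel-transport pictures: with $\beta$ fixed on $\PP^1$ and the section $e(s)$ obtained by parallel transport of $e(0)$ in $\KK_{t(s)}$, the same bookkeeping as in the previous two propositions shows that $e(1)=\exp(-2\pi\sqrt{-1}a_0/d)\,e(0)$, recovering the claim.
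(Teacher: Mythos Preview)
Your proof is correct and follows essentially the same strategy as the paper: both trivialize the family via the substitution $w=z\,t^{-a_0/d}$ (the paper writes $z=Z\,t^{a_0/d}$), reducing the question to identifying the monodromy with the fiberwise cyclic automorphism $Z\mapsto\zeta_d^{-a_0}Z$; the paper then reads off the scalar by watching the explicit holomorphic form $\omega=\dfrac{Z\,dx}{(x-x_1)(x-x_2)(x-x_3)}$ generating $H^{1,0}_\chi(C)$, whereas you read it off abstractly on the $\chi$-eigenspace, and both give $\zeta_d^{-a_0}$. The paper's Remark immediately after the proof also records your alternative path-based sanity check.
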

	\begin{proof}
	We present another approach here by differential forms.	Let $z=Z\cdot t^{a_0\over d}$. We rearrange the equation as 
		\begin{equation*}
			Z^d=(x-x_1)^{a_1}(x-x_2)^{a_2}(x-x_3)^{a_3}.
		\end{equation*}
		After change of coordinate to $T=t^{a_0\over d}$, we see that the family parametrized by $T$ can be filled with a smooth central fiber and forms a constant family. When $t=\exp({2\pi\sqrt{-1}s})$ changes from $s=0$ to $s=1$, we have a monodromy changing $Z$ to $\zeta_d^{-{a_0}}\cdot Z$. If we use differential form 
		\begin{equation*}
			\omega={dx\over (x-x_1)^{1-{a_1\over d}}\cdot (x-x_2)^{1-{a_2\over d}}\cdot (x-x_3)^{1-{a_3\over d}}}={Z\cdot dx \over (x-x_1)(x-x_2)(x-x_3)}
		\end{equation*}
		to represent the basis for $H^{1,0}_\chi (C)$ for the constant family. We can see that the monodromy of $t=\exp({2\pi\sqrt{-1}s})$ moving from $s=0$ to $s=1$ changes $\omega$ to $\zeta_d^{-a_0}\cdot \omega$. 
	\end{proof}
\begin{rmk}	
This method using differential forms only works for families which are constant after a change of coordinates. For other families the differential forms may not represent a parallel section. The method of using singular cohomology still works here. The base point $B$ in the family winds around the central fiber, so the monodromy around this divisor gives the same result. 
\end{rmk}

\subsection{Tacnode Case}
We consider the tacnode case, which has two subcases with Figure \ref{figure: vertical tacnode} and Figure \ref{figure: nonvertical tacnode}.
\begin{figure}[htp]
		\centering
		\includegraphics[width=9cm]{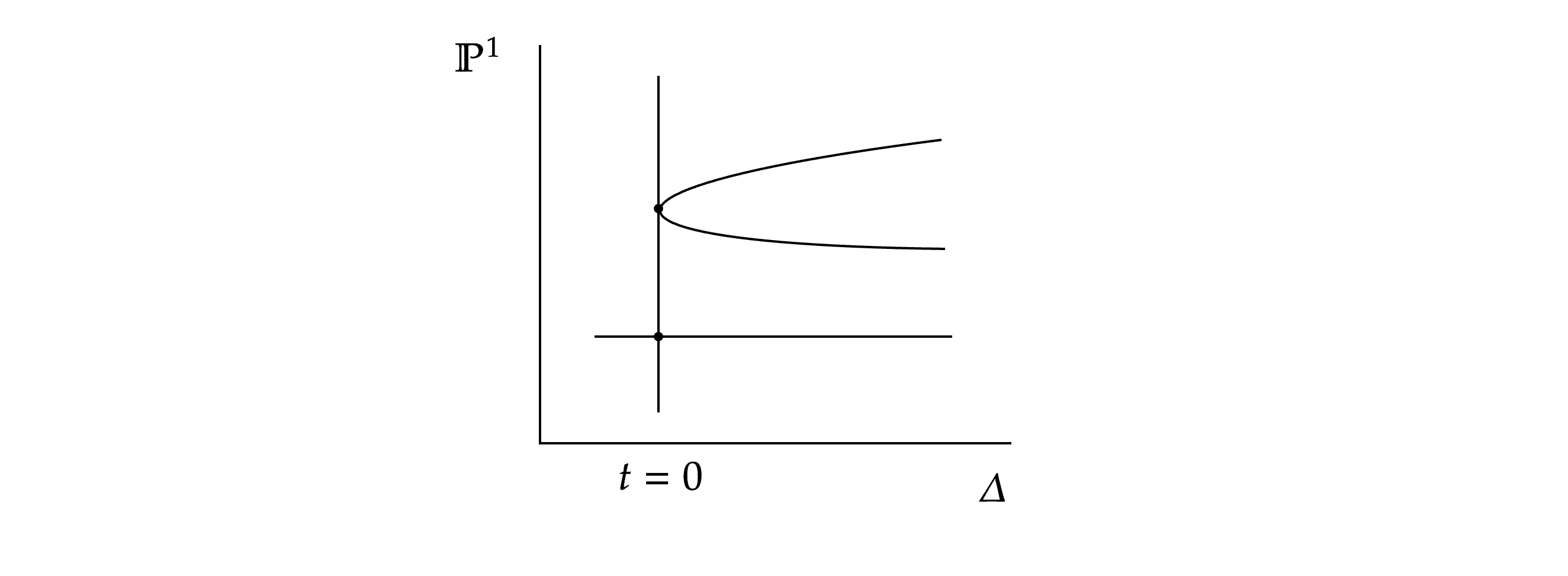}
		\caption{Vertical Tacnode Case}
		\label{figure: vertical tacnode}
	\end{figure}

\begin{figure}[htp]
		\centering
		\includegraphics[width=9cm]{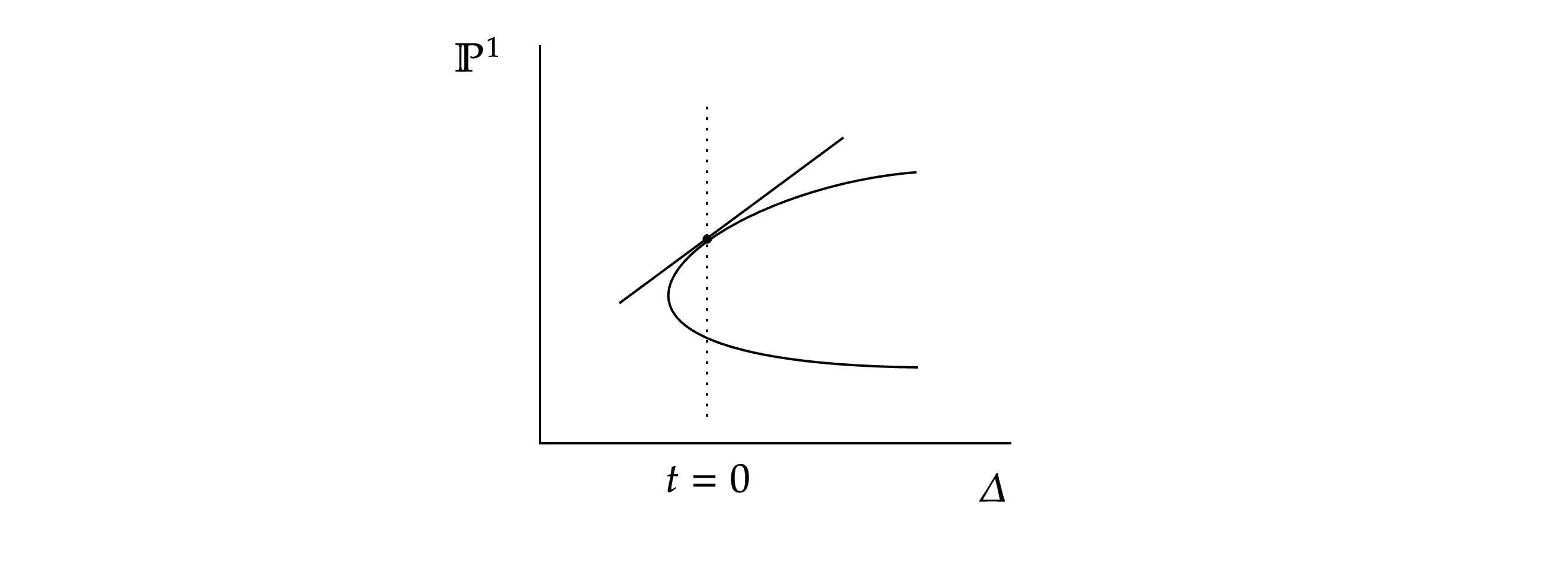}
		\caption{Non-vertical Tacnode Case}
		\label{figure: nonvertical tacnode}
	\end{figure}

For the vertical or non-vertical tacnode case, suppose the line has weight ${a_0\over d}$ and the divisor tangent with the line has weight ${a_1\over d}$, then we have the following monodromy calculation.
\begin{prop}
For vertical tacnode case in Figure \ref{figure: vertical tacnode}, the monodromy of $\LL$ around $t=0$ is multiplication by $\exp({2\pi \sqrt{-1}({1\over 2}-{a_0\over d}-{a_1\over d})})$. For non-vertical tacnode case in Figure \ref{figure: nonvertical tacnode}, the monodromy of $\LL$ around $t=0$ is multiplication by $\exp({2\pi \sqrt{-1}(-{2a_0\over d}-{2a_1\over d})})$.
\end{prop}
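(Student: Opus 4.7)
The plan is to decompose each tacnode case into the more elementary monodromy contributions already computed in this section and multiply the resulting factors; the differential-form technique used in the vertical line case then provides an independent check.

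For the vertical tacnode, centering coordinates at the tangency point gives the local cyclic cover $z^d = t^{a_0}(x^2 - t)^{a_1}\,h(x,t)$ with $h$ holomorphic and non-vanishing at $(0,0)$. As $t$ traces a loop around $0$, the factor $t^{a_0}$ rescales $z$ by $\zeta_d^{a_0}$, which is precisely the vertical-line contribution $\exp(-2\pi\sqrt{-1}\,a_0/d)$ to the $\LL$-monodromy, while the factor $(x^2 - t)^{a_1}$ swaps the two branch points $\pm\sqrt{t}$ on the fiber and contributes the degree-two-ramification factor $\exp(2\pi\sqrt{-1}(1/2 - a_1/d))$. The two effects act on disjoint data---the first on the overall Galois scaling of the $z$-coordinate, the second on the braid motion of the branch points in the fiber---so they commute and their $\LL$-monodromies multiply, yielding $\exp(2\pi\sqrt{-1}(1/2 - a_0/d - a_1/d))$. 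Independently, the substitution $x = \sqrt{t}\,y$ converts a basis form of $H^{1,0}_\chi(C_t)$ into $t^{a_0/d + a_1/d - 1/2}\,\eta(y,t)\,dy$ with $\eta$ continuous at $t = 0$, confirming the same answer upon inverting the prefactor's monodromy.

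For the non-vertical tacnode, the local cover is $z^d = x^{a_0}(x - t^2)^{a_1}\,h(x,t)$. The non-vertical line produces a fixed branch point $x_0 = 0$ on the fiber, while the tangent curve produces a branch point $x_1 = t^2$ that winds twice around $x_0$ as $t$ winds once around $0$. This motion realizes $\sigma^4 = (\sigma^2)^2$ in the pure braid group of two strands, where $\sigma^2$ is the full twist responsible for the nodal-case monodromy $\exp(-2\pi\sqrt{-1}(a_0 + a_1)/d)$. As the $\LL$-monodromy is a character of the pure braid group, the non-vertical tacnode monodromy is the square of the nodal one, namely $\exp(2\pi\sqrt{-1}(-2a_0/d - 2a_1/d))$. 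The substitution $x = t^2 u$ provides the parallel check: the basis form becomes $t^{2(a_0+a_1)/d - 2}\,\eta(u,t)\,du$ with $\eta$ single-valued at $t = 0$, and inverting the prefactor's monodromy (the integer shift $-2$ contributing trivially) gives the same answer.

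The main subtlety to address is the multiplicativity invoked in the vertical tacnode case, i.e., that the central rescaling $z \mapsto \zeta_d^{a_0} z$ truly commutes with the braid motion of the branch points on the fiber. This is intuitively clear because the two actions come from distinct sources---the Galois structure of the cyclic cover versus the mapping class action on the fiber $\PP^1_x$---but requires careful bookkeeping to justify rigorously. Verifying it either by the explicit commuting-subgroup argument or by completing the differential-form calculation with a careful handling of the $\sqrt{t}$-multivaluedness is the principal technical point; once done, both formulas follow immediately from the monodromies computed in the preceding propositions.
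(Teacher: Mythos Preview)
Your product decomposition is exactly the paper's approach. The paper phrases it in one line as ``by moving the line'': deform the configuration so that the vertical tacnode splits into a vertical-line point and a degree-two ramification point at distinct values of $t$ (respectively, the non-vertical tacnode splits into two nodes at distinct $t$), after which the monodromy around a large loop enclosing both is trivially the composition. This deformation viewpoint sidesteps the commutativity subtlety you flag at the end, since after perturbation the two contributions sit over disjoint discs in the base and their loops compose without any ordering issue; your differential-form checks are a pleasant extra but, as the paper itself cautions after the vertical-line case, that method only applies cleanly when the family becomes constant after base change.
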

\begin{proof}
By moving the line, we see that the monodromy for vertical tacnode case is equal to the multiplication of monodromy for vertical case and half-nodal case, and the monodromy for non-vertical tacnode case equals to the multiplication of monodromy for two nodal cases.
\end{proof}

\section{Moduli Spaces and Monodromy Groups}
\label{section: moduli space}
In this section, we obtain a family version of Theorem \ref{theorem: surface S to DM cohomology} and compare the moduli spaces of surfaces $S$ with the Deligne--Mostow ball quotients. This implies the commensurability relations between two Deligne--Mostow monodromy groups induced by two fibrations of $S$ over $\PP^1$. The main result is Theorem \ref{theorem: main}. 

\subsection{Local Torelli for Moduli of Surfaces $S$}
Take a configuration $T=(L_1, \cdots, L_m)$ of line bundles on $\PP^1\times \PP^1$ and a tuple of positive integers $(d,a_1, \cdots, a_m)$ that satisfy conditions in Proposition \ref{proposition: Hodge numbers for surface S} or equivalently in the list of Proposition \ref{proposition: types}. Let $\calU\subset \prod\limits_{i=1}^m |L_i|$ be a nonempty Zariski open subset consisting of divisors $D=D_1+D_2+\cdots +D_m$ with $D_i\in |L_i|$ and the following conditions are satisfied: 
\begin{enumerate}
\item[(i)]  $\calU$ is preserved by the natural action of $G=\SL(2, \CC)\times \SL(2, \CC)$; 
\item[(ii)] all elements of $\calU$ are stable and have trivial stabilizer;
\end{enumerate}
By \cite[Proposition 3.3]{yu2024calabi}, generic elements of $\prod\limits_{i=1}^m |L_i|$ are stable, hence the second condition can be imposed.

\begin{rmk}
A natural choice of linearization would be $\calO(d-a_1)\boxtimes\cdots\boxtimes\calO(d-a_m)$ as it matches the polarization of ball quotient by Hodge bundle, but we do not need it here. 
\end{rmk}

Take GIT quotient $\calM\coloneqq G\dbs \calU$. The space $\calU$ is a $\PSL(3, \CC)$-bundle over $\calM$. By Luna slice theorem, after suitably shrinking $\calU$ and $\calM$, there exists a section in $\calU$ over $\calM$.

For a divisor $D$ of type $T$ we have a $d$-fold cover $S\to \PP^1\times \PP^1$. We denote by $\widetilde{\calU}$ the subset of $H^0(\PP^1\times \PP^1, \calO(3,3))$ consisting of elements $f_1\cdots f_m$ such that $f_i\in H^0(\PP^1\times \PP^1, L_i)$ defines an irreducible smooth divisor. Then $\widetilde{\calU}\to \calU$ is a $\CC^\times$-fiber bundle. There is a canonical family of $S$ on $\widetilde{\calU}$ given by $y^d=f_1^{a_1}\cdots f_m^{a_m}$. We can sufficiently shrink $\calM$, so that any points in the section over it, say $f_1\cdots f_m\in H^0(\PP^1\times \PP^1, \calO(3,3))=\CC[x_1, x_2; y_1, y_2]_{(3,3)}$, have a nonzero coefficient for $x_1^3 y_1^3$. Putting this coefficient to be $1$, we obtain a lifting of $\calM$ to $\widetilde{\calU}$. Thus there is a family of surfaces $S$ on $\calM$. 

We fix a base point $b\in \calM$, and let $S_b$ be the corresponding surface. The cohomology $H^2_{\chi}(S_b, \QQ[\zeta_d])$ has a Hodge structure of ball type. Let $n=\dim H^{1,1}_\chi(S_0)$. For any loop in $\calM$ based on $D_b$, using the family of $S$ over $\calM$, we obtain an automorphism of $H^2_{\chi}(S_b, \QQ[\zeta_d])$. This gives rise to the monodromy representation 
\[
\rho\colon \pi_1(\calM, b)\to \PU(H^2_\chi(S, \QQ[\zeta_d]))
\] 
which does not depend on the choice of the family, since the target is the projective unitary group. Define $\Gamma=\mathrm{Im}(\rho)$ the monodromy group. This group does not depend on the choice of $\calM$, since for any smooth quasi-projective variety $U$ and a Zariski-open subset $U_0$, the map $\pi_1(U_0)\to \pi_1(U)$ is surjective. Note that $\Gamma$ is not necessarily discrete. 

The positive complex lines in $H^2_{\chi}(S_b, \CC)$ form a complex hyperbolic ball $\BB^n$. For any element $D\in \calU$ with corresponding surface $S$ and a path $\gamma$ from $D$ to $D_b$ in $\calM$. Using the family of surfaces over $\calM$, we can transport $H^{2,0}_\chi (S)\subset H^{2}_\chi (S)$ to a positive line in $H^2_{\chi}(S_b, \CC)$. This defines a point in $\BB^n$, which only depends on the homotopy class of the path $\gamma$. Let $\widetilde{\calM}$ be the cover of $\calM$ corresponding to $\ker (\rho\colon \pi_1(\calM)\to \PU(1,n))$. Then there is a $\Gamma$-equivariant period map $\calP\colon \widetilde{\calM}\to \BB^n$. 
	
\begin{prop}[Local Torelli]
 \label{Proposition: local Torelli}
The period map $\calP\colon \widetilde{\calM} \to \BB^n$ is locally biholomorphic and
\[
\dim \calM=\dim H^{1,1}_\chi(S)=n.
\]
\end{prop}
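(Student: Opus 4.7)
The proof will proceed by computing both sides at a base point $b \in \calM$ and then analyzing the derivative of the period map. The first step is a direct dimension count for $\calM$: since $\calU$ is Zariski-open in $\prod_i |L_i|$ and $G = \SL(2, \CC) \times \SL(2, \CC)$ acts with trivial stabilizers by assumption, we have
\[
\dim \calM = \sum_{i=1}^m h^0(\PP^1\times\PP^1, L_i) - m - 6,
\]
which is tabulated routinely for each of the twelve configurations of Proposition \ref{proposition: types}.

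Next, to compute $\dim H^{1,1}_\chi(S_b)$, the plan is to invoke the Esnault--Viehweg formula (Proposition \ref{Proposition: Esnault--Viehweg}) with $i = 1$, $q = 1$ to obtain
\[
H^{1,1}_\chi(S_b) \cong H^1\bigl(\PP^1\times\PP^1,\ \Omega^1_{\PP^1\times\PP^1}(\log D)\otimes \calO(-\textstyle\sum_j \{a_j/d\} D_j)\bigr).
\]
Combining the logarithmic residue sequence with the K\"unneth decomposition on $\PP^1\times\PP^1$ reduces the right-hand side to global sections along the components $D_j$, and a case-by-case check against Proposition \ref{proposition: types} yields the numerical equality $\dim H^{1,1}_\chi(S_b) = \dim \calM = n$.

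For the local biholomorphism, the differential of $\calP$ at $b$ factors as the Kodaira--Spencer map $T_b\calM \to H^1(S_b, T_{S_b})$ composed with cup product against the generator of $H^{2,0}_\chi(S_b)$, landing in $\Hom(H^{2,0}_\chi(S_b), H^{1,1}_\chi(S_b))$. A tangent vector in $T_b\calM$ is represented by an infinitesimal deformation of the defining sections $f_j$ modulo the $G$-action. Under the Esnault--Viehweg identifications the generator of $H^{2,0}_\chi$ corresponds to a section of $K_{\PP^1\times\PP^1}(\sum_j(1-\{a_j/d\}) D_j)$, so the differential becomes an explicit multiplication--residue map on $\PP^1\times\PP^1$: a deformation $\dot f_j$ produces a class with an extra logarithmic pole along $D_j$. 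Injectivity then reduces to non-degeneracy of polynomial multiplication on $\PP^1\times\PP^1$, and combined with the dimension equality above this yields bijectivity of the differential.

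The main obstacle is verifying injectivity of this multiplication--residue map uniformly across the twelve configurations of Proposition \ref{proposition: types}. A cleaner alternative, which I would pursue in parallel, is to factor $\calP$ through the tautological map $\calM \to \calM_{\mu^{(1)}}$ recording the branch locus of one fibration $p_1\colon S \to \PP^1$: by Theorem \ref{theorem: surface S to DM cohomology} this intertwines the two period maps, so local Torelli for $\calM$ would reduce to local injectivity of $\calM \to \calM_{\mu^{(1)}}$ together with the classical fact that the Deligne--Mostow period map is a local biholomorphism onto its image.
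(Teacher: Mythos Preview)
Your approach is workable but takes a considerably more laborious route than the paper, and you are missing the key lemma that makes the argument uniform.

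The paper does \emph{not} compute $\dim\calM$ and $\dim H^{1,1}_\chi(S)$ separately and then match them case by case. Instead it identifies $T_{[D]}\calM$ with $H^1(X,T_X(-\log D))$ via the long exact sequence of
\[
0 \to T_X(-\log D) \to T_X \to \bigoplus_j (j_j)_* N_{D_j/X} \to 0,
\]
and then observes that because $\dim X=2$ the wedge pairing $\Omega_X^1(\log D)\times\Omega_X^1(\log D)\to K_X(D)$ is perfect, giving a canonical isomorphism
\[
\Omega_X^1(\log D)\;\cong\; K_X(D)\otimes T_X(-\log D).
\]
Since the numerical conditions force $K_X(D)\otimes L^{-1}\cong\calO_X$, twisting by $L^{-1}$ yields $\Omega_X^1(\log D)\otimes L^{-1}\cong T_X(-\log D)$, whence by Esnault--Viehweg
\[
H^{1,1}_\chi(S)\;\cong\; H^1\bigl(X,\Omega_X^1(\log D)\otimes L^{-1}\bigr)\;\cong\; H^1\bigl(X,T_X(-\log D)\bigr)\;\cong\; T_{[D]}\calM.
\]
This single isomorphism gives the dimension equality \emph{and} identifies the differential of $\calP$ with an isomorphism, with no case analysis and no separate injectivity check. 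Your ``non-degeneracy of polynomial multiplication'' is a shadow of this self-duality, but you have not articulated it precisely, and carrying it out configuration by configuration would be unnecessary work.

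Your alternative via Theorem~\ref{theorem: surface S to DM cohomology} is logically delicate: that theorem carries the hypothesis $|A|=\dim H^{1,1}_\chi(S)+3$, and the paper's subsequent comparison of moduli spaces (Proposition~\ref{proposition: Monodromy from surfaces to DM}) actually \emph{uses} the Local Torelli statement you are trying to prove. So while one could in principle establish local injectivity of $\calM\to\calM_{\mu,H}$ directly, this reverses the paper's flow of implications and still requires the dimension computation you were hoping to avoid.
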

\begin{proof}
The proof is similar to that of \cite[Theorem 2.5]{yu2024calabi}. Recall $D=\sum\limits_{i=1}^m D_i$ is simple normal crossing. Let $j_i\colon D_i\to X=\PP^1\times \PP^1$. We have the following short exact sequence
		\begin{equation*}
			0\to T_X(-\log D)\to T_X\to \bigoplus_i {(j_i)}_*N_{D_i/X}\to 0.
		\end{equation*}
		The induced long exact sequence gives
		\begin{equation*}
			H^0(X, T_X)\to \bigoplus_i H^0(D_i, N_{D_i/X})\to H^1(X, T_X(-\log D))\to 0.
		\end{equation*}
		Here we can identify the deformation space of $D_i$ in $X$ with $|L_i|$, and $H^0(D_i, N_{D_i/X})$ with $T_{D_i} |L_i|$. Hence the cokernel of $H^0(X, T_X)\to \bigoplus_i H^0(D_i, N_{D_i/X})$ is naturally isomorphic to $T_{[D]}\calM$.
		Hence $T_{[D]}\calM\to H^1(X, T_X(-\log D))$ is an isomorphism and we call this map the Kodaira-Spencer map. From Esnault--Viehweg formula in Proposition \ref{Proposition: Esnault--Viehweg}, we have \begin{equation*}
			H^{2,0}_\chi(S)\cong H^0(X, K_X\otimes \calO(D)\otimes L^{-1})
		\end{equation*}
		and \begin{equation*}
			H^{1,1}_\chi(S)\cong H^1(X, \Omega_X^1(\log D)\otimes L^{-1}) 
		\end{equation*}
		Under the assumption of $D$, we know that $K_X\otimes \calO(D)\otimes L^{-1}\cong \calO_X$. Since the wedge product 
		\begin{equation*}
			\bigwedge^2 \Omega_X^1(\log D)=K_X(D),
		\end{equation*}
		there is a non-degenerate bilinear pairing of vector bundles
		\begin{equation*}
			\Omega_X^1(\log D)\times \Omega_X^1(\log D)\to K_X(D). 
		\end{equation*}
		This induces an isomorphism 
		\begin{equation*}
			\Omega_X^1(\log D)\cong K_X(D)\otimes   (\Omega_X^1(\log D))^\vee \cong K_X(D)\otimes T_X(-\log D).
		\end{equation*}
		Tracing the relation between Kodaira-Spencer map and infinitesimal variation of Hodge structures, the tangent map of period map
		\begin{equation*}
			T_{[D]}\calM\to \Hom(H^{2,0}_\chi (S), H^{1,1}_\chi (S))
		\end{equation*}
		is an isomorphism.
	\end{proof}

\subsection{Relation of Moduli Spaces Induced by Projection}
We next relate the moduli space $\calM$ to Deligne--Mostow theory. We follow notation in \S\ref{subsection: review deligne mostow theory}. For Deligne--Mostow data $\mu=(\mu_1,\cdots, \mu_{n+3})$ with $\sum\limits_{i=1}^{n+3}\mu_i=2$, the $\QQ[\zeta_d]$-Hodge structures on $H^1(\PP^1-A,\LL_\mu)$ induce monodromy representation 
\[
\rho_{\mu,H} \colon \pi_1(\calM_{\mu,H})\to \PU(1, n).
\]
Let $\widetilde{\calM_{\mu,H}}$ be the covering of $\calM_{\mu,H}$ corresponding to $\ker \rho_{\mu,H}$. The period map is $\calP_{\mu,H}\colon \widetilde{\calM_{\mu,H}}\to \BB^n$. 
	
Consider the fibration $p\colon S\to \PP^1$ that projects to one component. Let $A$ be the discriminant set and let $H$ be the group of permutations of $A$ preserving discriminant points of the same type. For example, in the case shown in Figure \ref{figure: (2,2)+(1,0)+(0,1)illustrateH}, the points $x_1$ and $x_2$ have the same weight but not the same type, so $H$ is a proper subgroup of $\Sigma$.

  \begin{figure}[htp]
		\centering
\includegraphics[width=9cm]{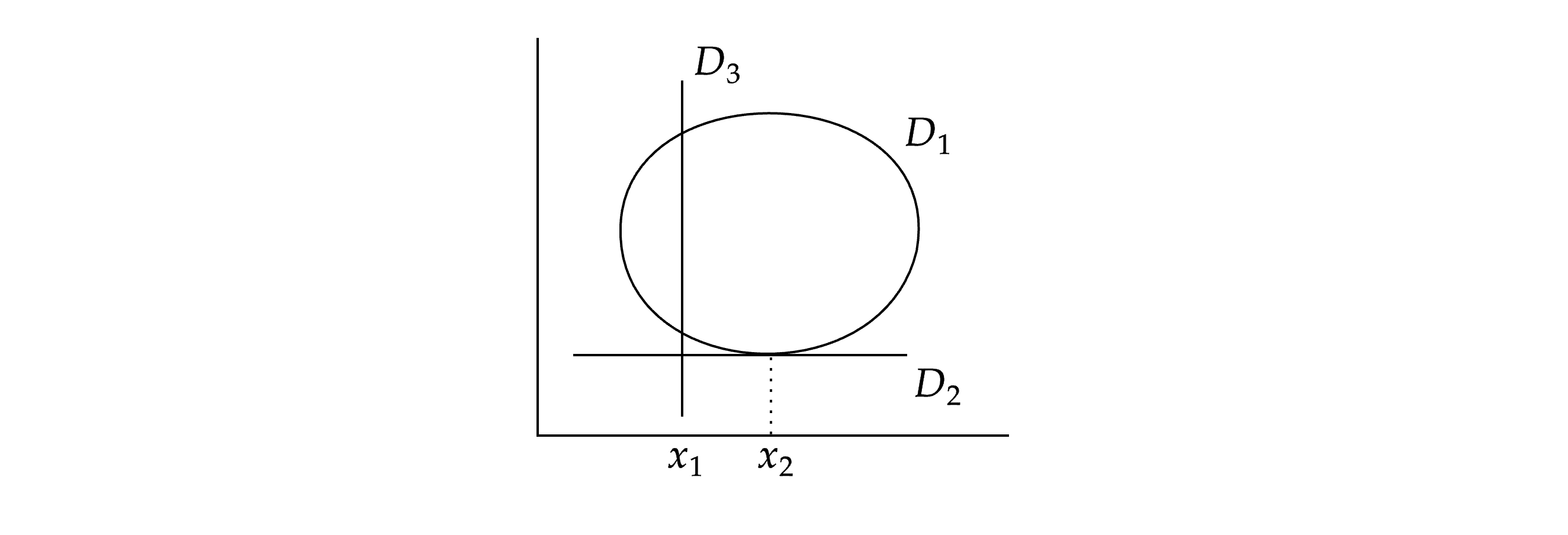}
		\caption{Configuration: (2,2)+(1,0)+(0,1) with One Tacnode}
		\label{figure: (2,2)+(1,0)+(0,1)illustrateH}
	\end{figure}

After suitably shrinking $\calU$, we may assume that for any $S$ in $\calM$ the number of discriminant points of $p\colon S\to \PP^1$ reaches the maximum. This gives rise to an algebraic map $\calM \to \calM_{\mu,H}$. The following result can be regarded as a family version of Theorem \ref{theorem: surface S to DM cohomology}.

\begin{prop}
\label{proposition: Monodromy from surfaces to DM}
If $|A|=\dim^{1,1}_\chi(S)+3=n+3$ and the monodromy of $\LL$ around each point of $A$ is not the identity, then the map $\calM \to \calM_{\mu,H}$ is generically finite. Denote by $k$ the degree. The monodromy group $\Gamma$ arising from surfaces $S$ is a finite index subgroup of $\Gamma_{\mu,H}$ with index dividing $k$.
\end{prop}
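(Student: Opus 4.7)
The plan is to split the argument into two parts: first establishing that $f\colon \calM \to \calM_{\mu,H}$ is generically finite of degree $k$, and then relating the monodromy groups $\Gamma$ and $\Gamma_{\mu,H}$ through the induced map on fundamental groups.

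For generic finiteness, I would promote the pointwise isomorphism of Theorem \ref{theorem: surface S to DM cohomology} to a family-level identification of variations of Hodge structure. The isomorphism $H^2_\chi(S,\QQ[\zeta_d])\cong H^1(\PP^1-A,\LL)$ is constructed from the Leray spectral sequence of $p\colon S\to \PP^1$ together with the character decomposition under $C_d$, both of which are natural in the base. Hence it globalizes to a $\QQ[\zeta_d]$-VHS isomorphism over $\calM$ between the ball-type VHS coming from cyclic covers and the $f$-pullback of the Deligne--Mostow local-system VHS on $\calM_{\mu,H}$. Consequently, after choosing compatible base points, the period maps fit into a diagram $\widetilde{\calP}=\widetilde{\calP_{\mu,H}}\circ\widetilde{f}$, where $\widetilde{f}$ is a lift of $f$ to the relevant covers. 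Proposition \ref{Proposition: local Torelli} says $\widetilde{\calP}$ is locally biholomorphic, and the analogous local-isomorphism statement for the Deligne--Mostow period map is standard. Therefore $\widetilde{f}$ is itself a local isomorphism, so $f$ is generically \'etale. Since $\dim\calM = n = \dim\calM_{\mu,H}$, the map $f$ is generically finite; let $k$ denote its degree.

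For the monodromy comparison, the family identification of VHSs immediately yields $\rho=\rho_{\mu,H}\circ f_*$ as representations of $\pi_1(\calM)$ into $\PU(1,n)$, so $\Gamma=\rho_{\mu,H}(f_*\pi_1(\calM))\subset \Gamma_{\mu,H}$. To bound the index, restrict to a Zariski open $\calM^\circ\subset \calM$ over which $f$ is finite \'etale of degree $k$ onto its image $\calM^\circ_{\mu,H}$; then $f_*\pi_1(\calM^\circ)$ has index exactly $k$ in $\pi_1(\calM^\circ_{\mu,H})$. Using that $\pi_1$ of a Zariski open subset of a smooth irreducible variety surjects onto the $\pi_1$ of the ambient variety, the image $f_*\pi_1(\calM)\subset \pi_1(\calM_{\mu,H})$ has index dividing $k$. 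Applying $\rho_{\mu,H}$ can only decrease the index, so $[\Gamma_{\mu,H}:\Gamma]$ divides $k$.

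The principal technical point is the first step: promoting the isomorphism of Theorem \ref{theorem: surface S to DM cohomology} from a single fiber to the whole family $\calM$, while keeping track of the Hermitian rescaling factor. Naturality of Leray spectral sequences, the cyclic-group action, and the isomorphism $H^1(\PP^1-A,\LL)\cong H^1_c(\PP^1-A,\LL)$ provided by Proposition \ref{proposition: deligne mostow} (valid thanks to the nontrivial monodromy hypothesis at every point of $A$) should make this go through, but one must check that the identification intertwines the two Gauss--Manin connections; once this is in hand, the rest of the argument is essentially formal.
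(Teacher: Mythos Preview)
Your proposal is correct and follows essentially the same route as the paper: globalize Theorem \ref{theorem: surface S to DM cohomology} to identify the two variations of Hodge structure over $\calM$, use local Torelli on both sides to conclude that $\calM\to\calM_{\mu,H}$ is a local isomorphism (hence generically finite of some degree $k$), and then pass to a Zariski open where the map is an honest degree-$k$ covering to compare fundamental groups and their images under $\rho_{\mu,H}$. The paper phrases the last step slightly differently---it interposes the subgroup $\rho_{\mu,H}^{-1}(\Gamma)$ between $\pi_1(\calM)$ and $\pi_1(\calM_{\mu,H})$ and uses the tower formula to get divisibility---which is marginally sharper than your phrase ``can only decrease the index,'' but the content is the same and the divisibility conclusion is what you actually need and obtain.
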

	
\begin{proof}
From Theorem \ref{theorem: surface S to DM cohomology}, we have isomorphism $H^2_\chi(S,\QQ[\zeta_d])\cong H^1(\PP^1-A, \LL_\mu)$ of Hodge structures. So the local period map from $\calP\colon \widetilde{\calM} \to \BB^n$ factors through $\calP_{\mu,H}\colon \widetilde{\calM_{\mu,H}}\to \BB^n$. Both maps are local isomorphisms. So $\widetilde{\calM} \to \widetilde{\calM_{\mu,H}}$ and $\calM\to \calM_{\mu,H}$ are local isomorphisms. Then $\calM\to \calM_{\mu,H}$ is generically finite of degree $k=\deg(\pi)$. 

We shrink $\calM$ and $\calM_{\mu,H}$ to make the finite map $\calM\to \calM_{\mu,H}$ a covering map. Then $\pi_1(\calM)\to \pi_1(\calM_{\mu,H})$ is injective with index $k$. The map $\rho$ is equal to the composition of 
\[
\pi_1(\calM)\hookrightarrow \pi_1(\calM_{\mu,H}) \xrightarrow{\rho_{\mu,H}} \PU(1,n).
\]
There is the group $\rho_{\mu,H}^{-1}(\Gamma)$ between $\pi_1(\calM)$ and $\pi_1(\calM_{\mu,H})$, and the index of $\Gamma<\Gamma_{\mu,H}$ is equal to that of $\rho_{\mu,H}^{-1}(\Gamma)<\pi_1(\calM_{\mu,H})$. Therefore, the index of $\Gamma$ in $\Gamma_{\mu,H}$ divides $k$.
\end{proof}

For two such subgroups $\Gamma_1, \Gamma_2$ of $\PU(1,n)$, we simply say that they are commensurable if they are commensurable in $\PU(1,n)$. For a type $T$ in the classification of Proposition \ref{proposition: types}, let $\mu, \nu$ be the two Deligne--Mostow data associated with the two projections $S\to \PP^1$ for a generic $D$ of type $T$ (according to calculations in \S\ref{section: monodromy of fibrations}). Let $H_1, H_2$ be the associated groups of permutations for $\mu, \nu$ respectively. We have the following commensurability result relating $\Gamma_{\mu,H_1}$ and $\Gamma_{\nu, H_2}$.
 	
\begin{thm}
\label{theorem: main}
Two monodromy groups $\Gamma_{\mu, H_2}$ and $\Gamma_{\nu, H_2}$ are commensurable. More precisely, up to conjugation in $\PU(1,n)$, they share a common finite-index subgroup $\Gamma$ from monodromy representations arising from surfaces $S$. Explicit commensurability relations obtained from this are listed in Table \ref{table: relation from geo}. 
\end{thm}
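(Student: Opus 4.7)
The plan is to apply Proposition \ref{proposition: Monodromy from surfaces to DM} separately to the two projections $p_1, p_2 \colon S \to \PP^1$ on a common surface $S$, and use the common moduli space $\calM$ of such surfaces to exhibit a shared finite-index subgroup inside both $\Gamma_{\mu,H_1}$ and $\Gamma_{\nu,H_2}$.

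First I would verify that, for each configuration $T$ in the list of Proposition \ref{proposition: types}, the two fibrations individually satisfy the hypotheses of Proposition \ref{proposition: Monodromy from surfaces to DM}: namely, on a Zariski open subset of $\calM$, the discriminant set $A_i$ of $p_i$ has cardinality $n+3 = \dim H^{1,1}_\chi(S) + 3$, and the monodromy of the local system $(R^1 (p_i)_* \QQ[\zeta_d])_\chi$ around each point of $A_i$ is nontrivial. Both checks are finite, using the explicit monodromy formulas from Section \ref{section: monodromy of fibrations} (the degree-two ramification, nodal, vertical line, and tacnode local models) to read off the weights $\mu$ and $\nu$ attached to the two fibrations, and confirming that no weight is an integer. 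This simultaneously identifies the tuples $\mu, \nu$ and the natural permutation groups $H_1, H_2$ that preserve the types of branch points.

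Next, Proposition \ref{proposition: Monodromy from surfaces to DM} gives two generically finite maps $\calM \to \calM_{\mu, H_1}$ and $\calM \to \calM_{\nu, H_2}$, of degrees $k_1 = \deg \pi_1$ and $k_2 = \deg \pi_2$ respectively. By Theorem \ref{theorem: surface S to DM cohomology}, the common local period map $\widetilde{\calM} \to \BB^n$ factors, after appropriate rescalings of Hermitian forms, through both $\widetilde{\calM_{\mu, H_1}} \to \BB^n$ and $\widetilde{\calM_{\nu, H_2}} \to \BB^n$. Since both factorizations are local biholomorphisms by the local Torelli result Proposition \ref{Proposition: local Torelli}, the monodromy group $\Gamma$ arising from the family of surfaces over $\calM$ embeds, up to conjugation in $\PU(1,n)$, as a finite-index subgroup of $\Gamma_{\mu, H_1}$ with index dividing $k_1$, and simultaneously as a finite-index subgroup of $\Gamma_{\nu, H_2}$ with index dividing $k_2$. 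Commensurability of $\Gamma_{\mu, H_1}$ and $\Gamma_{\nu, H_2}$ in $\PU(1,n)$ follows immediately.

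The main obstacle is bookkeeping: the conjugation in $\PU(1,n)$ identifying the two copies of $\Gamma$ must genuinely come from a single ambient Hermitian space $H^2_\chi(S, \QQ[\zeta_d])$, and one has to track the rescaling factor of the form $\tfrac{1-\alpha_1\alpha_2}{(1-\alpha_1)(1-\alpha_2)}$ in Theorem \ref{theorem: surface S to DM cohomology} on each side so that both Hermitian identifications are compatible up to positive real scalars (which are harmless for passing to $\PU(1,n)$). The remaining work is then the row-by-row verification in Table \ref{table: relation from geo}: for each of the twelve types of Proposition \ref{proposition: types}, compute $\mu, \nu, H_1, H_2$ from the monodromy formulas of Section \ref{section: monodromy of fibrations}, and compute the degrees $k_1, k_2$ of the two projection maps from $\calM$ to the corresponding Deligne--Mostow moduli spaces by comparing GIT quotients. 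This recovers the classical relations of Theorem \ref{theorem: deligne mostow sauter} as specific rows and produces the new relations of Theorem \ref{theorem: dimension 3}.
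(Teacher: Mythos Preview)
Your proposal is correct and follows essentially the same approach as the paper: apply Proposition \ref{proposition: Monodromy from surfaces to DM} separately to the two projections of $S$ to $\PP^1$, so that the monodromy group $\Gamma$ of the family over $\calM$ sits as a finite-index subgroup in both $\Gamma_{\mu,H_1}$ and $\Gamma_{\nu,H_2}$. The paper's proof is in fact just that one sentence, with the hypothesis checks and table computations relegated to a subsequent remark and to Section \ref{section: monodromy of fibrations}; your concern about the rescaling factor is unnecessary since, as you note, it disappears upon passing to $\PU(1,n)$.
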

\begin{proof}
The theorem follows by applying Proposition \ref{proposition: Monodromy from surfaces to DM} to two projections of $S$ to $\PP^1$.
\end{proof}
	
\begin{rmk}
    Direct calculations (of discriminant loci $A$, monodromy of $\LL$ from \S\ref{section: monodromy of fibrations} and Hodge numbers of $S$ by Proposition \ref{Proposition: local Torelli}) show that the assumptions, $|A|=\dim H^{1,1}_\chi(S)+3$ and the monodromy of $\LL$ around each point of $A$ not being identity, are satisfied for all cases listed in Proposition \ref{proposition: types}. So we can remove them from Theorems \ref{theorem: surface S to DM cohomology} and \ref{theorem: main}.
\end{rmk}

Next we discuss the generalization of Theorem \ref{theorem: main} when the branching divisor degenerates to non-normal crossing singularities. When two divisors $D_j$ and $D_k$ in Proposition \ref{proposition: types} are tangent at a tacnode, the Hodge structure of $H^2_\chi(S)$ is not necessarily of ball type, since the branching divisor is no longer normal crossing. Two successive blowups give the following proposition.
\begin{prop}[Tacnode singularity]
 Theorem \ref{theorem: main} still holds when $D_j$ and $D_k$ are tangent with weights satisfying ${1\over 2}<{a_j\over d}+{a_k\over d}\leq 1$, and all other singularities of $D$ are normal crossing.
\end{prop}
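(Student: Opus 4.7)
The plan is to reduce the tacnode situation to the normal crossing setting of Theorem \ref{theorem: main} by resolving $D_j\cap D_k$ through two successive blowups, then to verify that the two inputs used in the proof of that theorem—the ball-type structure of $H^2_\chi$ of the cyclic cover, and the generic finiteness of the moduli maps to $\calM_{\mu,H_1}$ and $\calM_{\nu,H_2}$—remain intact. Let $p=D_j\cap D_k$ be the tacnode. A local calculation (model $D_j\colon y=0$, $D_k\colon y=x^2$) shows that a first blow-up $\pi_1\colon \widetilde X_1\to \PP^1\times \PP^1$ at $p$ produces smooth proper transforms $\widetilde D_j,\widetilde D_k$ meeting the exceptional divisor $E_1$ transversally at a common point $p_1$, so three smooth branches still meet at $p_1$. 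A second blow-up $\pi_2\colon \widetilde X_2\to \widetilde X_1$ at $p_1$ separates these three branches into a chain: the new exceptional curve $E_2$ meets $\widetilde{\widetilde D}_j,\widetilde{\widetilde D}_k$ and the proper transform $\widetilde E_1$ transversally at three distinct points. The total pullback to $\widetilde X_2$ of $a_jD_j+a_kD_k$ becomes
\[a_j\widetilde{\widetilde D}_j+a_k\widetilde{\widetilde D}_k+(a_j+a_k)\widetilde E_1+2(a_j+a_k)E_2,\]
so the cyclic cover $\widetilde S\to \widetilde X_2$ is now branched along a simple normal crossing divisor and its cohomology is a pure $\QQ$-Hodge structure.

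Next I would check that $H^2_\chi(\widetilde S,\QQ[\zeta_d])$ is of ball type. The hypothesis $1/2<(a_j+a_k)/d\le 1$ forces the reduced weight of $\widetilde E_1$ to be $(a_j+a_k)\bmod d\in (d/2,d)$ and the reduced weight of $E_2$ to be $2(a_j+a_k)-d\in (0,d)$ (the boundary case $a_j+a_k=d$ being handled by the corresponding divisor simply dropping out of the branch locus). Using the formula $K_{\widetilde X_2}=\pi^*K_{\PP^1\times\PP^1}+\widetilde E_1+2E_2$ together with careful bookkeeping of the fractional parts $\{(a_j+a_k)/d\}$ and $\{2(a_j+a_k)/d\}=2(a_j+a_k)/d-1$, the Esnault--Viehweg formula (Proposition \ref{Proposition: Esnault--Viehweg}) on $\widetilde X_2$ collapses to the one on $\PP^1\times\PP^1$: the contributions of $\widetilde E_1$ and $E_2$ to the canonical bundle exactly cancel those coming from the twist, yielding $h^{2,0}_\chi(\widetilde S)=1$ and $h^{0,2}_\chi(\widetilde S)=0$ as in Proposition \ref{proposition: Hodge numbers for surface S}. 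This Esnault--Viehweg bookkeeping on $\widetilde X_2$ is the main obstacle, and the sharp weight condition $1/2<(a_j+a_k)/d\le 1$ is precisely what makes both exceptional cancellations go through.

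Once ball-type has been established, the rest of the argument of Theorem \ref{theorem: main} runs without change. The monodromy of $\LL$ around the image of the tacnode under either projection $\widetilde S\to \PP^1$ has already been computed in the vertical and non-vertical tacnode subcases of \S\ref{section: monodromy of fibrations}, and the resulting weights are exactly the corresponding entries of the Deligne--Mostow tuples $\mu$ and $\nu$. Working in a nonempty Zariski open subset of the tacnode stratum of the parameter space, the period map yields local isomorphisms from the moduli $\calM$ of such surfaces to both $\calM_{\mu,H_1}$ and $\calM_{\nu,H_2}$, so Proposition \ref{proposition: Monodromy from surfaces to DM} furnishes a common finite-index subgroup of $\Gamma_{\mu,H_1}$ and $\Gamma_{\nu,H_2}$, proving the commensurability.
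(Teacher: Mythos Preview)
Your proposal is correct and follows the same route as the paper: resolve the tacnode by two successive blowups to obtain a simple normal crossing branch divisor on $\widetilde X$, apply Esnault--Viehweg there to recover the ball-type Hodge structure, and then rerun the argument of Theorem \ref{theorem: main} via the two fibrations $\widetilde S\to\PP^1$.

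One point of precision is worth flagging. You attribute both halves of the hypothesis $\tfrac12<\mu_j+\mu_k\le 1$ to the Esnault--Viehweg bookkeeping yielding $h^{2,0}_\chi(\widetilde S)=1$. In fact (and this is how the paper organizes it in Lemma \ref{lemma: blowup numerical inequality}) the lower bound alone already forces $h^{2,0}_\chi=1$ and ball type: one finds
\[
H^{2,0}_\chi(\widetilde S)\cong H^0\bigl(\widetilde X,\ \calO(\lceil\mu_j+\mu_k-1\rceil E_1+\lceil 2\mu_j+2\mu_k-2\rceil E_2)\bigr),
\]
which is one-dimensional as soon as $2\mu_j+2\mu_k>1$. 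The upper bound $\le 1$ is what pins this down to $H^0(\widetilde X,\calO_{\widetilde X})$ (both ceiling coefficients vanish), and it is precisely this identification that lets the Kodaira--Spencer argument of Proposition \ref{Proposition: local Torelli} transfer verbatim to $\widetilde X$. Your sentence ``the period map yields local isomorphisms'' is exactly this local Torelli statement on the tacnode stratum; you invoke it without justification, whereas the paper records it as the second assertion of Lemma \ref{lemma: blowup numerical inequality} and points to the same wedge-pairing proof.
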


\begin{proof}
Let $q$ be the tangent intersection point of $D_j$ and $D_k$, then the blowup of $X=\PP^1\times \PP^1$ at $q$ has an exceptional divisor $E$ intersecting with the strict transforms of $D_j$ and $D_k$ at one point $q^\prime$. Then the corresponding blowup of surface $S$ gives a cyclic cover given by normalization of equation
    \begin{equation*}
		z^d=(f_0)^{a_j+a_k}(\widetilde{f}_j)^{a_j} (\widetilde{f}_k)^{a_k}\Pi_{i\neq j,k}(f_i)^{a_i}.
\end{equation*}
where $f_0$ the defining section of $E$, and $(\widetilde{f}_j), (\widetilde{f}_k)$ the defining sections of strict transforms of $D_j$ and $D_k$. A second blowup at $q^\prime$ gives a surface $b\colon \widetilde{X}\to X$ with two exceptional divisors $E_1$ (from strict transform of $E$) and $E_2$. The corresponding blowups of $S$ give a cyclic cover $\widetilde{S}\to \widetilde{X}$ branching along $E_i$ and $\widetilde{D}_i$, the strict transforms of $D_i$. The multiplicity for $E_1$ is $a_j+a_k$, for $E_2$ is $2a_j+2a_k$, for $\widetilde{D}_i$ is $a_i$.

Write $\mu_i={a_i\over d}$. Then we have lemma:
\begin{lem}
\label{lemma: blowup numerical inequality}
The character space $H^{2,0}_\chi (\widetilde{S})$ has Hodge structure of ball-type if ${1\over 2}<\mu_{j}+\mu_{k}.$
Furthermore, the local Torelli (Proposition \ref{Proposition: local Torelli}) holds if	${1\over 2}<\mu_{j}+\mu_{k}\leq 1.$
\end{lem}

The proof of the lemma is straightforward application of Esnault-Viehweg formula and the details are as follows. 

The blowup $b\colon \widetilde{X}\to X$ relates the line bundles by
		\begin{enumerate}
			\item $K_{\widetilde{X}}\cong b^*(K_X)\otimes \calO(E_1+2E_2)$,
			\item $\calO(\widetilde{D}_{i})\cong b^*\calO({D}_{i})\otimes \calO(-E_1-2E_2)$, if $i\in \{j,k\}$,
			\item $\calO(\widetilde{D}_{i})\cong b^*\calO({D}_{i})$, if $i\not\in \{j,k\}$.
		\end{enumerate}
		We apply the Esnault-Viehweg formula in the following three cases.
		\begin{enumerate}
			\item If $2\mu_{j}+2\mu_{k}\not\in \ZZ$, then
			\begin{equation*}
				H^{2,0}_\chi(\widetilde{S})=H^0(\widetilde{X}, K_{\widetilde{X}}\otimes \calO(E_1+E_2+\sum_j \widetilde{D}_j)\otimes b^*(L^{-1})\otimes \calO([\mu_{j}+\mu_{k}]E_1+[2\mu_{j}+2\mu_{k}]E_2))
			\end{equation*}
			The right-hand side is equal to
			\begin{equation*}
				H^0(\widetilde{X}, \calO([\mu_{j}+\mu_{k}]E_1+([2\mu_{j}+2\mu_{k}]-1)E_2)).
			\end{equation*}
			
			\item If $2\mu_{j}+2\mu_{k}\in \ZZ$ and $\mu_{j}+\mu_{k}\not\in \ZZ$, then
			\begin{equation*}
				H^{2,0}_\chi(\widetilde{S})=H^0(\widetilde{X}, K_{\widetilde{X}}\otimes \calO(E_1+\sum_j \widetilde{D}_j)\otimes b^*(L^{-1})\otimes \calO([\mu_{j}+\mu_{k}]E_1+(2\mu_{j}+2\mu_{k})E_2))
			\end{equation*}
			The right hand side is equal to
			\begin{equation*}
				H^0(\widetilde{X}, \calO([\mu_{j}+\mu_{k}]E_1+(2\mu_{j}+2\mu_{k}-2)E_2)).
			\end{equation*}
			
			\item If $\mu_{j}+\mu_{k}\in \ZZ$, then
			\begin{equation*}
				H^{2,0}_\chi(\widetilde{S})=H^0(\widetilde{X}, K_{\widetilde{X}}\otimes \calO(\sum_j \widetilde{D}_j)\otimes b^*(L^{-1})\otimes \calO((\mu_{j}+\mu_{k})E_1+(2\mu_{j}+2\mu_{k})E_2))
			\end{equation*}
			The right-hand side is equal to
			\begin{equation*}
				H^0(\widetilde{X}, \calO((\mu_{j}+\mu_{k}-1)E_1+(2\mu_{j}+2\mu_{k}-2)E_2)).
			\end{equation*}
		\end{enumerate}
	In summary, we have $$H^{2,0}_\chi(\widetilde{S})=H^0(\widetilde{X},\calO(\lceil\mu_{j}+\mu_{k}-1\rceil E_1+\lceil 2\mu_{j}+2\mu_{k}-2\rceil E_2)).$$
		
		So $\dim H^{2,0}_\chi(\widetilde{S})\leq 1$ and the equality is maintained if and only if $2\mu_{j}+2\mu_{k}>1$. The same formula shows that  $\dim H^{2,0}_{\overline{\chi}}(\widetilde{S})=0$. So the Hodge structure on $H^2_\chi(\widetilde{S})$ is still of ball type.
   
   Under the assumption ${1\over 2}<{a_j\over d}+{a_k\over d}\leq 1$, we have $H^{2,0}_\chi (\widetilde{S})\cong H^0(\widetilde{X}, \calO)$,  The local Torelli holds in this case by the same formula as in Proposition \ref{Proposition: local Torelli}. This completes the proof of Lemma \ref{lemma: blowup numerical inequality}.
   
   Then the two fibrations $\widetilde{S}\to \PP^1$ provide the same commensurability result in Theorem \ref{theorem: main}.
\end{proof}

The same argument shows the following generalization.
\begin{prop}
    Theorem \ref{theorem: main} holds when $D_j$ and $D_k$ are tangent, $D_i$ passes through the tangent point with weights inequalities
    $$
    2<{2a_j\over d}+{2a_k\over d}+{a_i\over d}\leq 3\text{ and }1<{a_j\over d}+{a_k\over d}+{a_i\over d}\leq 2
    $$
  and other singularities of $D$ are normal crossing.
\end{prop}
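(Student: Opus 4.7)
My plan is to follow the same two-step blowup strategy as in the preceding tacnode proposition, adapted to accommodate the third divisor $D_i$ passing through the tangent point $q$ of $D_j$ and $D_k$. Because $D_j$ and $D_k$ share a tangent direction at $q$ while $D_i$ is transverse to them, after the first blowup $b_1$ at $q$ (with exceptional divisor $E$), the strict transforms of $D_j, D_k$ meet $E$ at a common point $q'$, whereas the strict transform of $D_i$ meets $E$ transversally at a distinct point. The second blowup $b_2$ at $q'$ produces $b\colon \widetilde{X}\to X$ with new exceptional divisor $E_2$ and strict transform $E_1$ of $E$; at this stage the total branch divisor has only normal crossing singularities, and the cyclic cover equation can be lifted to a normalized cover $\widetilde{S}\to \widetilde{X}$ satisfying the hypotheses of Esnault--Viehweg.

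The key computation will be the pullback identities
\[
b^*D_j = \widetilde{D}_j + E_1 + 2E_2,\quad b^*D_k = \widetilde{D}_k + E_1 + 2E_2,\quad b^*D_i = \widetilde{D}_i + E_1 + E_2,
\]
which follow from the tangency of $D_j, D_k$ and the transversality of $D_i$ at $q$. Hence the defining equation of $\widetilde{S}$ has exponent $a_j+a_k+a_i$ on $E_1$ and $2a_j+2a_k+a_i$ on $E_2$. The two numerical hypotheses say exactly that the fractional parts of these exponents divided by $d$ are $\mu_j+\mu_k+\mu_i-1$ and $2\mu_j+2\mu_k+\mu_i-2$. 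Running Esnault--Viehweg for the character $\chi$ on $\widetilde{X}$, the coefficients of $E_1, E_2$ in the resulting line bundle become $3-\mu_j-\mu_k-\mu_i$ and $5-2\mu_j-2\mu_k-\mu_i$, which match exactly the coefficients of $E_1, E_2$ in $b^*\big((1-\mu_j)D_j+(1-\mu_k)D_k+(1-\mu_i)D_i\big)$; the whole line bundle therefore collapses to $b^*(K_X\otimes \calO(D)\otimes L^{-1})=b^*\calO_X=\calO_{\widetilde{X}}$, giving $\dim H^{2,0}_\chi(\widetilde{S})=1$, while the parallel $\overline{\chi}$-computation pulls back $\calO_X(-1,-1)$ and yields $\dim H^{0,2}_\chi(\widetilde{S})=0$.

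With the ball-type Hodge structure on $H^2_\chi(\widetilde{S},\QQ[\zeta_d])$ established, local Torelli follows by the same Kodaira--Spencer argument as in Proposition \ref{Proposition: local Torelli}, after which the two fibrations $\widetilde{S}\to \PP^1$ produce commensurable Deligne--Mostow monodromy groups exactly as in Theorem \ref{theorem: main}. The main obstacle I anticipate is the boundary cases $\mu_j+\mu_k+\mu_i\in\ZZ$ or $2\mu_j+2\mu_k+\mu_i\in\ZZ$, where the corresponding exceptional divisor is unramified in the cyclic cover and must be omitted from the branch locus before applying Esnault--Viehweg. I would address this by a case split on the integrality of each of the two quantities, mirroring the three-case analysis at the end of Lemma \ref{lemma: blowup numerical inequality}, and checking in each case that the line bundle simplifies to $\calO\big(\lceil\mu_j+\mu_k+\mu_i-1\rceil E_1+\lceil 2\mu_j+2\mu_k+\mu_i-2\rceil E_2\big)$, whose space of global sections on $\widetilde{X}$ remains one-dimensional under the stated upper bounds.
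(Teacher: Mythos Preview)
Your approach is exactly the one the paper intends: the paper's own proof consists of the single sentence ``The same argument shows the following generalization,'' and you have correctly carried out that argument. The two-step blowup, the pullback identities $b^*D_j=\widetilde{D}_j+E_1+2E_2$, $b^*D_k=\widetilde{D}_k+E_1+2E_2$, $b^*D_i=\widetilde{D}_i+E_1+E_2$, the resulting multiplicities $a_j+a_k+a_i$ on $E_1$ and $2a_j+2a_k+a_i$ on $E_2$, and the collapse of the Esnault--Viehweg line bundle to $b^*\calO_X=\calO_{\widetilde X}$ in the generic case are all right, as is the observation that local Torelli and the two fibrations then run as before.

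One small arithmetic slip: your proposed unified formula for the boundary cases should read
\[
\calO\big(\lceil\mu_j+\mu_k+\mu_i-2\rceil\,E_1+\lceil 2\mu_j+2\mu_k+\mu_i-3\rceil\,E_2\big),
\]
not with $-1$ and $-2$ inside the ceilings. (The extra divisor $D_i$ contributes one more unit to the multiplicity along each of $E_1,E_2$, which shifts the integer thresholds by one relative to Lemma~\ref{lemma: blowup numerical inequality}.) With your stated formula the coefficients would be $1$ rather than $0$ on the open range $1<\mu_j+\mu_k+\mu_i<2$, and although $h^0$ would still equal $1$, the line bundle would not be literally $\calO_{\widetilde X}$, which is what the local Torelli step needs. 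Once you correct the shift, the case split goes through exactly as in the tacnode lemma.
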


\begin{rmk}[Remark on real weights]
    In \cite{deligne1986monodromy}, Deligne--Mostow also defined monodromy groups $\Gamma_\mu$ for $\mu=(\mu_1,\cdots,\mu_{n+3})$ with real weights $\mu_j\in (0,1)$. The commensurability result in Theorem \ref{theorem: main} still holds for real weights by constructions similar to Deligne--Mostow. Instead of cyclic covers over $\PP^1\times\PP^1$ defined by equation \eqref{equation: define cyclic cover}, we consider rank-one unitary local systems $\LL$ on $U=\PP^1\times \PP^1-\bigcup_j D_j$ with monodromy $\exp(2\pi \sqrt{-1} ({\mu^\prime_j}))$ around each divisor $D_j$, where $\mu^\prime_j$ are real numbers in $(0,1)$ satisfying the same conditions in Proposition \ref{proposition: Hodge numbers for surface S} as ${a_j\over d}$. Then the cohomology group $H^2(U, \LL)$ has a $\CC$-Hodge structure of ball type. The same arguments show that 
 this Hodge structure is the same as those defined in \cite{deligne1986monodromy}. And the monodromy group arising from varying $D_i$ forms a finite index subgroup in $\Gamma_\mu$ and $\Gamma_\nu$.
\end{rmk}

\section{Commensurability Invariant: Conformal Classes}
\label{section: arithmetic lattices}
In this and the next section, we discuss commensurability invariants of Deligne--Mostow monodromy groups $\Gamma_\mu$ by conformal classes of Hermitian forms. 

\subsection{Hermitian Forms and Projective Unitary Groups} 
We first collect some useful facts about Hermitian forms and the corresponding projective unitary groups. The goal is to prove the equivalence between conformal classes of Hermitian forms and isomorphism classes of projective unitary groups.

Let $F$ be a totally real number field and its quadratic extension $K$ be a CM field. The Galois group $\Gal(K/F)$ is generated by $\iota\colon a\mapsto \overline{a}$. A Hermitian form $h$ on a $K$-vector space $V$ is a map
\begin{equation*}
h\colon V\times V\to K 
\end{equation*}
such that $h(x,y)=\overline{h(y,x)}$ and $h$ is linear in the first component. We further assume $h$ is nondegenerate. For each real embedding $\sigma\colon F\to \RR$, denote by $\sgn_\sigma(h)=(p.q)$ the signature of the induced Hermitian form on $V\otimes \CC$ with $p$-positive and $q$-negative index of inertia. For any $K$-basis $v_1,\cdots,v_m$ of $V$, the determinant of Gram-Schmidt matrix $(h(v_i, v_j))$ determines a well-defined element $\det h\in F^\times /N_{K/F}(K^\times)$.

Similarly, if $h$ satisfies $h(x,y)=-\overline{h(y,x)}$, then we call $h$ a skew-Hermitian form. We can interchange skew-Hermitian and Hermitian forms by multiplying $h$ by $x-\overline{x}$ for any $x\in K-F$.

We have the following proposition that describes the isometry classes of $K$-Hermitian spaces. See \cite{landherr1935aquivalenz} or \cite[Page 268, Example 5]{Jacobson1940anoteonhermitianforms}
\begin{prop}[{\cite{landherr1935aquivalenz}}]
\label{prop: isometric classes of hermtian forms}
    Two nondegenerate $K$-Hermitian spaces $(V_1, h_1)$ and $(V_2, h_2)$ of the same dimension are isometric if and only if $sgn_\sigma(h_1)=sgn_\sigma(h_2)$ for all real embeddings $\sigma$ of $F$ and $\det(h_1)=\det(h_2)\in F^\times /N_{K/F}(K^\times)$.
\end{prop}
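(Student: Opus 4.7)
The plan is to follow the classical route of Landherr. Necessity is immediate: a $K$-linear isometry $\phi \colon V_1 \to V_2$ carries any basis to one with the same Gram matrix, so the determinants agree literally (not merely modulo $N_{K/F}(K^\times)$), and extending scalars along each real embedding $\sigma \colon F \to \RR$ preserves signatures. The substance of the argument lies in proving sufficiency, and I would treat that by a diagonalization and induction on dimension.

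First I would show that every nondegenerate Hermitian form admits an orthogonal basis, reducing to the claim that there exists $v \in V$ with $h(v,v) \ne 0$. If all diagonal values vanished, polarization would give $h(v,w) + \iota(h(v,w)) = 0$ for all $v,w$, i.e.\ $h$ would take values in the $(-1)$-eigenspace of $\iota$ on $K$; combining this with $K$-sesquilinearity (take $w \mapsto \lambda w$ for $\lambda \in K \setminus F$) contradicts nondegeneracy. Splitting off a one-dimensional anisotropic summand and iterating yields $h \cong \langle a_1, \dots, a_n \rangle$ with $a_i \in F^\times$.

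The core technical step is a two-dimensional lemma: if $a, b, c, d \in F^\times$ satisfy $ab \equiv cd \pmod{N_{K/F}(K^\times)}$ and the forms $\langle a, b \rangle$, $\langle c, d \rangle$ have matching signature at every real place $\sigma$ of $F$, then they are isometric. Concretely, this reduces to a representation statement — that $\langle a, b \rangle$ represents $c$, i.e.\ the equation $a x \iota(x) + b y \iota(y) = c$ has a solution in $K^2$. Granted representability, pick a vector realizing $c$ as its norm value, split it off, and the orthogonal complement is one-dimensional with a single invariant fixed by the determinant condition, namely $d$ modulo $N_{K/F}(K^\times)$, settled by the one-variable case $\langle a \rangle \cong \langle b \rangle \iff a/b \in N_{K/F}(K^\times)$.

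Finally, I would conclude by induction on $\dim V$, using Witt cancellation for Hermitian forms (available as soon as anisotropic vectors exist, via the standard reflection construction): adjust the first two diagonal entries of $\langle a_1,\dots,a_n\rangle$ and $\langle b_1,\dots,b_n\rangle$ using the two-dimensional lemma so that they match, cancel, and iterate. The principal obstacle is the two-dimensional representation lemma, where one must verify that the only obstructions to representing an element are the archimedean signatures and the determinant class modulo norms. This is precisely where the CM hypothesis enters: at each non-archimedean place $v$ of $F$, either $v$ splits in $K$, in which case the local Hermitian form is automatically universal, or $K_w/F_v$ is a quadratic extension and local Hermitian forms are classified purely by rank and determinant (no further Hasse-type invariants); either way the real signatures remain the sole global obstruction, which is controlled by hypothesis.
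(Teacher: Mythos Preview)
The paper does not supply its own proof of this proposition; it quotes it as Landherr's classical theorem with a citation. Your outline is essentially the classical route, and the main ingredients --- diagonalization, local classification of Hermitian forms, the Hasse principle, and Witt cancellation --- are the right ones.

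There is, however, a genuine gap in the induction step as you have phrased it. You propose to ``adjust the first two diagonal entries of $\langle a_1,\dots,a_n\rangle$ and $\langle b_1,\dots,b_n\rangle$ using the two-dimensional lemma so that they match.'' But the two-dimensional lemma requires $\langle a_1,a_2\rangle$ and $\langle b_1,b_2\rangle$ to have matching signatures at every real place and equal determinant class, and there is no reason the first two diagonal entries of a diagonalization should satisfy this: for instance $\sigma(a_1),\sigma(a_2)$ could both be positive while $\sigma(b_1)$ is negative at some $\sigma$, and then $\langle a_1,a_2\rangle$ cannot even represent $b_1$ over $F_\sigma$. The repair is to run the representation argument for the \emph{full} form: show that $h_1=\langle a_1,\dots,a_n\rangle$ represents $b_1$. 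This is again a local-global statement, equivalent to isotropy of $h_1\perp\langle -b_1\rangle$. At archimedean places the matching signatures of $h_1$ and $h_2$ guarantee that $h_1$ represents the sign of $b_1$; at non-archimedean places one uses that a Hermitian form of rank $\geq 3$ over a local field (with $K_w/F_v$ a field) is always isotropic, hence universal, and the split places are trivial. The Hasse principle for isotropy of Hermitian forms then yields global representability, after which you split off $\langle b_1\rangle$, invoke Witt cancellation, check that the residual invariants still match, and induct. Your two-dimensional lemma is then precisely the base case $n=2$ (where the rank-$3$ local isotropy fact is unavailable and the determinant hypothesis must be used directly).
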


We say that two nondegenerate $K$-Hermitian spaces $(V_1, h_1)$ and $(V_2, h_2)$ are $F$-conformal if there exists a $K$-vector space isomorphism $f\colon V_1\to V_2$ such that $h_1=\lambda \cdot f^*(h_2)$ for some constant $\lambda\in F^\times$. We have the characterization of conformal classes as follows.

\begin{prop}
\label{prop: conformal classes of hermitian forms}
 Let $(V_1, h_1)$ and $(V_2, h_2)$ be two nondegenerate $K$-Hermitian spaces of dimension $m$. Then $(V_1, h_1)$ and $(V_2, h_2)$ are $F$-conformal if and only if 
 \begin{enumerate}
     \item When $m$ is odd, $\sgn_\sigma(h_1)=\pm \sgn_\sigma(h_2)$ for all real embeddings $\sigma$ of $F$.
     \item When $m$ is even, $\sgn_\sigma(h_1)=\pm \sgn_\sigma(h_2)$ for all real embeddings $\sigma$ of $F$ and $\det h_1=\det h_2\in F^\times/N_{K/F}(K^\times)$.
 \end{enumerate}
 Here $-\sgn_\sigma(h)$ is defined to be $(q, p)$ if $\sgn_\sigma(h)=(p,q)$.
\end{prop}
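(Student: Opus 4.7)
\noindent\emph{Proof strategy.} My plan is to reduce both directions to Landherr's classification (Proposition \ref{prop: isometric classes of hermtian forms}). $F$-conformality of $(V_1, h_1)$ and $(V_2, h_2)$ is equivalent to the existence of $\lambda \in F^\times$ such that $(V_1, h_1)$ and $(V_2, \lambda h_2)$ are isometric as $K$-Hermitian spaces, which by Landherr amounts to matching signatures at every real place and matching determinants in $F^\times/N_{K/F}(K^\times)$. I will first record how scaling by $\lambda \in F^\times$ affects these invariants: the signature at a real place $\sigma$ stays the same if $\sigma(\lambda)>0$ and reverses if $\sigma(\lambda)<0$, while the Gram determinant picks up a factor $\lambda^m$. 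This factor is trivial in $F^\times/N_{K/F}(K^\times)$ precisely when $m$ is even, since then $\lambda^m = N_{K/F}(\lambda^{m/2})$; when $m$ is odd, $\lambda^m \equiv \lambda$ modulo $N_{K/F}(K^\times)$.

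The forward direction is then immediate. For the backward direction, the main tool will be weak approximation in $F$, combined with a crucial consequence of the CM hypothesis: every real place $\sigma$ of $F$ ramifies in $K$, so $N_{K/F}(x) = x\cdot \iota(x)$ is $\sigma$-positive for all $x \in K^\times$, hence $N_{K/F}(K^\times)$ lies in the totally positive cone of $F^\times$, and the class of any element of $F^\times/N_{K/F}(K^\times)$ already determines its signs at every real place.

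In the even case, I will use weak approximation to produce $\lambda \in F^\times$ whose signs at each real place are prescribed by the signature hypothesis (namely $\sigma(\lambda)>0$ if $\sgn_\sigma(h_1)=\sgn_\sigma(h_2)$, and $\sigma(\lambda)<0$ if they are reverses; at places where $\sgn_\sigma(h_2) = (p,p)$ the sign is free). The determinants then agree automatically from the evenness of $m$ together with the hypothesis $\det h_1 = \det h_2$, so Landherr delivers the isometry.

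In the odd case, which is more delicate, I will instead take $\lambda$ to be any lift of $\det h_1\cdot(\det h_2)^{-1} \in F^\times/N_{K/F}(K^\times)$, and the task becomes verifying that the signs of such a $\lambda$ automatically satisfy the signature requirement. For this I will use the sub-lemma that a Hermitian form of signature $(p,q)$ at $\sigma$ has $\sigma(\det)$ of sign $(-1)^q$, so $\sigma(\det h_1/\det h_2)$ has sign $(-1)^{q_1-q_2}$; I will then check in both subcases $\sgn_\sigma(h_1)=\pm\sgn_\sigma(h_2)$ that this sign coincides with the required sign of $\sigma(\lambda)$, using essentially that $m$ odd forces $p-q$ to have the same parity as $m$, hence to be odd. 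With this, $\sgn_\sigma(\lambda h_2) = \sgn_\sigma(h_1)$ at every $\sigma$, and the determinants match by construction (since $\lambda^m \equiv \lambda$ modulo norms), so Landherr again concludes. The main obstacle will be precisely this sign-parity verification in the odd case, which ties together the signature hypothesis, the CM condition, and the parity of $m$; the even case is comparatively routine because the determinant condition decouples from the choice of $\lambda$.
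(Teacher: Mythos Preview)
Your proposal is correct and follows essentially the same route as the paper: reduce to Landherr, analyze how scaling by $\lambda\in F^\times$ affects signatures and determinants, use weak approximation in the even case to prescribe the signs of $\lambda$, and in the odd case set $\lambda$ equal to (a lift of) $\det h_1/\det h_2$ and verify via the parity argument $(-1)^{q_1-q_2}$ that the signatures then match automatically. If anything, you are slightly more careful than the paper in making explicit that the CM hypothesis forces $N_{K/F}(K^\times)$ into the totally positive cone, which is exactly what guarantees that the sign of $\sigma(\det h)$ is well-defined on norm classes; the paper uses this implicitly.
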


\begin{proof}
 We first prove the only if part. For any $\lambda\in F^\times$ and $K$-Hermitian form $(V, h)$, we have 
    \[
    \sgn_\sigma  (\lambda h)=\sgn(\sigma(\lambda))\sgn_\sigma(h).
    \]
So $\sgn_\sigma(\lambda h)=\pm \sgn_\sigma(h)$.
For determinants, we have \[
\det(\lambda h)=\lambda^m\det(h). 
\]
Notice that $N_{K/F}(\lambda)=\lambda^2$ for $\lambda\in F$.
When $m=2l+1$ is odd, $$\det(\lambda h)=\lambda N_{K/F}(\lambda^l)\det(h)=\lambda \det (h)\in F^\times/N_{K/F}(K^\times).$$
When $m=2l$ is even, \[\det(\lambda h)=N_{K/F}(\lambda^l)\det(h)=\det (h)\in F^\times/N_{K/F}(K^\times)\] in this case. 

Next we prove the if part. When $m$ is odd and $\sgn_\sigma(h_1)=\pm \sgn_\sigma(h_2)$ for all real embeddings $\sigma$ of $F$, we choose 
$$\lambda={\det (h_1)\over \det (h_2)}\in F^\times.$$
Then $\det (\lambda  h_2)=\det (h_1)\in F^\times/N_{K/F}(K^\times)$. We claim that $\sgn_\sigma(\lambda h_2)=\sgn_\sigma(h_1)$. For an odd-dimensional Hermitian space $(V,h)$, if $\sgn_\sigma(h)\in \{(p, q), (q, p)\}$, then $\sgn(\sigma(\det (h)))=(-1)^q$ or $(-1)^p$ and $(-1)^q\neq (-1)^p$. Hence $\sigma(\det (h))$ determines the choice of $\sgn_\sigma(h)$ in $\{(p, q), (q, p)\}$. Since $\det (\lambda  h_2)=\det (h_1)$ and $\sgn_\sigma(\lambda  h_2)=\pm \sgn_\sigma (h_1)$, we have $\sgn_\sigma(\lambda h_2)=\sgn_\sigma(h_1)$. So by Proposition \ref{prop: isometric classes of hermtian forms}, the Hermitian spaces $(V_1, h_1)$ and $(V_2, \lambda h_2)$ are isometric.

When $m$ is even, assume $\sgn_\sigma(h_1)=\pm \sgn_\sigma(h_2)$ for all real embeddings $\sigma$ of $F$ and $\det h_1=\det h_2\in F^\times/N_{K/F}(K^\times)$. Then $\det (\lambda h_2)=\det (h_2)=\det (h_1)\in F^\times/N_{K/F}(K^\times)$ for any $\lambda\in F^\times$. We choose $\lambda\in F^\times$ such that \[
\sgn(\sigma(\lambda))={\sgn_\sigma(h_1)\over \sgn_\sigma(h_2)}.
\]
This is possible because of the weak approximation theorem. Or denote by $\{\sigma_1, \cdots, \sigma_k\}$ the set of real embeddings of $F$ the embedding map, then the image of the map
\[
 F\to \RR^k, \lambda\mapsto (\sigma_i(\lambda))_{1\leq i\leq k}
\]
is dense in $\RR^k$. We choose $\lambda$ the element approximating the vector in $\RR^k$ with the desired the signature. So we have $\sgn_\sigma(h_1)=\pm \sgn_\sigma(\lambda h_2)$ for all real embeddings $\sigma$ of $F$. Hence $(V_1, h_1)$ is isometric to $(V_2, \lambda h_2)$.
\end{proof}

Denote by $\U(V,h)$ the unitary group consisting of linear transformations of $V$ preserving $h$, and $\PU(V,h)$ the corresponding projective group. Both are $F$-algebraic groups. We have the following exact sequence on the automorphism group of $\PU(V, h)$.

\begin{prop}
\label{prop: auto of PU}
Assume $m\geq 2$. Let $A_{m-1}$ be the Dynkin diagram of type $A$ with $m-1$ nodes. Then we have the following exact sequence of $F$-algebraic groups
\[
1\to \PU(V, h)\to \Aut(\PU(V, h))\to \Aut(A_{m-1})\to 1.
\]
When $m\geq 3$, there is an element of order two in $\Aut(\PU(V, h))(F)$ that maps to the generator of $\Aut(A_{m-1})(F)\cong \ZZ/2\ZZ$. When $m=2$, $\Aut(A_{m-1})(F)$ is trivial. The canonical map
\[ 
H^1(F,\PU(V,h)) \to H^1(F,{\Aut}(\PU(V,h)))
\]
is bijective when $m=2$ and is injective when $m\ge 3$. 
\end{prop}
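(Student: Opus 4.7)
My proof plan has three parts: establish the exact sequence via general structure theory; construct the rational outer involution explicitly; and then deduce the cohomology isomorphism from that splitting together with the twisting trick.

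For the exact sequence, I would invoke the general structural theorem for automorphism groups of connected adjoint semisimple algebraic groups: for any such $F$-group $G$ there is a canonical exact sequence of $F$-group schemes
\[
1\to G\to \Aut(G)\to \Out(G)\to 1,
\]
with $\Out(G)$ a finite \'etale $F$-form of $\Aut(\mathrm{Dyn}(G))$. Applied to $G=\PU(V,h)$, whose Dynkin diagram is $A_{m-1}$, the group $\Aut(A_{m-1})$ is trivial when $m=2$ and cyclic of order two when $m\geq 3$. In both cases it admits no nontrivial automorphisms, so the implicit Galois action on $\Out(G)$ is automatically trivial and $\Out(\PU(V,h))=\Aut(A_{m-1})$ as a constant $F$-group scheme. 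This settles the exact sequence, and when $m=2$ it makes both remaining claims trivial.

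For the rational involution when $m\geq 3$, I would fix a $K$-basis of $V$ in which $h$ has Gram matrix $H$ (so $\iota(H)^{\mathrm{T}}=H$) and consider
\[
\tau\colon g\mapsto H^{-1}\bigl(\iota(g)^{\mathrm{T}}\bigr)^{-1}H.
\]
A direct check shows that $\tau$ is an order-two $F$-algebraic automorphism of $\U(V,h)$ that descends to $\PU(V,h)$, and that after base change to $K$ it becomes, up to inner conjugation, the standard outer involution $g\mapsto (g^{\mathrm{T}})^{-1}$ of $\PGL_m$. Hence its image in $\Aut(A_{m-1})(F)$ is the nontrivial element, yielding the required section.

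For the cohomology statement, the long exact sequence of non-abelian Galois cohomology reads
\[
\Aut(\PU(V,h))(F)\to \Aut(A_{m-1})(F)\xrightarrow{\delta} H^1(F,\PU(V,h))\to H^1(F,\Aut(\PU(V,h)))\to H^1(F,\Aut(A_{m-1})).
\]
The section $\tau$ renders the first arrow surjective, so $\delta$ is trivial and the neutral fiber of the middle arrow is a singleton. Injectivity at the other classes would follow from the standard twisting argument: after twisting by any cocycle in $\PU(V,h)$ one obtains another adjoint $F$-group of type $A_{m-1}$ to which the same discussion applies. The main obstacle — and the point where I expect to rely most heavily on the MathOverflow input credited in the acknowledgments — is surjectivity, i.e.\ showing that every $F$-form of $\PU(V,h)$ is already an inner twist, so that no genuinely outer classes arise in $H^1(F,\Aut(\PU(V,h)))$. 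The strategy is that any $\Aut(\PU(V,h))$-valued cocycle can, thanks to the rational section $\tau$, be modified by a coboundary into one valued in the inner factor $\PU(V,h)$, which combined with the previous injectivity step yields the desired bijection of pointed sets.
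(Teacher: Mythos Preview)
Your overall strategy---structure theorem for $\Aut$ of adjoint groups, explicit $F$-rational outer involution, long exact sequence in non-abelian cohomology with twisting---is exactly the paper's. But your explicit involution is wrong: the unitary condition $\iota(g)^{\mathrm{T}}Hg=H$ rearranges to $(\iota(g)^{\mathrm{T}})^{-1}=HgH^{-1}$, so your $\tau(g)=H^{-1}(\iota(g)^{\mathrm{T}})^{-1}H=g$ for every $R$-point $g$ of $\U(V,h)$. You have written down the identity, not an outer automorphism. The paper instead uses entrywise Galois conjugation $A\mapsto\iota(A)$ on $\Res_{K/F}\GL(V)$, checks that it preserves $\U(V,h)$, and verifies it is outer by observing that the tautological representation (highest weight $\alpha_1$) is sent to its conjugate (highest weight $\alpha_{m-1}$), so the induced Dynkin automorphism is nontrivial when $m\geq 3$.

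On the cohomology side you are actually more careful than the paper, which simply writes the long exact sequence terminating in $1$ and concludes from surjectivity on $F$-points. Your twisting argument for injectivity is correct and is the substantive content needed later. Your proposed fix for surjectivity, however, does not work: an $F$-rational section $\Aut(A_{m-1})\to\Aut(\PU(V,h))$ is not a retraction onto $\PU(V,h)$, so it does not let you move an arbitrary $\Aut(\PU(V,h))$-valued cocycle into the inner factor by a coboundary (concretely, forms such as $\PGL_m$ itself, or projective unitary groups attached to quadratic extensions other than $K/F$, lie outside the image). In the paper's subsequent applications only the injectivity is ever invoked.
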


\begin{proof}
    The proof was given by Mikhail Borovoi in the answer to our question on Mathoverflow \cite{borovoi2022}. Since $\PU(V, h)$ is semisimple and of adjoint type, we have the exact sequence by the isomorphism theorem of reductive groups of adjoint type. 

First we fix a $K$-basis $v_1, v_2,\cdots, v_m$ of $(V, h)$. Then the Gram matrix of $(V, h)$ is $M_h=(h(v_i, v_j))$. Then each element in $\U(V, h)$ is represented by a matrix $A\in \GL(m,K)$ satisfying $A^T M_h \overline{A}=M_h$. The map $A\mapsto (\overline{M_h})^{-1}\overline{A^T}\overline{M_h}$ defines an $F$-algebraic group automorphism of $\U(V, h)$ and descends to an involution of $\PU(V, h)$. When $m\geq 3$, this defines an outer automorphism of $\U(V, h)$ and induces the involution of the Dynkin diagram $A_{m-1}$. So does the automorphism of $\PU(V, h)$. When $m=2$, the automorphism group $\Aut(A_1)$ is trivial. Applying $\Gal(\overline{F}/F)$-action, we obtain a long exact sequence
    \begin{eqnarray*}
1\to &\PU(V, h)(F)\to \Aut(\PU(V, h))(F)\to \Aut(A_{m-1})(F)\to\\ &H^1(F, \PU(V, h))\to H^1(F, \Aut(\PU(V, h)))\to 1
    \end{eqnarray*}  
Since $\Aut(\PU(V, h))(F)\to \Aut(A_{m-1})(F)$ is surjective, the map 
\[
H^1(F, \PU(V, h))\to H^1(F, \Aut(\PU(V, h)))
\]
has trivial kernel. Using twisting, we show that this map is injective.
\end{proof}

When $(V_1, h_1)$ is $F$-conformal to $(V_2, h_2)$, there is an $F$-algebraic group isomorphism $\PU(V_1, h_1)\cong \PU(V_2, h_2)$ from the definition. When $m\geq 3$, we have a stronger form of the converse statement not requiring a priori assumption that $V_i$ are defined on the same CM field.
\begin{prop}
\label{prop: the same CM field}
Let $K_1$ and $K_2$ be two CM fields sharing the same totally real subfield $F$. Assume that $h_i$ is a nondegenerate Hermitian form on finite-dimensional $K_i$-vector space $V_i$, such that $\PU(V_1, h_1)\cong \PU(V_2, h_2)$ as $F$-algebraic groups. We have:
\begin{enumerate}
\item If $m=\dim V_1=\dim V_2\geq 3$, then $K_1=K_2$, and $(V_1, h_1)$ is $F$-conformal to $(V_2, h_2)$. 
\item If $m=\dim V_1=\dim V_2=2$ and $K_1=K_2$, then $(V_1, h_1)$ is $F$-conformal to $(V_2, h_2)$.
\end{enumerate}
\end{prop}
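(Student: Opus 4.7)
The plan is to reduce the proposition to two facts in non-abelian Galois cohomology, with Proposition~\ref{prop: auto of PU} as the main input. For a fixed quadratic CM extension $K/F$ and a reference Hermitian space $(V_0, h_0)$ of dimension $m$ over $K$, the pointed set $H^1(F, \GU_0)$, where $\GU_0$ denotes the similitude group of $(V_0, h_0)$, will classify $F$-conformal classes of $K$-Hermitian spaces of dimension $m$, while $H^1(F, \Aut(\PU_0))$ classifies isomorphism classes of $F$-forms of $\PU_0 := \PU(V_0, h_0)$. Proposition~\ref{prop: auto of PU} supplies the bijection $H^1(F, \PU_0) \xrightarrow{\sim} H^1(F, \Aut(\PU_0))$.

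For Part~(1), the first step is to recover $K$ from the $F$-algebraic group $\PU(V,h)$ alone, assuming $m \geq 3$. Over $\overline{F}$ one has $\PU(V,h) \otimes_F \overline{F} \cong \mathrm{PGL}_m$, and the $\Gal(\overline{F}/F)$-action on its Dynkin diagram $A_{m-1}$ factors as $\Gal(\overline{F}/F) \to \Gal(K/F) \hookrightarrow \Aut(A_{m-1}) \cong \ZZ/2\ZZ$, where the last embedding is the nontrivial diagram automorphism arising from the $\Gal(K/F)$-descent implicit in the construction of $\PU(V,h)$. Since $\Aut(A_{m-1})$ is nontrivial for $m \geq 3$, the kernel of this action is exactly $\Gal(\overline{F}/K)$, and because the pair (Dynkin diagram, Galois action) is an invariant of the $F$-algebraic group, $K$ is recovered as the fixed field of this kernel. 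Consequently $\PU(V_1, h_1) \cong \PU(V_2, h_2)$ forces $K_1 = K_2$.

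The second step, which handles the remainder of Part~(1) and all of Part~(2), assumes $K_1 = K_2 =: K$. I consider the exact sequence of $F$-algebraic groups
\[
1 \longrightarrow Z \longrightarrow \GU_0 \longrightarrow \PU_0 \longrightarrow 1, \qquad Z := Z(\GU_0) = \Res_{K/F} \mathbb{G}_m.
\]
By Shapiro's lemma and Hilbert~90, $H^1(F, Z) = H^1(K, \mathbb{G}_m) = 0$, and the analogous vanishing holds for every inner twist of $Z$ since inner twisting does not alter the center. A standard twisting argument in non-abelian cohomology then shows that every fiber of the induced map of pointed sets $H^1(F, \GU_0) \to H^1(F, \PU_0)$ is a singleton, i.e.\ the map is injective. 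Combined with Proposition~\ref{prop: auto of PU}, this shows that the class of $\PU(V,h)$ in $H^1(F, \PU_0)$ determines its class in $H^1(F, \GU_0)$, that is, its $F$-conformal class. Applied to $(V_1, h_1)$ and $(V_2, h_2)$, this yields both parts. Note that for $m = 2$ the first step is unavailable because $\Aut(A_1)$ is trivial; this is precisely why the hypothesis $K_1 = K_2$ cannot be dropped in Part~(2).

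The principal technical obstacle is the twisting step establishing injectivity of $H^1(F, \GU_0) \to H^1(F, \PU_0)$: the long exact sequence in non-abelian cohomology is only exact as pointed sets, so one must verify that every fiber is trivial, not just the fiber over the distinguished element, by twisting the short exact sequence by a cocycle representing each candidate fiber and applying the vanishing of $H^1(F, {}_\xi Z)$. A secondary, more routine point is to unwind the definition of $\GU_0$-twist to confirm that $H^1(F, \GU_0)$ does in fact parametrize $F$-conformal classes of Hermitian forms.
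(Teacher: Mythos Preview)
Your proposal is correct, and the cohomological step (the exact sequence $1 \to \Res_{K/F}\mathbb{G}_m \to \GU_0 \to \PU_0 \to 1$, Shapiro plus Hilbert~90, then Proposition~\ref{prop: auto of PU}) is exactly the argument the paper uses once $K_1=K_2$ is known. You are in fact more careful than the paper here: the paper writes the exact sequence of pointed sets and invokes $H^1(F,\Res_{K/F}\mathbb{G}_m)=1$ without explicitly noting that injectivity (not just trivial kernel) requires the twisting argument you spell out, namely that the center is unchanged under inner twists so every fiber is a singleton.

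Where you genuinely diverge is in establishing $K_1=K_2$ for $m\geq 3$. The paper argues by contradiction: assuming $K_1\neq K_2$, it passes to the compositum $K=K_1K_2$ over the totally real field $F'=F[\sqrt{d_1d_2}]$, obtains an $F'$-conformal isomorphism by the already-proved case, and then tracks explicitly how the generator of $\Gal(F'/F)$ acts on this isomorphism, reaching a contradiction by comparing inner versus outer automorphisms. Your route is more structural: you read $K$ off directly as the fixed field of the kernel of the $*$-action $\Gal(\overline{F}/F)\to\Aut(A_{m-1})$, which is an invariant of the inner class of the $F$-group. This is cleaner and avoids the descent computation entirely; what the paper's hands-on argument buys in exchange is that it does not appeal to the general machinery of quasi-split inner forms and the $*$-action, staying closer to the concrete exact sequence already set up in Proposition~\ref{prop: auto of PU}.
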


\begin{proof}
  First we prove under the assumption $K_1=K_2=K$. This case was communicated to us by Mikhail Borovoi on Mathoverflow \cite{borovoi2022}. For a Hermitian space, denote by $\GU(V, h)$ the group of conformal isomorphisms of $(V, h)$, or in other words, $K$-linear isomorphisms $f\colon V\to V$ such that $f^*h$ is $F$-conformal to $h$. The $F$-conformal class of Hermitian form $h$ is denoted by $(V, F^\times h)$. Then the twisted $F$-forms of $(V, F^\times h)$ are given by $H^1(F, \GU(V, h))$. On the other hand, the twisted $F$-forms of $\PU(V, h)$ correspond to elements in $H^1(F, \Aut(\PU(V,h)))$. The conjugation of $\GU(V, h)$ on $\U(V, h)$ induces a morphism $\GU(V, h)\to \Aut(\PU(V,h))$. The corresponding map 
  \begin{equation}
  \label{equation: twisted F-forms}
     H^1(F, \GU(V, h))\to H^1(F, \Aut(\PU(V,h))) 
  \end{equation}
represents the induced $F$-forms on $\PU(V, h)$ by twisted $F$-conformal classes of Hermitian forms. So the proposition is equivalent to the injectivity of the above map. The kernel of the homomorphism 
  $\GU(V,h)\to \Aut(\PU(V,h))$ is $\Res_{K/F}\mathbb{G}_m$ and the image is $\Aut^\circ(\PU(V,h))$.
  
The map \eqref{equation: twisted F-forms} factors as   
\[
H^1(F, \GU(V, h))\to H^1(F, \Aut^\circ(\PU(V,h)))\to H^1(F, \Aut(\PU(V,h))).
\]
The first map is injective because it fits into the exact sequence
\begin{equation*}
    H^1(F, \Res_{K/F}\mathbb{G}_m)\to H^1(F, \GU(V, h))\to H^1(F, \Aut^\circ(\PU(V,h))) 
\end{equation*}
and $H^1(F, \Res_{K/F}\mathbb{G}_m)=H^1(K, \mathbb{G}_m)=1$ by Shapiro's lemma and Hilbert's Theorem 90. The second map is injective by Proposition \ref{prop: auto of PU}. 

Next we prove that $K_1=K_2$ when $m\geq 3$. In fact, we have the following description of $K_i$ in terms of Galois cohomology. Let $K_i=F[\sqrt{-d_i}]$. Fixing the $F$-form on $\PU(1,n)$ induced by $(V_1, h_1)$, then the $F$-form induced by $(V_2, h_2)$ corresponds to an element $f$ in $H^1(F, \Aut(\PU(V_1, h_1)))$, which is sent to an element $\widetilde{f}\in H^1(F, \Aut(A_{m-1}))$ by 
\[
\Aut(\PU(V_1, h_1))\to \Aut(A_{m-1}).
\]
Under the isomorphisms $\Aut(A_{m-1})\cong \mu_2$ and 
\[
H^1(F, \Aut(A_{m-1}))\cong F^\times / (F^\times)^2,
\]
the element $\widetilde{f}$ corresponds to ${d_2/ d_1}\in F^\times / (F^\times)^2$.

The detailed calculation is as follows. Fixing orthogonal bases of $(V_i, h_i)$, assume the Gram matrices are diagonal matrices 
\[
M_{h_i}=\diag(\lambda_1^i, \cdots, \lambda_n^i)\in M_m(F). 
\]
Then elements in $\U(V_i, h_i)$ are given by matrices $z_i=x_i+\sqrt{-d_i}y_i$, $x_i, y_i\in M_m(F)$ satisfying equations
\begin{equation*}
	x_i^TM_{h_i}x_i+d_iy_i^TM_{h_i}y_i=M_{h_i},\, x_i^TM_{h_i}y_i=y_i^TM_{h_i}x_i.
\end{equation*}
Let $w=\diag(\sqrt{\lambda_1^1\over \lambda_1^2}, \cdots, \sqrt{\lambda_n^1\over \lambda_n^2})$ and $e=\sqrt{d_1\over d_2}$. Then there is an $\overline{F}$-algebraic group isomorphism $P\colon \U(V_1,h_1)(\overline{F})\to \U(V_2, h_2)(\overline{F})$ given by 
\begin{equation*}
(x_1, y_1)\mapsto w(x_1, \sqrt{d_1\over d_2}y_1)w^{-1}.
\end{equation*}
For any $\tau\in \Gal(\overline{F}/F)$, the $\tau$-twist of $P$ is given by $P^\tau(x_1, y_1)=\tau (P(\tau^{-1}(x_1, y_1)))$. Then we have a cocycle $f\colon \Gal(\overline{F}/F)\to \Aut(\U(V_1, h_1))$ given by $f(\tau)=P^{-1}P^\tau$. Direct calculation shows that 
$$f(\tau)(x_1, y_1)=u(x_1, e^{-1}\tau(e)y_1)u^{-1}$$
where $u=w^{-1}\tau(w)$ and $e^{-1}\tau(e)=\pm 1$. Since the automorphism $(x_1, y_1)\mapsto (x_1, -y_1)$ corresponds to the nontrivial element of $\Aut(A_{m-1})$ for $m\geq 3$ and the conjugation by $u$ is an inner automorphism, the element $\title{f}$ corresponds to ${d_2\over d_1}\in F^\times/(F^\times)^2$. The cocycle is mapped to $H^1(F, \Aut(\PU(V_1, h_1)))$ and corresponds to the same element in $F^\times/(F^\times)^2$.
\end{proof}

\begin{rmk}
The conclusion $K_1=K_2$ in Proposition \ref{prop: the same CM field} only holds when $m\geq 3$. There are examples of $\PU(V_1, h_1)\cong \PU(V_2, h_2)$ with $K_1=\QQ[\sqrt{-3}]$ and $K_2=\QQ[\sqrt{-1}]$, which can be constructed by commensurable triangle groups \cite{takeuchi1977commensurability}. 
\end{rmk}
	
\subsection{Trace Fields}	
We collect some general facts about trace fields. For example, see \cite[\S12]{deligne1986monodromy}. Let $G$ be an adjoint connected semi-simple algebraic group $G$ over field $k$ with zero characteristic. Let $\Gamma\subset G(k)$ be a Zariski dense subgroup. The induced adjoint operation of $G$ on the Lie algebra of $G$ is denoted by $\Ad$. In particular, when $\Gamma$ is an arithmetic subgroup in an adjoint real connected algebraic group $G(\RR)$, then it is Zariski dense.
	
	\begin{defn}
The trace field $F$ of $\Gamma$ is defined to be the field generated by traces of all elements of $\Gamma$ under adjoint operation:
\begin{equation*}
F=\QQ[\Tr \,\Ad \,\Gamma].
\end{equation*}
This is a finite extension field of $\QQ$ when $\Gamma$ is an arithmetic lattice.
	\end{defn}
Regarding commensurability classes, we have
\begin{prop}
	The trace field is a commensurability invariant for Zariski dense subgroups $\Gamma\subset G(k)$.
\end{prop}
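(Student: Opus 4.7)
The plan is to handle the two operations that generate commensurability: conjugation in $G(k)$ and passage to finite-index subgroups. Conjugation invariance is immediate from $\Tr\Ad(g\gamma g^{-1}) = \Tr\Ad(\gamma)$, so the substance reduces to showing $F(\Gamma) = F(\Gamma')$ for any finite-index subgroup $\Gamma' \subset \Gamma$, where both are Zariski dense in $G$ (the Zariski closure of $\Gamma'$ has finite index in $G$, so its identity component equals $G^0$).

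The inclusion $F(\Gamma') \subset F(\Gamma)$ is trivial. For the reverse, I would first replace $\Gamma'$ by its normal core in $\Gamma$, reducing to the case where $\Gamma'$ is a normal subgroup of $\Gamma$ of finite index $n$; this replacement can only shrink $F(\Gamma')$, so it suffices to treat the reduced case. Every $\gamma \in \Gamma$ then satisfies $\gamma^n \in \Gamma'$, and more generally $(\gamma\delta)^n \in \Gamma'$ for every $\delta \in \Gamma'$ by normality. Consequently, the $k$-regular function
\[
\phi_\gamma \colon G \to \mathbb{A}^1, \quad g \mapsto \Tr\Ad((\gamma g)^n)
\]
takes values in $F(\Gamma')$ on the Zariski-dense subset $\Gamma' \subset G$.

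The next step is to extract $\Tr\Ad(\gamma)$ itself from these data. My plan is to use Zariski density to select $\delta_1, \ldots, \delta_N \in \Gamma'$ whose adjoint images form a basis of the $k$-linear span of $\Ad(G)$ inside $\End(\fg)$; such a basis exists by Burnside's density theorem applied to the adjoint representation of the semisimple $G$. The Gram matrix $\bigl(\Tr\Ad(\delta_i\delta_j)\bigr)$ then has entries in $F(\Gamma')$, because $\delta_i\delta_j \in \Gamma'$. Expanding $\Ad(\gamma)$ against the dual basis and combining with the values $\phi_\gamma(\delta_i) \in F(\Gamma')$ via Newton's identities (to pass between the power sums $\Tr\Ad((\gamma\delta_i)^n)$ and the elementary symmetric functions of the eigenvalues) should pin down the characteristic polynomial of $\Ad(\gamma)$ over $F(\Gamma')$; in particular $\Tr\Ad(\gamma) \in F(\Gamma')$.

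The main obstacle is that the power sum $\Tr\Ad(\gamma)^n$ alone recovers only the $n$-th powers of the eigenvalues of $\Ad(\gamma)$, not the eigenvalues themselves, so one genuinely needs the variation over $\delta \in \Gamma'$ to disentangle. A cleaner conceptual route that avoids this bookkeeping is to invoke Vinberg's theorem on the minimal field of definition: the trace field of a Zariski-dense subgroup $\Gamma \subset G(k)$ equals the unique smallest subfield $F \subset k$ over which $G$ admits a form with $\Ad(\Gamma) \subset \Ad(G)(F)$, up to $G(k)$-conjugation. Since $\Gamma$ and $\Gamma'$ have the same Zariski closure $G$, the minimality characterization forces $F(\Gamma) = F(\Gamma')$ directly.
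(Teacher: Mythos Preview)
The paper does not supply a proof; the proposition is quoted from \cite[\S12]{deligne1986monodromy}, which in turn rests on Vinberg. So there is no paper argument to compare against, and the question is whether your outline stands on its own.

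Your direct approach has the gap you yourself flag, and it is a real one: $\Tr\Ad((\gamma\delta)^n)$ is the $n$-th power sum of the eigenvalues of $\Ad(\gamma\delta)$, not of $\Ad(\gamma)$, so varying $\delta$ over $\Gamma'$ does not produce the lower power sums of any single operator, and Newton's identities do not apply in the way you suggest. The sentence beginning ``Expanding $\Ad(\gamma)$ against the dual basis\ldots'' is where the argument would need a genuinely new idea, and none is given.

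The fallback to Vinberg also has a gap, in its last sentence. Having the same Zariski closure does \emph{not} by itself force equal trace fields: two arithmetic lattices in $\PU(1,n)$ defined over different CM fields are both Zariski dense in the same real group yet have distinct trace fields. The minimality characterization gives $F(\Gamma')\subset F(\Gamma)$ at once, since any form containing $\Ad(\Gamma)$ contains $\Ad(\Gamma')$; but the reverse inclusion requires showing that the $F(\Gamma')$-form containing $\Ad(\Gamma')$ already contains $\Ad(\Gamma)$, and this genuinely uses the finite-index hypothesis, not merely equality of Zariski closures. Vinberg does prove this (via the nondegenerate trace form on the $k$-span of $\Ad(G)$ and a Skolem--Noether argument), but it is a theorem in his paper, not an immediate consequence of the minimal-field description. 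Citing Vinberg outright is fine and is essentially what the paper does; claiming the conclusion follows ``directly'' from the characterization is not.
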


The defining field for $G$ can be descent to $F$.
\begin{prop}
	\label{prop: F-structure remain the same}
	Let $F\subset F^\prime\subset k$ be any field extension. The algebraic group $G$ has a unique $F^\prime$-structure such that $\Gamma\subset G(F^\prime)$. This $F^\prime$-structure remains the same when $\Gamma$ changes to a finite-index subgroup. 
\end{prop}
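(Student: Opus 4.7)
The plan is to establish existence and uniqueness of the $F'$-structure separately, and then derive the stability under passage to a finite-index subgroup as an easy consequence.

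For existence, I would exploit that $G$ is adjoint, so the adjoint representation is a closed $k$-embedding $\Ad\colon G \hookrightarrow \GL(\fg)$, and constructing an $F'$-structure on $G$ amounts to cutting out $\Ad(G)$ inside $\GL(\fg)$ by polynomial equations over $F'$. The hypothesis $\Tr\,\Ad\,\Gamma \subset F \subset F'$, together with the Zariski density of $\Gamma$, is exactly Vinberg's criterion that $F'$ is a field of definition: one selects an $F'$-basis of $\fg$ from vectors of the form $\Ad(\gamma_1)\cdots \Ad(\gamma_j)v$ for a fixed $v$, checks that in this basis every matrix $\Ad(\gamma)$ with $\gamma\in\Gamma$ has entries in $F'$ (its entries are extracted from $F$-valued traces of products of adjoint elements), and takes the $F'$-Zariski closure of $\Ad(\Gamma)$ inside $\GL(\fg)_{F'}$.

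For uniqueness, suppose $G^{(1)}$ and $G^{(2)}$ are two $F'$-forms of $G$ both containing $\Gamma$ in their $F'$-points. The identity $\phi\colon G^{(1)}_k \to G^{(2)}_k$ is a priori only a $k$-isomorphism. For any $\sigma \in \Gal(\bar k / F')$, the automorphism $\phi^{-1}\phi^\sigma$ of $G^{(1)}_k$ fixes every element of $\Gamma$, because the points $\Gamma \subset G^{(i)}(F')$ are $\sigma$-invariant. By Zariski density, $\phi^{-1}\phi^\sigma = \id$ for all such $\sigma$, so $\phi$ is defined over $F'$ and the two $F'$-structures coincide.

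The stability claim then follows by running the argument for a finite-index subgroup $\Gamma' \subset \Gamma$. It is still Zariski dense and shares the same trace field by the commensurability invariance stated just above, so existence applied to $\Gamma'$ produces some $F'$-structure on $G$. But $\Gamma' \subset \Gamma \subset G(F')$ in the structure coming from $\Gamma$, hence uniqueness applied to $\Gamma'$ identifies this with the structure coming from $\Gamma$. I expect the most delicate step to be the descent theorem used in existence: it is nontrivial that the trace field genuinely serves as a field of definition, and the argument uses that $G$ is adjoint in an essential way, since for a non-adjoint $G$ one would only recover its adjoint quotient over $F'$ with possible central ambiguities.
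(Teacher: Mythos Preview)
The paper does not give its own proof of this proposition: it is stated in \S\ref{section: arithmetic lattices} as one of the ``general facts about trace fields'' collected from \cite[\S12]{deligne1986monodromy}, so there is no in-paper argument to compare against. Your sketch follows precisely the Vinberg--Deligne--Mostow approach underlying that reference, and the logical structure (existence via the trace field as field of definition, uniqueness via Galois descent and Zariski density, stability as a corollary of the two) is correct.

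One small imprecision in your existence step: the clean version of Vinberg's argument works not with an $F'$-basis of $\fg$ chosen from orbit vectors $\Ad(\gamma_1)\cdots\Ad(\gamma_j)v$, but with a basis of the associative subalgebra $A\subset\mathrm{End}(\fg)$ generated by $\Ad(\Gamma)$, taken from monomials $\Ad(\gamma_1)\cdots\Ad(\gamma_j)$ themselves. The trace form on $A$ is non-degenerate (here one uses that $G$ is semisimple, so $A=\mathrm{End}(\fg)$ over $\bar k$ factorwise), the Gram matrix in that basis has entries in $F$, hence so does the dual basis, and the coefficients of any $\Ad(\gamma)$ are then traces of products and lie in $F\subset F'$. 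Your version with a fixed cyclic vector $v$ works when the adjoint representation is irreducible (i.e.\ $G$ simple), but for general semisimple $G$ one should use the associative-algebra formulation. This is a detail of execution rather than a gap in the strategy.
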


\subsection{Deligne--Mostow Monodromy Groups}
Now we specialize to the monodromy group $\Gamma_\mu\subset \PU(1,n)$ in Deligne--Mostow theory. Note that these $\Gamma_\mu$ are Zariski dense subgroup of $\PU(1,n)$. We quote the calculation of trace fields from Deligne--Mostow \cite[Lemma 12.5]{deligne1986monodromy}. Denote by $d$ the least common denominator of components $\mu_i$ in tuple $\mu$.
\begin{prop}
	When $n\geq 2$, the trace field $F$ of $\Gamma_\mu$ is equal to $\QQ[\zeta_d]\cap \RR$. When $n=1$, the trace field is a subfield of $\QQ[\zeta_d]\cap \RR$.
\end{prop}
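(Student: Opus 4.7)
The plan is to establish two inclusions: $F\subseteq F_0$ in all cases and $F\supseteq F_0$ when $n\geq 2$, where $F_0\coloneqq \QQ[\zeta_d]\cap \RR$ denotes the maximal totally real subfield of $\QQ[\zeta_d]$.

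For the inclusion $F\subseteq F_0$, I would invoke the $\QQ[\zeta_d]$-structure on the Deligne--Mostow Hermitian space $(V_\mu, h_\mu)$ of Definition \ref{definition: Deligne--Mostow Hermitian space}. The monodromy representation preserves this structure together with the Hermitian form, because the variation of Hodge structure on $\calM_\mu$ respects the character eigenspace decomposition of $H^1(C,\QQ[\zeta_d])$. Regarding $\PU(V_\mu, h_\mu)$ as an algebraic group over $F_0$ via Weil restriction from $\QQ[\zeta_d]$, we then have $\Gamma_\mu\subseteq \PU(V_\mu, h_\mu)(F_0)$. The adjoint representation on $\mathfrak{pu}(V_\mu, h_\mu)$ is defined over $F_0$, so $\Tr\,\Ad(\gamma)\in F_0$ for every $\gamma\in \Gamma_\mu$, giving $F\subseteq F_0$.

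For the reverse inclusion when $n\geq 2$, the strategy is to compute adjoint traces on explicit pseudo-reflection generators of $\Gamma_\mu$. For each pair $1\leq i<j\leq n+3$, the local monodromy $R_{ij}\in \Gamma_\mu$ around the collision divisor $\{x_i=x_j\}\subset \calM_\mu$ acts on $V_\mu$ as a complex pseudo-reflection: by the Picard--Lefschetz calculation for cyclic covers, compatible with the nodal monodromy computation of \S\ref{subsection: nodal case}, it fixes a hyperplane and has a distinguished non-trivial eigenvalue $\alpha_{ij}=\zeta_d^{-(a_i+a_j)}$ on a line. The induced eigenvalues of $\Ad(R_{ij})$ on $\mathfrak{pu}(V_\mu, h_\mu)\otimes \CC\cong \mathfrak{sl}_{n+1}(\CC)$ are then $1$ (with multiplicity $n^2$), $\alpha_{ij}$ (multiplicity $n$), and $\overline{\alpha_{ij}}$ (multiplicity $n$), yielding
\begin{equation*}
\Tr\,\Ad(R_{ij})=n^2+n\bigl(\zeta_d^{a_i+a_j}+\zeta_d^{-(a_i+a_j)}\bigr),
\end{equation*}
so $\zeta_d^{a_i+a_j}+\zeta_d^{-(a_i+a_j)}\in F$ for all $i\neq j$.

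It remains to check that the subgroup $S\subseteq \ZZ/d\ZZ$ generated by $\{a_i+a_j\bmod d\}$ equals $\ZZ/d\ZZ$ when $n\geq 2$; together with the standard Galois description of cyclotomic subfields this gives $F\supseteq F_0$. Since $n+3\geq 5$, the subgroup $S$ contains all differences $a_j-a_k=(a_i+a_j)-(a_i+a_k)$ as well as all doubles $2a_j=(a_i+a_j)+(a_j-a_i)$, and a short case analysis using $\gcd(a_1,\ldots,a_{n+3},d)=1$ then forces $S=\ZZ/d\ZZ$. The main technical step is the Picard--Lefschetz identification of $R_{ij}$ as a pseudo-reflection with precisely the claimed eigenvalue, which is carried out in \cite[\S12]{deligne1986monodromy}. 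For $n=1$ one has only four points and the relation $\sum a_i\equiv 0\bmod d$ can force $S$ to be a proper subgroup of $\ZZ/d\ZZ$, so only the inclusion $F\subseteq F_0$ is asserted in that case, consistent with the behavior of commensurable but non-isomorphic arithmetic triangle groups.
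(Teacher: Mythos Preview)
The paper does not give its own argument here: it simply quotes \cite[Lemma~12.5]{deligne1986monodromy}. Your sketch therefore goes beyond what the paper does, and the strategy you outline (upper bound from the $\QQ[\zeta_d]$-structure, lower bound from adjoint traces of explicit monodromy elements) is exactly the one Deligne--Mostow use. The inclusion $F\subseteq F_0$ is fine.

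There is, however, a genuine gap in your lower bound. Your claim that the subgroup $S=\langle a_i+a_j\bmod d\rangle$ equals $\ZZ/d\ZZ$ is false in general: take $d=8$ and $\mu=\tfrac{1}{8}(1,1,3,3,3,5)$ (so $n=3$, $\sum\mu_i=2$, and $d$ is the least common denominator). All $a_i$ are odd, hence every $a_i+a_j$ is even and $S=2\ZZ/8\ZZ$; the numbers $\zeta_8^{a_i+a_j}+\zeta_8^{-(a_i+a_j)}$ lie in $\QQ$, not in $\QQ(\sqrt{2})=F_0$. So adjoint traces of the individual pseudo-reflections $R_{ij}$ do \emph{not} suffice to produce $F_0$.

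What rescues the argument, and what Deligne--Mostow actually use, is the adjoint trace of \emph{products} of adjacent half-twists. A short computation with the explicit matrices (the formulas just before Theorem~\ref{theorem: merge two weights}) gives
\[
\Tr\bigl(T_{i,i+1}T_{i+1,i+2}\bigr)=(n-1)+\alpha_{i+1}+\alpha_i\alpha_{i+1}\alpha_{i+2},
\]
so that $\Tr\Ad$ of such a product is a $\QQ$-linear combination of $1$, $\mathrm{Re}\,\alpha_{i+1}$, $\mathrm{Re}(\alpha_i\alpha_{i+1}\alpha_{i+2})$ and $\mathrm{Re}(\alpha_i\alpha_{i+2})$. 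Varying $i,j,k$ and combining with the single-reflection traces is what forces all $\zeta_d^{a_i}+\zeta_d^{-a_i}$ into $F$; since $\gcd(a_1,\ldots,a_{n+3},d)=1$, this yields $F_0\subseteq F$. Your ``short case analysis'' needs to be replaced by this step; as written, the argument does not close.
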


Let $F=\QQ[\zeta_d]\cap \RR$. There is another construction of $F$-structure on $\PU(1, n)$ as follows. The vector space $V_\mu=H^0(\PP^1, \LL_\mu)$ is defined over $\QQ[\zeta_d]$, and the Hermitian form $h_\mu$ on $V_\mu$ can also be defined over $\QQ[\zeta_d]$. The group of $\QQ[\zeta_d]$-linear automorphisms of $V_\mu$ preserving $h_\mu$ defines an $F$-algebraic group $\U(V_\mu, h_\mu)$ and its projectivization $\PU(V_\mu, h_\mu)$. This gives an $F$-structure on $\PU(1,n)$. On the other hand, the monodromy group $\Gamma_\mu\subset \PU(V_\mu, h_\mu)(F)$. Thus this induces the same $F$-structure on $\PU(1,n)$. The previous results specialize to a commensurability invariant for Deligne--Mostow monodromy groups.

\begin{prop}
\label{prop: Gamma to F-structures}
Let $n$ be a positive integer. Assume $\Gamma_\mu$ and $\Gamma_\nu$ are commensurable Deligne--Mostow monodromy groups in $\PU(1,n)$ arising from tuples $\mu$ and $\nu$. Then they have the same adjoint trace field $F$ and the $F$-forms induced by $\Gamma_\mu$ and $\Gamma_\nu$ on $\PU(1,n)$ are isomorphic. Let the common denominator of $\mu$ (respectively $\nu$) be $d$ (respectively $d^\prime$).
\begin{enumerate}
    \item When $n\geq 2$, then $\QQ[\zeta_d]=\QQ[\zeta_{d^\prime}]$, and $(V_\mu, h_\mu)$ and $(V_\nu, h_\nu)$ are conformal.
    \item When $n=1$ and $\QQ[\zeta_d]=\QQ[\zeta_{d^\prime}]$, then $(V_\mu, h_\mu)$ and $(V_\nu, h_\nu)$ are conformal.
\end{enumerate}
\end{prop}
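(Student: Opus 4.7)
The plan is to reduce the statement to an application of Proposition \ref{prop: the same CM field}. The first stage establishes that the commensurability of $\Gamma_\mu$ and $\Gamma_\nu$ forces them to have the same adjoint trace field $F$ and to induce isomorphic $F$-algebraic group structures on $\PU(1,n)$; the second stage then just quotes Proposition \ref{prop: the same CM field} in the two cases.

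By definition, commensurability in $\PU(1,n)$ means that after replacing $\Gamma_\nu$ with a conjugate $g\Gamma_\nu g^{-1}$ for some $g\in \PU(1,n)$, the intersection $\Gamma_0 \coloneqq \Gamma_\mu \cap g\Gamma_\nu g^{-1}$ has finite index in both. Since trace fields are invariant under conjugation in the ambient group and under passage to finite-index subgroups, $\Gamma_\mu$, $g\Gamma_\nu g^{-1}$, $\Gamma_\nu$, and $\Gamma_0$ share a common trace field $F$. Because $\Gamma_0$ is Zariski dense (being a finite-index subgroup of a Zariski dense lattice), Proposition \ref{prop: F-structure remain the same} applies: the $F$-structure on $\PU(1,n)$ determined by $\Gamma_\mu$, namely $\PU(V_\mu, h_\mu)$, is the unique $F$-structure containing $\Gamma_\mu$, and equally the unique $F$-structure containing $\Gamma_0$. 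Running the same argument for $g\Gamma_\nu g^{-1}$ and $\Gamma_0$, the $F$-structure determined by $g\Gamma_\nu g^{-1}$ coincides with that determined by $\Gamma_\mu$. This latter $F$-structure is by construction the pullback via conjugation-by-$g$ of $\PU(V_\nu, h_\nu)$, and conjugation by a real point $g$ is an $\RR$-isomorphism of algebraic groups that transports $F$-structures to isomorphic $F$-structures. We therefore obtain an $F$-algebraic group isomorphism $\PU(V_\mu, h_\mu)\cong \PU(V_\nu, h_\nu)$.

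With this isomorphism in hand, both parts of the conclusion follow immediately from Proposition \ref{prop: the same CM field}. For part (1), when $n\geq 2$ the common rank $m = n+1$ satisfies $m \geq 3$, so Proposition \ref{prop: the same CM field}(1) forces $\QQ[\zeta_d] = \QQ[\zeta_{d'}]$ and yields $F$-conformality of $(V_\mu, h_\mu)$ and $(V_\nu, h_\nu)$. For part (2), the hypothesis $\QQ[\zeta_d]=\QQ[\zeta_{d'}]$ together with $m=2$ places us in the setting of Proposition \ref{prop: the same CM field}(2), which gives the desired conformality directly.

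The main delicate point is the justification that the $F$-form determined by $g\Gamma_\nu g^{-1}$ is $F$-isomorphic to, rather than merely $\RR$-equal to, the $F$-form $\PU(V_\nu, h_\nu)$ determined by $\Gamma_\nu$. This is resolved by the observation that conjugation by $g$ provides a concrete isomorphism of real algebraic groups through which, by functoriality of Galois descent, the two $F$-structures are explicit pullbacks of each other. Everything else is a formal combination of Proposition \ref{prop: F-structure remain the same}, the commensurability invariance of trace fields, and Proposition \ref{prop: the same CM field}.
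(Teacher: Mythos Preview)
Your proof is correct and follows essentially the same route as the paper: reduce commensurability to an isomorphism of $F$-forms via Proposition~\ref{prop: F-structure remain the same}, then invoke Proposition~\ref{prop: the same CM field}. One small point worth tightening in the $n=1$ case: the adjoint trace field $F$ can be a \emph{proper} subfield of $F'\coloneqq\QQ[\zeta_d]\cap\RR$, so $\PU(V_\mu,h_\mu)$ is a priori only an $F'$-group and need not literally be ``the $F$-structure determined by $\Gamma_\mu$'' as you write. The paper patches this by observing that equal $F$-forms force equal $F'$-forms (apply Proposition~\ref{prop: F-structure remain the same} at level $F'\supset F$), after which Proposition~\ref{prop: the same CM field}(2) applies; you should insert the same one-line step.
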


\begin{proof}
	Suppose $\Gamma_\mu$ is commensurable to $\Gamma_\nu$. Then there exists $g\in \PU(1,n)$ such that $\Gamma=\Gamma_\mu\cap g^{-1}\Gamma_\nu g$ has finite index in $\Gamma_\mu$ and $g^{-1}\Gamma_\nu g$. By Proposition \ref{prop: F-structure remain the same}, the trace fields of $\Gamma_\mu$ and $\Gamma_\nu$ are equal to the trace field of $\Gamma$, and the $F$-structures induced by $\Gamma_\mu$ and $\Gamma_\nu$ are isomorphic.

 When $n\geq 2$, the adjoint trace field $F$ of $\Gamma_\mu$ is the same as real field of the cyclotomic field for $\mu$. By Proposition $\ref{prop: the same CM field}$, we have $\QQ[\zeta_d]=\QQ[\zeta_{d^\prime}]$, and $(V_\mu, h_\mu)$ and $(V_\nu, h_\nu)$ are conformal. When $n=1$, the adjoint trace field is a subfield of $F^\prime=\QQ[\zeta_d]\cap \RR$, then the same $F$-forms induces the same $F^\prime$-forms, and hence $(V_\mu, h_\mu)$ and $(V_\nu, h_\nu)$ are conformal if they are defined over the same CM field.
 \end{proof}

On the other hand, the converse is true for arithmetic groups. 

\begin{prop}
\label{prop: arithmetic commen by F-forms}
    Let $n$ be a positive integer and $\Gamma_\mu, \Gamma_\nu$ are arithmetic Deligne--Mostow lattices in  $\PU(1,n)$. Assume that they have the same adjoint trace field $F$ and induce the same $F$-form on $\PU(1,n)$, then $\Gamma_\mu$ and $\Gamma_\nu$ are commensurable in $\PU(1,n)$. In terms of conformal classes, we have the following.
    
    Let the common denominator of $\mu$ (respectively $\nu$) be $d$ (respectively $d^\prime$) and assume $\QQ[\zeta_d]=\QQ[\zeta_{d^\prime}]=K$. 
    \begin{enumerate}
        \item When $n\geq 2$ and $(V_\mu, h_\mu)$ is conformal to $(V_\nu, h_\nu)$, then $\Gamma_\mu$ and $\Gamma_\nu$ are commensurable.
        \item When $n=1$, $F=K\cap \RR$, and $(V_\mu, h_\mu)$ is conformal to $(V_\nu, h_\nu)$, then $\Gamma_\mu$ and $\Gamma_\nu$ are commensurable.
    \end{enumerate}
\end{prop}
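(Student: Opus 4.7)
The plan is to reduce everything to the standard fact that any two arithmetic subgroups of a connected semisimple algebraic group over a number field are commensurable. First, I would reduce the two conformal assertions to the $F$-form assertion. If $(V_\mu, h_\mu)$ and $(V_\nu, h_\nu)$ are $F$-conformal via an isomorphism $f\colon V_\mu \to V_\nu$ with $f^* h_\nu = \lambda h_\mu$ for some $\lambda \in F^\times$, then $f$ induces an isomorphism of $F$-algebraic groups $\PU(V_\mu, h_\mu) \cong \PU(V_\nu, h_\nu)$, because projective unitary groups depend only on the conformal class of the Hermitian form. Combined with the canonical identifications of these groups with the $F$-forms of $\PU(1,n)$ induced by $\Gamma_\mu$ and $\Gamma_\nu$ respectively, this shows the two $F$-forms are isomorphic, so it suffices to prove the statement under the $F$-form hypothesis.

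Second, I would apply arithmeticity. By Borel--Harish-Chandra together with the definition of arithmeticity, since $\Gamma_\mu$ is an arithmetic lattice in $\PU(1,n) = \PU(V_\mu, h_\mu)(F_\sigma)$ at the unique real place $\sigma$ of $F$ at which the signature is $(1,n)$, the group $\PU(V_\mu, h_\mu)$ is $\RR$-anisotropic at every other real place, and $\Gamma_\mu$ is commensurable, up to conjugation in $\PU(1,n)$, with the standard arithmetic subgroup $\PU(V_\mu, h_\mu)(\calO_F)$ sitting inside $\PU(1,n)$ via $\sigma$. The analogous statement holds for $\Gamma_\nu$ inside $\PU(V_\nu, h_\nu)(F_\sigma)$.

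Third, I would transport via the $F$-form isomorphism. An isomorphism $\phi\colon \PU(V_\mu, h_\mu)\xrightarrow{\sim}\PU(V_\nu, h_\nu)$ of $F$-algebraic groups carries $\PU(V_\mu, h_\mu)(\calO_F)$ to an arithmetic subgroup of $\PU(V_\nu, h_\nu)$; since any two arithmetic subgroups of the same connected semisimple $F$-algebraic group are commensurable in its group of $F$-rational points, after specializing at $\sigma$ the images of $\Gamma_\mu$ and $\Gamma_\nu$ become commensurable in $\PU(1,n)$. The main bookkeeping obstacle is to check that the two identifications with $\PU(1,n)$ at $\sigma$ are compatible up to an automorphism of $\PU(1,n)$ that preserves commensurability classes. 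Inner automorphisms correspond to conjugation in $\PU(1,n)$ and so obviously preserve the class; for $n \geq 2$ the only outer automorphism comes from the nontrivial element of $\Gal(K/F)$ and is $F$-rational by Proposition \ref{prop: auto of PU}, hence it permutes arithmetic subgroups among themselves and still preserves commensurability. For $n = 1$ the outer automorphism group is trivial, and this step is vacuous.
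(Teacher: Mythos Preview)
Your proposal is correct and follows essentially the same route as the paper's proof: reduce the conformal statements to the $F$-form statement, use that arithmetic subgroups of the same $F$-algebraic group are commensurable, and then handle the ambiguity in identifying with $\PU(1,n)$ by splitting into inner versus outer automorphisms, invoking Proposition~\ref{prop: auto of PU} to get an $F$-rational representative of the outer automorphism for $n\geq 2$. The paper is slightly more explicit in the last step, writing down the intersection $\Gamma_1\cap\tau(\Gamma_1)$ to see that the $F$-rational outer automorphism preserves the commensurability class, but your phrase ``permutes arithmetic subgroups among themselves'' captures the same point.
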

 \begin{proof}
	Suppose $G_1, G_2$ are the $F$-forms defined by $\Gamma_\mu, \Gamma_\nu$ respectively and $\delta\colon G_1\cong G_2$ an isomorphism as $F$-algebraic groups. Since $\Gamma_\mu$ and $\Gamma_\nu$ are arithmetic subgroups under the $F$-structures, so we have that $\delta(\Gamma_\mu)$ and $\Gamma_\nu$ share a common finite-index subgroup. Denote by $G=\PU(1,n)$ a fixed $\RR$-algebraic group. We have isomorphisms $\delta_i\colon G_i(\RR)\to G$. Now we need to prove $\delta_1(\Gamma_\mu)$ and $\delta_2(\Gamma_\nu)$ share a finite-index subgroup after conjugation in $G$. If $\delta_2\delta_\RR\delta_1^{-1}\colon G\to G$ is an inner automorphism of $G$ induced by $g\in G$, then $g\delta_1(\Gamma_\mu) g^{-1}$ and $\Gamma_\nu$ share a common finite-index subgroup. Assume $\delta_2\delta_\RR\delta_1^{-1}$ is an outer automorphism. The group $\Aut(G)$ fits in the following exact sequence by isomorphism theorem of semisimple algebraic groups.
	\begin{equation*}
		1\to G(\RR)\to \Aut(G(\RR)) \to \Aut(A_n)\to 1,
	\end{equation*}
where $A_n$ represents the Dynkin diagram of type $A$ with $n$ nodes, and $\Aut(A_n)$ is a cyclic group of order two for $n\geq 2$. There is a similar exact sequence for $G_1(F)$. When $n\geq 2$, by Proposition \ref{prop: auto of PU}, there exists an involution $\tau$ in $\Aut(G_1)(F)$ which maps to the generator of $\Aut(A_n)$. And subgroups $\tau(\Gamma_1)$ are both arithmetic subgroup in $G_1(F)$, so $\Gamma_1\cap\tau(\Gamma_1)$ has finite index in $\Gamma_1$ and $\tau(\Gamma_1)$. Then $\delta_2(\delta\tau)_\RR{\delta_1}^{-1}$ is an inner automorphism of $\PU(1,n)$. So the groups $\delta_1(\Gamma_\mu)$ and $\delta_2(\Gamma_\nu)$ are commensurable in $\PU(1,n)$. When $n=1$, $\Aut(G_1)(\RR)$ has no outer automorphisms, so $\delta_1(\Gamma_\mu)$ and $\delta_2(\Gamma_\nu)$ share a finite-index subgroup after conjugation in $G$.
\end{proof}

Combining the previous discussion on $F$-forms on $\PU(1,n)$ and conformal classes of Hermitian forms, we conclude that the conformal class of $(V_\mu, h_\mu)$ is a commensurability invariant of Deligne--Mostow monodromy groups, and a criterion for commensurability of arithmetic ones.

\begin{thm}
\label{theorem: hermitian form}
Take two Deligne--Mostow monodromy groups $\Gamma_\mu$ and $\Gamma_\nu$ in $\PU(1,n)$. Let the common denominator of $\mu$ (respectively $\nu$) be $d$ (respectively $d^\prime$), $K=\QQ[\zeta_d]$, $F=K\cap \RR$, $K'=\QQ[\zeta_{d'}]$ and $F'=K'\cap \RR$. Consider the following statements.
    \begin{enumerate}[(a)]
    \item 
    \label{state: commensurable}
    $\Gamma_\mu$ and $\Gamma_\nu$ are commensurable.
    \item 
    \label{state: the same trace fields} $K=K'$ \item \label{state: the same det}
$K=K'$ and $\det h_\mu=\det h_\nu\in F^\times/N_{K/F} K^\times$. 
        \item 
        \label{state: sign}
$F=F'$ and $\sgn_\sigma(h_\mu)=\pm \sgn_\sigma(h_\nu)$ for all real embeddings $\sigma$ of $F$.
    \end{enumerate}

Then 
\begin{enumerate}[(1)]
\item
\label{theorem: hermitian form n dayu 2}
When $n\geq 2$, the statement \eqref{state: commensurable} implies the statements \eqref{state: the same trace fields} and \eqref{state: sign}. \item When $n\geq 3$ is odd, then the statement \eqref{state: commensurable} implies the statement \eqref{state: the same det} and \eqref{state: sign}. 
\item When $n=1$, then the statements \eqref{state: commensurable} and \eqref{state: the same trace fields} together imply statements \eqref{state: the same det} and \eqref{state: sign}.
    \end{enumerate}
Furthermore, assume that $\Gamma_\mu$ and $\Gamma_\nu$ are both arithmetic. Then 
\begin{enumerate}[(1)]
\setcounter{enumi}{3}
\item \label{theorem: n even b implies a}
When $n$ is even, the statement \eqref{state: the same trace fields} implies the statement \eqref{state: commensurable}. 
\item When $n\geq 3$ is odd, the statement \eqref{state: the same det} implies the statement \eqref{state: commensurable}. 
\item When $n=1$ and assume the adjoint trace fields of $\Gamma_\mu$ (respectively $\Gamma_\nu$) is $F$  (respectively $F'$), then the statement \eqref{state: the same det}  implies the statement \eqref{state: commensurable}.
\end{enumerate}
\end{thm}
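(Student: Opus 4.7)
The plan is to package the theorem as a parity-indexed bookkeeping on top of three earlier results: Proposition \ref{prop: Gamma to F-structures} (commensurability yields a common $F$-form, hence a common $F$-conformal class when $n\geq 2$, or when $n=1$ together with $\QQ[\zeta_d]=\QQ[\zeta_{d'}]$), Proposition \ref{prop: arithmetic commen by F-forms} (for arithmetic lattices the converse holds), and Proposition \ref{prop: conformal classes of hermitian forms} (the characterization of $F$-conformal classes of $K$-Hermitian spaces by signatures up to sign and, when $m=\dim_K V=n+1$ is even, the class of $\det h$ in $F^\times/N_{K/F}K^\times$). The only extra ingredient needed beyond these is a standard observation about arithmetic lattices in $\PU(1,n)$: the underlying $K$-Hermitian form must be definite at every real embedding of $F$ other than the identity, so the signature spectrum is $(1,n)$ at the identity embedding and lies in $\{(n+1,0),(0,n+1)\}$ at every other real embedding.

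For the necessary direction (items (1)--(3)), case (1) follows by invoking Proposition \ref{prop: Gamma to F-structures}(1) to produce both $\QQ[\zeta_d]=\QQ[\zeta_{d'}]$ and an $F$-conformal equivalence of $(V_\mu,h_\mu)$ and $(V_\nu,h_\nu)$, after which Proposition \ref{prop: conformal classes of hermitian forms} reads off the signature condition (d). Case (2) adds the remark that $n$ odd means $m=n+1$ is even, so the same proposition also yields the determinant equality in (c). Case (3) is the $n=1$ exception: the adjoint trace field of $\Gamma_\mu$ may be a proper subfield of $K\cap\RR$, so the cyclotomic-field equality must be imposed as a hypothesis; once it is, Proposition \ref{prop: Gamma to F-structures}(2) applies, and the $m=2$ (hence even) case of Proposition \ref{prop: conformal classes of hermitian forms} produces both (c) and (d).

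For the arithmetic converse (items (4)--(6)), the observation about signature spectra makes (d) automatic whenever $F=F'$, since both sides are forced to have the same list of local signatures. Case (4) then needs only $K=K'$ from (b): together with the automatic signature agreement, Proposition \ref{prop: conformal classes of hermitian forms} ($m$ odd) gives conformality, and Proposition \ref{prop: arithmetic commen by F-forms} delivers commensurability. Cases (5) and (6) differ only in that $m=n+1$ is even, so the determinant equality in (c) must be fed in explicitly; the same two propositions then close the argument, with case (6) additionally using the trace-field hypothesis to ensure $F=K\cap\RR$ so that Proposition \ref{prop: arithmetic commen by F-forms}(2) truly applies. The principal obstacle is not any single argument but the careful tracking of the parity of $m$ and the correct invocation of the two parts of Proposition \ref{prop: Gamma to F-structures}; all genuine algebraic difficulty has already been absorbed into the proof of Proposition \ref{prop: the same CM field}, and the present theorem is essentially its translation into numerical invariants.
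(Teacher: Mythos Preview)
Your proposal is correct and follows essentially the same route as the paper's own proof: both directions are reduced to Proposition~\ref{prop: Gamma to F-structures}, Proposition~\ref{prop: arithmetic commen by F-forms}, and Proposition~\ref{prop: conformal classes of hermitian forms}, with the parity of $m=n+1$ dictating whether the determinant class enters, and the arithmeticity hypothesis supplying the definite signatures at the non-identity embeddings. Your write-up is in fact slightly more explicit than the paper's about which part of each proposition is being invoked at each step.
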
 

\begin{proof}
First we prove the first three statements. Assume $\Gamma_\mu$ and $\Gamma_\nu$ are commensurable. If $n\geq 2$, then Proposition \ref{prop: Gamma to F-structures} implies that $K=K^\prime$, and $(V_\mu, h_\mu)$ and $(V_\nu, h_\nu)$ are $F$-conformal. If $n=1$ and $K=K^\prime$, then $(V_\mu, h_\mu)$ and $(V_\nu, h_\nu)$ are also 
$F$-conformal by the second part of Proposition \ref{prop: Gamma to F-structures}. Then the first three conclusions follows from the explicit criterion of confomality in Proposition \ref{prop: conformal classes of hermitian forms}.

Next, we prove the last three statements. Assume $\Gamma_\mu$ and $\Gamma_\nu$ are arithmetic subgroups of $\PU(1,n)$ and $K=K^\prime$. Furthermore, we also assume that the adjoint trace fields of $\Gamma_\mu$ and $\Gamma_\nu$ are $F$ when $n=1$. From the arithmeticity criterion of Deligne--Mostow monodromy groups, we have $\sgn_\sigma(h_\mu)$ is $(1,n)$ for the tautological embedding $\sigma=id$. For all the other embeddings $\sigma$, the signature $\sgn_\sigma(h_\mu)$ is definite. The same holds for $\sgn_\sigma(h_\nu)$. So we have $\sgn_\sigma(h_\mu)=\pm \sgn_\sigma(h_\nu)$ for all real embeddings $\sigma$ of $F$. Then each condition in the last three statements implies that $(V_\mu, h_\mu)$ and $(V_\nu, h_\nu)$ are $F$-conformal by Proposition \ref{prop: conformal classes of hermitian forms}. So we have commensurablity between $\Gamma_\mu$ and $\Gamma_\nu$ by Proposition \ref{prop: Gamma to F-structures} in each case. 
\end{proof}

As an application of Theorem \ref{theorem: hermitian form}, we provide an alternative proof for the following result of Kappes--M\"oller \cite{kappes2016lyapunov} and McMullen \cite{McMullen2017Gauss-Bonnet}. 
\begin{cor}
\label{cor: distinguishing nonarithemetic DM by signature}
Let $\mu ={1\over 20}(5,5,5,11,14)$ or ${1\over 20}(6,6,9,9,10)$, $\nu ={1\over 20}(6,6,6,9,13)$. Then $\Gamma_\mu$ and $\Gamma_\nu$ are not commensurable. 
\end{cor}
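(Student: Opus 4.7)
The plan is to invoke part (1) of Theorem \ref{theorem: hermitian form}, which forces $\sgn_{\sigma}(h_\mu) = \pm \sgn_{\sigma}(h_\nu)$ at every real embedding $\sigma$ of the common totally real adjoint trace field $F$ whenever $\Gamma_\mu$ and $\Gamma_\nu$ are commensurable in $\PU(1,2)$. It therefore suffices to exhibit a single real embedding of $F$ at which this fails.

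All three listed tuples have common denominator $d = 20$, so they share the cyclotomic field $K = \QQ[\zeta_{20}]$ and the totally real subfield $F = \QQ(\zeta_{20}+\zeta_{20}^{-1})$; the real embeddings of $F$ are parametrized by $k\in(\ZZ/20\ZZ)^\times/\{\pm 1\}=\{1,3,7,9\}$ via $\sigma_k(\zeta_{20})=\zeta_{20}^k$. The essential input I use is the Galois-twisted signature formula: since $\sigma_k$ permutes the character eigenspaces of $C_d$ acting on $H^1(C_\mu,\QQ[\zeta_{20}])$, and since the Hodge dimensions of these eigenspaces are given by Proposition \ref{prop: signature of DM hermitian space} in terms of the fractional parts $\{k\mu_i\}$, one obtains
\[
\sgn_{\sigma_k}(h_\mu)\;=\;\Bigl(\textstyle\sum_{i=1}^{5}\{k\mu_i\}-1,\;\;4-\textstyle\sum_{i=1}^{5}\{k\mu_i\}\Bigr).
\]

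With this formula in hand, I would tabulate the signatures at all four embeddings. The decisive case is $k=3$. For $\nu=\tfrac{1}{20}(6,6,6,9,13)$, the tuple $(\{3\nu_i\})_i$ equals $\tfrac{1}{20}(18,18,18,7,19)$ with coordinate sum $4$, giving $\sgn_{\sigma_3}(h_\nu)=(3,0)$. For $\mu=\tfrac{1}{20}(5,5,5,11,14)$, the tuple $(\{3\mu_i\})_i$ equals $\tfrac{1}{20}(15,15,15,13,2)$ with sum $3$, so $\sgn_{\sigma_3}(h_\mu)=(2,1)$; for $\mu=\tfrac{1}{20}(6,6,9,9,10)$, the tuple $(\{3\mu_i\})_i$ equals $\tfrac{1}{20}(18,18,7,7,10)$, again with sum $3$, so $\sgn_{\sigma_3}(h_\mu)=(2,1)$. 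In either case $(2,1)\notin\{(3,0),(0,3)\}$, so the signature criterion of Theorem \ref{theorem: hermitian form} fails at $\sigma_3$, and commensurability is excluded.

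The only non-bookkeeping step is the signature formula above; it follows directly from the Esnault--Viehweg decomposition of $H^1(C_\mu)$ into character eigenspaces applied to each Galois conjugate, and is essentially the content of Mostow's arithmeticity criterion. The rest is a finite arithmetic check, so I do not expect any serious obstacle.
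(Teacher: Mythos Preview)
Your proof is correct and follows essentially the same approach as the paper: both invoke Theorem \ref{theorem: hermitian form}(1) and compute signatures at the real embedding $\sigma_3$ induced by $\zeta_{20}\mapsto\zeta_{20}^3$, finding $\sgn_{\sigma_3}(h_\mu)=(2,1)$ versus $\sgn_{\sigma_3}(h_\nu)=(3,0)$. You supply more detail via the explicit Galois-twisted signature formula (which the paper leaves implicit), but the argument is the same.
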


\begin{proof}
  Under the real embedding $\sigma\colon \QQ[\zeta_{20}]\cap \RR\to \RR$ induced by $\zeta_{20}\mapsto \zeta_{20}^3$, the signature of $(V_\mu, h_\mu)$ is $\sgn_\sigma(h_\mu)=(2,1)$, while $\sgn_\sigma(h_\nu)=(3,0)$. Then Theorem \ref{theorem: hermitian form} \eqref{theorem: hermitian form n dayu 2} implies that $\Gamma_\mu$ and $\Gamma_\nu$ are not commensurable.
\end{proof}
The commensurability invariant used in Corollary \ref{cor: distinguishing nonarithemetic DM by signature} is the signature of Hermitian form. It was already found by Deraux--Parker--Paupert \cite[\S6.2]{deraux2021new} and called signature spectrum.

\section{Explicit Calculation of Hermitian Forms: Degeneration Method}
\label{section: degeneration method}
In this section we develop the method to calculate the determinant of Hermitian forms defined in Deligne--Mostow theory. The main idea is to study how the Hermitian forms change when two weights collide in Deligne--Mostow theory. 

Deligne--Mostow monodromy groups are also related to Burau representations and Gassner representations of braid groups specialized at roots of unity; see \cite{venkataramana2014image, venkataramana2014monodromy}. It is known that the Gassner representation preserves a skew Hermitian form; see \cite[Theorem 3.3]{long1989linear}. The matrix of this Hermitian form under suitable basis was calculated in \cite[\S4.1]{venkataramana2014monodromy}. A geometric way to obtain this skew Hermitian matrix at roots of unity is given in Proposition \ref{prop: intersection form}.

\subsection{Geometric Degeneration}
Now we study how the Hermitian form $V_\mu$ changes when two points $x_k$ and $x_j$ collide. Let $\mu^\prime$ be the weight vector consisting of components of $\mu_k+\mu_j$ and $\mu_i, \, i\neq j,k$. 

\begin{prop}
Assume $\mu_k+\mu_j\not\in \ZZ$ for a pair $k\neq j$. Then $$(V_\mu, h_\mu)=(V_{\mu^\prime}, h_{\mu^\prime})\oplus \langle\gamma_{kj}\rangle,$$
where $\langle\gamma_{kj}\rangle$ is a one-dimensional Hermitian space only depending on parameters $\mu_j$ and $\mu_k$.
\end{prop}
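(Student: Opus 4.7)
Up to relabeling assume $j=1$, $k=2$, and place $x_1,x_2$ inside a small disk $U\subset\PP^1$ disjoint from $x_3,\ldots,x_{n+3}$. Write $A'=\{x',x_3,\ldots,x_{n+3}\}$ with $x'\in U$ the limit point. Outside $U$, the local systems $\LL_\mu$ and $\LL_{\mu'}$ are canonically identified, because they have the same monodromies around $x_3,\ldots,x_{n+3}$ and the combined monodromy $\exp(2\pi\sqrt{-1}(\mu_1+\mu_2))$ of $\LL_\mu$ around a loop enclosing $U$ matches the monodromy of $\LL_{\mu'}$ around $x'$. The hypothesis $\mu_j+\mu_k\notin\ZZ$ ensures this monodromy is nontrivial, so that $(V_{\mu'},h_{\mu'})$ is a genuine Deligne--Mostow Hermitian space via Proposition \ref{proposition: deligne mostow}.

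Via Proposition \ref{prop: basis for lf homology} choose a basis $\gamma_1,\ldots,\gamma_{n+1}$ of $V_\mu$ in which $\gamma_1$ is the short ``vanishing'' path from $x_1$ to $x_2$ inside $U$, and each $\gamma_i$ for $i\ge 2$ connects $x_i$ to $x_{i+1}$ through $\PP^1\setminus U$ (meeting $\overline U$ at most at the endpoint $x_2$ when $i=2$). Similarly choose a basis $\gamma_1',\ldots,\gamma_n'$ of $V_{\mu'}$ with $\gamma_1'$ connecting $x'$ to $x_3$ through the same outside-$U$ region, and $\gamma_i'=\gamma_{i+1}$ for $i\ge 2$. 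The identification of local systems outside $U$ then yields a $\QQ[\zeta_d]$-linear embedding $\iota\colon V_{\mu'}\hookrightarrow V_\mu$ defined on the basis by $\iota(\gamma_i')=\gamma_{i+1}$, with image a codimension-one subspace complementary to $\langle\gamma_1\rangle$.

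The Hermitian pairing from Proposition \ref{prop: iso from C to P^1} is an intersection pairing of locally finite cycles, and its value on a pair of cycles is assembled from local contributions at the punctures they share and at points where the cycles meet. Since all cycles $\iota(\gamma_i'),\iota(\gamma_l')$ sit in the region outside $U$, the pairings $h_\mu(\iota(\gamma_i'),\iota(\gamma_l'))$ and $h_{\mu'}(\gamma_i',\gamma_l')$ receive identical local contributions at each of $x_3,\ldots,x_{n+3}$ and none from $U$, hence they agree. Likewise $h_\mu(\gamma_1,\iota(\gamma_i'))$ picks up local contributions only near the shared endpoint $x_2$, so it vanishes for $i\ge 2$ and is a single explicit local expression in $\alpha_1=\exp(-2\pi\sqrt{-1}\mu_1)$, $\alpha_2=\exp(-2\pi\sqrt{-1}\mu_2)$ when $i=1$. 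Gram--Schmidt then replaces $\gamma_1$ by $\widetilde\gamma_1=\gamma_1-c\cdot\iota(\gamma_1')$ for a unique $c\in\QQ[\zeta_d]$ making $\widetilde\gamma_1$ orthogonal to $\iota(V_{\mu'})$. The decomposition $V_\mu=\iota(V_{\mu'})\oplus\langle\widetilde\gamma_1\rangle$ is therefore orthogonal, and the entries $h_\mu(\gamma_1,\gamma_1)$, $h_\mu(\gamma_1,\iota(\gamma_1'))$, $h_\mu(\iota(\gamma_1'),\iota(\gamma_1'))$ entering the formula for $h_\mu(\widetilde\gamma_1,\widetilde\gamma_1)$ are all explicit local quantities in $\alpha_1,\alpha_2$ (of the type of the factor $(1-\alpha_1\alpha_2)/((1-\alpha_1)(1-\alpha_2))$ appearing in Theorem \ref{theorem: surface S to DM cohomology}), so the one-dimensional summand $\langle\widetilde\gamma_1\rangle$ depends only on $\mu_j$ and $\mu_k$.

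\textbf{Main obstacle.} The crucial technical step is pinning down the locality of the intersection pairing at a shared puncture, which underlies both the matching of Hermitian forms on the outside-$U$ subspaces and the local nature of the single coupling $h_\mu(\gamma_1,\iota(\gamma_1'))$. Once this is rigorously established, either by direct analysis of representatives or by passing to the cyclic covers and using Leray--Hirsch and Poincar\'e duality as in Proposition \ref{prop: iso from C to P^1}, the orthogonal splitting and the dependence of $\langle\widetilde\gamma_1\rangle$ only on $\mu_j,\mu_k$ follow immediately.
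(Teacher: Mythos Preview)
Your embedding $\iota\colon V_{\mu'}\to V_\mu$, $\gamma_i'\mapsto\gamma_{i+1}$, is \emph{not} isometric, and this is where the argument breaks. The intersection pairing on a locally finite $1$-cycle has a local contribution at each endpoint puncture governed by the monodromy \emph{at that puncture}. For $\gamma_2\in V_\mu$ (endpoints $x_2,x_3$) Proposition~\ref{prop: intersection form} gives
\[
h_\mu(\gamma_2,\gamma_2)=-1+\frac{1}{1-\alpha_2}+\frac{1}{1-\alpha_3},
\]
whereas for $\gamma_1'\in V_{\mu'}$ (endpoints $x',x_3$, with monodromy $\alpha_1\alpha_2$ at $x'$) one gets
\[
h_{\mu'}(\gamma_1',\gamma_1')=-1+\frac{1}{1-\alpha_1\alpha_2}+\frac{1}{1-\alpha_3}.
\]
These differ. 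The ``locality outside $U$'' heuristic fails precisely because $\gamma_2$ terminates at $x_2\in U$, not at the merged point. Consequently your Gram--Schmidt step is also off: subtracting a single multiple of $\iota(\gamma_1')=\gamma_2$ from $\gamma_1$ will not make it orthogonal to $\gamma_3$ (since $h_\mu(\gamma_2,\gamma_3)\neq 0$), and the full projection onto $\iota(V_{\mu'})^\perp$ would involve the entire Gram matrix and hence all the $\alpha_i$, destroying the claim that the one-dimensional piece depends only on $\mu_1,\mu_2$.

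The paper runs the orthogonalization in the \emph{opposite} direction, and this is what makes it work. One keeps $\gamma_1$ as the one-dimensional summand (its self-pairing $(1-\alpha_1\alpha_2)/((1-\alpha_1)(1-\alpha_2))$ visibly depends only on $\mu_1,\mu_2$), and then projects $\gamma_2$ onto $\gamma_1^\perp$ to obtain $\widehat\gamma_2=\gamma_2+\dfrac{\alpha_2-\alpha_1\alpha_2}{1-\alpha_1\alpha_2}\gamma_1$. Since $\gamma_1$ is already orthogonal to $\gamma_3,\ldots,\gamma_{n+1}$, one has $\gamma_1^\perp=\langle\widehat\gamma_2,\gamma_3,\ldots,\gamma_{n+1}\rangle$, and a direct computation gives $h_\mu(\widehat\gamma_2,\widehat\gamma_2)=-1+\frac{1}{1-\alpha_1\alpha_2}+\frac{1}{1-\alpha_3}$ and $h_\mu(\widehat\gamma_2,\gamma_3)=\frac{1}{\alpha_3-1}$, exactly matching the Gram matrix of $V_{\mu'}$. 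Equivalently (and this is how the paper organizes it conceptually), $\langle\gamma_1\rangle$ and $\gamma_1^\perp$ are the eigenspaces of the unitary monodromy operator $T=T_{12}$ for the distinct eigenvalues $\alpha_1\alpha_2\neq 1$ and $1$, so orthogonality is automatic; the isometry $\ker(T-\mathrm{id})\cong V_{\mu'}$ is then the content of the computation.
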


We will give two proofs of the degeneration proposition. One is based on Clemens-Schmid sequence, the other is based on explicit calculation of the Hermitian form. 
\begin{proof}   
The proof follows from an argument similar to \cite[Proposition 4.1]{yu2024calabi} in the case of curves. 
Assume $k=1, j=2$, and $x_3, \cdots, x_{n+3}$ are distinct points on $\CC$ with $|x_i|>1$. Let $\Delta=\{t\in \CC\mid |t|<1\}$ be the unit disc on the $t$-plane. Consider one parameter family of curves $f\colon \mathcal{C}\to \Delta$ formed by normalization of 
$$
y^d=x^{a_1}(x-t)^{a_2}\prod\limits_{i=3}^{n+3}(x-x_i)^{a_i}, \, |t|<1.
$$
The fiber of $f$ over any $t$ is denoted by $C_t$. The morphism $f|_{\widetilde{C}-C_0}\to \Delta-{0}$ is smooth, and the central fiber $C_0$
has at most isolated singularity. Then surface $\mathcal{C}$ is a cyclic cover of $\PP^1\times \Delta$ branched along the normal crossing divisor defined by $x(x-t)\prod\limits_{i=3}^{n+3}(x-x_i)$. So, the total space $\mathcal{C}$ has quotient singularities. The intersection complex on $\mathcal{C}$ is the constant sheaf. So by the argument in \cite[Theorem 5]{kerr2021hodge} (or see \cite[Sequence 0.2]{kerr2021hodge}), we have the following isomorphism
\[
 H^1_{\overline{\chi}}(C_0)\to H^1_{\overline{\chi}}(C_t)^T,
\]
where $T$ is the monodromy operator induced by the action of $\pi_1(\Delta-\{0\})$. On the other hand, since $d\nmid a_1+a_2$, the normalization $\widetilde{C_0}\to C_0$ is an isomorphism away from the ramification points of the cyclic cover $C_0\to \PP^1$, and the induced map $H^1_{\overline{\chi}}(C_0)\to H^1_{\overline{\chi}}(\widetilde{C_0})$ is an isomorphism. So we have $(V_\mu, h_\mu)^T\cong (V_{\mu^\prime}, h_{\mu^\prime})$. On the other hand, we have an identification of $H^1_{\overline{\chi}}(C_t)$ with $H^1(\PP^1, \LL_\mu)$ in Proposition \ref{prop: iso from C to P^1}. By \cite[Proposition 9.2]{deligne1986monodromy}, the action of $T$ on $V_\mu$ is semisimple, and the $e^{2\pi\sqrt{-1}(\mu_1+\mu_2)}$-eigenspace is generated by a cocycle represented by a path connecting $x_1$ and $x_2$. (A similar argument as \cite[Proposition 4.3]{yu2024calabi} implies that $T$ has finite order since the central fiber has ball-type Hodge structure.) The self intersection of this cycle only depends on the local system on its neighborhood. So it only depends on $\mu_1, \mu_2$; see Proposition \ref{prop: intersection form} for the explicit intersection number. 
\end{proof}

\subsection{Explicit Calculation of Hermitian Forms}
We also give a more explicit proof in terms of local systems on punctured projective line.

In \S\ref{subsection: locally finite homology} we define $\gamma_i$ to be a path in $\PP^1$ connecting $x_i$ to $x_{i+1}$. Take a point $B\in \PP^1-A$ and take $n+1$ paths $\beta_i$ connecting $B$ to points on $\gamma_1, \cdots, \gamma_{n+1}$ respectively. Then a nonzero value of $\LL$ at $p$ induces a section $e$ of $\LL$ over $\beta_i, \gamma_i$ simultaneously. The locally finite homology is generated by $\gamma_i \cdot e$ with $1\le i\le n+1$. Then the Hermitian form can be calculated explicitly in Proposition \ref{prop: intersection form}.

Firstly, we need the following geometric description of the inverse map of isomorphism $H_1(\PP^1-A)\to H^1_{lf}(\PP^1-A)$ from \cite[Proposition 2.6.1]{deligne1986monodromy} and its proof,
\begin{prop}
\label{proposition: geometric description of isomorphism of homologies}
The isomorphism $H_1^{lf}(\PP^1-A, \LL)\to H_1(\PP^1-A, \LL)$ can be phrased as follows. For a path $\gamma$ from $x_i$ to $x_j$ with $e\in \Gamma((0,1), \gamma^*\LL)$. Take $\epsilon>0$ small enough. Let $\widetilde{\gamma}$ be the part of $\gamma$ from $\gamma(\epsilon)$ to $\gamma(1-\epsilon)$. Let $\theta_1, \theta_2$ be two circles starting at $\gamma(\epsilon), \gamma(1-\epsilon)$ and going counterclockwise around $x_i, x_j$  once respectively. Let $e_1$ be a section of $\LL$ over $\gamma_1$ such that $e_1(1)=e(\epsilon)$. Let $e_2$ be a section of $\LL$ over $\gamma_2$ such that $e_2(0)=e(1-\epsilon)$. Then the isomorphism $H_1^{lf}(\PP^1-A, \LL)\to H_1(\PP^1-A, \LL)$ replaces $\gamma\cdot e$ by 
\begin{equation*}
{\alpha_i\over\alpha_i-1}\theta_1\cdot e_1+\widetilde{\gamma}\cdot e+{1\over 1-\alpha_j} \theta_2\cdot e_2.
\end{equation*}
See Figure \eqref{figure: gamma}.
\end{prop}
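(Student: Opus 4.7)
The plan is to verify two things: first, that the expression on the right-hand side is a $1$-cycle in $C_*(\PP^1-A, \LL)$, and second, that it is homologous to $\gamma \cdot e$ inside $C_*^{lf}(\PP^1-A, \LL)$. Since the map $H_1 \to H_1^{lf}$ is induced by the inclusion of complexes, these two statements together realize the inverse isomorphism at the level of classes.

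For the cycle property I would compute boundaries term by term. The prescribed condition $e_1(1) = e(\epsilon)$ together with the counterclockwise monodromy relation $e_1(1) = \alpha_i e_1(0)$ forces $e_1(0) = \alpha_i^{-1}e(\epsilon)$, so
\[
\partial(\theta_1\cdot e_1) \,=\, \bigl(1-\alpha_i^{-1}\bigr)\,e(\epsilon)\cdot\gamma(\epsilon) \,=\, \tfrac{\alpha_i - 1}{\alpha_i}\,e(\epsilon)\cdot\gamma(\epsilon),
\]
and similarly $\partial(\theta_2\cdot e_2) = (\alpha_j-1)\,e(1-\epsilon)\cdot\gamma(1-\epsilon)$ once $e_2(0)=e(1-\epsilon)$. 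The coefficients $\tfrac{\alpha_i}{\alpha_i-1}$ and $\tfrac{1}{1-\alpha_j}$ are chosen exactly so that these two contributions cancel $\partial(\widetilde{\gamma}\cdot e) = \gamma(1-\epsilon)\cdot e(1-\epsilon) - \gamma(\epsilon)\cdot e(\epsilon)$, giving a cycle.

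For the homological equality in $H_1^{lf}$, I would decompose $\gamma\cdot e \in C_1^{lf}(\PP^1-A, \LL)$ as the sum of the two locally finite tails $\gamma|_{[0,\epsilon]}\cdot e$ and $\gamma|_{[1-\epsilon,1]}\cdot e$ (which accumulate at the removed points $x_i, x_j$) together with the finite middle piece $\widetilde{\gamma}\cdot e$. The middle matches the target, so it suffices to show
\[
\gamma|_{[0,\epsilon]}\cdot e \;\sim\; \tfrac{\alpha_i}{\alpha_i - 1}\,\theta_1\cdot e_1 \qquad \text{in } H_1^{lf}(\PP^1-A, \LL),
\]
and analogously near $x_j$. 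Both chains have the same boundary $\gamma(\epsilon)\cdot e(\epsilon)$ in $C_0^{lf}$ and are supported in a small punctured disk $D_i - \{x_i\}$ around $x_i$, so their difference defines a class in $H_1^{lf}(D_i - \{x_i\}, \LL)$.

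The crucial vanishing $H_1^{lf}(D_i - \{x_i\}, \LL)=0$ uses the standing assumption $\alpha_i \neq 1$ (equivalently $\mu_i \notin \ZZ$, cf.\ Proposition \ref{proposition: deligne mostow}). By Poincar\'e--Lefschetz duality on the oriented $2$-manifold $D_i - \{x_i\}$, one has $H_1^{lf}(D_i - \{x_i\}, \LL) \cong H^1(D_i - \{x_i\}, \LL)$; since $D_i - \{x_i\} \simeq S^1$ with monodromy $\alpha_i$ on the rank-one coefficient, this cohomology is the coinvariant quotient, which vanishes because $1-\alpha_i$ is a unit in the coefficient field. The analogous argument at $x_j$ completes the proof. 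The main technical obstacle is the careful handling of infinite formal sums in $C_*^{lf}$ and of the non-compact tails near $x_i, x_j$; the duality argument is clean and bypasses the need to construct explicit $2$-chains spiraling into the punctures, though such an explicit homology (built from countably many sector-shaped simplices between the spiral $\gamma|_{[0,\epsilon]}$ and the loop $\theta_1$, weighted using the geometric series $\sum_{n\ge 0}\alpha_i^{-n}$) can be written down if desired.
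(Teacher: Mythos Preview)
Your cycle computation is identical to the paper's own proof: it checks that
$\partial(\theta_1\cdot e_1)=(1-\alpha_i^{-1})\,\gamma(\epsilon)\cdot e$,
$\partial(\theta_2\cdot e_2)=(\alpha_j-1)\,\gamma(1-\epsilon)\cdot e$, and
$\partial(\widetilde{\gamma}\cdot e)=\gamma(1-\epsilon)\cdot e-\gamma(\epsilon)\cdot e$,
so the weighted sum is closed. That is literally all the paper writes down; for the rest it simply cites \cite[Proposition 2.6.1]{deligne1986monodromy} and its proof.

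Your second step goes beyond the paper by actually proving, rather than citing, that the class maps to $\gamma\cdot e$ under $H_1\to H_1^{lf}$. The local argument is sound: the tail $\gamma|_{(0,\epsilon]}\cdot e$ and the loop term $\tfrac{\alpha_i}{\alpha_i-1}\theta_1\cdot e_1$ have the same boundary $\gamma(\epsilon)\cdot e(\epsilon)$ in $C_0^{lf}$ of the punctured disk, so their difference is a locally finite $1$-cycle there; Poincar\'e--Lefschetz duality gives $H_1^{lf}(D_i-\{x_i\},\LL)\cong H^1(D_i-\{x_i\},\LL)$, and the latter vanishes since $\alpha_i\neq 1$. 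This is exactly the mechanism behind Deligne--Mostow's Proposition 2.6.1, packaged efficiently. The explicit geometric-series $2$-chain you mention at the end is a nice alternative but unnecessary given the duality argument. In short, your proof is correct and self-contained, whereas the paper's proof contents itself with verifying closedness and defers the homological identification to the reference.
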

\begin{proof}
We check that ${\alpha_i\over\alpha_i-1}\gamma_1\cdot e_1+\widetilde{\gamma}\cdot e+{1\over 1-\alpha_j} \theta_2\cdot e_2$ is closed. Since the monodromy of $\LL$ aroud $x_i, x_j$ is by multiplication with $\alpha_i, \alpha_j$, we have $\partial(\theta_1\cdot e_1)=\gamma(\epsilon)\cdot e-\alpha_i^{-1} (\gamma(\epsilon)\cdot e)$ and $\partial(\theta_2\cdot e_2)=\alpha_j (\gamma(1-\epsilon)\cdot e)-\gamma(1-\epsilon)\cdot e$.
We have also $\partial(\widetilde{\gamma}\cdot e)=\gamma(1-\epsilon)\cdot e-\gamma(\epsilon)\cdot e$. These imply $\partial({\alpha_i\over\alpha_i-1}\theta_1\cdot e_1+\widetilde{\gamma}\cdot e+{1\over 1-\alpha_j} \theta_2\cdot e_2)=0$.
\end{proof}
\begin{figure}[htp]
\centering
\includegraphics[width=9cm]{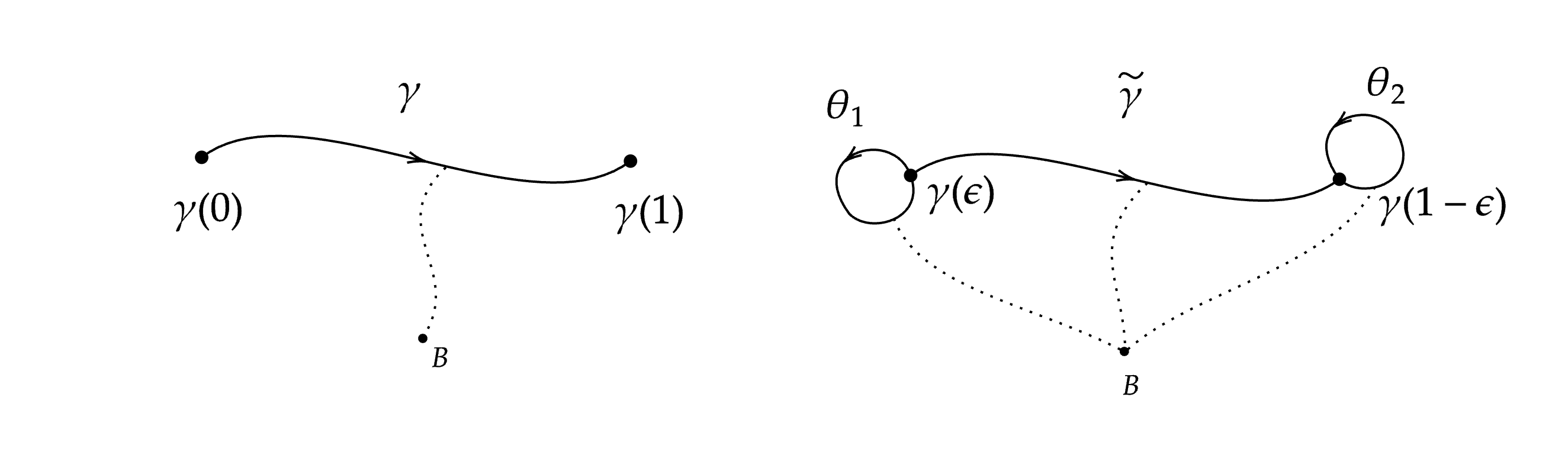}
\caption{$\gamma$ and $\widetilde{\gamma}$}
\label{figure: gamma}
\end{figure}

Let $\omega_1=\gamma_1\cdot e, \cdots, \omega_{n+2}=\gamma_{n+2}\cdot e$. 

\begin{prop}
The only relation among $\omega_1, \cdots, \omega_{n+2}$ is
\begin{equation*}
(1-\alpha_1^{-1})\omega_1+(1-\alpha_1^{-1}\alpha_2^{-1})\omega_2+\cdots+(1-\alpha_1^{-1}\cdots\alpha_{n+2}^{-1})\omega_{n+2}=0
\end{equation*}    
\end{prop}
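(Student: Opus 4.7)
The first step is a dimension count: by Proposition \ref{prop: basis for lf homology} the elements $\omega_1,\ldots,\omega_{n+1}$ already form a basis of $H_1^{lf}(\PP^1-A,\LL)$, so this space has dimension $n+1$. Since we have $n+2$ elements $\omega_1,\ldots,\omega_{n+2}$, there is exactly one linear relation among them up to scalar, and the coefficient of $\omega_{n+2}$ in any such relation must be nonzero (otherwise it would be a nontrivial dependence among the basis elements). Thus once the stated combination is shown to vanish in $H_1^{lf}$, the uniqueness claim is automatic.

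To verify the vanishing, my plan is to apply the explicit isomorphism $H_1^{lf}(\PP^1-A,\LL)\xrightarrow{\sim}H_1(\PP^1-A,\LL)$ of Proposition \ref{proposition: geometric description of isomorphism of homologies}. Under this map each $\omega_i=\gamma_i\cdot e$ decomposes as a truncated arc $\widetilde{\gamma}_i\cdot e$ together with two small loops: a counterclockwise circle around $x_i$ with coefficient $\alpha_i/(\alpha_i-1)$, and a counterclockwise circle around $x_{i+1}$ with coefficient $1/(1-\alpha_{i+1})$. I would then form the weighted sum $\sum_i c_i\omega_i$ with $c_i=1-\alpha_1^{-1}\cdots\alpha_i^{-1}$, push it through the isomorphism, and reorganize the output by the puncture around which each loop sits.

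The core algebraic check is the cancellation at each intermediate puncture $x_{i+1}$ for $1\le i\le n+1$: the loop coming from $c_i\omega_i$ (with coefficient $c_i/(1-\alpha_{i+1})$) and the loop coming from $c_{i+1}\omega_{i+1}$ (with coefficient $c_{i+1}\alpha_{i+1}/(\alpha_{i+1}-1)$) must cancel. After normalizing the phase factor that relates the sections transported along $\gamma_i$ and $\gamma_{i+1}$ near their common endpoint, this reduces to the scalar recursion
\[
c_{i+1}=\alpha_{i+1}^{-1}c_i+(1-\alpha_{i+1}^{-1}),
\]
which is exactly satisfied by the ansatz $c_i=1-\alpha_1^{-1}\cdots\alpha_i^{-1}$ with initial value $c_1=1-\alpha_1^{-1}$. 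The residual truncated arcs $\widetilde{\gamma}_i$ together with the boundary loops at the two endpoint punctures $x_1$ and $x_{n+3}$ should then assemble into $\partial(D\cdot e)$, where $D\subset\PP^1$ is a disk bounded by the concatenated arcs and $e$ is a single-valued section on $D$; the consistency of this section across the two endpoint contributions is exactly the identity $\prod_{j=1}^{n+3}\alpha_j=1$, equivalent to $\sum_j\mu_j\in\ZZ$, which is what pins down the final coefficient $c_{n+2}=1-\alpha_{n+3}$.

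The hard part is the careful phase bookkeeping: the loops around a common puncture coming from $\omega_i$ and $\omega_{i+1}$ carry sections transported from different basepoints along different reference paths $\beta_i,\beta_{i+1}$, so one must fix a consistent convention before the cancellation becomes a clean scalar identity. A cleaner purely topological alternative is to realize the whole relation as the boundary of a 2-chain $D\cdot e$ from the start: the monodromy factors relating the restriction of the single-valued section $e$ on $D$ to the reference sections $e_i$ on each $\gamma_i$ are precisely the coefficients $c_i$, and the identity then falls out directly without needing the conversion of Proposition \ref{proposition: geometric description of isomorphism of homologies}.
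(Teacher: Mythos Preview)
Your proposal is correct, and your second (``cleaner purely topological'') alternative is exactly the paper's approach: the paper's entire proof is the single sentence ``This directly follows from Figure~\ref{figure: relation of cycles},'' the figure depicting a $2$-chain on $\PP^1$ whose boundary, carrying the transported section $e$, is precisely the stated combination of the $\omega_i$.

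Your first approach---pushing through the isomorphism of Proposition~\ref{proposition: geometric description of isomorphism of homologies} and checking the loop cancellations at each intermediate puncture via the recursion $c_{i+1}=\alpha_{i+1}^{-1}c_i+(1-\alpha_{i+1}^{-1})$---is a valid computational alternative that makes every constant explicit, at the cost of the phase bookkeeping you flag. The paper bypasses this entirely with the picture; your dimension-count paragraph establishing uniqueness is a useful addition that the paper leaves implicit.
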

\begin{proof}
This directly follows from Figure \ref{figure: relation of cycles}.
\end{proof}

\begin{figure}[htp]
		\centering
	\includegraphics[width=9cm]{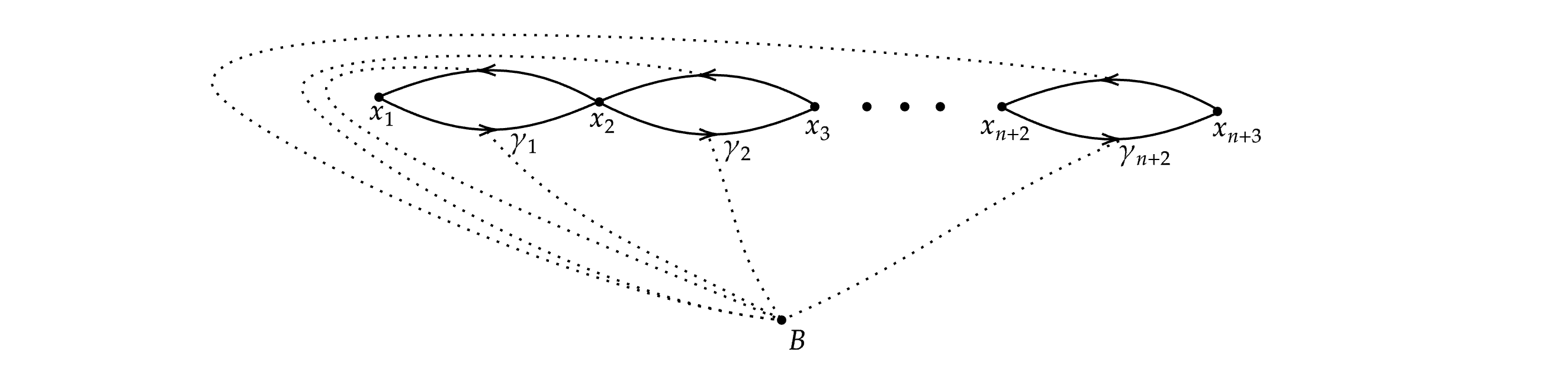}
		\caption{Relation of Cycles}
		\label{figure: relation of cycles}
	\end{figure}

\begin{prop}
\label{prop: intersection form}
The space $H^{lf}_1(\PP^1-A,\LL)$ is generated by $\gamma_i$ ($1\le i\le n+1$) such that 
\begin{enumerate}
\item $(\gamma_i\cdot e, \gamma_i\cdot e)=-1+{1\over 1-\alpha_i}+{1\over 1- \alpha_{i+1}}={1-\alpha_i \alpha_{i+1}\over (1-\alpha_i)(1-\alpha_{i+1})}$ for $1\le i\le n+1$;
\item $(\gamma_i\cdot e, \gamma_{i+1}\cdot e)=-\overline{(\gamma_{i+1}\cdot e, \gamma_i\cdot e)}={1\over \alpha_{i+1}-1}$ for $1\le i\le n+1$;
\item $(\gamma_i\cdot e, \gamma_j\cdot e)=0$ for $|i-j|\ge 2$.
\end{enumerate}
\end{prop}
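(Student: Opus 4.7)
The plan is to use the isomorphism $\iota\colon H_1^{lf}(\PP^1-A,\LL)\xrightarrow{\sim} H_1(\PP^1-A,\LL)$ from Proposition \ref{proposition: geometric description of isomorphism of homologies} to reduce the pairing to a transverse intersection computation. By construction, the Hermitian form $(\cdot,\cdot)$ is the composition of $\iota$ with the Poincar\'e intersection pairing $H_1\times H_1^{lf}\to \QQ[\zeta_d]$, so $(\gamma_i\cdot e,\gamma_j\cdot e)=\iota(\gamma_i\cdot e)\cdot (\gamma_j\cdot e)$. Here $\iota(\gamma_i\cdot e)$ is the compact cycle
\[
\sigma_i = \frac{\alpha_i}{\alpha_i-1}\,\theta_i^{-}\cdot e_1 \;+\; \widetilde{\gamma}_i\cdot e \;+\; \frac{1}{1-\alpha_{i+1}}\,\theta_i^{+}\cdot e_2,
\]
with $\theta_i^-,\theta_i^+$ small counterclockwise loops at $\gamma_i(\epsilon),\gamma_i(1-\epsilon)$ circling $x_i,x_{i+1}$, and $e_1,e_2$ the compatible sections.

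For the \textbf{disjoint case} $|i-j|\ge 2$, the support of $\sigma_i$ lies in an arbitrarily small neighborhood of $\gamma_i\cup\{x_i,x_{i+1}\}$, which is disjoint from $\gamma_j$; hence no transverse intersections occur and the pairing vanishes. For the \textbf{adjacent case} $j=i+1$, after a small perturbation only $\theta_i^+$ can meet $\gamma_{i+1}$, and it does so at a single point $p$ near $x_{i+1}$. I would choose local coordinates in which $x_{i+1}=0$ and $\gamma_{i+1}$ exits along the positive real axis; then the counterclockwise loop meets the outgoing arc with local orientation sign $-1$, while the section factor at $p$ is trivial under the chosen normalization. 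Multiplying by the coefficient $\frac{1}{1-\alpha_{i+1}}$ of $\theta_i^+$ in $\sigma_i$ produces $\frac{-1}{1-\alpha_{i+1}}=\frac{1}{\alpha_{i+1}-1}$, as claimed. The relation $(\gamma_{i+1}\cdot e,\gamma_i\cdot e)=-\overline{(\gamma_i\cdot e,\gamma_{i+1}\cdot e)}$ then follows from the skew-Hermitian property of the pairing.

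For the \textbf{self-pairing case} $i=j$, I would push $\widetilde{\gamma}_i$ slightly to one side of $\gamma_i$ (say above, if $\gamma_i$ is modeled as $[0,1]\subset\RR$); the pushed $\widetilde{\gamma}_i$ then avoids $\gamma_i$, while each of the perturbed loops $\theta_i^\pm$ (now a small circle around $x_i$ or $x_{i+1}$ passing just above $\gamma_i(\epsilon)$ or $\gamma_i(1-\epsilon)$) meets $\gamma_i$ transversely at exactly one point. A direct local computation of the tangent determinant shows that the $\theta_i^-$ crossing (on the positive side of $x_i$) has sign $-1$ while the $\theta_i^+$ crossing (on the negative side of $x_{i+1}$) has sign $+1$, with both section pairings trivial under the normalization. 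Summing the weighted contributions gives
\[
\frac{\alpha_i}{\alpha_i-1}(-1) + \frac{1}{1-\alpha_{i+1}}(+1) = \left(\frac{1}{1-\alpha_i}-1\right)+\frac{1}{1-\alpha_{i+1}},
\]
which is the stated formula, and combining the fractions yields $\frac{1-\alpha_i\alpha_{i+1}}{(1-\alpha_i)(1-\alpha_{i+1})}$.

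The main technical obstacle is the bookkeeping of intersection signs and section factors at each transverse crossing; the two opposite signs in the self-pairing case are the key geometric input and ultimately produce the characteristic $-1$ term. As a consistency check, the resulting matrix is manifestly skew-Hermitian, and one can verify that the unique relation among $\omega_1,\dots,\omega_{n+2}$ from the preceding proposition lies in the radical of the form, confirming that the formula descends correctly to the $(n+1)$-dimensional quotient $H_1^{lf}(\PP^1-A,\LL)$.
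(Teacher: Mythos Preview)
Your argument is correct and follows the same approach as the paper: replace each $\gamma_i\cdot e$ by its compact representative via Proposition~\ref{proposition: geometric description of isomorphism of homologies} and then count (signed, weighted) transverse intersections with $\gamma_j$. The paper's proof is more terse and relies on two figures to indicate the intersection points and their signs, but the computation you carry out---in particular the coefficients $\frac{\alpha_i}{\alpha_i-1}$ and $\frac{1}{1-\alpha_{i+1}}$ on the loops and the opposite local signs in the self-pairing---is exactly what is encoded in those figures.
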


\begin{proof}
For (1), see Figure \ref{figure: self intersection}, we need to calculate the intersection value at $A_1$ and $A_2$. See Proposition \ref{proposition: geometric description of isomorphism of homologies} for the multiplicities of the two circles. The intersection value at $A_1$ is $(-1)\times {\alpha_i\over \alpha_i-1}$, and the intersection value at $A_2$ is ${1\over 1-\alpha_{i+1}}$. Hence the intersection $(\gamma_i\cdot e, \gamma_i\cdot e)$ is $-1+{1\over 1-\alpha_i}+{1\over 1-\alpha_{i+1}}$.

For (2), see Figure \ref{figure: adjacent intersection}, we need to calculate the intersection value at $A$, which is $(-1)\times {1\over 1-\alpha_{i+1}}={1\over \alpha_{i+1}-1}$. The equality (3) is obvious.
\end{proof}

\begin{figure}[htp]
		\centering
	\includegraphics[width=5cm]{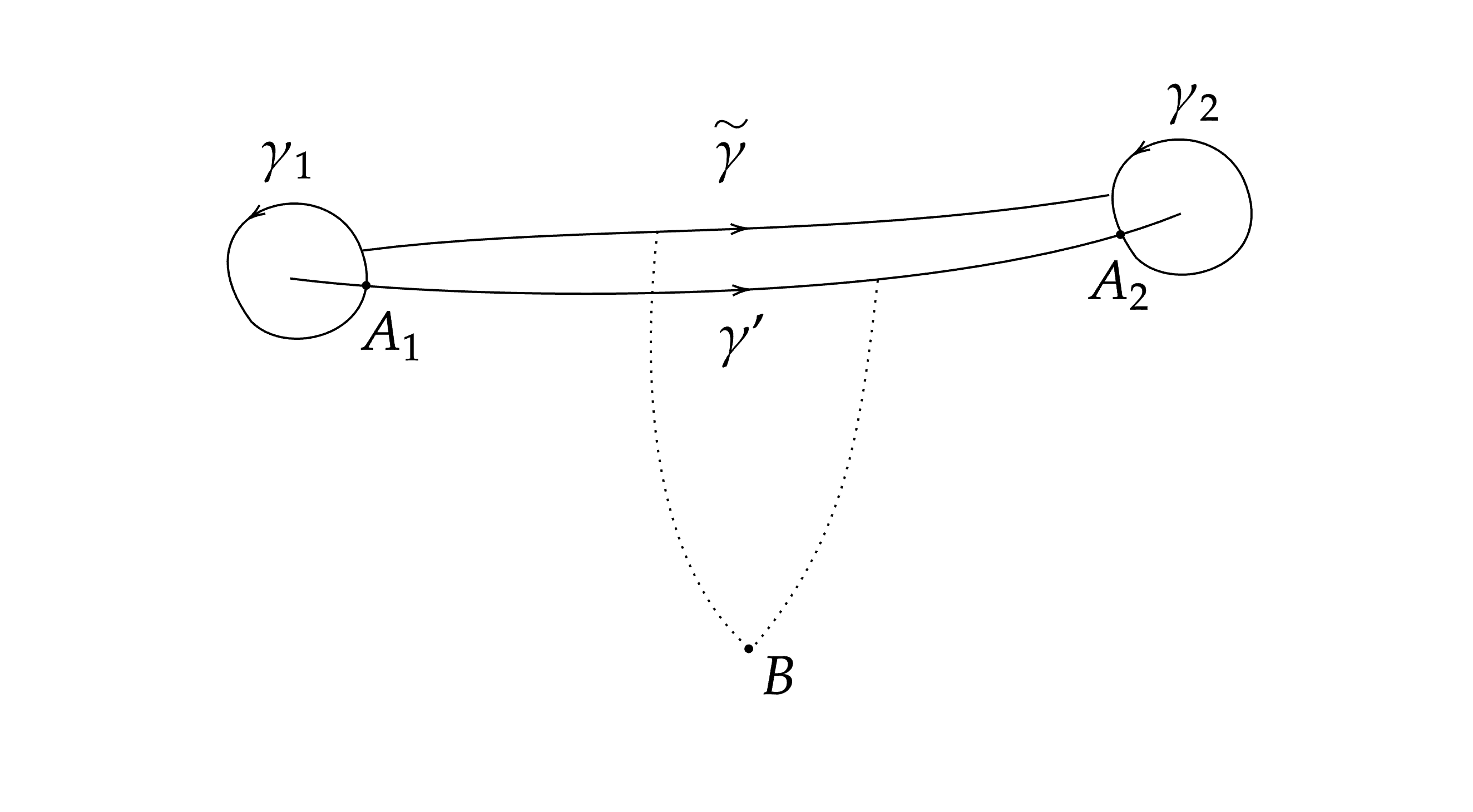}
		\caption{Self Intersection}
		\label{figure: self intersection}
	\end{figure}
 
\begin{figure}[htp]
\centering
\includegraphics[width=4cm]{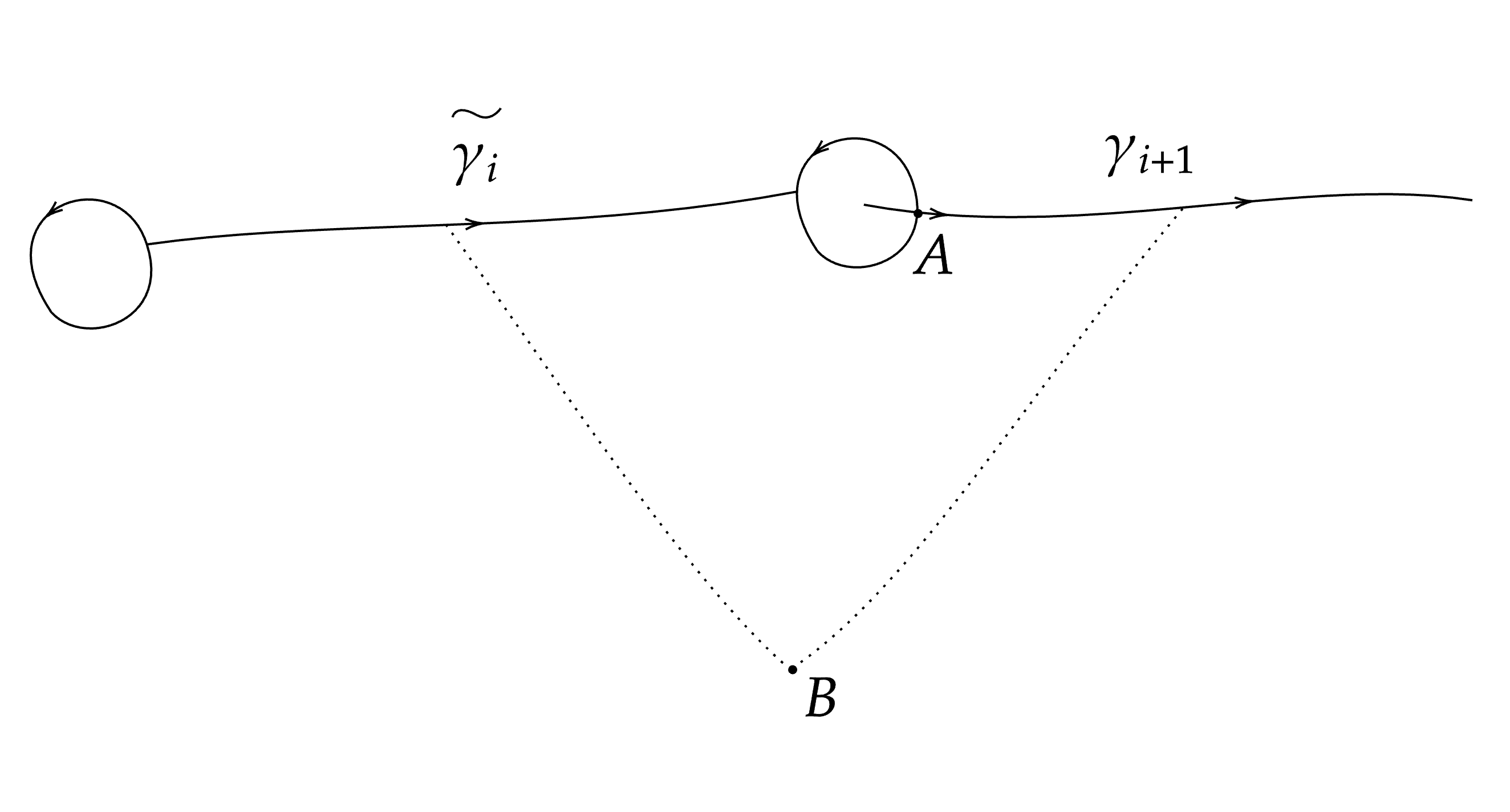}
\caption{Adjacent Intersection}
\label{figure: adjacent intersection}
	\end{figure}

\begin{figure}[htp]
		\centering
	\includegraphics[width=9cm]{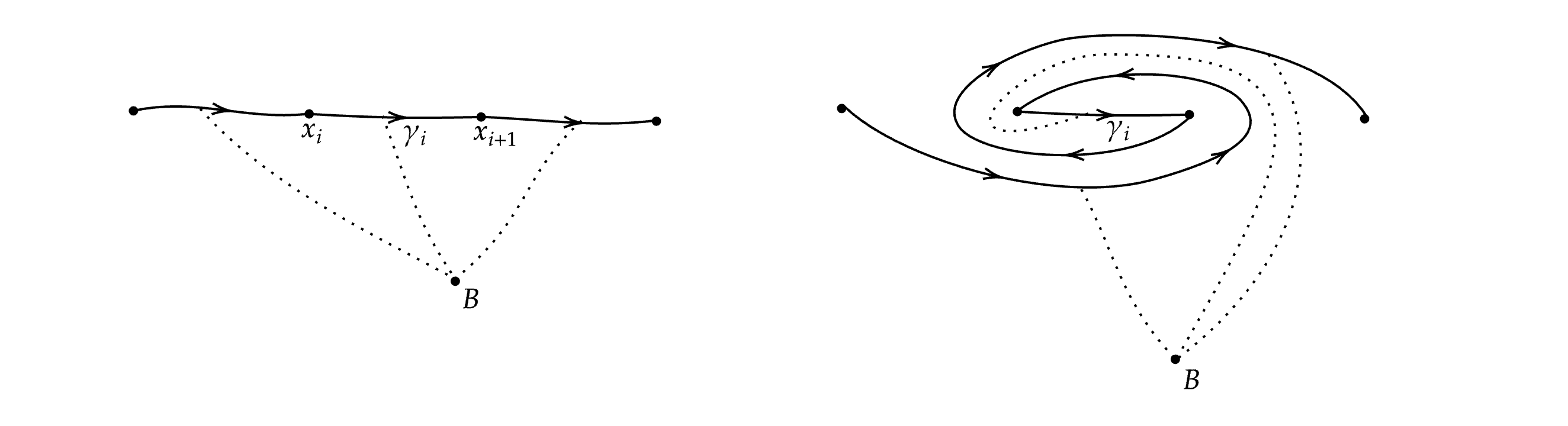}
		\caption{Monodromy Operator}
		\label{figure: monodromy operator}
	\end{figure}

The monodromy operator $T_{i,i+1}$ can be calculated from Figure \ref{figure: monodromy operator}. We have
\begin{align*}
T_{i,i+1}(\omega_{i}) &=\alpha_i\alpha_{i+1}\omega_i \\
T_{i,i+1}(\omega_{i-1}) &= \omega_{i-1}+(1-\alpha_{i+1})\omega_i \\
T_{i,i+1}(\omega_{i+1}) &= \omega_{i+1}+(\alpha_{i+1}-\alpha_i \alpha_{i+1})\omega_i.
\end{align*}

We denote by $H_{\mu}$ the skew-Hermitian lattice $H^{lf}_1(\PP^1-A, \LL)$.
\begin{prop}
\label{proposition: merge two weights}
Suppose $\mu$ degenerates to $\mu'$ with $\mu_1, \mu_2$ merging to $\mu_1+\mu_2$. Suppose moreover $\mu_1+\mu_2\notin \ZZ$. Then there is an orthogonal decomposition 
\begin{equation*}
H_\mu\cong H_{\mu'}\oplus \langle \gamma_1\rangle
\end{equation*}
as skew-Hermitian forms over $\QQ[\zeta_d]$. Here $(\gamma_1, \gamma_1)=-1+{1\over 1-\alpha_1}+{1\over 1-\alpha_2}$.
\end{prop}
\begin{proof}
By winding $x_2$ around $x_1$ counterclockwisely once, we obtain a monodromy operator $T$ on $H_\mu$ given by $T(\gamma_1)=\alpha_1 \alpha_2\gamma_1$ and
\begin{equation*}
T(\gamma_2)=\gamma_2+\alpha_2(1-\alpha_1)\gamma_1.
\end{equation*}
The operator $T$ preserves the skew-Hermitian form on $H_\mu$. If $\alpha_1\alpha_2\ne 1$, then $H_\mu=\mathrm{Ker}(T-id)\oplus\langle \gamma_1\rangle$. 

Let $\widehat{\gamma}_2={\alpha_2-\alpha_1\alpha_2\over 1-\alpha_1\alpha_2}\gamma_1+\gamma_2$, then $\mathrm{Ker}(T-id)=\langle \widehat{\gamma}_2, \gamma_3,\cdots, \gamma_{n+1}\rangle$. By straightforward calculation we have
\begin{equation*}
(\widehat{\gamma}_2, \widehat{\gamma}_2)=-1+{1\over 1-\alpha_1\alpha_2}+{1\over 1-\alpha_3} 
\end{equation*}
and 
\begin{equation*}
\langle \widehat{\gamma}_2, \gamma_3\rangle={1\over \alpha_3-1}.
\end{equation*}
Hence $H_\mu\cong H_{\mu'}\oplus \langle \gamma_1\rangle$.
\end{proof}

\begin{cor}
\label{corollary: degeneration method}
Suppose $\mu,\nu$ are two tuples of the same length, and both $\mu,\nu$ contains factors $a,b$. Replace $a,b$ by $a+b$ we get tuples $\mu',\nu'$ respectively. Then $H_\mu\cong H_\nu$ if and only if $H_{\mu'}\cong H_{\nu'}$.
\end{cor}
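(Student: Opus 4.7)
The plan is to deduce the corollary from Theorem~\ref{theorem: merge two weights} by applying it twice, once to $\mu$ and once to $\nu$, and then invoking Witt cancellation for skew-Hermitian forms. Crucially, the one-dimensional summand $\langle\gamma_1\rangle$ produced by Theorem~\ref{theorem: merge two weights} depends only on the colliding weights through the formula
\[
(\gamma_1,\gamma_1) \;=\; -1 + \frac{1}{1-\alpha_a} + \frac{1}{1-\alpha_b},
\]
where $\alpha_a = e^{2\pi\sqrt{-1}\,a}$ and $\alpha_b = e^{2\pi\sqrt{-1}\,b}$. Since the weights being merged are the same pair $(a,b)$ in both tuples, the same one-dimensional factor $L$ appears in each decomposition.

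Concretely, I would first apply Theorem~\ref{theorem: merge two weights} to $\mu$ to obtain an orthogonal decomposition $H_\mu \cong H_{\mu'} \oplus L$, and similarly $H_\nu \cong H_{\nu'} \oplus L$ with the \emph{same} $L$. (This step tacitly uses the hypothesis $a+b\notin\ZZ$, which is needed for Theorem~\ref{theorem: merge two weights} to apply; I would note this as a standing assumption, consistent with its use throughout the rest of Section~\ref{section: degeneration method}.) The ``if'' direction is then immediate: given an isometry $H_{\mu'}\cong H_{\nu'}$, take its direct sum with $\mathrm{id}_L$.

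For the ``only if'' direction, I would appeal to Witt cancellation. Over a field of characteristic zero, nondegenerate (skew-)Hermitian forms over a quadratic extension $K/F$ satisfy cancellation: if $U\oplus L \cong V\oplus L$ as Hermitian $K$-spaces, then $U\cong V$. This is the standard Hermitian analogue of the classical Witt theorem and is proved in the usual way, by reflecting along a vector of nonzero length and inducting on dimension; the form $\langle\gamma_1\rangle$ is nondegenerate whenever $\alpha_a\alpha_b\ne 1$, i.e., $a+b\notin\ZZ$, so the reflection is well defined. If the two Hermitian spaces happen to be defined a priori over different cyclotomic subfields $\QQ[\zeta_d]$ and $\QQ[\zeta_{d'}]$, we can harmlessly base-change both sides to $\QQ[\zeta_{\mathrm{lcm}(d,d')}]$ before applying cancellation, since the isometry relation $H_\mu\cong H_\nu$ already presupposes a common field of scalars.

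I do not anticipate a serious obstacle: the substantive work has already been done in Theorem~\ref{theorem: merge two weights}, and Witt cancellation is a black box. The only point requiring mild care is verifying that the one-dimensional factor truly depends on nothing beyond $a$ and $b$, which is transparent from the explicit formula above.
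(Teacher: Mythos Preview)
Your proposal is correct and matches the paper's intended argument: the corollary is stated without proof, as an immediate consequence of Theorem~\ref{theorem: merge two weights}, and your spelling-out via the common one-dimensional summand plus Witt cancellation is exactly the content the paper leaves implicit. Your remark that the hypothesis $a+b\notin\ZZ$ is tacitly required is accurate and worth keeping.
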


\begin{cor}
Suppose $\mu=(a,a,a,a,2-4a)$ and $\nu=(a,a,a,{1\over 2}-a, {3\over 2}-2a)$ for a rational number $a\in(0,{1\over 2})$, then we have conformality between Hermitian spaces:
\[
(V_\mu, h_\mu)\sim (V_\nu, h_\nu).
\]
\end{cor}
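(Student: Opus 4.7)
The plan is to apply Corollary \ref{corollary: degeneration method} twice to reduce $V_\mu$ and $V_\nu$ to one-dimensional Hermitian forms, and then to exhibit the ratio of their values as an explicit $K/F$-norm. Both tuples share the pair $(a,a)$; since $2a\notin\ZZ$ for $a\in(0,1/2)$, merging yields $\mu'=(2a,a,a,2-4a)$ and $\nu'=(2a,a,1/2-a,3/2-2a)$. These still share the pair $(2a,a)$, and provided $3a\notin\ZZ$ (that is, $a\neq 1/3$), a second merge produces the $3$-tuples $\mu''=(3a,a,2-4a)$ and $\nu''=(3a,1/2-a,3/2-2a)$. By Corollary \ref{corollary: degeneration method} applied twice, $H_\mu\cong H_\nu$ is equivalent to $H_{\mu''}\cong H_{\nu''}$.

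Setting $\zeta=e^{2\pi\sqrt{-1}\,a}$ and using $e^{2\pi\sqrt{-1}(1/2-a)}=-\bar\zeta$, Proposition \ref{prop: intersection form} gives the one-dimensional skew-Hermitian values
\[
c_1=\frac{1-\zeta^4}{(1-\zeta^3)(1-\zeta)},\qquad c_2=\frac{1+\zeta^2}{(1-\zeta^3)(1+\bar\zeta)}.
\]
The factorisation $1-\zeta^4=(1-\zeta)(1+\zeta)(1+\zeta^2)$ combined with $1+\bar\zeta=(1+\zeta)/\zeta$ collapses the ratio to
\[
\frac{c_1}{c_2}=(1+\zeta)(1+\bar\zeta)=N_{K/F}(1+\zeta),
\]
where $K=\QQ[\zeta_d]$ and $F=K\cap\RR$. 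Since the ratio is a norm, rescaling a basis vector by $1+\zeta$ realises an isometry $\langle c_1\rangle\cong\langle c_2\rangle$ of skew-Hermitian $K$-forms. Running the corollary backward, $V_\mu\cong V_\nu$ as Hermitian $K$-spaces, which in particular gives the desired conformality.

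For the residual case $a=1/3$, the second merge is blocked because $3a=1\in\ZZ$; here I would instead invoke invariants. The common CM field is $K=\QQ[\zeta_3]$ with real subfield $F=\QQ$, which admits a single real embedding, at which both $V_\mu$ and $V_\nu$ have signature $(1,2)$. Since the dimension $3$ is odd, Proposition \ref{prop: conformal classes of hermitian forms}(1) then yields conformality automatically. The step I expect to be the main technical point is the norm identity $c_1/c_2=N_{K/F}(1+\zeta)$: once this cyclotomic-unit computation is in hand, both the degeneration formalism and the classification of one-dimensional Hermitian forms do the rest.
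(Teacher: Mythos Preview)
Your proof is correct and follows precisely the approach the paper intends: the corollary is stated without proof immediately after Corollary \ref{corollary: degeneration method}, and your two-step application of that degeneration method, reducing to the one-dimensional norm identity $c_1/c_2=(1+\zeta)(1+\bar\zeta)=N_{K/F}(1+\zeta)$, is exactly the computation being left to the reader. Your separate treatment of $a=1/3$ via Proposition \ref{prop: conformal classes of hermitian forms} is a clean way to handle the blocked merge, and the only cosmetic point is that $H_\mu\cong H_\nu$ as skew-Hermitian forms yields conformality (not literally isometry) of $(V_\mu,h_\mu)$ and $(V_\nu,h_\nu)$ when the least common denominators of $\mu$ and $\nu$ differ---but that is all the statement requires.
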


\begin{prop}
\label{proposition: determinant}
The determinant of $H_{\mu}$ is equivalent to ${1\over \prod\limits_{i=1}^{n+3}(1-\alpha_i)}$.    
\end{prop}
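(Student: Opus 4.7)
The plan is to induct on the length $n+3$ of $\mu$, reducing via Theorem \ref{theorem: merge two weights} which provides an orthogonal decomposition $H_\mu \cong H_{\mu'} \oplus \langle \gamma_1 \rangle$ whenever we collide two weights $\mu_1, \mu_2$ into $\mu_1 + \mu_2 \notin \ZZ$. Since determinants of orthogonal sums multiply, this translates directly into a multiplicative relation for $\det H_\mu$ modulo $N_{K/F}(K^\times)$, and the one-dimensional summand contributes exactly the factor needed for the inductive formula to propagate.

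For the base case $n = 0$, we have $H_\mu = \langle \gamma_1 \rangle$ and Proposition \ref{prop: intersection form} gives
\[
\det H_\mu = (\gamma_1, \gamma_1) = \frac{1-\alpha_1\alpha_2}{(1-\alpha_1)(1-\alpha_2)}.
\]
The constraint $\sum_{i=1}^3 \mu_i \in \ZZ$ forces $\alpha_1\alpha_2\alpha_3 = 1$, so $\alpha_3 = (\alpha_1\alpha_2)^{-1} = \overline{\alpha_1\alpha_2}$ (each $\alpha_i$ lies on the unit circle), whence
\[
(1-\alpha_1\alpha_2)(1-\alpha_3) = (1-\alpha_1\alpha_2)\left(1 - \overline{\alpha_1\alpha_2}\right) = N_{K/F}(1-\alpha_1\alpha_2) \in N_{K/F}(K^\times).
\]
This identifies $\frac{1-\alpha_1\alpha_2}{(1-\alpha_1)(1-\alpha_2)}$ with $\frac{1}{(1-\alpha_1)(1-\alpha_2)(1-\alpha_3)}$ in $F^\times/N_{K/F}(K^\times)$, establishing the base case.

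For the inductive step, given $\mu$ of length $n+3$, choose $\mu_1, \mu_2$ with $\mu_1 + \mu_2 \notin \ZZ$ (reindexing if necessary) and let $\mu'$ be the collapsed tuple of length $n+2$, whose associated roots of unity are $\alpha_1' = \alpha_1\alpha_2$ and $\alpha_i' = \alpha_{i+1}$ for $i \geq 2$. Theorem \ref{theorem: merge two weights} gives
\[
\det H_\mu = \det H_{\mu'} \cdot (\gamma_1, \gamma_1) \sim \frac{1}{(1-\alpha_1\alpha_2)\prod_{i=3}^{n+3}(1-\alpha_i)} \cdot \frac{1-\alpha_1\alpha_2}{(1-\alpha_1)(1-\alpha_2)},
\]
where the first factor comes from the inductive hypothesis applied to $\mu'$. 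The $(1-\alpha_1\alpha_2)$ factors cancel, yielding $\frac{1}{\prod_{i=1}^{n+3}(1-\alpha_i)}$ as claimed.

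The main obstacle I anticipate is ensuring the existence of a suitable colliding pair at each inductive step (one with $\mu_1 + \mu_2 \notin \ZZ$): in generic Deligne--Mostow data this is automatic, but a handful of symmetric configurations (for instance, all weights equal to $1/2$) must be handled by a different grouping or by a direct evaluation of the tridiagonal Gram determinant from Proposition \ref{prop: intersection form}. A secondary point of care is that $H_\mu$ is skew-Hermitian, so the determinant is most cleanly tracked after rescaling to a Hermitian form; one must check that the inductive cancellation respects this rescaling, but since every step multiplies both sides of the conjectured identity by elements of $K^\times$ of matching type under $\iota$, the class in $F^\times / N_{K/F}(K^\times)$ remains well-defined throughout.
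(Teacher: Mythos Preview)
Your proof is correct and follows essentially the same approach as the paper: induction on $n$, with the base case handled by the explicit value $(\gamma_1,\gamma_1)=\frac{1-\alpha_1\alpha_2}{(1-\alpha_1)(1-\alpha_2)}$ and the identity $\alpha_1\alpha_2\alpha_3=1$, and the inductive step via the orthogonal splitting $H_\mu\cong H_{\mu'}\oplus\langle\gamma_1\rangle$ from Theorem~\ref{theorem: merge two weights}. Your treatment of the base case (phrasing $(1-\alpha_1\alpha_2)(1-\alpha_3)$ as a norm) is a mild rewording of the paper's, and you are in fact slightly more careful than the paper in flagging the edge case where every pair sums to an integer (which forces all $\mu_i=\tfrac12$ and does not arise for the Deligne--Mostow data with $n\geq 2$ actually used).
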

\begin{proof}
We induct on $n$. If $n=0$, then $\mu=(\mu_1, \mu_2, \mu_3)$ such that none of $\mu_i$ is equal to $1$. Then $\det H_{\mu}$ is ${1-\alpha_1\alpha_2\over (1-\alpha_1)(1-\alpha_2)}={1-\alpha_3^{-1}\over (1-\alpha_1)(1-\alpha_2)}\sim {1 \over (1-\alpha_1)(1-\alpha_2)(1-\alpha_3)}$.

Suppose $n\ge 1$. Assume $\alpha_1\alpha_2\ne 1$. By merging $\mu_1$ and $\mu_2$ to $\mu_1+\mu_2$ we obtain $\mu'$. Then $H_{\mu}=H_{\mu'}\oplus \langle {1-\alpha_1\alpha_2\over (1-\alpha_1)(1-\alpha_2)}\rangle$. By induction assumption, we have 
\begin{equation*}
\det H_{\mu'}={1\over (1-\alpha_1\alpha_2)\prod\limits_{i=3}^{n+3}(1-\alpha_i)}.    
\end{equation*}
The result clearly follows.
\end{proof}

\subsection{Ideal Classes of Determinants}
Next we use Propositsion \ref{proposition: determinant} to calculate the equivalence classes of the determinants for the skew-Hermitian forms $H_\mu$ in $K^\times/\mathrm{N}_{K/F}(K^\times)$. The group of principal fractional ideals of $\calO_K$ is denoted by $\calP_{K}$. We have a natural map 
\[
K^\times/\mathrm{N}_{K/F}(K^\times)\to \calP(K)/\mathrm{N}_{K/F}(\calP(K)).
\]
For Deligne--Mostow data $\mu$, we list the equivalence class of the fractional ideal generated by $\det H_\mu$ in Table \ref{table: main table}. For fixed $n$ and $\QQ[\zeta_d]$, if the ideal class for two data $\mu, \nu$ are different, then according to Theorem \ref{theorem: hermitian form} they are not commensurable. Therefore, to complete the classification shown in Table \ref{table: main table}, it suffices to check that the monodromy groups associated with the data in one block are commensurable to each other. Since the factors of $\det H_\mu$ have the form $1-\zeta_d^a$, the fractional ideals involved are primes ramified over $\QQ$ and the units are cyclotomic units.

\begin{ex}
As an illustration of the calculation, we present the details when $(n,d)=(3,12)$ here. It is known that $1-\zeta_{12}^i$ is a unit for $i=1,2,5,7,10,11$. The ideal $(1-\zeta_{12}^3)= (1-\zeta_{12}^9)$ (denoted by $\fp_2$) is a prime ideal lying over $(2)\subset \ZZ$. The ideal $(1-\zeta_{12}^4)=(1-\zeta_{12}^8)$ (denoted by $\fp_3$) is a prime ideal lying over $(3)\subset \ZZ$. The ideal $(1-\zeta_{12}^6)=(2)=(1+\sqrt{-1})(1-\sqrt{-1})$ is trivial in $\calP(K)/\mathrm{N}_{K/F}(\calP(K))$. Now by Proposition \ref{proposition: determinant} it is straightforward to calculate the ideal classes of $\det H_\mu$ for cases 54--58, as shown in Table \ref{table: main table}. As a result, case 57 is not commensurable to cases 54--56.
\end{ex}

Now we are ready to finish the proof of Theorem \ref{theorem: full classification}.
\begin{proof}[Proof of Theorem \ref{theorem: full classification}]
As mentioned in introduction, the classification of non-arithmetic Deligne--Mostow lattices follows from Theorem \ref{theorem: deligne mostow sauter}, Kappes--M\"oller \cite{kappes2016lyapunov} and McMullen \cite{McMullen2017Gauss-Bonnet}. 

Next we consider arithmetic Deligne--Mostow lattices. By \eqref{theorem: hermitian form n dayu 2} and \eqref{theorem: n even b implies a} of Theorem \ref{theorem: hermitian form}, the classification is completed for cases when $n$ is even. We then only need to deal with the cases $n=5$ or $3$.

In the last column of Table \ref{table: main table}, we list the ideal classes of $\det H_{\mu}$. When $n$ is odd, this class is a commensurability invariant. To finish the proof, we need to verify commensurability relations for Deligne--Mostow data in each block when $n=3,5$.  

For $(n,d)=(5,6)$, we need to check that cases 8--11 are commensurable to each other. This is straightforward. For example, take $\mu=({1\over 6}, {1\over 6},{1\over 6},{1\over 6}, {1\over 6}, {1\over 6}, {1\over 2}, {1\over 2})$ and $\nu=({1\over 6},{1\over 6},{1\over 6},{1\over 6}, {1\over 3}, {1\over 3}, {1\over 3}, {1\over 3})$, by Proposition \ref{proposition: determinant},
\[
{\det H_\mu\over \det H_\nu}={(1-\zeta_6^2)^4 \over 4(1-\zeta_6)^2} = {1\over 4} (1+\zeta_6)^2 (1+\zeta_6^{-1})^2 \in \mathrm{N}_{K/F}(K^\times).
\]
Similar calculation verifies commensurablity relations for cases 19--23, cases 24--27 and cases 39--40.

For $(n,d)=(3,12)$, we need to show cases 54--56 are commensurable. By Theorem \ref{theorem: dimension 3} with $a={1\over 4}$, cases 55 and 56 are commensurable. Next we show cases 54 and 56 are commensurable. Let $\mu=({1\over 12}, {1\over 4}, {5\over 12}, {5\over 12},{5\over 12},{5\over 12})$ and $\nu=({1\over 4}, {1\over 4}, {1\over 4}, {5\over 12}, {5\over 12},{5\over 12})$, then 
\[
{\det H_\mu \over \det H_\nu}
={(1-\sqrt{-1})^2\over (1-\zeta_{12})(1-\zeta_{12}^5)}
=2=(1+\sqrt{-1})(1-\sqrt{-1})\in \mathrm{N}_{K/F}(K^\times).
\]
We complete the proof of Theorem \ref{theorem: full classification}.
\end{proof}

\section{Commensurability Invariant: Boundary Divisors of Toroidal Compactifications}
\label{section: toroidal boundary divisors}
In this section, we describe another commensurability invariant related to boundary divisors in toroidal compactifications in both arithmetic and non-arithmetic ball quotients. In fact, when $n=3$, this is essentially the invariant used by Deraux \cite{deraux2020new} to distinguish the commensurability class of the unique Deligne--Mostow non-arithmetic lattice in $\PU(1,3)$ and a non-arithmetic Couwenberg--Heckman--Looijenga lattice \cite{couwenberg2005geometric}.

We recall the construction of Satake--Baily--Borel and toroidal compactification of ball quotients. In arithmetic quotients of general Hermitian symmetric domains, see Satake \cite{satake1960compactifications} and Baily--Borel \cite{baily1966compactification} for Satake--Baily--Borel compactifications, see \cite{Ash_Mumford_Rapoport_Tai_2010} for toroidal compactifications. Especially, see \cite{looijenga2003compactificationsball} for arithmetic ball quotients. In non-arithmetic cases, see Mok \cite{mok2011projective} for both Satake--Baily--Borel and toroidal compactifications for ball quotients. 

Let $W$ be a complex vector space with Hermitian form $h$ of signature $(1, n)$. The $n$-dimensional complex hyperbolic ball $\BB\subset \PP(W)$ consisting of positive lines in $W$. Let $\Gamma$ be a torsion-free discrete lattice in $\SU(W, h)$. Hence the quotient $X=\Gamma\bs \BB$ is a complex  manifold with K\"ahler metric finite volume. The work of Siu--Yau \cite{siu1982compactification} gives a compactification of $X$ by adding finitely many cusps $b_i$, $1\leq i\leq m$. This is the Satake--Baily--Borel compactification of $X$ when $\Gamma$ is arithmetic. Those cusps correspond to finitely many $\Gamma$-orbits of isotropic vectors $b_i\in W$. Denote by $b=b_i$. Let $v_0\in W-b^\perp$ be any vector, then the space $\PP(W/b)-\PP(b^\perp/b)$ is identified with affine space $v_0+b^\perp/b$ by intersecting lines in $W/b-b^\perp/b$ with $v_0+b^\perp/b$. The stabilizer $\Gamma_b$ of $b$ in $\Gamma$ acts on the $(n-1)$-dimensional complex affine space $\PP(W/b)-\PP(b^\perp/b)$ as affine translations. In fact, those $b\in \partial \BB$ are characterized by the property that $\Gamma_b$ is nontrivial. Then the quotient 
\[
D_b\coloneqq\Gamma_b\bs (\PP(W/b)-\PP(b^\perp/b))
\]
is an abelian variety of dimensional $n-1$. The toroidal compactification of $X$ adds those abelian varieties as boundary divisors to $X$. When $\Gamma$ changes to a finite index subgroup $\Gamma^\prime$, we denote by $X^\prime$ the quotient $\Gamma^\prime\bs \BB$. Each cusp of $X^\prime$ corresponds to a cusp of $X$, and $\Gamma^\prime_b$ is a finite index subgroup of $\Gamma_b$. So, the boundary divisor $D_b^\prime$ is isogeneous to $D_b$. When $\Gamma$ has torsion elements, we pass to a torsion-free subgroup with finite index to obtain the Satake--Baily--Borel and toroidal compactifications, and quotient by $\Gamma/\Gamma^\prime$. When $\Gamma$ is changed to a finite index subgroup, the stabilizer also changes to a finite index subgroup, so we have the following commensurability invariant for discrete lattices in $\PU(1, n)$.

\begin{prop}
\label{prop: toroidal boundary divisors are isogeneous}
    Let $\Gamma_1$ and $\Gamma_2$ be two commensurable discrete lattices in $\PU(1,n)$. Denote by $\calA_i$ the set of isogeny classes of the boundary abelian varieties for the toroidal compactifications of ball quotient $\Gamma_i\bs \BB^n$. Then $\calA_1=\calA_2$.
\end{prop}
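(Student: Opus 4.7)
The plan is to reduce to the case of a finite-index inclusion $\Gamma \subset \Gamma'$ and then check that each cusp of $\Gamma'$ is covered by finitely many cusps of $\Gamma$ whose boundary abelian varieties are isogenous to the original. By the definition of commensurability in $\PU(1,n)$, there exists $g \in \PU(1,n)$ such that $\Gamma_0 \coloneqq \Gamma_1 \cap g\Gamma_2 g^{-1}$ has finite index in both $\Gamma_1$ and $g\Gamma_2 g^{-1}$. Conjugation by $g$ induces a biholomorphism $\Gamma_2 \backslash \BB^n \to (g\Gamma_2 g^{-1}) \backslash \BB^n$ that carries cusps to cusps and identifies the corresponding boundary abelian varieties, so $\calA_2 = \calA_{g\Gamma_2 g^{-1}}$; if torsion is present, one further passes to a torsion-free finite-index subgroup, which by the argument below also does not affect isogeny classes. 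It therefore suffices to prove $\calA_\Gamma = \calA_{\Gamma'}$ whenever $\Gamma \subset \Gamma'$ is a finite-index inclusion of torsion-free lattices.

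For such an inclusion the cusps are in bijection with orbits (under $\Gamma$ or $\Gamma'$ respectively) of isotropic lines $b \subset W$ whose stabilizer in the corresponding group is a lattice of translations of rank $n-1$. The natural map from $\Gamma$-cusps to $\Gamma'$-cusps is surjective with finite fibers, since each $\Gamma'$-orbit decomposes into finitely many $\Gamma$-orbits; moreover, every $\Gamma'$-cusp admits a lift because for any isotropic $b$ with $\Gamma'_b$ a rank $n-1$ lattice, the intersection $\Gamma \cap \Gamma'_b$ has finite index in $\Gamma'_b$ and is therefore still a lattice of the same rank. If $b$ is a cusp of $\Gamma'$ and $b' = \gamma \cdot b$ with $\gamma \in \Gamma'$ represents a lift to a cusp of $\Gamma$, then
\[
\Gamma_{b'} \;=\; \Gamma \cap \gamma\Gamma'_b\gamma^{-1}
\]
is a finite-index subgroup of the translation lattice $\gamma\Gamma'_b\gamma^{-1}$.

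Now $\gamma\Gamma'_b\gamma^{-1}$ acts as a full-rank lattice of translations on the affine space $\PP(W/b') - \PP((b')^\perp/b')$, and the quotient is the abelian variety $D_{b'}^{\Gamma'}$, canonically identified with $D_b^{\Gamma'}$ by $\gamma$. Replacing the lattice by the finite-index sublattice $\Gamma_{b'}$ yields a second abelian variety $D_{b'}^{\Gamma}$ together with a surjective homomorphism $D_{b'}^{\Gamma} \to D_{b'}^{\Gamma'}$ whose kernel is the finite group $\gamma\Gamma'_b\gamma^{-1}/\Gamma_{b'}$; hence the map is an isogeny. Consequently the isogeny class of every boundary divisor of $\Gamma \backslash \BB^n$ agrees with that of the boundary divisor it covers in $\Gamma' \backslash \BB^n$, and conversely every $\Gamma'$-boundary class is realized in this way, giving $\calA_\Gamma = \calA_{\Gamma'}$. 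Applying this to $\Gamma_0 \subset \Gamma_1$ and $\Gamma_0 \subset g\Gamma_2 g^{-1}$ yields $\calA_1 = \calA_{\Gamma_0} = \calA_2$. The only step requiring care is the finite-to-one correspondence of cusps under a finite-index inclusion together with the identification of stabilizers; both the fact that $\Gamma_b$ is a translation lattice of full rank and the fact that $D_b$ is an abelian variety are already built into the toroidal compactification recalled before the statement, so no extra analytic input is needed.
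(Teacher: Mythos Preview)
Your proof is correct and follows essentially the same approach as the paper. The paper does not give a separate proof for this proposition; the argument is sketched in the paragraph immediately preceding it, which observes that passing to a finite-index subgroup replaces each stabilizer $\Gamma_b$ by a finite-index subgroup $\Gamma'_b$, so the boundary divisor $D'_b$ is isogenous to $D_b$. Your write-up makes the reduction via conjugation and the finite-to-one cusp correspondence explicit, but the underlying idea is the same.
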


Let $\Gamma_\nu$ be a Deligne--Mostow lattice associated with tuple $\mu$. Deligne--Mostow \cite[Section 4]{deligne1986monodromy} describes the compactification of the ball quotient $\Gamma_\mu \bs \BB(V_\mu)$ by adding cusps as follows. Let $\mu=(\mu_1\cdots \mu_{n+3})$. Then each cusp corresponds to a partition of index set $\{1,2,\cdots, n+3\}$ into two parts $S_1\sqcup S_2$, such that the weight of each part is 
$$\sum_{i\in S_1}\mu_i=\sum_{i\in S_2}\mu_i=1.$$
The cusps defined this way are semistable points in Mumford's geometric invariant theory and the corresponding compactification of $\Gamma_\mu\bs \BB(V_\mu)$ is a GIT compactification. In \cite[Corollary 7.3]{deligne1986monodromy}, it is proved that under period map, these semistable points are the cusps in Siu--Yau's metric compactification. 

Denote by $b$ a cusp corresponding to a fixed partition $S_1\sqcup S_2$. Next we describe certain elements in the stabilizer group $\Gamma_b$ of $b$ in $\Gamma_\mu$. First recall the monodromy representation inducing $\Gamma_\mu$. For each pair of indices $1\leq i\neq j\leq n+3$, define the main diagonal in the moduli spaces
\[\Delta_{ij}=\{(x_1, \cdots, x_{n+3})\in(\PP^1)^{n+3}\mid x_i=x_j \}.\]
The configuration space of ordered $n+3$-points on $\PP^1$ is defined by 
$$
Q=(\PP^1)^{n+3}-\bigcup_{1\leq i\neq j\leq n+3}\Delta_{ij}.
$$
The monodromy representation is from 
\begin{equation*}
    \rho_\mu\colon \pi_1(Q)\to \PU(V_\mu, h_\mu).
\end{equation*}
More explicitly a point $A=(x_1, \cdots, x_{n+3})\in Q$ moves around one of the main diagonals $\Delta_{ij}$ in $(\PP^1)^{n+2}$, the loop induces an element $T_{ij}$ in $\Gamma_\mu$ by $\rho_\mu$. The monodromy group $\Gamma_\mu$ is generated by these $T_{ij}$. When $i, j\in S_1$ or $i, j\in S_2$, then $T_{ij} \in \Gamma_b$ under a suitable choice of representative $b\in \PP(V_\mu)$ from the $\Gamma$-orbit of cusps. When $\mu_i+\mu_j<1$, the corresponding element $T_{ij}$ is a complex reflection and not a parabolic element. When $\mu_i+\mu_j=1$, the monodromy $T_{ij}$ is a parabolic element. 

In the following, we look at a special case with $S_1={1,2}$ and $S_2={3, \cdots, n+3}$. Let $\gamma_1$ be a path connecting $x_1$ to $x_2$ in $\PP^1$ and $e$ be a section of local system $\LL$ on $\gamma_1$. Then by Proposition \ref{prop: intersection form}, the element $\gamma_1\cdot e$ is an isotropic vector in $H^1(\PP^1-A, \LL)$. Let $x_1, x_2$ be fixed, and $x_3, \cdots, x_{n+3}$ move in the configuration space and not wind around $x_1$ and $x_2$. Then the corresponding monodromy element $T$ preserves $\gamma_1\cdot e$. Next we calculate explicitly the subgroup of $\Gamma_b$ generated by $T_{ij}, i,j\in S_2$ and the corresponding isogeny of $D_b$. This is essentially the explicit calculation for the Euclidean case of Deligne--Mostow theory; see \cite[Section 13.2]{deligne1986monodromy}.

\begin{ex}
\label{Example: toroidal bdy elliptic curves}
Consider a $5$-tuple $(\mu_1, \cdots, \mu_5)$ with $\mu_1+\mu_2=1$. Let $\omega_i=\gamma_i\cdot e$, $1\le i\le 4$. We have the following relation 
\begin{equation*}
(1-\alpha_1^{-1})\omega_1+(1-\alpha_3^{-1})\omega_3+(1-\alpha_3^{-1}\alpha_4^{-1})\omega_4=0.
\end{equation*}
Thus $\omega_3=-{1-\alpha_1^{-1}\over 1-\alpha_3^{-1}}\omega_1-{1-\alpha_5\over 1-\alpha_3^{-1}}\omega_4$. Now $\omega_1\in V_{\mu}$ is isotropic. The orthogonal complement of $\omega_1$ in $V_{\mu}$ is $\langle \omega_1, \omega_2, \omega_4\rangle$. And the quotient $V_{\mu}\slash \langle \omega_1\rangle=\langle \omega_2, \omega_4\rangle$. We next calculate the action of $T_{45}$ and $T_{34}$ on $\omega_1, \omega_2, \omega_4$.

We have $T_{45}(\omega_1)=\omega_1$, $T_{45}(\omega_2)=\omega_2$ and $T_{45}(\omega_4)=\alpha_4 \alpha_5 \omega_4$. This implies that 
\begin{equation*}
T_{45}(\omega_2+\lambda\omega_4)=\omega_2+\alpha_3^{-1} \lambda \omega_4.
\end{equation*}
Here $\lambda\in\CC$ and  $\omega_2+\lambda\omega_4$ represents an element in $\PP(V_{\mu}\slash\langle\omega_1\rangle)-\PP(\omega_1^{\perp}\slash\langle \omega_1\rangle)$.

On the other hand, we have $T_{34}(\omega_1)=\omega_1$, $T_{34}(\omega_2)=\omega_2+(1-\alpha_4)\omega_3$ and $T_{34}(\omega_4)=\omega_4+(\alpha_4-\alpha_3\alpha_4)\omega_3$. This implies
\begin{equation*}
T_{34}(\omega_2+\lambda\omega_4)\equiv \omega_2+(-{(1-\alpha_4)(1-\alpha_5)\over 1-\alpha_3^{-1}}+\alpha_5^{-1}\lambda)\omega_4 \,(\mathrm{mod} \,\omega_1).
\end{equation*}
Let $f_\mu(\lambda)=-{(1-\alpha_4)(1-\alpha_5)\over 1-\alpha_3^{-1}}+\alpha_5^{-1}\lambda$ and $g_\mu(\lambda)=\alpha_3^{-1}\lambda$.

For $\mu=({5\over 12}, {7\over 12}, {3\over 12}, {3\over 12}, {6\over 12})$, we have $f_\mu(\lambda)=2\sqrt{-1}-\lambda$ and $g_\mu(\lambda)=-\sqrt{-1}\lambda$. Then $(g_\mu\circ g_\mu\circ f_\mu)(\lambda)=\lambda-2\sqrt{-1}$ and $(g_\mu\circ f_\mu\circ g)(\lambda)=\lambda+2$. So the affine translations generated by $f_\mu, g_\mu$ contains the lattice $\ZZ+\ZZ[\sqrt{-1}]$.

For $\nu=({5\over 12}, {7\over 12}, {4\over 12}, {4\over 12}, {4\over 12})$, we have $f_\nu(\lambda)=\zeta_3^2\lambda+\zeta_3-\zeta_3^2$ and $g_\nu(\lambda)=\zeta_3^2\lambda$. Then $(g_\nu\circ g_\nu\circ f_\nu)(\lambda)=\lambda+\zeta_3^2-1$ and $(g_\nu\circ f_\nu\circ f_\nu)(\lambda)=\lambda+\zeta_3^2-\zeta_3$. So the affine translations generated by $f_\nu, g_\nu$ contains the lattice $\ZZ+\ZZ[\zeta_3]$.
\end{ex}

Then we have the following corollary from Proposition \ref{prop: toroidal boundary divisors are isogeneous} distinguishing the only two non-compact Deligne--Mostow non-arithmetic lattices in $\PU(1,2)$.

\begin{cor}
\label{Corollary: nonarithemtic with denominator 12}
The non-arithmetic Deligne--Mostow lattices $\Gamma_\mu$ and $\Gamma_\nu$ associated to the tuples $\mu ={1\over 12}(3,3,5,6,7)$ and $\nu={1\over 12}(4,4,4,5,7)$ are not commensurable.
\end{cor}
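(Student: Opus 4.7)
The plan is to apply Proposition \ref{prop: toroidal boundary divisors are isogeneous} together with the explicit computation already carried out in Example \ref{Example: toroidal bdy elliptic curves}. After reordering the weights so that the first two slots of each tuple are $5/12$ and $7/12$, both $\mu$ and $\nu$ admit a partition $\{1,2\}\sqcup\{3,4,5\}$ with $\mu_1+\mu_2=\nu_1+\nu_2=1$. This defines a cusp $b$ in the Siu--Yau compactification of each of the two ball quotients $\Gamma_\mu\bs\BB^2$ and $\Gamma_\nu\bs\BB^2$. Because $n=2$, the toroidal boundary divisor attached to $b$ is a one-dimensional abelian variety, that is, an elliptic curve $E_\mu$ (respectively $E_\nu$). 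The strategy is to show that $E_\mu$ and $E_\nu$ have complex multiplication by orders in non-isomorphic imaginary quadratic fields, so that they lie in different isogeny classes; Proposition \ref{prop: toroidal boundary divisors are isogeneous} then forces $\Gamma_\mu$ and $\Gamma_\nu$ to be non-commensurable.

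First I would pass to torsion-free finite-index subgroups of $\Gamma_\mu$ and $\Gamma_\nu$ (which preserves commensurability classes). The stabilizer $\Gamma_b$ then acts as a translation lattice $\Lambda$ on the affine chart $\PP(V_\mu/\langle\omega_1\rangle)-\PP(\omega_1^\perp/\langle\omega_1\rangle)\cong\CC$ from Example \ref{Example: toroidal bdy elliptic curves}, and $E_\mu=\CC/\Lambda_\mu$. The example computes the pre-reduction action of $T_{34},T_{45}\in\Gamma_b$ as affine maps $f_\mu,g_\mu\in\CC\rtimes\U(1)$; extracting the pure-translation subgroup of $\langle f_\mu,g_\mu\rangle$ produces a sublattice of $\Lambda_\mu$ of finite index. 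For $\mu$, the identities $(g_\mu^{2}\circ f_\mu)(\lambda)=\lambda-2\sqrt{-1}$ and $(g_\mu\circ f_\mu\circ g_\mu)(\lambda)=\lambda+2$ supply two $\RR$-linearly independent translations lying in $\ZZ[\sqrt{-1}]$, so $\Lambda_\mu$ is commensurable to $\ZZ[\sqrt{-1}]$ and $E_\mu$ has CM by an order in $\QQ(\sqrt{-1})$. The analogous computations for $\nu$ yield translations by $\zeta_3^{2}-1$ and $\zeta_3^{2}-\zeta_3$, both in $\ZZ[\zeta_3]$ and $\RR$-linearly independent, so $\Lambda_\nu$ is commensurable to $\ZZ[\zeta_3]$ and $E_\nu$ has CM by an order in $\QQ(\zeta_3)=\QQ(\sqrt{-3})$.

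Since $\QQ(\sqrt{-1})$ and $\QQ(\sqrt{-3})$ are distinct imaginary quadratic fields, the endomorphism algebras of $E_\mu$ and $E_\nu$ are non-isomorphic, so these elliptic curves cannot be isogenous. Proposition \ref{prop: toroidal boundary divisors are isogeneous} then rules out commensurability of $\Gamma_\mu$ and $\Gamma_\nu$. The main delicate step is passing from Example \ref{Example: toroidal bdy elliptic curves}'s description of $\langle T_{34},T_{45}\rangle\subset\CC\rtimes\U(1)$ to an honest identification of the full translation lattice $\Lambda$ up to commensurability; this is handled by the torsion-free reduction (which kills the finite rotational part of $\Gamma_b$) combined with the observation that any rank-two discrete subgroup of $\CC$ containing a finite-index sublattice inside the appropriate imaginary quadratic field must itself be commensurable to the ring of integers of that field.
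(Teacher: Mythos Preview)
Your approach is essentially the same as the paper's: use Example \ref{Example: toroidal bdy elliptic curves} to identify the boundary elliptic curves as having CM by $\QQ(\sqrt{-1})$ and $\QQ(\sqrt{-3})$ respectively, then invoke Proposition \ref{prop: toroidal boundary divisors are isogeneous}.

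There is one small logical gap. Proposition \ref{prop: toroidal boundary divisors are isogeneous} asserts equality of the \emph{sets} $\calA_\mu$ and $\calA_\nu$ of isogeny classes of boundary abelian varieties. You compute one cusp of each quotient and show the two resulting elliptic curves are non-isogenous, but this alone does not give $\calA_\mu\neq\calA_\nu$: a priori $\Gamma_\nu\bs\BB^2$ could have another cusp whose boundary curve is isogenous to $E_\mu$. The paper closes this by noting that each Baily--Borel compactification has exactly one cusp, so $\calA_\mu$ and $\calA_\nu$ are singletons. You should verify this directly: for $\mu=\tfrac{1}{12}(3,3,5,6,7)$ the only subset of numerators summing to $12$ is $\{5,7\}$ (with complement $\{3,3,6\}$), and for $\nu=\tfrac{1}{12}(4,4,4,5,7)$ the only such subset is again $\{5,7\}$ (with complement $\{4,4,4\}$). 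With this check your argument is complete.
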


\begin{proof}
In each case, the Baily--Borel compactification of the ball quotient has one cusp. The boundary divisor of the corresponding toroidal compactification is an elliptic curve. The elliptic curve $E_\mu$ (respectively $E_\nu$) at the cusp of $\Gamma_\mu\bs \BB^2$ (respectively $\Gamma_\mu\bs \BB^2$) is isogeneous to elliptic curve $\CC/\ZZ[\sqrt{-1}]$ (respectively $\CC/\ZZ[\zeta_3]$). So $E_\mu$ and $E_\nu$ are not isogeneous, and hence $\Gamma_\mu$ and $\Gamma_\nu$ are not commensurable.
\end{proof}

The invariant used in this section has essentially appeared in Deraux's work \cite{deraux2020new} distinguishing the non-arithmetic lattices in $\PU(1,3)$ constructed by Deligne--Mostow and Couwenberg--Heckman--Looijenga \cite{couwenberg2005geometric}. Deraux calculated the stabilizer $\Gamma_b$ in both cases and called them Heisenberg groups at cusps. The two toroidal boundary divisors in those two cases are isogeneous to $(\CC/\ZZ[\sqrt{-1}])^2$ and $(\CC/\ZZ[\zeta_3])^2$, which are products of elliptic curves. Then it is interesting to see that the two elliptic curves also appears as the boundary divisor for the only two non-compact Deligne--Mostow non-arithmetic lattices in $\PU(1,2)$ in Corollary \ref{Corollary: nonarithemtic with denominator 12}. The ball $\Gamma_\mu\bs \BB^2$ with $\mu={1\over 12}(3,3,5,6,7)$ naturally appears as the totally geodesic subball in the three-dimensional non-arithmetic Deligne--Mostow ball quotient. It is natural to ask whether $\Gamma_\nu\bs \BB^2$ with $\nu={1\over 12}(4,4,4,5,7)$ is a subball of Couwenberg--Heckman--Looijenga non-arithmetic ball quotient.

\begin{rmk}
The Baily--Borel compactifications and toroidal compactifications are originally constructed for arithmetic quotients of Hermitian symmetric domains. The Baily--Borel and toroidal compactifications for the non-arithmetic ball quotients in Example \ref{Example: toroidal bdy elliptic curves} are defined by the work of Siu--Yau \cite{siu1982compactification} and Mok \cite{mok2011projective}. Our calculation of Example \ref{Example: toroidal bdy elliptic curves} determines the isogeny classes of boundary divisors in the toroidal compactifications for the two non-arithmetic ball quotients. For both arithmetic and non-arithmetic Deligne--Mostow ball quotients, we include the description of boundary divisors in their toroidal compactifications as follows. Consider the cusp of $\Gamma_\mu\bs \BB(V_\mu)$ represented by $S_1\sqcup S_2$. Then each part $S_i$ determines an abelian variety of dimension $|S_i|-2$ in the Euclidean case of Deligne--Mostow theory as \cite[Corollary 13.2.2]{deligne1986monodromy}. Then the boundary divisor at this cusp is isogeneous to the product of the two abelian varieties.

Gallardo--Kerr--Schaffler \cite{gallardo2021geometric} and Hulek--Maeda \cite{hulek2025universe} studied identifications of Kirwan blowups and toroidal compactifications for arithmetic Deligne--Mostow ball quotients. Such identifications are still unknown for the three non-compact and non-arithmetic Deligne--Mostow ball quotients (58, 73, 74 in Table \ref{table: main table}) up to our knowledge.
\end{rmk}

\section{List of Commensurability Relations and Classification}
\label{section: tables}

\subsection{Commensurable Pairs from Geometry}
We name the degeneration types needed as follows:
	\begin{enumerate}
		\item[2.1.] $(3,2)+(0,1)$ normal crossing;
		\item[2.2.] $(3,2)+(0,1)$, the $(3,2)$ curve has one node;
		\item[2.3.] $(3,2)+(0,1)$ tangent at one tacnode;
		\item[2.4.] $(3,2)+(0,1)$, the $(3,2)$ curve has two nodes;
		\item[2.5.] $(3,2)+(0,1)$ tangent at one tacnode, the $(3,2)$ curve has one node;
		\item[2.6.] $(3,2)+(0,1)$ tangent at one tacnode, the $(3,2)$ curve has two nodes;
		\item[3.1.] $(3,1)+(0,1)+(0,1)$ normal crossing;
		\item[3.2.] $(3,1)+(0,1)+(0,1)$, the first two are tangent at one tacnode;
		\item[5.1.] $(2,2)+(1,0)+(0,1)$, the first two are tangent at one tacnode;
		\item[5.2.] $(2,2)+(1,0)+(0,1)$, the first two are tangent at one tacnode, the $(2,2)$ curve has one node;
        \item[5.3.] $(2,2)+(1,0)+(0,1)$, the first two are tangent at a tacnode, and the $(0,1)$ curve passes through the tacnode;
		\item[7.1.] $(2,1)+(1,1)+(0,1)$ normal crossing;
		\item[7.2.] $(2,1)+(1,1)+(0,1)$ with $(2,1)$ and $(0,1)$ tangent at one tacnode;
        \item[7.3.] $(2,1)+(1,1)+(0,1)$ with $2,1$ and $(1,1)$ tangent at one tacnode;
		\item[8.1.] $(2,1)+(1,0)+(0,1)+(0,1)$ normal crossing.
	\end{enumerate}
Here the indices are compatible with those in Proposition \ref{proposition: types}. 

The calculation of $\deg(\pi)$ in Table \ref{table: relation from geo} is straightforward. We illustrate the method for case 8.1. Consider normal crossing divisors $D=D_1+D_2+D_3+D_4\subset \PP^1_{x_1, x_2}\times \PP^1_{y_1, y_2}$ of type $(2,1)+(1,0)+(0,1)+(0,1)$; see Figure \ref{figure: (2,1)+(1,0)+(0,2)}. 

Suppose that the weights for $D_1, D_2, D_3, D_4$ are $1-a-b, 2a+2b-1, a, b$, respectively. For a generic set $A$ of $5$ points on $\PP^1_{y_1,y_2}$ with weight $(1-a, 1-b, 1-a-b, a+b-{1\over 2}, a+b-{1\over 2})$, while the last two points are unordered, the degree of $\pi_2$ is just the number of $D$ (up to $\SL(2, \CC)\times \SL(2, \CC)$ action) such that the discriminant locus of $p_2$ is $S$. The positions of $D_3, D_4$ are determined by $A$. By direct calculation, the divisors $D_1, D_2$ (up to the action of $\SL(2, \CC)$ on $\PP^1_{x_1, x_2}$) are also uniquely determined by $A$. Thus $\deg(\pi_2)=1$. Similarly, we have $\deg(\pi_1)=1$.
\begin{figure}[htp]
		\centering
\includegraphics[width=10cm]{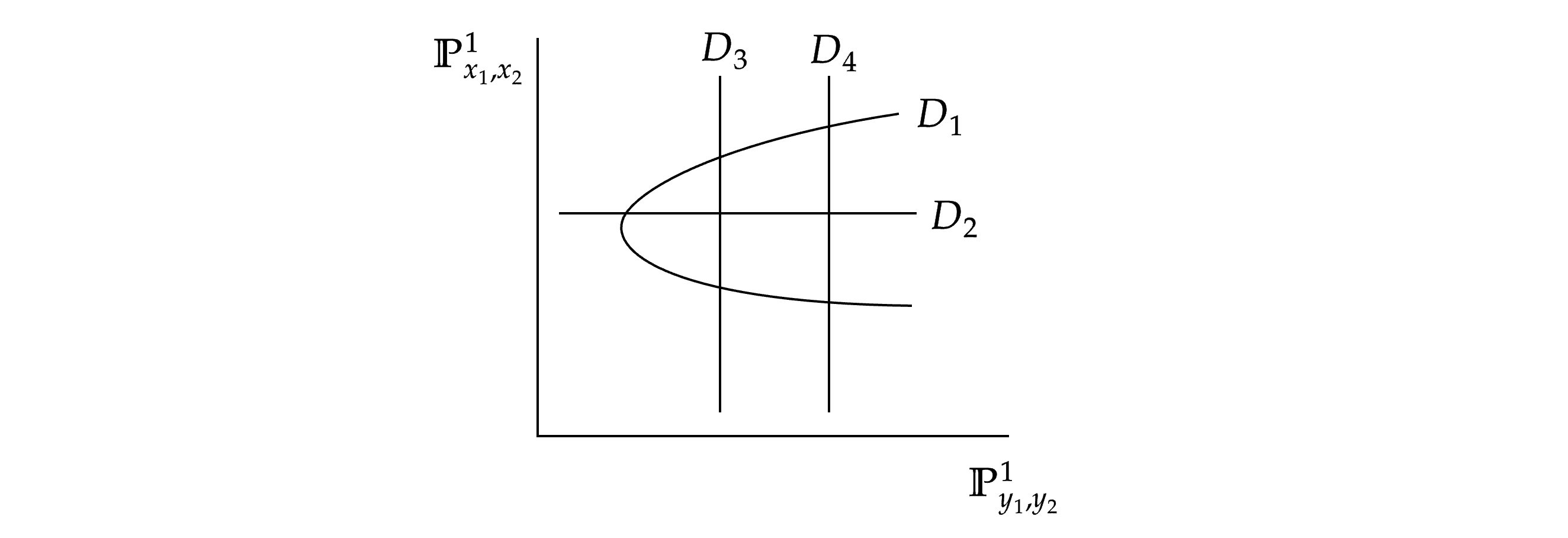}
		\caption{Configuration: (2,1)+(1,0)+(0,1)+(0,1)}
		\label{figure: (2,1)+(1,0)+(0,2)}
\end{figure}

We list the commensurability relations (see Theorem \ref{theorem: deligne mostow sauter} and Theorem \ref{theorem: dimension 3} for the range of $a,b$) obtained from geometry (Theorem \ref{theorem: main}) and the implications for Deligne--Mostow tuples in Table \ref{table: relation from geo}. 
	
	\begin{longtable}
		{|p{.08\textwidth} | p{.04\textwidth} | p{.22\textwidth} |p{.22\textwidth} |  p{.1\textwidth} | p{.1\textwidth} | }

		\toprule
		No. & $d$ & Numerators $d\mu$ & Numerators $d\nu$ & $\deg(\pi_1)$ & $\deg(\pi_2)$  \\
		\midrule
		2.1 & $6$ & $(1,1,1,1,1,1,2,2,2)$ & $(1,1,1,1,1,1,1,1,4)$ &&\\
        \hline
        2.2 & $6$ & $(1,1,1,1,2,2,2,2)$ & $(1,1,1,1,1,1,2,4)$ &&\\
        \hline
        2.3 & $6$ & $(1,1,1,1,1,1,2,4)$ & $(1,1,1,1,1,1,1,5)$ &&\\
        \hline
        2.4 & $6$ & $(1,1,2,2,2,2,2)$ & $(1,1,1,1,2,2,4)$ &&\\
        \hline
        2.5 & $6$ & $(1,1,1,1,2,2,4)$ & $(1,1,1,1,1,2,5)$ &&\\
        \hline
        2.6 & $6$ & $(1,1,2,2,2,4)$ & $(1,1,1,2,2,5)$ &&\\
        \hline
		3.1 & $d$  & $d(a$, $a$, $a$, ${2\over 3}-a$, ${2\over 3}-a$, ${2\over 3}-a)$ & $d({1\over 6}$, ${1\over 6}$, ${1\over 6}$, ${1\over 6}$, $1-a$, ${1\over 3}+a)$ & $1$ & $4$\\
		\hline
		3.1.1 & $6$  & $(2,2,2,2,2,2)$ & $(1,1,1,1,4,4)$ & & \\
		\hline
		3.1.2 & $6$  & $(1,1,1,3,3,3)$ & $(1,1,1,1,3,5)$ & & \\
		\hline
		3.1.3 & $12$  & $(3,3,3,5,5,5)$ & $(2,2,2,2,7,9)$ & &  \\
		\hline
		3.2 & $d$  & $d(a$, $a$, $a$, ${2\over 3}-a$, ${4\over 3}-2a)$ & $d({1\over 6}$, ${1\over 6}$, ${1\over 6}$, ${7\over 6}-a$, ${1\over 3}+a)$ & $1$ & $4$ \\
		\hline
		3.2.1 & $6$  & $(2,2,2,2,4)$ & $(1,1,1,4,5)$ & & \\
		\hline
		3.2.2 & $12$  & $(3,3,3,5,10)$ & $(2,2,2,7,11)$ & & \\
		\hline
		3.2.3 & $12$  & $(3,5,5,5,6)$ & $(2,2,2,9,9)$ & & \\
		\hline
		3.2.4 & $12$  & $(1,2,7,7,7)$ & $(2,2,2,7,11)$ & & \\
		\hline
		3.2.5 & $18$  & $(4,8,8,8,8)$ & $(3,3,3,13,14)$ & & \\
		\hline
		3.2.6 & $18$  & $(5,7,7,7,10)$ & $(3,3,3,13,14)$ & & \\
		\hline
		3.2.7 & $24$  & $(7,9,9,9,14)$ & $(4,4,4,17,19)$ & & \\
		\hline
		3.2.8 & $24$  & $(5,10,11,11,11)$ & $(4,4,4,17,19)$ & &  \\
		\hline
		3.2.9 & $30$  & $(9,9,9,11,22)$ & $(5,5,5,19,26)$ & & \\
		\hline
		3.2.10 & $30$  & $(4,8,16,16,16)$ & $(5,5,5,19,26)$ & & \\
		\hline
		3.2.11 & $30$  & $(7,13,13,13,14)$ & $(5,5,5,22,23)$ & & \\
		\hline
		3.2.12 & $30$  & $(8,12,12,12,16)$ & $(5,5,5,22,23)$ & &  \\
		\hline
		3.2.13 & $42$  & $(13,15,15,15,26)$ & $(7,7,7,29,34)$ & & \\
		\hline
		3.2.14 & $42$  & $(8,16,20,20,20)$ & $(7,7,7,29,34)$ & & \\
		\hline
		5.1 & $d$ & $d(a$, $a$, $a$, ${1\over 2}-a$, ${1\over 2}-a$, $1-a)$ & $d(a$, $a$, $a$, $a$, $1-2a$, $1-2a)$ & $4$ & $4$ \\
		\hline
		5.1.1 & $4$ &  $(1,1,1,1,1,3)$ & $(1,1,1,1,2,2)$ & & \\
		\hline
		5.1.2 & $6$ &  $(1,1,2,2,2,4)$ & $(2,2,2,2,2,2)$ && \\
		\hline
		5.1.3 & $6$  & $(1,1,1,2,2,5)$ & $(1,1,1,1,4,4)$ & & \\
		\hline
		5.2 & $d$ & $d(a$, $2a$, ${1\over 2}-a$, ${1\over 2}-a$, $1-a)$ & $d(a$, $a$, $2a$, $1-2a$, $1-2a)$ & $4$ & $4$ \\
		\hline
		5.2.1 & $4$  & $(1,1,1,2,3)$ & $(1,1,2,2,2)$ & & \\
		\hline
		5.2.2 & $6$  & $(1,1,2,4,4)$ & $(2,2,2,2,4)$ & & \\
		\hline
		5.2.3 & $6$  & $(1,2,2,2,5)$ & $(1,1,2,4,4)$ & & \\
		\hline
        5.3 & $d$ & $d(a$, $a$, $a$, ${1\over 2}-a$, ${3\over 2}-2a)$ & $d(a$, $a$, $a$, $a$, $2-4a)$ & $4$ & $4$ \\
		\hline
        5.3.1 & $6$ & $(1,2,2,2,5)$ &  $(2,2,2,2,4)$& &  \\
		\hline
		5.3.2 & $10$ & $(2,3,3,3,9)$ & $(3,3,3,3,8)$ & &  \\
		\hline
		5.3.3 & $18$ & $(1,8,8,8,11)$ & $(4,8,8,8,8)$ & &  \\
		\hline
		5.3.4 & $18$ & $(2,7,7,7,13)$ & $(7,7,7,7,8)$ & &  \\
		\hline
	
		7.1 & $d$ & $d(a$, $a$, $a$, $a$, $1-2a$, $1-2a)$ & $d(a$, $a$, $a$, ${1\over 2}-a$, ${1\over 2}-a$, $1-a)$ & $4$ & $4$ \\
		\hline
		7.1.1 & $4$ & $(1,1,1,1,2,2)$ & $(1,1,1,1,1,3)$ & &  \\
		\hline
		7.1.2 & $6$ & $(2,2,2,2,2,2)$ & $(1,1,2,2,2,4)$ & &  \\
		\hline
		7.2 & $d$ & $d(a$, $a$, $a$, $a$, $2-4a)$ & $d(a$, $a$, $a$, ${1\over 2}-a$, ${3\over 2}-2a)$ & $4$ & $4$\\
		\hline
        7.3 & $d$ & $d(a$, $a$, $2a$, $1-2a$, $1-2a)$ & $d(a$, $2a$, ${1\over 2}-a$, ${1\over 2}-a$, $1-a)$ & $4$ & $4$ \\
        \hline
		8.1 & $d$ & $d(a$, $a$, $b$, $b$, $2-2a-2b)$ & $d(1-a$, $1-b$, $1-a-b$, $a+b-{1\over 2}$, $a+b-{1\over 2})$ & $1$ & $1$ \\
		\hline
		8.1.1 & $6$ & $(2,2,2,2,4)$ & $(1,1,2,4,4)$ & &  \\
		\hline
		8.1.2 & $4$ & $(1,1,2,2,2)$ & $(1,1,1,2,3)$ & &  \\
		\hline
		8.1.3 & $10$ & $(4,4,4,4,4)$ & $(2,3,3,6,6)$ & &  \\
		\hline
		8.1.4 & $6$ & $(1,1,2,4,4)$ & $(1,2,2,2,5)$ & &  \\
		\hline
		8.1.5 & $6$ & $(1,1,3,3,4)$ & $(1,1,2,3,5)$ & &  \\
		\hline
		8.1.6 & $6$ & $(2,2,2,3,3)$ & $(1,2,2,3,4)$ & &  \\
		\hline
		8.1.7 & $8$ & $(2,2,2,5,5)$ & $(1,3,3,3,6)$ & &  \\
		\hline
		8.1.8 & $8$ & $(3,3,3,3,4)$ & $(2,2,2,5,5)$ & &  \\
		\hline
		8.1.9 & $18$ & $(4,8,8,8,8)$ & $(2,7,7,10,10)$ & &  \\
		\hline
		8.1.10 & $10$ & $(2,3,3,6,6)$ & $(1,4,4,4,7)$ & &  \\
		\hline
		8.1.11 & $10$ & $(3,3,3,3,8)$ & $(1,1,4,7,7)$ & &  \\
		\hline
		8.1.12 & $10$ & $(1,1,4,7,7)$ & $(2,3,3,3,9)$ & &  \\
		\hline
		8.1.13 & $12$ & $(2,2,6,7,7)$ & $(3,3,3,5,10)$ & &  \\
		\hline
		8.1.14 & $12$ & $(2,4,4,7,7)$ & $(1,5,5,5,8)$ & &  \\
		\hline
		8.1.15 & $12$ & $(3,3,5,5,8)$ & $(2,2,4,7,9)$ & &  \\
		\hline
		8.1.16 & $12$ & $(4,4,5,5,6)$ & $(3,3,3,7,8)$ & &  \\
		\hline
		8.1.17 & $12$ & $(4,5,5,5,5)$ & $(2,4,4,7,7)$ & &  \\
		\hline
		8.1.18 & $12$ & $(2,2,2,9,9)$ & $(1,3,5,5,10)$ & &  \\
		\hline
		8.1.19 & $14$ & $(5,5,5,5,8)$ & $(3,3,4,9,9)$ & &  \\
		\hline
		8.1.20 & $14$ & $(3,3,4,9,9)$ & $(2,5,5,5,11)$ & &  \\
		\hline
		8.1.21 & $18$ & $(2,7,7,10,10)$ & $(1,8,8,8,11)$ & &  \\
		\hline
		8.1.22 & $18$ & $(7,7,7,7,8)$ & $(4,5,5,11,11)$ & &  \\
		\hline
		8.1.23 & $18$ & $(4,5,5,11,11)$ & $(2,7,7,7,13)$ & &  \\
		\hline
		8.1.24 & $20$ & $(6,6,9,9,10)$ & $(5,5,5,11,14)$ & &  \\
\hline

\caption{commensurability relations from Geometry}
\label{table: relation from geo}
	\end{longtable}

\subsection{Table of Commensurability Classes for Deligne--Mostow Lattices}

\begin{longtable}
{|p{.04\textwidth} |p{.07\textwidth}  | p{.10\textwidth}  |p{.27\textwidth} | p{.07\textwidth} | p{.07\textwidth} | p{.07\textwidth} |}
\hline
No. & $(n,d)$ & Trace Field  & Numerators & C/NC & A/NA & Ideal Class \\\hline
1 & $(9,6)$ & $\QQ$  & (1,1,1,1,1,1,1,1,1,1,1,1) & NC & A & $1$ \\
\hline
2 & $(8,6)$ & $\QQ$ & (1,1,1,1,1,1,1,1,1,1,2) & NC & A & $\fp_3$  \\
\hline
3 & $(7,6)$ & $\QQ$ & (1,1,1,1,1,1,1,1,1,3) & NC & A & $2$ \\
\hline
4 & $(7,6)$ & $\QQ$ & (1,1,1,1,1,1,1,1,2,2) & NC & A & $1$ \\
\hline
5 & $(6,6)$ & $\QQ$ & (1,1,1,1,1,1,1,1,4) & NC & A & $\fp_3$ \\
6 & $(6,6)$ & $\QQ$ & (1,1,1,1,1,1,1,2,3) & NC & A & $2\fp_3$ \\
7 & $(6,6)$ & $\QQ$ & (1,1,1,1,1,1,2,2,2) & NC & A & $\fp_3$ \\
\hline
8 & $(5,6)$ & $\QQ$ & (1,1,1,1,1,1,1,5) & NC & A & $1$ \\
9 & $(5,6)$ & $\QQ$ & (1,1,1,1,1,1,2,4) & NC & A & $1$ \\
10 & $(5,6)$ & $\QQ$ & (1,1,1,1,1,1,3,3) & NC & A & $1$ \\
11 & $(5,6)$ & $\QQ$ & (1,1,1,1,2,2,2,2) & NC & A & $1$ \\
\hline
12 & $(5,6)$ & $\QQ$ & (1,1,1,1,1,2,2,3) & NC & A & $2$ \\
\hline
13 & $(4,6)$ & $\QQ$ & (1,1,1,1,1,2,5) & NC & A & $\fp_3$ \\
14 & $(4,6)$ & $\QQ$ & (1,1,1,1,1,3,4) & NC & A & $2\fp_3$ \\
15 & $(4,6)$ & $\QQ$ & (1,1,1,1,2,2,4) & NC & A & $\fp_3$ \\
16 & $(4,6)$ & $\QQ$ & (1,1,1,1,2,3,3) & NC & A & $\fp_3$ \\
17 & $(4,6)$ & $\QQ$ & (1,1,1,2,2,2,3) & NC & A & $2\fp_3$ \\
18 & $(4,6)$ & $\QQ$ & (1,1,2,2,2,2,2) & NC & A & $\fp_3$ \\
\hline
19 & $(3,3)$ & $\QQ$ & (1,1,1,1,1,1) & NC & A & $1$ \\
20 & $(3,6)$ & $\QQ$ & (1,1,1,1,4,4) & NC & A & $1$ \\
21 & $(3,6)$ & $\QQ$ & (1,1,1,2,2,5) & NC & A & $1$ \\
22 & $(3,6)$ & $\QQ$ & (1,1,2,2,2,4) & NC & A & $1$ \\
23 & $(3,6)$ & $\QQ$ & (1,1,2,2,3,3) & NC & A & $1$ \\
\hline
24 & $(3,6)$ & $\QQ$ & (1,1,1,1,3,5) & NC & A & $2$ \\
25 & $(3,6)$ & $\QQ$ & (1,1,1,2,3,4) & NC & A & $2$ \\
26 & $(3,6)$ & $\QQ$ & (1,1,1,3,3,3) & NC & A & $2$ \\
27 & $(3,6)$ & $\QQ$ & (1,2,2,2,2,3) & NC & A & $2$ \\
\hline
28 & $(2,3)$ & $\QQ$ & (1,1,1,1,2) & NC & A & $\fp_3$ \\
29 & $(2,6)$ & $\QQ$ & (1,1,1,4,5) & NC & A & $\fp_3$ \\
30 & $(2,6)$ & $\QQ$ & (1,1,2,3,5) & NC & A & $2\fp_3$ \\
31 & $(2,6)$ & $\QQ$ & (1,1,2,4,4) & NC & A & $\fp_3$ \\
32 & $(2,6)$ & $\QQ$ & (1,1,3,3,4) & NC & A & $\fp_3$ \\
33 & $(2,6)$ & $\QQ$ & (1,2,2,2,5) & NC & A & $\fp_3$ \\
34 & $(2,6)$ & $\QQ$ & (1,2,2,3,4) & NC & A & $2\fp_3$ \\
35 & $(2,6)$ & $\QQ$ & (1,2,3,3,3) & NC & A & $2\fp_3$ \\
36 & $(2,6)$ & $\QQ$ & (2,2,2,3,3) & NC & A & $\fp_3$ \\
\hline
37 & $(5,4)$ & $\QQ$ & (1,1,1,1,1,1,1,1) & NC & A & $1$ \\
\hline
38 & $(4,4)$ & $\QQ$ & (1,1,1,1,1,1,2) & NC & A & $1$ \\
\hline
39 & $(3,4)$ & $\QQ$ & (1,1,1,1,1,3) & NC & A & $1$ \\
40 & $(3,4)$ & $\QQ$ & (1,1,1,1,2,2) & NC & A & $1$ \\
\hline
41 & $(2,4)$ & $\QQ$ & (1,1,1,2,3) & NC & A & $1$ \\
42 & $(2,4)$ & $\QQ$ & (1,1,2,2,2) & NC & A & $1$ \\
\hline
43 & $(4,10)$ & $\QQ[\sqrt{5}]$ & (2,3,3,3,3,3,3) & C & A & $\fp_5$ \\
\hline
44 & $(3,10)$ & $\QQ[\sqrt{5}]$ & (2,3,3,3,3,6) & C & A & $1$  \\
\hline
45 & $(3,10)$ & $\QQ[\sqrt{5}]$ & (3,3,3,3,3,5) & C & A & $2$ \\
\hline
46 & $(2,5)$ & $\QQ[\sqrt{5}]$ & (2,2,2,2,2) & C & A & $\fp_5$ \\
47 & $(2,10)$ & $\QQ[\sqrt{5}]$ & (1,1,4,7,7) & C & A & $\fp_5$ \\
48 & $(2,10)$ & $\QQ[\sqrt{5}]$ & (1,4,4,4,7) & C & A & $\fp_5$\\
49 & $(2,10)$ & $\QQ[\sqrt{5}]$ & (2,3,3,3,9) & C & A & $\fp_5$ \\
50 & $(2,10)$ & $\QQ[\sqrt{5}]$ & (2,3,3,6,6) & C & A & $\fp_5$ \\
51 & $(2,10)$ & $\QQ[\sqrt{5}]$ & (3,3,3,3,8) & C & A & $\fp_5$ \\
52 & $(2,10)$ & $\QQ[\sqrt{5}]$ & (3,3,3,5,6) & C & A & $2\fp_5$ \\
\hline
53 & $(4,12)$ & $\QQ[\sqrt{3}]$ & (2,2,2,2,2,7,7) & C & A & $1$ \\
\hline
54 & $(3,12)$ & $\QQ[\sqrt{3}]$ & (1,3,5,5,5,5) & C & A & $\fp_2$ \\
55 & $(3,12)$ & $\QQ[\sqrt{3}]$ & (2,2,2,2,7,9) & C & A & $\fp_2$ \\
56 & $(3,12)$ & $\QQ[\sqrt{3}]$ & (3,3,3,5,5,5) & C & A & $\fp_2$ \\
\hline
57 & $(3,12)$ & $\QQ[\sqrt{3}]$ & (2,2,2,4,7,7) & C & A & $\fp_3$ \\
\hline
58 & $(3,12)$ & $\QQ[\sqrt{3}]$  & (3,3,3,3,5,7) & NC & NA & $1$ \\
\hline
59 & $(2,12)$ & $\QQ[\sqrt{3}]$ & (1,2,7,7,7) & C & A & $1$ \\
60 & $(2,12)$ & $\QQ[\sqrt{3}]$ & (1,3,5,5,10) & C & A & $\fp_2$ \\
61 & $(2,12)$ & $\QQ[\sqrt{3}]$ & (1,5,5,5,8) & C & A & $\fp_3$ \\
62 & $(2,12)$ & $\QQ[\sqrt{3}]$ & (2,2,2,7,11) & C & A & $1$ \\
63 & $(2,12)$ & $\QQ[\sqrt{3}]$ & (2,2,2,9,9) & C & A & $1$ \\
64 & $(2,12)$ & $\QQ[\sqrt{3}]$ & (2,2,4,7,9) & C & A & $\fp_2\fp_3$ \\
65 & $(2,12)$ & $\QQ[\sqrt{3}]$ & (2,2,6,7,7) & C & A & $1$ \\
66 & $(2,12)$ & $\QQ[\sqrt{3}]$ & (2,4,4,7,7) & C & A & $1$ \\
67 & $(2,12)$ & $\QQ[\sqrt{3}]$ & (3,3,3,5,10) & C & A & $\fp_2$ \\
68 & $(2,12)$ & $\QQ[\sqrt{3}]$ & (3,3,5,5,8) & C & A & $\fp_3$ \\
69 & $(2,12)$ & $\QQ[\sqrt{3}]$ & (3,5,5,5,6) & C & A & $\fp_2$ \\
70 & $(2,12)$ & $\QQ[\sqrt{3}]$ & (4,5,5,5,5) & C & A & $\fp_3$ \\
\hline
71 & $(2,12)$ & $\QQ[\sqrt{3}]$ & (3,3,3,7,8) & C & NA & $\fp_2\fp_3$ \\
72 & $(2,12)$ & $\QQ[\sqrt{3}]$ & (4,4,5,5,6) & C & NA & $1$ \\
\hline
73 & $(2,12)$ & $\QQ[\sqrt{3}]$ & (3,3,5,6,7) & NC & NA & $1$ \\
\hline
74 & $(2,12)$ & $\QQ[\sqrt{3}]$ & (4,4,4,5,7) & NC & NA & $\fp_3$ \\
\hline
75 & $(3,8)$ & $\QQ[\sqrt{2}]$ & (1,3,3,3,3,3) & C & A & $\fp_2$ \\
\hline
76 & $(2,8)$ & $\QQ[\sqrt{2}]$ & (1,3,3,3,6) & C & A & $1$ \\
77 & $(2,8)$ & $\QQ[\sqrt{2}]$ & (2,2,2,5,5) & C & A & $1$ \\
78 & $(2,8)$ & $\QQ[\sqrt{2}]$ & (3,3,3,3,4) & C & A & $1$ \\
\hline
79 & $(2,14)$ & $\QQ[\cos {\pi\over 7}]$ & (2,5,5,5,11) & C & A & $\fp_7$ \\
80 & $(2,14)$ & $\QQ[\cos {\pi\over 7}]$ & (3,3,4,9,9) & C & A & $\fp_7$ \\
81 & $(2,14)$ & $\QQ[\cos {\pi\over 7}]$ & (5,5,5,5,8) & C & A & $\fp_7$ \\
\hline
82 & $(2,9)$ & $\QQ[\cos {\pi\over 9}]$ & (2,4,4,4,4) & C & A & $\fp_3$ \\
83 & $(2,18)$ & $\QQ[\cos {\pi\over 9}]$ & (1,8,8,8,11) & C & A & $\fp_3$ \\
84 & $(2,18)$ & $\QQ[\cos {\pi\over 9}]$ & (2,7,7,10,10) & C & A & $\fp_3$ \\
85 & $(2,18)$ & $\QQ[\cos {\pi\over 9}]$ & (3,3,3,13,14) & C & A & $\fp_3$ \\
86 & $(2,18)$ & $\QQ[\cos {\pi\over 9}]$ & (5,7,7,7,10) & C & A & $\fp_3$ \\
\hline
87 & $(2,18)$ & $\QQ[\cos {\pi\over 9}]$ & (2,7,7,7,13) & C & NA & $\fp_3$ \\
88 & $(2,18)$ & $\QQ[\cos {\pi\over 9}]$ & (4,5,5,11,11) & C & NA & $\fp_3$\\
89 & $(2,18)$ & $\QQ[\cos {\pi\over 9}]$ & (7,7,7,7,8) & C & NA & $\fp_3$\\
\hline
90 & $(2,20)$ & $\QQ[\cos {\pi\over 10}]$ & (5,5,5,11,14) & C & NA & $\fp_2$\\
91 & $(2,20)$ & $\QQ[\cos {\pi\over 10}]$ & (6,6,9,9,10) & C & NA & $1$ \\
\hline
92 & $(2,20)$ & $\QQ[\cos {\pi\over 10}]$ & (6,6,6,9,13) & C & NA & $1$ \\
\hline
93 & $(2,24)$ & $\QQ[\cos {\pi\over 12}]$ & (4,4,4,17,19) & C & NA & $1$\\
94 & $(2,24)$ & $\QQ[\cos {\pi\over 12}]$ & (5,10,11,11,11) & C & NA & $1$\\
95 & $(2,24)$ & $\QQ[\cos {\pi\over 12}]$ & (7,9,9,9,14) & C & NA & $\fp_2$\\
\hline
96 & $(2,15)$ & $\QQ[\cos {\pi\over 15}]$ & (2,4,8,8,8) & C & A & $1$\\
97 & $(2,30)$ & $\QQ[\cos {\pi\over 15}]$ & (5,5,5,19,26) & C & A & $1$\\
98 & $(2,30)$ & $\QQ[\cos {\pi\over 15}]$ & (9,9,9,11,22) & C & A & $1$\\
\hline
99 & $(2,15)$ & $\QQ[\cos {\pi\over 15}]$ & (4,6,6,6,8) & C & NA & $\fp_5$ \\
100 & $(2,30)$ & $\QQ[\cos {\pi\over 15}]$ & (5,5,5,22,23) & C & NA & $1$\\
101 & $(2,30)$ & $\QQ[\cos {\pi\over 15}]$ & (7,13,13,13,14) & C & NA & $1$\\
\hline
102 & $(2,21)$ & $\QQ[\cos {\pi\over 21}]$ & (4,8,10,10,10) & C & NA & $1$\\
103 & $(2,42)$ & $\QQ[\cos {\pi\over 21}]$ & (7,7,7,29,34) & C & NA & $1$\\
104 & $(2,42)$ & $\QQ[\cos {\pi\over 21}]$ & (13,15,15,15,26) & C & NA & $1$\\
\hline
\caption{Commensurability classes of Deligne--Mostow Lattices}
\label{table: main table}
\end{longtable}

\begin{rmk}
The $10$ cases which are discrete but not satisfying the half-integer condition are (see \cite[\S 5.1]{mostow1988discontinuous}): 
\begin{center}
$({1\over 12},{3\over 12}, {5\over 12},{5\over 12},{5\over 12},{5\over 12})$, $({1\over 12},{3\over 12},{5\over 12},{5\over 12},{10\over 12})$,
 $({1\over 10},{1\over 10},{4\over 10},{7\over 10},{7\over 10})$,
 $({1\over 12},{2\over 12},{7\over 12},{7\over 12},{7\over 12})$, \\
 \medskip
 $({3\over 14},{3\over 14},{4\over 14},{9\over 14},{9\over 14})$,
 $({2\over 15}, {4\over 15}, {8\over 15}, {8\over 15}, {8\over 15})$,
 $({4\over 18},{5\over 18},{5\over 18},{11\over 18},{11\over 18})$,
 $({4\over 21}, {8\over 21}, {10\over 21}, {10\over 21}, {10\over 21})$,\\
 \medskip
 $({5\over 24}, {10\over 24}, {11\over 24}, {11\over 24}, {11\over 24})$,
 $({7\over 30}, {13\over 30}, {13\over 30}, {13\over 30}, {14\over 30})$.
\end{center}
\end{rmk}

\bibliography{reference}
\bibstyle{alpha}
	
\Addresses
\end{document}